\documentclass[11pt]{amsart}
\usepackage{amssymb,amsmath,txfonts,mathrsfs,mathtools,titletoc,tikz,stmaryrd}
\usepackage{color}
\usepackage{graphicx}
\usepackage{float}
\usetikzlibrary{arrows.meta,positioning,calc,decorations.pathreplacing}
\usepackage{hyperref}
\usepackage[alphabetic]{amsrefs}
\allowdisplaybreaks
\hypersetup{colorlinks=true,linkcolor=blue,citecolor=blue,urlcolor=blue}

\newtheorem{theorem}{Theorem}[section]
\newtheorem{prop}[theorem]{Proposition}
\newtheorem{lemma}[theorem]{Lemma}
\newtheorem{remark}[theorem]{Remark}

\newtheorem{question}[theorem]{Question}
\newtheorem{definition}[theorem]{Definition}
\newtheorem{notation}[theorem]{Notation}
\newtheorem{cor}[theorem]{Corollary}
\newtheorem{example}[theorem]{Example}

\DeclareMathOperator{\Ric}{Ric}
\DeclareMathOperator{\Rc}{Rc}

\DeclareMathOperator{\Id}{Id}
\newcommand{\R}{\mathbb R}
\newcommand{\C}{\mathbb C}
\newcommand{\CP}{\mathbb{CP}}

\numberwithin{equation}{section}
\def\pf{{\it Proof:}~}
\begin{document}
\title[Eigenvalues on spheres]{Eigenvalues on spheres}
\author{Shengjie Lin}
\address{Department of Mathematical Sciences\\Tsinghua University, Beijing\\P. R. China}
\email{linsj667788@gmail.com}
\author{Haibin Wang}
\address{School of Mathematical Sciences\\Peking University, Beijing\\P. R. China}
\email{haibinwang@pku.edu.cn}
\author[Guoyi Xu]{Guoyi Xu\textsuperscript{*}}
\address{Department of Mathematical Sciences\\Tsinghua University, Beijing\\P. R. China}
\email{guoyixu@tsinghua.edu.cn}
\date{\today}
\begin{abstract}
For every smooth Riemannian metric on the two sphere whose Gaussian curvature is bounded below by one, we prove that each positive Laplace eigenvalue, counted with multiplicity, is no smaller than the corresponding eigenvalue of the unit round sphere. Equality at any positive position in the ordered spectrum forces the metric to be isometric to the unit round metric.

We further establish a sharp finite spectral counting comparison for Alexandrov two spheres with curvature bounded below by one. At every positive spectral threshold of the unit round sphere, the number of Laplace eigenvalues below or at that threshold, counted with multiplicity, does not exceed the corresponding number for the round sphere. Equality at any such threshold forces the Alexandrov sphere to be isometric to the unit round sphere.

As an application, we obtain the sharp Euclidean dimension bound for spaces of polynomial growth harmonic functions on complete three dimensional manifolds with nonnegative sectional curvature and positive asymptotic volume ratio, together with rigidity in the equality case.
\end{abstract}
\subjclass[2020]{Primary 58J50; Secondary 35P15, 53C21, 53C23,
53C24, 31C12}

\thanks{\textsuperscript{*}Corresponding author:
Guoyi Xu
(\href{mailto:guoyixu@tsinghua.edu.cn}
{guoyixu@tsinghua.edu.cn}).}

\thanks{Guoyi Xu was partially supported by NSFC 12141103.}
\maketitle
\titlecontents{section}[0em]{}{\hspace{.5em}}{}{\titlerule*[1pc]{.}\contentspage}
\titlecontents{subsection}[1.5em]{}{\hspace{.5em}}{}{\titlerule*[1pc]{.}\contentspage}
\tableofcontents

\section{Introduction}

Throughout the paper we use the conventions
\begin{align}
\mathbb{Z}^+=\{1,2,\ldots\},\qquad \mathbb{Z}_{\geq0}=\{0,1,2,\ldots\}. \nonumber
\end{align}
For a nonnegative self-adjoint operator \(P\) with compact resolvent and eigenvalues
\(\lambda_0(P)\leq\lambda_1(P)\leq\cdots\), counted with multiplicity, we use the operator-subscript notation
\begin{align}
N_P^{<}(\Lambda)&:=\#\{j\geq0:\lambda_j(P)<\Lambda\},\nonumber\\
\mathcal{N}_P(\Lambda)&:=\#\{j\geq0:\lambda_j(P)\leq\Lambda\}. \nonumber
\end{align}

\begin{notation}[Spectral conventions]
We use
\[
\Delta_g=\operatorname{div}_g\nabla,
\qquad
-\Delta_g\geq0.
\]
For a compact Riemannian manifold $(M,g)$, define
\[
\lambda_j(M,g):=\lambda_j(-\Delta_g),
\qquad j\in\mathbb Z_{\geq0}.
\]
For a compact Alexandrov space $X$ carrying its canonical
nonnegative Laplacian, define
\[
\lambda_j(X):=\lambda_j(-\Delta_X).
\]
All eigenvalues are counted with multiplicity and indexed from $0$.
\end{notation}

Let $(M^n,g)$ be a complete Riemannian manifold with nonnegative Ricci curvature. For $d\geq0$, denote by $\mathscr{H}_d(M^n)$ the vector space of harmonic functions on $M^n$ with polynomial growth of degree at most $d$. Yau's original finite-dimensionality question \cite[Problem~48, pp.~275--319]{Yau-OpenProblems1992} asks whether
\begin{align}
\dim \mathscr{H}_d(M^n)<\infty \nonumber
\end{align}
for every fixed growth degree $d\geq 0$. That formulation also asks whether $\mathbb R^n$ has the maximal dimension of polynomial-growth harmonic functions. 

In $1998$, Colding and Minicozzi proved the following Weyl-type bound \cite{CM-Weyl}.
\begin{theorem}[Colding--Minicozzi]\label{thm-CM-weyl-type}
If $(M^n,g)$ is complete and $\Rc\geq0$, then there exists $C(n)>0$ such that, for every $d\geq1$,
\begin{align}
\dim \mathscr{H}_d(M^n)\leq C(n)d^{n-1}. \nonumber
\end{align}
\end{theorem}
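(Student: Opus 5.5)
The plan follows Colding--Minicozzi's argument: a volume-doubling/Poincaré structure, the mean value inequality for subharmonic functions, and a linear-algebra lemma that turns an abstract $L^2$ bound on a finite-dimensional space of functions into a dimension bound. Here is the order I would carry it out.

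\textbf{Analytic inputs from $\Rc\geq0$.} Bishop--Gromov gives volume doubling, $\mathrm{Vol}(B_{2r}(x))\leq 2^n\mathrm{Vol}(B_r(x))$, together with the relative comparison $\mathrm{Vol}(B_r(y))\geq (r/R)^n\mathrm{Vol}(B_R(y))$. Li--Yau (or Buser) gives a scale-invariant Neumann Poincaré inequality on balls. For harmonic $u$, $u^2$ is subharmonic, so Moser iteration from these two facts yields the mean value inequality
\[
\sup_{B_r(x)}u^2\leq C(n)\,\frac{1}{\mathrm{Vol}(B_{2r}(x))}\int_{B_{2r}(x)}u^2 .
\]

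\textbf{Linear-algebra lemma.} Let $K\subset \mathscr{H}_d(M^n)$ be a finite-dimensional subspace of dimension $k$ with orthonormal basis $u_1,\dots,u_k$ in $L^2(B_R)$. Then
\[
\sum_i \int_{B_R}u_i^2=k.
\]
For each point $y$, the function $\sum_i u_i(y)u_i$ lies in $K$ and attains the value $\sum_i u_i(y)^2$ at $y$. This lets one bound $\sum_i u_i(y)^2$ pointwise by applying the mean value inequality near $y$ on a ball of radius comparable to the distance to $\partial B_R$.

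\textbf{Growth and integration step.} Polynomial growth of degree $d$ allows one to choose, by a pigeonhole argument on the scales $R, (1+\delta)R, \dots$, a radius $R$ at which the $L^2$ norms of functions in $K$ satisfy a reverse doubling $\int_{B_{(1+\delta)R}}u^2\leq C\int_{B_R}u^2$ uniformly. This uses a determinant/trace argument on the Gram matrices. One then integrates the pointwise bound over $B_R$ and compares with $k$. Choosing $\delta\sim 1/d$ makes the ratio $(1+\delta)^{2d}$ bounded, and controlling the boundary layer of width $\delta R$ through volume comparison produces $k\leq C(n)\delta^{-(n-1)}=C(n)d^{n-1}$.

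The main obstacle is this last step: extracting the sharp exponent $n-1$ rather than $n$. The naive covering by balls of radius $\delta R$ costs $\delta^{-n}$. To reduce this to $\delta^{-(n-1)}$, I would first try the Colding--Minicozzi idea of bounding $\sum_i u_i(y)^2$ at a point at distance $\rho$ from $\partial B_R$ by a multiple of $(R/\rho)^{n-1}$ through the mean value inequality on $B_\rho(y)$, and then integrate this over $B_R$ using the co-area formula and the comparison on areas of distance spheres.
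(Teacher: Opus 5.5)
The paper does not prove this statement; it only quotes Colding--Minicozzi. Your overall scheme is the standard one (it is essentially P.~Li's simplified treatment): the mean value inequality, the kernel $F=\sum_i u_i^2$ controlled via $v=\sum_i u_i(y)u_i$, a Gram-determinant choice of scale, and $\delta\sim 1/d$.

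\textbf{The gap is in the step you single out as the crux.} The mean value inequality on $B_\rho(y)$ gives only
\[
F(y)\le \frac{C}{\operatorname{Vol}(B_\rho(y))}\le \frac{C\,(R/\rho)^n}{\operatorname{Vol}(B_R)},
\]
with exponent $n$. No argument can improve this to $(R/\rho)^{n-1}$ uniformly in $K$. Take $\mathscr H_d(\mathbb R^2)$ orthonormalized on the unit disc. Then
\[
F=\pi^{-1}\Bigl(1+\sum_{j=1}^d(2j+2)|y|^{2j}\Bigr),
\]
which is of order $d^2=(R/\rho)^2$ at $\rho=1/d$, not of order $d$. Worse, if your bound held, integrating it over $\{\rho\ge\delta R\}$ would produce $\int_\delta^1 s^{1-n}\,ds$. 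That would give $\dim\mathscr H_d\lesssim d^{n-2}$ (or $\log d$ when $n=2$), contradicting $\dim\mathscr H_d(\mathbb R^n)\sim d^{n-1}$. The saving of one power does not come from a better pointwise bound. It comes from the fact that the worst bound $\delta^{-n}$ holds only on a boundary layer of relative volume about $\delta$, so you must integrate the exponent-$n$ bound in the radial variable.

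\textbf{How to repair it.} Fix $\beta=1+\delta$ and an $L^2(B_{\beta R})$-orthonormal basis $\{u_i\}$ of $K$.

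For $y\in B_R$ put $s=\beta R-r(y)\ge\delta R$. Apply the mean value inequality to $v$ on $B_s(y)\subset B_{\beta R}$, and Bishop--Gromov using $B_{\beta R}\subset B_{2\beta R}(y)$. This gives
\[
F(y)\le \frac{C\,(R/s)^n}{\operatorname{Vol}(B_{\beta R})}.
\]

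Now use the co-area formula and the relative area bound $|\partial B_t|\le n\operatorname{Vol}(B_t)/t$, splitting the radial integral at $t=R/2$. This gives
\[
\int_{B_R}F\le C+\frac{C}{R}\int_{R/2}^{R}\Bigl(\frac{R}{\beta R-t}\Bigr)^{n}dt\le C\delta^{1-n}.
\]

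On the other side, the Gram-determinant argument does not give a reverse doubling inequality uniformly for every $u\in K$. It controls only the product of the relative Gram eigenvalues, so the smallest one is bounded below only by something like $\beta^{-k(2d+n+1)}$. What it does give, at arbitrarily large $R$, is the trace bound
\[
\sum_i\int_{B_R}u_i^2\ge k\beta^{-(2d+n+1)},
\]
and that is all you need.

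Combining the two bounds gives $k\le C\beta^{2d+n+1}\delta^{1-n}$. With $\delta=1/d$ this is $k\le C(n)\,d^{n-1}$.
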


The same circle of ideas includes Colding--Minicozzi's work on harmonic functions with polynomial growth and their work on harmonic sections \cite{CM-JDG, CM-Ann, CM-CPAM}, also see \cite{P.Li}. Together with Theorem \ref{thm-CM-weyl-type}, these results give the correct Weyl growth exponent in the following sense.

Here ``Weyl growth exponent'' means only the power-law exponent predicted by Weyl's law, not the sharp leading Weyl constant. Let $(N^{n-1},h)$ be a closed Riemannian manifold with Riemannian metric $h$ and suppose $\Rc(h)\geq (n-2)h$. On the punctured metric cone
\begin{align}
C(N)=(0,\infty)\times N,\qquad h_C=dr^2+r^2h, \nonumber
\end{align}
the Ricci tensor is nonnegative.

If $-\Delta_h\varphi_i=\lambda_i(-\Delta_h)\varphi_i$, then a separated function $r^\alpha\varphi_i(\theta)$ is harmonic on the cone if and only if
\begin{align}
\alpha(\alpha+n-2)=\lambda_i(-\Delta_h). \label{eq-intro-indicial}
\end{align}

By (\ref{eq-intro-indicial}), we have 
\begin{align}
\dim \mathscr{H}_k(C(N))=\mathcal{N}_{-\Delta_h}\big(k(k+n-2)\big). \label{eq-intro-cone-count}
\end{align}

Note Weyl's law gives
\begin{align}
\lim_{\Lambda\rightarrow\infty}\frac{\mathcal{N}_{-\Delta_h}(\Lambda)}{\Lambda^{\frac{n- 1}{2}}}= \frac{\omega_{n- 1}\operatorname{Vol}(N)}{(2\pi)^{n- 1}},\nonumber
\end{align}
where $\omega_{n- 1}$ is the Euclidean volume of the unit ball in $\mathbb R^{n- 1}$. Thus the Weyl order of the counting function is $\Lambda^{\frac{n- 1}{2}}$. 

In the cone discussion above, evaluating at the round thresholds $\Lambda=k(k+n-2)\sim k^2$ gives the order $k^{n-1}$ for $\mathcal{N}_{-\Delta_h}(k(k+n-2))$, equivalently for the cone dimension in (\ref{eq-intro-cone-count}). Therefore Theorem \ref{thm-CM-weyl-type} has the sharp Weyl growth exponent $d^{n-1}$.

The sharp comparison part of Yau's question asks whether the Euclidean model gives the sharp upper bound at the integer degrees.
\begin{question}[Yau's sharp dimension question]\label{ques-Yau-sharp-dimension}
Let $(M^n,g)$ be complete with $\Rc\geq0$. Is it true that
\begin{align}
\dim \mathscr{H}_d(M^n)\leq \dim \mathscr{H}_d(\mathbb R^n) , \quad \quad \forall d\in \mathbb{Z}^+? \label{eq-Yau-sharp-bound}
\end{align}
Moreover, if equality holds for some $d\in\mathbb{Z}^+$, must $M^n$ be isometric to $\mathbb R^n$?
\end{question}

\begin{remark}\label{rem:Yau's sharp upper bound}
{When $n=2$, Li--Tam give an affirmative answer to the corresponding surface question \cite[Theorem 4.2]{LT91}. For $d=1$ and general $n$, the inequality follows from Li--Tam \cite{LT-linear}, and the rigidity is due to Cheeger--Colding--Minicozzi \cite{CCM}. 

Donnelly \cite{Donnelly} showed, for some $\epsilon>0$ there is a complete manifold $(M^5,g)$ with $\Rc\geq0$ such that
\begin{align}
\dim\mathscr{H}_{2-\epsilon}(M^5)>\dim\mathscr{H}_{2-\epsilon}(\mathbb R^5). \nonumber
\end{align}
This does not disprove the integer-degree version (\ref{eq-Yau-sharp-bound}), and the integer-degree problem remains a natural sharp form of the question.
}
\end{remark}

For the Euclidean cone $\mathbb R^n=C(\mathbb S^{n-1})$, (\ref{eq-intro-cone-count}) gives the usual dimension of harmonic polynomials of degree at most $k$. Hence the sharp integer-degree form of Yau's question suggests the following question, which can be viewed as a compact cross-section version of Question \ref{ques-Yau-sharp-dimension}.

\begin{question}\label{ques-Yau-sharp-compac}
{Assume $(N^{n- 1}, h)$ has $\Rc(h)\geq (n-2)h$, do we have 
\begin{align}
\mathcal{N}_{-\Delta_h}\big(k(k+n-2)\big)\leq \mathcal{N}_{-\Delta_{\mathbb S^{n-1}}}\big(k(k+n-2)\big),\qquad \forall k\in\mathbb{Z}_{\geq0}? \label{eq-intro-cross-section-counting-question}
\end{align}
And if the equality in (\ref{eq-intro-cross-section-counting-question}) holds for some $k\in \mathbb{Z}^+$, must $N^{n- 1}$ be isometric to $\mathbb{S}^{n-1}$?
}
\end{question}

\begin{remark}\label{rem:CM-Polya}
{This point of view is already present in \cite[pp.~260--262]{CM-Weyl}. Colding and Minicozzi \cite[Page $261$]{CM-Weyl} (also see \cite[Conjecture $3$]{Milman}) pointed out that 
\begin{quote}
``it has been a long standing open problem whether for a closed manifold $N^{n-1}$ with Ricci curvature greater or equal to $n-2$, 
\begin{align}
\lambda_i(\mathbb S^{n-1})\leq \lambda_i(N^{n-1}),\qquad i\geq1, \label{eigenvalue-ineq-one-to-one}
\end{align} 
with equality for some positive integer $i= \max\{j\geq i| \lambda_j(\mathbb{S}^{n- 1})= \lambda_i(\mathbb{S}^{n-1})\}$ if and only if $N^{n-1}$ is isometric to $\mathbb{S}^{n-1}$."
\end{quote}

Aryan's recent work gives counterexamples to (\ref{eigenvalue-ineq-one-to-one}) in the conjecture for $n\geq 5$ \cite{Aryan2026}. We show that (\ref{eigenvalue-ineq-one-to-one}) does not hold for $n= 4$ in Section \ref{sec:harmonic-growth}. 

Note that the rigidity part of the above open problem is equivalent to the rigidity part of Question \ref{ques-Yau-sharp-compac}, and Question \ref{ques-Yau-sharp-compac} remains open for $n\geq 3$. In this paper, we answer Question \ref{ques-Yau-sharp-compac} affirmatively in the case $n=3$ and prove (\ref{eigenvalue-ineq-one-to-one}) in this dimension.
}
\end{remark}

This inequality (\ref{eq-intro-cross-section-counting-question}) should be understood as a \emph{spectral counting rigidity statement}. It compares entire eigenvalue distributions rather than individual eigenvalues.

The estimate (\ref{eq-intro-cross-section-counting-question}) is also a P\'olya-type spectral comparison. P\'olya's classical conjecture asks that the Dirichlet and Neumann eigenvalue counting functions of a Euclidean domain be bounded from above and from below, respectively, by the leading Weyl term \cite{Polya1954,Polya}; here the comparison model is the round sphere rather than a Euclidean domain.

Thus, in the present paper the phrase ``P\'olya-type'' is meant in this precise finite-counting sense: one asks for a non-asymptotic comparison at the exact round thresholds $k(k+n-2)$, not merely for this Weyl-order growth exponent.

\begin{remark}\label{rem Weyl law and finite idex}
{Weyl's law and Bishop--Gromov volume comparison imply that a non-round metric with the relevant Ricci lower bound has fewer eigenvalues than the round sphere at sufficiently high energy, but this asymptotic information does not decide the exact finite thresholds in (\ref{eq-intro-cross-section-counting-question}). The difficulty is precisely finite-index spectral comparison.
}
\end{remark}

One main result of this paper is the full finite-index comparison when the cross-section dimension is two.
\begin{theorem}\label{thm:main-eigenvalue}
Let $(S^2,g)$ be a smooth Riemannian manifold with sectional curvature $K_g\geq1$. Then
\begin{align}
\lambda_i(S^2,g)\geq\lambda_i(\mathbb S^2,g_{\mathrm{round}}),\qquad i\geq1. \label{eq-main-comparison}
\end{align}
Moreover, if equality holds in (\ref{eq-main-comparison}) for some $i_0\geq1$, then $(S^2,g)$ is isometric to $(\mathbb S^2,g_{\mathrm{round}})$.
\end{theorem}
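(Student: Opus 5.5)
The plan is to first recast \eqref{eq-main-comparison} as a counting statement and then reduce it to a one-dimensional Sturm--Liouville comparison. The round eigenvalues are $k(k+1)$ with multiplicity $2k+1$, so $\lambda_i(\mathbb S^2)=k(k+1)$ exactly for $k^2\le i\le (k+1)^2-1$. Hence \eqref{eq-main-comparison} is equivalent to
\begin{align}
N^{<}_{-\Delta_g}\big(k(k+1)\big)\le k^2,\qquad \forall k\in\mathbb Z^+. \nonumber
\end{align}
I would prove this counting form and then read the rigidity statement off the equality cases.

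By uniformization, write $g=e^{2u}g_0$ with $g_0$ the round metric, normalized by a M\"obius transformation; one could try first to centre the measure $e^{2u}dA_0$, and I will revisit this normalization below. The Dirichlet energy is conformally invariant. Therefore the eigenvalues of $g$ are exactly those of the weighted problem
\begin{align}
-\Delta_0 f=\lambda\rho f,\qquad \rho:=e^{2u}. \nonumber
\end{align}
The curvature hypothesis $K_g\ge1$ becomes
\begin{align}
\Delta_0u+e^{2u}\le 1. \nonumber
\end{align}
By Bishop--Gromov (or Gauss--Bonnet with $K\ge1$) we also have $\int\rho\,dA_0\le4\pi$. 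If we could arrange $\rho\le1$ pointwise, min--max would give the result at once. This is false in general, so the curvature inequality has to be used more cleverly.

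My plan is a symmetrization argument. Replace $u$ by a rotationally symmetric profile $u^*$ on $\mathbb S^2$, obtained either by Schwarz-type rearrangement of $\rho$ about a pole or by choosing the pole and profile via the Bol--Fiala isoperimetric inequality for $K\ge1$. The goal is a symmetric metric $g^*$ with two properties:
\begin{itemize}
\item it has curvature $\ge1$ in the Alexandrov sense, possibly with conical singularities at the poles (a ``football'');
\item its counting function dominates, $N^{<}_{-\Delta_g}(\Lambda)\le N^{<}_{-\Delta_{g^*}}(\Lambda)$.
\end{itemize}
For the counting inequality, I would try a Talenti/P\'olya--Szeg\H{o}-type comparison. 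Concretely, push the sublevel-set isoperimetric profile of $(S^2,g)$ (which by Bol--Fiala dominates that of the model with the same area) into the min--max characterization through nodal and sublevel decompositions. This is why the Alexandrov setting announced in the abstract is natural: the comparison objects are singular. Smooth metrics are then recovered as a special case.

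For rotationally symmetric metrics
\begin{align}
dr^2+\phi(r)^2d\theta^2,\qquad \phi''+K\phi=0,\quad K\ge1, \nonumber
\end{align}
separation in $e^{im\theta}$ reduces the count to the ODE
\begin{align}
-(\phi f')'/\phi+(m^2/\phi^2)f=\lambda f. \nonumber
\end{align}
The counting bound then splits into one bound per Fourier mode $m$. For each $m$, the number of eigenvalues below $k(k+1)$ should be at most $\max(k-|m|,0)$, which is what the round sphere (associated Legendre) gives. I would prove this by Sturm comparison and Pr\"ufer-angle arguments: $K\ge1$ forces $\phi\le\sin$ after reparametrization and diameter $\le\pi$, and this makes the Pr\"ufer angle rotate more slowly at each energy. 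Summing over $m$ gives exactly $k^2$.

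For rigidity, suppose equality $\lambda_{i_0}(g)=\lambda_{i_0}(\mathbb S^2)$ holds. Then equality holds in the counting bound at the corresponding threshold, so every step above is an equality. In particular, the Bol--Fiala and Sturm comparisons are equalities, which forces $K\equiv1$ and area $4\pi$, and hence the round metric.

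The main obstacle is the symmetrization step. One must show that passing from $g$ to $g^*$ both preserves the curvature lower bound and does not decrease the counting function at the exact thresholds $k(k+1)$, not just asymptotically (cf.\ Remark~\ref{rem Weyl law and finite idex}). The difficulty is that rearrangement controls only the first Dirichlet eigenvalue of each nodal domain, not full multiplicities. If this fails, my fallback is a continuity/degeneration argument: deform $g$ toward the round metric (for instance, along a curvature-preserving flow such as normalized Ricci flow on $S^2$, which preserves $K\ge1$), and show that eigenvalues crossing the thresholds $k(k+1)$ can only move upward.
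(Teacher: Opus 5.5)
\textbf{There is a genuine gap: the symmetrization step, which carries all the weight, has no mechanism behind it, and the fallback and the rotational Sturm step do not close it.}

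Your reformulation is correct. The paper also proves the comparison in the form $N^{<}_{-\Delta_g}(l(l+1))\le l^2$, and your per-mode target $\max(k-|m|,0)$ is exactly what the paper proves for rotational metrics. The problem is the map $g\mapsto g^*$ with $N^{<}_{-\Delta_g}\le N^{<}_{-\Delta_{g^*}}$ at every threshold; no known tool gives this, and the natural implementations fail.

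\begin{enumerate}
\item \emph{Rearranged test spaces.} Rearrangement (P\'olya--Szeg\H{o}, Talenti) acts on one function at a time and is nonlinear. It therefore does not turn a $(j+1)$-dimensional min--max space for $g$ into one for $g^*$. Worse, its outputs are radial about the pole, so they lie in the $m=0$ sector of $g^*$. For the round sphere that sector contains only one of the $2k+1$ eigenfunctions at level $k(k+1)$. Already when $g=g^*$ is round, any three-dimensional radial test space has Rayleigh quotients reaching $6$, while $\lambda_2=2$. So this route cannot even certify $\lambda_2(g^*)\le\lambda_2(g)$ in the trivial case.

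\item \emph{Nodal domains.} Faber--Krahn on nodal domains controls an eigenvalue only through the number of nodal domains of one eigenfunction. That number can be two or three even at high degree on the round sphere (Lewy). This gives $\lambda_1$-type bounds, not the count $k^2$.

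\item \emph{Side issues.} Bol--Fiala requires an \emph{upper} curvature bound. Under $K\ge1$ the relevant profile comparison is L\'evy--Gromov, whose extremal models are the constant-curvature footballs. You also give no argument that a rearranged factor keeps $\Delta_0u^*+e^{2u^*}\le1$.

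\item \emph{The fallback.} If $\operatorname{Area}(g)<4\pi$, the normalized Ricci flow has mean scalar curvature $\bar R=8\pi/\operatorname{Area}(g)>2$. The reaction term $R(R-\bar R)$ then immediately drives $K$ below $1$ wherever $K\equiv1$ on an open set. The unnormalized flow preserves $K\ge1$ but shrinks to a point. The claim that eigenvalues cross $k(k+1)$ only upward is, at the crossing time, the theorem itself with its equality case, and you give no derivative formula for it.

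\item \emph{The rotational Sturm step.} After $f=\phi^{-1/2}w$ the potential is $m^2\phi^{-2}-\frac12K-\frac14(\phi')^2\phi^{-2}$. Its $-K/2$ term has the wrong sign wherever $K>1$, so $\phi\le\sin r$ alone does not slow the Pr\"ufer angle.

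\item \emph{Rigidity.} You read rigidity off equality in these comparisons, so it inherits all of the gaps above.
\end{enumerate}

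What is missing is a way to generate the multiplicities $2k+1$ without any symmetry; this is the paper's key idea. On $E_m=(T^{1,0}S^2)^{\otimes m}\cong K_{\CP^1}^{-m}$ the paper uses the first-order operators $B_m=\sqrt2\,\mathcal I_m\bar\partial_m$.
\begin{itemize}
\item The Weitzenb\"ock (Bochner--Kodaira) identity $B_mB_m^*-B_{m+1}^*B_{m+1}=2(m+1)K_g$ gives the form order $B_mB_m^*+m(m+1)\ge B_{m+1}^*B_{m+1}+(m+1)(m+2)$ when $K_g\ge1$.
\item $\ker B_m=H^0(\CP^1,K_{\CP^1}^{-m})$ has dimension $2m+1$ for every metric.
\item The form order forces $\ker B_m^*=0$.
\end{itemize}
Set $A_m=B_m^*B_m+m(m+1)$ and $C_m=B_mB_m^*+m(m+1)$, so that $A_0=-\Delta_g$. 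The partner-spectrum identity gives $\lambda_{j+2m+1}(A_m)=\lambda_j(C_m)\ge\lambda_j(A_{m+1})$. Iterating over $m=0,\dots,l-1$ yields $\lambda_{l^2}(g)\ge\lambda_0(A_l)=l(l+1)$, which is the whole comparison.

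Rigidity is then a strictness statement rather than equality-chasing. If $K_g\not\equiv1$, unique continuation makes every rung strict, so $\lambda_{l^2}(g)>l(l+1)$ and no equality $\lambda_i(g)=\lambda_i(\mathbb S^2)$ with $i\ge1$ can occur. Your rotational per-mode bound is the $S^1$-reduction of this same ladder, with $\ker\mathbf B_m=\mathbb R\phi^m$. The paper proves it by this factorization, not by Sturm comparison.
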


One corollary of the above theorem is the following counting comparison result, which answers Question \ref{ques-Yau-sharp-compac} affirmatively.
\begin{cor}\label{cor:main-corollary}
Let $(S^2,g)$ be a smooth Riemannian manifold with sectional curvature $K_g\geq1$. Then
\begin{align}
\mathcal{N}_{-\Delta_g}\big(k(k+1)\big)\leq \mathcal{N}_{-\Delta_{\mathbb S^2}}\big(k(k+1)\big),\qquad \forall k\in\mathbb{Z}_{\geq0}. \label{eq-intro-cross-section-counting-question-1}
\end{align}
And if the equality in (\ref{eq-intro-cross-section-counting-question-1}) holds for some $k\in \mathbb{Z}^+$, then $(S^2,g)$ is isometric to $(\mathbb S^2,g_{\mathrm{round}})$.
\end{cor}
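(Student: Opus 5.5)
The corollary should follow directly from Theorem \ref{thm:main-eigenvalue} by translating eigenvalue comparison into counting comparison at the round thresholds. Recall that the spectrum of the unit round sphere consists of the values $\ell(\ell+1)$, $\ell\in\mathbb Z_{\geq0}$, with multiplicity $2\ell+1$. Hence
\begin{align}
\mathcal{N}_{-\Delta_{\mathbb S^2}}\big(k(k+1)\big)=\sum_{\ell=0}^{k}(2\ell+1)=(k+1)^2, \nonumber
\end{align}
and with $m_k:=(k+1)^2$ we have $\lambda_{m_k-1}(\mathbb S^2)=k(k+1)<(k+1)(k+2)=\lambda_{m_k}(\mathbb S^2)$. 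Thus $m_k-1$ is the last index of the eigenvalue cluster at level $k(k+1)$.

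For $k=0$ the inequality is trivial. On a connected closed surface the eigenvalue $0$ is simple, so both sides equal $1$; equality here gives no rigidity, which is why rigidity is only claimed for $k\geq1$. For $k\geq1$, I would argue by contradiction. Suppose $\mathcal{N}_{-\Delta_g}(k(k+1))\geq m_k+1$. Then $\lambda_{m_k}(S^2,g)\leq k(k+1)$. But Theorem \ref{thm:main-eigenvalue}, applied with $i=m_k\geq1$, gives
\begin{align}
\lambda_{m_k}(S^2,g)\geq\lambda_{m_k}(\mathbb S^2)=(k+1)(k+2)>k(k+1), \nonumber
\end{align}
a contradiction. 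So $\mathcal{N}_{-\Delta_g}(k(k+1))\leq m_k$, which is (\ref{eq-intro-cross-section-counting-question-1}).

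For rigidity, suppose equality $\mathcal{N}_{-\Delta_g}(k(k+1))=m_k$ holds with $k\geq1$. Then $\lambda_{m_k-1}(S^2,g)\leq k(k+1)$. Theorem \ref{thm:main-eigenvalue} with $i=m_k-1\geq3$ gives the reverse inequality $\lambda_{m_k-1}(S^2,g)\geq\lambda_{m_k-1}(\mathbb S^2)=k(k+1)$. Hence equality holds at the positive index $i_0=m_k-1$, and the rigidity part of Theorem \ref{thm:main-eigenvalue} yields that $(S^2,g)$ is isometric to the round sphere.

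There is no real obstacle here beyond bookkeeping. The only points to check are the indexing convention, namely that eigenvalues start at $\lambda_0=0$ so that the count up to $k(k+1)$ is exactly $(k+1)^2$, and that the indices used, $m_k$ and $m_k-1$, are both $\geq1$ when $k\geq1$, so that the theorem applies.
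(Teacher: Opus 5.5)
Your proof is correct and follows the paper's route. The paper obtains $\mathcal N_{-\Delta_g}\bigl(k(k+1)\bigr)\le (k+1)^2$ as the closed-count estimate of Theorem~\ref{thm:cluster-bottom-comparison} and treats it as equivalent to the ordered comparison, which is exactly what you verify at the index $(k+1)^2$. Your rigidity step, evaluating at $i=(k+1)^2-1$ and then invoking single-eigenvalue rigidity, is precisely the argument in the proof of Theorem~\ref{thm:rigidity-main}.
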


To study Question \ref{ques-Yau-sharp-dimension} for $(M^3,g)$, we establish the following strengthening of Corollary \ref{cor:main-corollary}.
\begin{theorem}\label{thm:main-counting-Aleandorff}
Let \(X\) be one Alexandrov two-sphere with curvature \(\geq1\). Then
\begin{align}
\mathcal{N}_{-\Delta_X}\big(l(l+1)\big)\leq \mathcal{N}_{-\Delta_{\mathbb S^2}}\big(l(l+1)\big),\qquad \forall l\in\mathbb{Z}_{\geq0}. \label{ineq:alex-counting-equality}
\end{align}
Moreover, if equality holds in \eqref{ineq:alex-counting-equality} for some $l\in \mathbb{Z}^+$, then $X$ is isometric to $(\mathbb S^2,g_{\mathrm{round}})$.
\end{theorem}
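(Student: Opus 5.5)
The plan is to reduce Theorem \ref{thm:main-counting-Aleandorff} to the smooth case, Theorem \ref{thm:main-eigenvalue} and Corollary \ref{cor:main-corollary}, by approximation, and then handle rigidity separately. First I would invoke the known fact that every Alexandrov two-sphere $X$ with curvature $\geq 1$ is the Gromov--Hausdorff limit of smooth metrics $g_j$ on $S^2$ with $K_{g_j}\geq 1$. For convex surfaces one can realize $X$ as a limit of smoothings; in the abstract setting one would use Alexandrov's realization of curvature $\geq1$ surfaces as boundaries of convex bodies in $\mathbb S^3$, smoothed by convex smooth bodies, or Perelman/Kapovitch-type smoothing in dimension two. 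Because the dimension does not drop, these limits are noncollapsed. The Cheeger--Colding/Ding spectral convergence then gives $\lambda_i(S^2,g_j)\to\lambda_i(X)$ for every $i$.

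Next I pass the eigenvalue inequality to the limit. Theorem \ref{thm:main-eigenvalue} gives $\lambda_i(X)\ge\lambda_i(\mathbb S^2)$ for all $i\ge1$. Fix the threshold $\Lambda_l=l(l+1)$. The round count is $\mathcal N_{\mathbb S^2}(\Lambda_l)=(l+1)^2$, and $\lambda_{(l+1)^2}(\mathbb S^2)=(l+1)(l+2)>\Lambda_l$. Hence $\lambda_{(l+1)^2}(X)>\Lambda_l$, which is exactly \eqref{ineq:alex-counting-equality}.

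The main obstacle is rigidity, because the strict inequality in the smooth case is not preserved under limits. Suppose equality holds for some $l\ge1$. Then $\lambda_{(l+1)^2-1}(X)\le l(l+1)=\lambda_{(l+1)^2-1}(\mathbb S^2)$, so by the limiting inequality this eigenvalue equals $l(l+1)$. I would first try to show this forces $\lambda_1(X)=2$, and then apply the Obata-type rigidity for Alexandrov spaces (Ketterer, or Cheng's maximal diameter theorem through Lichnerowicz rigidity in RCD$(1,2)$ spaces). That result yields $X\cong$ the spherical suspension of a circle of length $\le2\pi$. By the Alexandrov two-dimensional structure, this is the round sphere unless there are two conical points. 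A direct alternative is to apply the smooth proof's mechanism directly on $X$. The Alexandrov space is a smooth Riemannian surface away from finitely many (or countably many) singular points, with conical angles $<2\pi$, and I would rerun the argument of Theorem \ref{thm:main-eigenvalue} with the singular points treated as removable for $H^1$ test functions. Whatever equality mechanism that proof uses, say a Bishop--Gromov/volume or an integrated Bochner identity becoming equality, would then force curvature $\equiv1$ and no cone points.

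Finally, I would rule out the spindle cases directly. For the suspension of a circle of length $2\pi\beta$ with $\beta<1$, separation of variables gives eigenvalues $\nu(\nu+1)$ with $\nu=|m|/\beta+n$. A direct count shows strictly fewer than $(l+1)^2$ eigenvalues are $\le l(l+1)$, because every nonzero angular mode $m$ is pushed up. This completes the rigidity and shows $X$ is the unit round sphere.
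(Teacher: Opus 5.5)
The inequality half of your proposal is correct. It is essentially the paper's approximation route (Proposition~\ref{prop:alex-ordered-comparison-by-approx}). The paper smooths by heat-regularizing the conformal factor, $g_\tau=e^{2P_\tau u}g_0$, uses Jensen's inequality to keep $K_{g_\tau}\geq1$, and then applies Reshetnyak's convergence theorem and Shioya's spectral convergence; your convex-realization or Ricci-flow smoothing does the same job. Your Obata reduction and your spindle count are also fine, but only \emph{if} $\lambda_1(X)=2$ is already known. The rigidity argument has a genuine gap at exactly that point.

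The step everything depends on, that $\mathcal N_{-\Delta_X}(l(l+1))=(l+1)^2$ forces $\lambda_1(X)=2$, is only announced, and nothing you set up can prove it; it is essentially as strong as the theorem.

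\begin{itemize}
\item The smooth rigidity in Theorem~\ref{thm:main-eigenvalue} is a qualitative strict inequality, and strict inequalities degenerate to $\geq$ under your approximation.
\item The football of Example~\ref{ex:football-single-eigenvalue-equality} shows that nonround limits really do attain equality in individual eigenvalues.
\item Nothing in the comparison transfers equality at the threshold $l(l+1)$ down to the threshold $2$.
\end{itemize}

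Your fallback rests on a false picture of $X$: an Alexandrov sphere with curvature $\geq1$ need not be smooth off a discrete or countable set. The double of a spherical cap of radius $r<\pi/2$ has curvature measure $dA_X+2\cot r\,ds$, with the singular part spread along the gluing circle. So $\nu_X$ can be a nonatomic measure carried by a null set. Then there is no open set where $K>1$ on which to run the unique-continuation argument of Lemma~\ref{lem:smooth-strict-form-comparison}. Moreover, only the inequality $\mathfrak c_m^X\geq\mathfrak a_{m+1}^X$ survives the limit, not the defect identity.

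Even at isolated cone points, ``removable for $H^1$'' is correct only at level $0$. Near an atom of mass $2\pi\gamma$, the weight $e^{2lU}$ of $H_{l-1}^X$ behaves roughly like $|z|^{-2l\gamma}$. Membership in $\operatorname{Dom}((B_{l-1}^X)^*)$ is therefore a genuine constraint, not something you can discard.

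The missing idea is to run the ladder intrinsically on $X$ and exploit saturation of a whole round cluster.

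\begin{itemize}
\item In the conformal model $g_X=e^{2u}g_0$ on $\mathbb{CP}^1$, the paper uses the maximal closed operators $B_m^X$ on $K_{\mathbb{CP}^1}^{-m}$ with singular Hermitian metrics.
\item It obtains $\mathfrak c_m^X\geq\mathfrak a_{m+1}^X$ by Mosco convergence from the smooth Bochner--Kodaira identity, and shows $\ker B_m^X\subset H^0(\mathbb{CP}^1,K_{\mathbb{CP}^1}^{-m})$.
\item Equality at level $l$ saturates every rung (Proposition~\ref{prop:abstract-ladder-saturation}). Hence $V_l^X=\ker B_l^X=H^0(\mathbb{CP}^1,K_{\mathbb{CP}^1}^{-l})$, with $\mathfrak c_{l-1}^X=\mathfrak a_l^X$ there.
\item An atom of $\nu_X$ at $p$ would force every $s\in V_l^X$ to vanish at $p$ (Lemma~\ref{lem:atomic-adjoint-domain-vanishing}). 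This contradicts surjectivity of evaluation (Lemma~\ref{lem:CP1-anticanonical-cohomology}).
\item For atomless $\nu_X$, the exact identity $\mathfrak c_{l-1}^X(s,s)-\mathfrak a_l^X(s,s)=2l\int_X|s|_{h_{l,X}}^2\,d\nu_X$ on $V_l^X$ (Proposition~\ref{prop:atomless-BK-defect-inequality}), together with global generation, gives $\nu_X=0$.
\item Gauss--Bonnet and the equality case of volume comparison then give roundness.
\end{itemize}

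It is the pointwise nonvanishing of holomorphic top-rung sections that detects a defect measure concentrated on a null set. Neither a single-eigenvalue Obata reduction nor smooth unique continuation can do this, and on this route your spindle computation becomes unnecessary.
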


\begin{remark}\label{rem:counting-rigidity}
{Well-known example \ref{ex:football-single-eigenvalue-equality} shows that a \textbf{nonround} Alexandrov two-sphere $X$ with curvature \(\geq1\) may still satisfy \(\lambda_1(X)=\lambda_1(\mathbb{S}^2)= 2\). Thus the rigidity part in Theorem \ref{thm:main-eigenvalue} does not hold for Alexandrov spheres; the correct hypothesis is equality of the whole finite counting function at a round cluster threshold.
}
\end{remark}

As a consequence of Theorem \ref{thm:main-counting-Aleandorff}, we obtain the following sharp dimension estimate for polynomial growth harmonic functions on three-dimensional manifolds with nonnegative sectional curvature and positive asymptotic volume ratio.
\begin{theorem}\label{thm:three-dim-positive-avr-bound-1}
Let \((M^3,g)\) be a complete Riemannian manifold with \(K_g\geq0\) and \(\operatorname{AVR}(M)>0\).  Then
\begin{align}
 \dim\mathscr H_d(M)\leq\dim\mathscr H_d(\mathbb R^3)=(d+1)^2,
 \qquad d\in\mathbb Z_{\geq0}. \label{eq:positive-avr-sharp-bound-1}
\end{align}
Moreover, if equality holds in \eqref{eq:positive-avr-sharp-bound-1} for some $d\in \mathbb{Z}^+$, then \((M^3,g)\) is isometric to $\mathbb R^3$.
\end{theorem}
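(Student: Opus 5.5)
We reduce the statement to Theorem \ref{thm:main-counting-Aleandorff} by passing to the tangent cone at infinity and using a monotonicity/frequency argument. Since $K_g\geq0$ and $\operatorname{AVR}(M)>0$, every tangent cone at infinity of $(M^3,g)$ is a metric cone $C(X)$. Here $X$ is an Alexandrov space of curvature $\geq1$: it is the Gromov--Hausdorff limit of rescaled nonnegatively curved spaces, and by Perelman's stability and the noncollapsing it is a topological two-sphere. This holds because the limit is a $3$-dimensional Alexandrov cone over a $2$-dimensional space of curv $\geq1$ that is homeomorphic to $S^2$; the possibility of $\mathbb{RP}^2$ is excluded, since $M$ is homeomorphic to $\mathbb R^3$ at infinity by the soul theorem and AVR$>0$ forces the soul to be a point.

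The first step is to bound $\dim\mathscr H_d(M)$ by the number of cone eigenfunctions. Following Colding--Minicozzi and the standard blow-down argument, we fix $d$ and a finite-dimensional subspace $V\subset\mathscr H_d(M)$. We choose an $L^2(\partial B_r)$-orthonormal basis adapted by a Gram--Schmidt/frequency selection along a sequence $r_i\to\infty$, and rescale by $r_i$. Using Cheng--Yau gradient estimates and the polynomial growth, the rescaled functions converge, after subsequence, to harmonic functions on $C(X)$ of growth at most $d$. These functions remain linearly independent, which is ensured by the choice of basis through a three-circle/doubling argument giving nondegeneracy of the $L^2$ Gram matrix in the limit. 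Harmonic functions of growth $\leq d$ on $C(X)$ are sums $r^{\alpha}\varphi$ with $\alpha(\alpha+1)=\lambda_j(X)$ and $\alpha\leq d$, that is, $\lambda_j(X)\leq d(d+1)$. Hence
\begin{align}
\dim\mathscr H_d(M)\leq \mathcal N_{-\Delta_X}\big(d(d+1)\big)\leq \mathcal N_{-\Delta_{\mathbb S^2}}\big(d(d+1)\big)=(d+1)^2,\nonumber
\end{align}
where the second inequality is Theorem \ref{thm:main-counting-Aleandorff}. The growth condition on the cone is justified via the separation of variables in (\ref{eq-intro-indicial}), valid on Alexandrov cones through the spectral theory of $-\Delta_X$.

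For rigidity, suppose equality holds for some $d\geq1$. Then equality holds in the counting bound for every tangent cone at infinity $C(X)$. By the rigidity part of Theorem \ref{thm:main-counting-Aleandorff}, $X$ is the round $\mathbb S^2$, so each tangent cone at infinity is $\mathbb R^3$. This gives $\operatorname{AVR}(M)=1$, and Bishop--Gromov rigidity yields that $M$ is isometric to $\mathbb R^3$.

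The main obstacle is the first step, namely the injectivity of the limiting map from $\mathscr H_d(M)$ into cone harmonic functions. One must prevent distinct elements from collapsing, and ensure that the limiting growth rate does not exceed $d$ while the limit stays nonzero. I would handle this with the Colding--Minicozzi frequency/doubling framework on cones. This argument uses monotonicity of $r^{-2d-2}\int_{\partial B_r}u^2$ up to $o(1)$ errors, which is available because the space is asymptotically conical. It also uses a careful choice of orthonormal bases at each scale, so that the limiting Gram matrix is the identity. Non-uniqueness of tangent cones causes no difficulty, since any single cone suffices.
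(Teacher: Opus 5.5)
\textbf{The gap is in the middle inequality.} Your outer structure matches the paper: the tangent cone $C(X)$ at infinity with $X$ an Alexandrov sphere of curvature $\geq1$, then $h_d(M)\le\mathcal N_{-\Delta_X}(d(d+1))$, then Theorem~\ref{thm:main-counting-Aleandorff}, then Bishop--Gromov. Your topological identification of $X$ and your rigidity step are acceptable. (With the normalization \eqref{eq:avr-def} one gets $\operatorname{AVR}(M)=\omega_3$, not $1$.)

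The problem is $\dim\mathscr H_d(M)\le\mathcal N_{-\Delta_X}(d(d+1))$ itself. This is the whole analytic content of the reduction, and your sketch does not establish it. A Colding--Minicozzi choice of scales through the Gram determinant controls the growth of a scale-adapted orthonormal basis only on average. It bounds the product of the Gram eigenvalues at larger radii, which gives Weyl-order bounds. It does not show that each limit function lies in the span of homogeneous pieces $r^\alpha\varphi$ with $\alpha\le d$.

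For the sharp threshold you need something stronger. Every $u\in V$, simultaneously and at every scale $\geq r_0$, must satisfy a forward doubling bound with exponent $d+\epsilon$. That bound comes from almost-convexity (a three-circle property) of $\log\int_{\partial B_r}u^2$ in $\log r$. This property is proved by comparing $M$ at every large scale with one fixed cone $C(X)$. One tests at an exponent $d+\epsilon$ that avoids the homogeneity degrees $\alpha$ with $\alpha(\alpha+1)\in\operatorname{Spec}(-\Delta_X)$. Knowing only that every tangent cone is some metric cone is not enough. If different large scales were close to cones $C(X')$ whose cross-sections have eigenvalues crossing $d(d+1)$, the monotonicity across scales would break. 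So your remark that non-uniqueness ``causes no difficulty, since any single cone suffices'' is exactly the unjustified step.

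\textbf{How the paper closes it.} It uses two inputs that your proposal lacks. First, Shioya's structure theorem for nonnegatively curved Alexandrov spaces shows that the blow-downs $(M,r_i^{-2}g,p)$ converge without passing to subsequences. Hence the tangent cone at infinity $C(M(\infty))$ is unique. Second, Huang's finite-dimensional comparison theorem (Theorem~\ref{thm:huang-finite-dimensional-comparison}) has exactly the hypotheses $\Ric\geq0$, maximal volume growth and a unique tangent cone at infinity. It yields $h_d(M)\le\mathcal N_{-\Delta_X}(d(d+1))$ directly. If you put uniqueness of the tangent cone and Huang's theorem (or a full three-circle argument relative to the unique $C(X)$) in place of your sketched blow-down, the rest of your argument goes through.
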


\begin{remark}\label{rem:lower-bound-of-dim-hf}
{From \cite{Xu-three-circle}, we know $\dim\mathscr H_d(M^3)\geq 2$, i.e. there exists a non-constant harmonic function of polynomial growth on such $(M^3, g)$. 
}
\end{remark}

The paper is written deliberately as a progression of languages.  The underlying algebra is always the same: one constructs a first-order operator, compares the two partner operators
\begin{align}
T^*T+c \quad\text{and}\quad TT^*+c, \nonumber
\end{align}
uses curvature to obtain a one-sided inequality, and then converts the kernel dimension of $T$ into a finite counting recursion by min--max.  What changes from one stage to the next is not the formal spectral mechanism, but the language needed to make the objects canonical. This general mechanism is discussed in Section \ref{sec:abstract-ladder-counting}.

\begin{enumerate}
\item[Part $1$]. It contains section $3--6$, and it deals with rotationally symmetric metrics on $S^{n-1}$.

For a rotationally symmetric metric, spherical-harmonic decomposition reduces the Laplacian to a family of one-dimensional radial operators.  A first-order factorization, together with the curvature inequalities, the identity
\[
\mathrm{dim}(\ker \mathbf B_m)=1,
\qquad
\ker \mathbf B_m^*=\{0\},
\]
and min--max yields a counting recursion in the angular degree.  Summing over the spherical-harmonic multiplicities gives the sharp comparison with the round sphere, as well as the corresponding rigidity statement.  When \(n=3\), this is precisely the usual Fourier-mode decomposition on circles.

Section~\ref{sec:rotational} sets up the rotationally symmetric model and fixes the real Hilbert-space conventions.  Sections~\ref{sec:rotational-ladder}--\ref{sec:rotational-comparison} prove the rotationally symmetric model.  Section~\ref{sec:rotational-ladder} constructs the separated radial operators and the radial ladder identity.  Section~\ref{sec:rotational-vanishing} proves the one-dimensional kernel identity \(\dim_{\mathbb R}\ker \mathbf{B}_m=1\) and the adjoint-kernel vanishing \(\ker \mathbf{B}_m^*=\{0\}\).  Section~\ref{sec:rotational-comparison} applies min--max, sums the angular multiplicities, and proves the rotational comparison and rigidity theorem.

\item[Part $2$]. It contains
Sections~\ref{sec:riemannian-ladder}--\ref{sec:complex-geometric-proof}
and treats arbitrary smooth metrics on \(S^2\).

Without rotational symmetry, the angular modes are replaced by the line
bundles
\[
E_m=(T^{1,0}S^2)^{\otimes m}.
\]
The associated first-order operators
\[
B_m:L^2(E_m)\longrightarrow L^2(E_{m+1})
\]
satisfy a global curvature-dependent ladder identity.  The kernel formulas
\[
\dim_{\mathbb C}\ker B_m=2m+1,
\qquad
\ker B_m^*=\{0\},
\]
then give, through spectral pairing and min--max, a recursion for
\[
A_m=B_m^*B_m+m(m+1)I.
\]
Iteration yields the sharp finite counting inequalities, and the equality
case is handled by a strict min--max argument.

Section~\ref{sec:riemannian-ladder} constructs the global
two-dimensional ladder on the bundles \(E_m\), defines \(B_m\) and its
formal adjoint, identifies \(B_0^*B_0\) with \(-\Delta_g\), and proves the
Riemannian ladder identity.  Section~\ref{sec:Vanishing-theorem} proves
\[
\dim_{\mathbb C}\ker B_m=2m+1,
\qquad
\ker B_m^*=\{0\}.
\]
Section~\ref{sec:comparison-rigidity} applies the abstract counting
mechanism to the shifted operators \(A_m\), proves the ordered eigenvalue
comparison, and obtains smooth rigidity from the strict form of the ladder
inequality.

Section~\ref{sec:complex-geometric-proof} reformulates the same construction
on the Riemann surface \((S^2,J)\).  The bundles are identified with powers
of the anticanonical bundle, and the operators \(B_m\) become Dolbeault
operators through
\[
\Lambda^{0,1}T^*S^2\otimes K^{-m}\simeq K^{-m-1}.
\]
Riemann--Roch and Serre duality give the kernel dimensions, while the
Bochner--Kodaira identity supplies the curvature-dependent ladder
inequality.  This provides a shorter structural explanation of the smooth
counting argument and, more importantly, introduces the language that
continues to make sense for singular conformal metrics.

\item[Part $3$]. It contains
Sections~\ref{sec:alexandrov-preliminaries}--
\ref{sec:alexandrov-comparison-rigidity}
and extends the comparison and rigidity argument to Alexandrov
\(2\)-spheres with curvature at least \(1\).

After fixing a conformal identification of the Alexandrov sphere with
\(\mathbb{CP}^1\), the smooth Gaussian curvature is replaced by the
curvature measure \(\omega_X\), and the lower curvature bound is encoded by
the nonnegative defect measure
\[
\nu_X:=\omega_X-dA_X\geq0.
\]
The singular argument separates the comparison statement from the equality
case.  Smooth approximation and Mosco convergence preserve the one-sided
ladder inequality, whereas rigidity requires recovering more precise
information about the defect measure.  

Section~\ref{sec:alexandrov-preliminaries} fixes the conformal
parametrization
\[
F:\mathbb{CP}^1\longrightarrow X,
\]
represents the metric by a singular conformal factor
\[
g_X=e^{2u}g_0,
\]
and introduces the curvature measure, the defect measure, the singular
Hermitian bundle metrics, the Hilbert spaces \(H_m^X\), and the maximal
operators \(B_m^X\).  It also constructs the heat-regularized metrics
\(g_\tau\), proves that \(K_{g_\tau}\geq1\), and records the metric and
spectral convergence needed to pass from the smooth theorem to \(X\).

Section~\ref{sec:alexandrov-weak-bk} places the smooth and singular
operators in fixed Hilbert spaces and proves Mosco convergence of their
graphs, adjoint graphs, and shifted quadratic forms.  Passing the smooth
Bochner--Kodaira identity to the limit gives the weak singular form order
\[
\operatorname{Dom}(\mathfrak c_m^X)
 \subset \operatorname{Dom}(\mathfrak a_{m+1}^X),
\qquad
\mathfrak c_m^X\geq\mathfrak a_{m+1}^X.
\]
The same section proves the holomorphic kernel bound
\[
\ker B_m^X\subset H^0(\mathbb{CP}^1,K_{\mathbb{CP}^1}^{-m}),
\qquad
\dim_{\mathbb C}\ker B_m^X\leq2m+1,
\]
and establishes the compact-resolvent properties required by the abstract
counting recursion.

Section~\ref{sec:alexandrov-atomic-obstruction} analyzes the atomic case.
Equality in a round counting threshold first saturates every rung of the
singular ladder and produces a top-rung space of dimension \(2l+1\).  If
the curvature-defect measure had an atom at \(p\), the maximal adjoint-domain
condition would force every section in that top-rung space to vanish at
\(p\).  The resulting loss of one holomorphic degree of freedom contradicts
the saturated dimension.  Consequently, equality at a round counting
threshold forces \(\nu_X\) to be atomless.

Section~\ref{sec:alexandrov-atomless-defect} treats the complementary case.
When the curvature measure is atomless, the weak form inequality can be
sharpened on the saturated top rung to the exact defect identity
\[
\mathfrak c_{l-1}^X(s,s)-\mathfrak a_l^X(s,s)
=
2l\int_X |s|_{h_{l,X}}^2\,d\nu_X,
\qquad s\in V_l^X.
\]
The proof recovers the curvature term directly from the maximal adjoint
domain, using the Poincar\'e--Lelong formula, truncation, and the vanishing of
the boundary fluxes around the zeros of the holomorphic section.

Section~\ref{sec:alexandrov-comparison-rigidity} assembles these ingredients.
Smooth approximation gives the ordered eigenvalue comparison, while the
singular ladder gives the finite counting inequalities at the round
thresholds.  In the equality case,
Section~\ref{sec:alexandrov-atomic-obstruction} first eliminates atoms.
The exact identity from
Section~\ref{sec:alexandrov-atomless-defect}, together with saturation of
the holomorphic top rung, then forces
\[
\nu_X=0.
\]
Gauss--Bonnet and the equality case of Alexandrov volume comparison finally
show that \(X\) is isometric to the unit round sphere.

\item[Part $4$]. It contains Sections~\ref{sec:S3-counterexample} and
\ref{sec:three-dimensional-yau} and describes both the higher-dimensional
limitation of the spectral comparison and its three-dimensional
harmonic-growth application.

Section~\ref{sec:S3-counterexample} shows that the two-dimensional ordered
eigenvalue comparison does not extend directly to higher-dimensional
cross-sections.  It constructs an explicit smooth conformal perturbation of
the round metric on \(S^3\) satisfying
\[
\operatorname{Ric}_g\geq2g
\]
for which a sufficiently high eigenvalue lies strictly below the
corresponding round eigenvalue.  Thus the success of the line-bundle ladder
on \(S^2\) reflects a genuinely two-dimensional structure rather than a
general consequence of a Ricci lower bound.

Section~\ref{sec:three-dimensional-yau} applies the Alexandrov counting
theorem to complete three-manifolds with nonnegative sectional curvature
and positive asymptotic volume ratio.  It first obtains the unique tangent
cone at infinity
\[
C(X),
\]
shows that the cross-section \(X\) is an Alexandrov two-sphere with curvature at
least \(1\), and applies the finite-dimensional comparison theorem for
polynomial-growth harmonic functions together with the counting estimate
of Section~\ref{sec:alexandrov-comparison-rigidity}.  This gives
\[
\dim\mathscr H_d(M)\leq
\mathcal N_{-\Delta_X}\bigl(d(d+1)\bigr)
\leq(d+1)^2
=
\dim\mathscr H_d(\mathbb R^3).
\]
If equality holds at a positive integer degree, Alexandrov counting
rigidity makes \(X\) the unit round sphere.  The tangent cone is therefore
\(\mathbb R^3\), and the equality case of Bishop--Gromov comparison forces
\(M\) itself to be isometric to \(\mathbb R^3\).  The section also explains,
through the spherical-football example, why equality of a single
Alexandrov eigenvalue is insufficient for rigidity, and concludes with a
discussion of the collapsed regime \(\operatorname{AVR}(M)=0\).

\end{enumerate}

Thus the paper proceeds from the abstract ladder-counting mechanism to a
rotational model, then to an invariant smooth construction and its
complex-geometric reformulation, and finally to the singular Alexandrov
setting required by the tangent-cone application.  The last part records
both the sharp three-dimensional consequence and the obstruction to a
straightforward higher-dimensional extension.

\subsection*{Proof flowchart for the main conclusions}
Each diagram below has a unique terminal node.  The first diagram ends at
Theorem~\ref{thm:main-eigenvalue}, and the second ends at
Theorem~\ref{thm:three-dim-positive-avr-bound-1}.  Every solid arrow records a
direct logical dependence.  The rotational model and the independent
$S^3$ counterexample are not included, since neither is an input
to these two final conclusions.

\begin{figure}[H]
\centering
\resizebox{0.98\textwidth}{!}{%
\begin{tikzpicture}[
  proofnode/.style={draw, rounded corners, fill=white, align=center,
    text width=4.75cm, minimum height=0.92cm, inner sep=4pt,
    font=\scriptsize},
  proofwide/.style={proofnode, text width=5.15cm},
  proofmilestone/.style={proofnode, thick},
  proofresult/.style={proofnode, very thick, text width=5.45cm},
  proofarrow/.style={-{Latex[length=2mm]}, thick},
  x=1cm,y=0.82cm
]
\node[proofwide] (A) at (-3.70,0)
  {Section~\ref{sec:abstract-ladder-counting}:\\
   Lemma~\ref{lem:abstract-partner-spectra};
   Propositions~\ref{prop:abstract-ladder-recursion},
   \ref{prop:abstract-ladder-iteration};
   Corollary~\ref{cor:abstract-two-sphere-ladder}\\
   partner spectra, index shifts, and finite counting recursion};

\node[proofwide] (S) at (3.70,0)
  {Sections~\ref{sec:riemannian-ladder}--\ref{sec:Vanishing-theorem}:\\
   Lemma~\ref{lem:cr-ladder};
   Lemmas~\ref{lem:conformal-invariance-Bm},
   \ref{lem:ker-Bm-dimension-new},
   \ref{lem:adjoint-kernel-new}\\
   global bundle ladder, exact curvature defect, and kernel dimensions};

\node[proofnode] (Rec) at (-3.00,-3.55)
  {Proposition~\ref{prop:crucial induction ineq}\\
   spectral pairing and the smooth counting recursion};

\node[proofnode] (Strict) at (3.00,-3.55)
  {Lemma~\ref{lem:smooth-strict-form-comparison}\\
   if the metric is nonround, every ladder comparison is strict};

\node[proofmilestone] (Comp) at (-3.00,-7.05)
  {Theorem~\ref{thm:cluster-bottom-comparison}\\
   finite counting inequalities and ordered eigenvalue comparison};

\node[proofmilestone] (Rig) at (3.00,-7.05)
  {Theorem~\ref{thm:rigidity-main}\\
   equality at one positive ordered eigenvalue forces roundness};

\node[proofresult] (Main) at (0,-10.55)
  {Theorem~\ref{thm:main-eigenvalue}\\
   smooth ordered eigenvalue comparison and rigidity};

\draw[proofarrow] (A.south) to[out=-90,in=125] (Rec.north west);
\draw[proofarrow] (S.south west) to[out=-115,in=55] (Rec.north east);
\draw[proofarrow] (S.south) to[out=-90,in=90] (Strict.north);
\draw[proofarrow] (Rec.south) to[out=-90,in=90] (Comp.north);
\draw[proofarrow] (Rec.south east) to[out=-35,in=155] (Rig.north west);
\draw[proofarrow] (Strict.south) to[out=-90,in=90] (Rig.north);
\draw[proofarrow] (Comp.south east) to[out=-55,in=145] (Main.north west);
\draw[proofarrow] (Rig.south west) to[out=-125,in=35] (Main.north east);
\end{tikzpicture}%
}
\caption{Proof dependencies for Theorem~\ref{thm:main-eigenvalue}.  It is the
only terminal node in the diagram.}
\end{figure}

\begin{figure}[H]
\centering
\resizebox{0.98\textwidth}{!}{%
\begin{tikzpicture}[
  proofnode/.style={draw, rounded corners, fill=white, align=center,
    text width=4.70cm, minimum height=0.92cm, inner sep=4pt,
    font=\scriptsize},
  proofwide/.style={proofnode, text width=4.95cm},
  proofmilestone/.style={proofnode, thick},
  proofresult/.style={proofnode, very thick, text width=5.45cm},
  proofexternal/.style={proofnode, dashed},
  proofarrow/.style={-{Latex[length=2mm]}, thick},
  x=1cm,y=0.67cm
]
\node[proofwide] (Abs) at (-5.20,0)
  {Corollary~\ref{cor:abstract-two-sphere-ladder};
   Proposition~\ref{prop:abstract-ladder-saturation};
   Lemma~\ref{lem:CP1-anticanonical-cohomology}\\
   abstract counting, equality saturation, and the
   \(2m+1\)-dimensional holomorphic model};

\node[proofwide] (Sing) at (5.20,0)
  {Lemma~\ref{lem:complex-curvature-ladder};
   Sections~\ref{sec:alexandrov-preliminaries}--\ref{sec:alexandrov-weak-bk}\\
   singular conformal bundles, curvature-preserving regularization,
   and graph/form convergence};

\node[proofnode] (Count) at (-5.20,-3.55)
  {Corollary~\ref{cor:alex-counting-comparison-by-approx}\\
   strict and closed finite-counting inequalities on the Alexandrov sphere};

\node[proofnode] (Weak) at (5.20,-3.55)
  {Proposition~\ref{prop:weak-Bochner-Kodaira};
   Lemmas~\ref{lem:atomic-kernel-holomorphic},
   \ref{lem:alex-ladder-compactness}\\
   weak singular form order, kernel bound, and compact resolvent};

\node[proofnode] (Sat) at (-5.20,-7.10)
  {Lemma~\ref{lem:atomic-counting-saturation}\\
   equality fills every rung and produces the full top-rung space
   \(V_l^X\)};

\node[proofnode] (Atom) at (-5.20,-10.65)
  {Proposition~\ref{prop:atomic-alexandrov-counting-obstruction}\\
   a saturated round threshold excludes all curvature atoms};

\node[proofnode] (Defect) at (-5.20,-14.20)
  {Proposition~\ref{prop:atomless-BK-defect-inequality} and
   Lemma~\ref{lem:CP1-anticanonical-cohomology}\\
   the exact defect identity and pointwise nonvanishing force
   \(\nu_X=0\)};

\node[proofmilestone] (AlexRig) at (-5.20,-17.75)
  {Lemma~\ref{lem:alex-zero-defect-roundness};
   Theorem~\ref{claim:alexandrov-link-rigidity-needed}\\
   equality at one round counting threshold forces the unit round sphere};

\node[proofmilestone] (AlexMain) at (-5.20,-21.30)
  {Theorem~\ref{thm:main-counting-Aleandorff}\\
   Alexandrov finite counting comparison and rigidity};

\node[proofexternal] (Cone) at (5.20,-14.20)
  {Section~\ref{sec:three-dimensional-yau}, Steps (1)--(2):\\
   the unique tangent cone at infinity is \(C(X)\), and its link
   \(X\) is an Alexandrov two-sphere with curvature at least \(1\)};

\node[proofexternal] (Huang) at (5.20,-17.75)
  {Theorem~\ref{thm:huang-finite-dimensional-comparison}\\
   \(h_d(M)\leq
   \mathcal N_{-\Delta_X}\bigl(d(d+1)\bigr)\)};

\node[proofnode] (Apply) at (0,-24.85)
  {Section~\ref{sec:three-dimensional-yau}, Steps (3)--(4):\\
   apply Theorem~\ref{thm:main-counting-Aleandorff} to the link;
   equality makes \(X\) round, and Bishop--Gromov makes
   \(M\cong\mathbb R^3\)};

\node[proofresult] (Yau) at (0,-28.40)
  {Theorem~\ref{thm:three-dim-positive-avr-bound-1}\\
   sharp three-dimensional harmonic-growth bound and rigidity};

\draw[proofarrow] (Abs.south) to[out=-90,in=90] (Count.north);
\draw[proofarrow] (Sing.south) to[out=-90,in=90] (Weak.north);
\draw[proofarrow] (Weak.west) to[out=180,in=0] (Count.east);
\draw[proofarrow] (Count.south) to[out=-90,in=90] (Sat.north);
\draw[proofarrow] (Sat.south) to[out=-90,in=90] (Atom.north);
\draw[proofarrow] (Atom.south) to[out=-90,in=90] (Defect.north);
\draw[proofarrow] (Defect.south) to[out=-90,in=90] (AlexRig.north);
\draw[proofarrow] (AlexRig.south) to[out=-90,in=90] (AlexMain.north);

\draw[proofarrow] (Cone.south) to[out=-90,in=90] (Huang.north);

\draw[proofarrow] (AlexMain.south east) to[out=-55,in=145] (Apply.north west);
\draw[proofarrow] (Huang.south west) to[out=-125,in=35] (Apply.north east);
\draw[proofarrow] (Apply.south) to[out=-90,in=90] (Yau.north);
\end{tikzpicture}%
}
\caption{Proof dependencies for Theorem~\ref{thm:three-dim-positive-avr-bound-1}.
Theorem~\ref{thm:main-counting-Aleandorff} is an intermediate input, and the
harmonic-growth theorem is the only terminal node.}
\end{figure}

\section{The abstract ladder-counting mechanism}\label{sec:abstract-ladder-counting}

The purpose of this section is to isolate the functional-analytic mechanism that is common to all the geometric realizations used below.  The geometry determines the Hilbert spaces, the closed first-order operators, their domains, their kernel dimensions, and the form inequality between consecutive levels.  Once those facts have been verified, the remaining spectral argument is independent of whether the operators arise from a rotational decomposition, a smooth Riemannian bundle, a Dolbeault complex, or a singular Alexandrov metric.

The common mechanism has six steps.
\begin{enumerate}
\item The positive spectra of the partner operators $T^*T$ and $TT^*$ agree, including multiplicity.
\item The space $\ker T$ gives an unpaired block at the bottom of $T^*T$ and therefore produces an index shift.
\item The condition $\ker T^*=\{0\}$ removes the corresponding unpaired block on the $TT^*$ side.
\item A quadratic-form inequality compares the $TT^*$ partner at one level with the $T^*T$ operator at the next level.
\item The min--max principle converts this comparison into a recursion for strict and closed counting functions.
\item The recursion terminates when the shift at the final level reaches the prescribed round threshold.
\end{enumerate}
This is precisely the spectral mechanism behind the different geometric stages of the proof.

All Hilbert spaces in this section may be real or complex.  We write $\mathbb F\in\{\mathbb R,\mathbb C\}$ for the ground field, and all dimensions and spectral multiplicities are taken over $\mathbb F$.

\begin{notation}\label{not:abstract-ladder-package}
Let $\{H_m\}_{m\geq0}$ be Hilbert spaces, and let
\begin{align}
B_m:\operatorname{Dom}(B_m)\subset H_m\longrightarrow H_{m+1}, \qquad m\geq0, \nonumber
\end{align}
be closed densely defined operators.  Let
\begin{align}
0\leq\alpha_0<\alpha_1<\alpha_2<\cdots . \nonumber
\end{align}
Define the closed quadratic forms
\begin{align}
\mathfrak a_m(u,v) &:=\langle B_mu,B_mv\rangle_{H_{m+1}} +\alpha_m\langle u,v\rangle_{H_m}, &\operatorname{Dom}(\mathfrak a_m)&=\operatorname{Dom}(B_m), \nonumber\\
\mathfrak c_m(s,t) &:=\langle B_m^*s,B_m^*t\rangle_{H_m} +\alpha_m\langle s,t\rangle_{H_{m+1}}, &\operatorname{Dom}(\mathfrak c_m)&=\operatorname{Dom}(B_m^*). \nonumber
\end{align}
Let $A_m$ and $C_m$ be the associated self-adjoint operators.  Equivalently,
\begin{align}
A_m=B_m^*B_m+\alpha_m I \quad\text{in }H_m, \qquad C_m=B_mB_m^*+\alpha_m I \quad\text{in }H_{m+1}. \nonumber
\end{align}

We assume that, for every $m\geq0$, both $A_m$ and $C_m$
have compact resolvent.  Since $A_m,C_m\geq0$, this is equivalent
to requiring the bounded operators
\begin{align}
(A_m+I)^{-1}:H_m\longrightarrow H_m, \qquad (C_m+I)^{-1}:H_{m+1}\longrightarrow H_{m+1} \nonumber
\end{align}
to be compact.

Set
\begin{align}
r_m:=\dim_{\mathbb F}\ker B_m, \qquad s_m:=\dim_{\mathbb F}\ker B_m^*. \nonumber
\end{align}
These dimensions are finite because the eigenspaces of $A_m$ and $C_m$ at $\alpha_m$ are finite dimensional.
\end{notation}

\begin{lemma}\label{lem:abstract-partner-spectra}
In the setting of Notation~\ref{not:abstract-ladder-package}, the positive spectra of $B_m^*B_m$ and $B_mB_m^*$ agree, including multiplicity.  More precisely, for every $\lambda>0$, the map $B_m$ gives an isomorphism
\begin{align}
B_m: \ker(B_m^*B_m-\lambda I) \longrightarrow \ker(B_mB_m^*-\lambda I), \nonumber
\end{align}
whose inverse is $\lambda^{-1}B_m^*$.  Consequently,
\begin{align}
&\lambda_{j+r_m}(A_m)=\lambda_{j+s_m}(C_m), \qquad j=0,1,2,\ldots , \label{eq:abstract-partner-index-shift}\\
&N_{A_m}^{<}(\Lambda)-N_{C_m}^{<}(\Lambda) =r_m-s_m, \quad \quad \forall \Lambda> \alpha_m, \nonumber\\
&\mathcal N_{A_m}(\Lambda)-\mathcal N_{C_m}(\Lambda) =r_m-s_m, \quad \quad \forall \Lambda\geq \alpha_m,. \label{eq:abstract-partner-counting-defect}
\end{align}
\end{lemma}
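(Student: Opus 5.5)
The plan is to prove the eigenspace isomorphism first, then read off the index shift and the two counting identities from the spectral decompositions of $A_m$ and $C_m$.

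\textbf{Step 1: $B_m$ maps $\lambda$-eigenvectors of $B_m^*B_m$ to $\lambda$-eigenvectors of $B_mB_m^*$.} Fix $\lambda>0$. Since $B_m$ is closed and densely defined, von Neumann's theorem makes $B_m^*B_m$ and $B_mB_m^*$ self-adjoint, with domains
\begin{align}
\operatorname{Dom}(B_m^*B_m)&=\{u\in\operatorname{Dom}(B_m): B_mu\in\operatorname{Dom}(B_m^*)\},\nonumber\\
\operatorname{Dom}(B_mB_m^*)&=\{s\in\operatorname{Dom}(B_m^*): B_m^*s\in\operatorname{Dom}(B_m)\}.\nonumber
\end{align}
These are exactly the operators associated with the closed forms $\mathfrak a_m-\alpha_m$ and $\mathfrak c_m-\alpha_m$. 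Take $u$ with $B_m^*B_mu=\lambda u$ and set $s=B_mu$. Then $s\in\operatorname{Dom}(B_m^*)$ and $B_m^*s=\lambda u\in\operatorname{Dom}(B_m)$, so $s\in\operatorname{Dom}(B_mB_m^*)$ and $B_mB_m^*s=\lambda B_mu=\lambda s$. Moreover $\|s\|^2=\langle B_m^*B_mu,u\rangle=\lambda\|u\|^2$, so $B_m$ is injective on this eigenspace.

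\textbf{Step 2: the inverse is $\lambda^{-1}B_m^*$.} Symmetrically, if $B_mB_m^*s=\lambda s$, then $\lambda^{-1}B_m^*s$ lies in $\ker(B_m^*B_m-\lambda I)$. The two compositions are $B_m\circ\lambda^{-1}B_m^*=\mathrm{Id}$ and $\lambda^{-1}B_m^*\circ B_m=\mathrm{Id}$ on the respective eigenspaces, so $B_m$ is an isomorphism with inverse $\lambda^{-1}B_m^*$. These eigenspaces are finite dimensional because $A_m$ and $C_m$ have compact resolvent.

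\textbf{Step 3: index shift.} Both $A_m$ and $C_m$ are $\ge\alpha_m$, and both have compact resolvent, so each has purely discrete spectrum. The eigenvalue $\alpha_m$ has eigenspace $\ker B_m^*B_m=\ker B_m$ for $A_m$ (using $\langle B_m^*B_mu,u\rangle=\|B_mu\|^2$), and $\ker B_mB_m^*=\ker B_m^*$ for $C_m$. Hence $A_m$ has multiplicity $r_m$ at $\alpha_m$ and $C_m$ has multiplicity $s_m$ there. By Steps 1 and 2, every eigenvalue $\mu>\alpha_m$ has the same multiplicity for $A_m$ and $C_m$ (take $\lambda=\mu-\alpha_m$). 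So the ordered list for $A_m$ is $r_m$ copies of $\alpha_m$ followed by a common sequence, and the list for $C_m$ is $s_m$ copies of $\alpha_m$ followed by that same sequence. Deleting the bottom block gives (\ref{eq:abstract-partner-index-shift}).

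\textbf{Step 4: counting identities.} For $\Lambda>\alpha_m$, both $N^{<}$ counts include the full block at $\alpha_m$ plus the common positive-shift eigenvalues below $\Lambda$, so they differ by $r_m-s_m$. The same holds for $\mathcal N$ whenever $\Lambda\ge\alpha_m$, since the $\alpha_m$ block is then fully counted.

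The only real subtlety is domain bookkeeping in Step 1, namely that $B_mu$ lands in $\operatorname{Dom}(B_mB_m^*)$. This follows directly from the definition of the operator products. Everything else is routine, and in particular no min--max argument is needed.
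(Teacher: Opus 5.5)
Your proposal is correct and follows the paper's proof essentially step for step. Both use the same domain bookkeeping to show that $B_m$ and $\lambda^{-1}B_m^*$ are mutually inverse between the $\lambda$-eigenspaces. Both then remove the bottom eigenspaces $\ker B_m$ and $\ker B_m^*$ at $\alpha_m$ to read off the index shift and the two counting identities.
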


\begin{proof}
Suppose that $u\in\operatorname{Dom}(B_m^*B_m)$ and
\begin{align}
B_m^*B_mu=\lambda u, \qquad \lambda>0. \nonumber
\end{align}
Then $B_mu\neq0$.  Moreover, $B_mu\in\operatorname{Dom}(B_m^*)$, and
$B_m^*(B_mu)=\lambda u\in\operatorname{Dom}(B_m)$.  Hence
\begin{align}
B_mB_m^*(B_mu)=\lambda B_mu. \nonumber
\end{align}
Conversely, if $v\in\operatorname{Dom}(B_mB_m^*)$ and
\begin{align}
B_mB_m^*v=\lambda v, \qquad \lambda>0, \nonumber
\end{align}
then $B_m^*v\neq0$ and
\begin{align}
B_m^*B_m(B_m^*v)=\lambda B_m^*v. \nonumber
\end{align}
The two maps are inverse after multiplication by $\lambda^{-1}$.  Thus the positive eigenspaces have the same dimensions.

The eigenspace of $A_m$ at $\alpha_m$ is $\ker B_m$, while the eigenspace of $C_m$ at $\alpha_m$ is $\ker B_m^*$.  Removing these two bottom eigenspaces leaves the same ordered sequence of shifted positive eigenvalues.  This proves \eqref{eq:abstract-partner-index-shift}.  The two counting identities in \eqref{eq:abstract-partner-counting-defect} follow by counting the bottom eigenspaces and the paired positive spectrum separately.
\end{proof}


The quadratic-form order \(C_m\geq A_{m+1}\) will always mean both the
form-domain inclusion and the inequality displayed below.  On a
singular space, the formal commutator identity on smooth sections of the
regular set does not by itself determine the closed realizations of the two
operators.  Thus both the domain inclusion and the form inequality must be
established at the level of the closed quadratic forms.

\begin{lemma}\label{lem:abstract-adjoint-kernel-gap}
Fix $m\geq0$.  Assume that the consecutive forms satisfy
\begin{align}
\operatorname{Dom}(\mathfrak c_m) &\subset\operatorname{Dom}(\mathfrak a_{m+1}), \nonumber\\
\mathfrak c_m(v,v) &\geq\mathfrak a_{m+1}(v,v), \qquad v\in\operatorname{Dom}(\mathfrak c_m). \label{eq:abstract-ladder-form-order}
\end{align}
Then $\ker B_m^*=\{0\}$.
\end{lemma}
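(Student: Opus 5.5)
The proof is a one-line evaluation of the form inequality \eqref{eq:abstract-ladder-form-order} on an element of the adjoint kernel, using that the shifts $\alpha_m$ are strictly increasing.

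Let $v\in\ker B_m^*$. Then $v\in\operatorname{Dom}(B_m^*)=\operatorname{Dom}(\mathfrak c_m)$, and by the domain inclusion in the hypothesis also $v\in\operatorname{Dom}(\mathfrak a_{m+1})=\operatorname{Dom}(B_{m+1})$. Since $B_m^*v=0$, the left-hand side of \eqref{eq:abstract-ladder-form-order} is
\begin{align}
\mathfrak c_m(v,v)=\|B_m^*v\|_{H_m}^2+\alpha_m\|v\|_{H_{m+1}}^2=\alpha_m\|v\|_{H_{m+1}}^2. \nonumber
\end{align}
The right-hand side satisfies
\begin{align}
\mathfrak a_{m+1}(v,v)=\|B_{m+1}v\|_{H_{m+2}}^2+\alpha_{m+1}\|v\|_{H_{m+1}}^2\geq\alpha_{m+1}\|v\|_{H_{m+1}}^2. \nonumber
\end{align}
Combining the two with \eqref{eq:abstract-ladder-form-order} gives
\begin{align}
\alpha_m\|v\|_{H_{m+1}}^2\geq\|B_{m+1}v\|_{H_{m+2}}^2+\alpha_{m+1}\|v\|_{H_{m+1}}^2\geq\alpha_{m+1}\|v\|_{H_{m+1}}^2, \nonumber
\end{align}
so $(\alpha_{m+1}-\alpha_m)\|v\|_{H_{m+1}}^2\leq0$.

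Because $\alpha_{m+1}>\alpha_m$ by Notation~\ref{not:abstract-ladder-package}, this forces $\|v\|_{H_{m+1}}=0$, that is, $v=0$. Hence $\ker B_m^*=\{0\}$.

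There is no real obstacle. The only points to watch are that the domain inclusion is what allows $\mathfrak a_{m+1}(v,v)$ to be evaluated at all, and that it is the strict monotonicity of the shifts, not the form inequality alone, that yields the conclusion. No compactness of resolvents is needed for this step.
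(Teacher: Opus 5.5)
Your proof is correct and matches the paper's argument exactly: you evaluate both forms at $v\in\ker B_m^*$, using the domain inclusion so that $\mathfrak a_{m+1}(v,v)$ is defined. Chaining $\alpha_{m+1}\|v\|^2\leq\mathfrak a_{m+1}(v,v)\leq\mathfrak c_m(v,v)=\alpha_m\|v\|^2$ with $\alpha_{m+1}>\alpha_m$ then forces $v=0$.
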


\begin{proof}
Let $v\in\ker B_m^*$.  Then $v\in\operatorname{Dom}(\mathfrak c_m)$, so \eqref{eq:abstract-ladder-form-order} gives $v\in\operatorname{Dom}(\mathfrak a_{m+1})$ and
\begin{align}
\alpha_{m+1}\|v\|_{H_{m+1}}^2 &\leq\mathfrak a_{m+1}(v,v) \leq\mathfrak c_m(v,v) =\alpha_m\|v\|_{H_{m+1}}^2. \nonumber
\end{align}
Since $\alpha_{m+1}>\alpha_m$, one has $v=0$.
\end{proof}

\begin{prop}\label{prop:abstract-ladder-recursion}
Fix $m\geq0$ and assume \eqref{eq:abstract-ladder-form-order}.  Then, for every $j\geq0$,
\begin{align}
\lambda_{j+r_m}(A_m) =\lambda_j(C_m) \geq\lambda_j(A_{m+1}). \label{eq:abstract-ladder-eigenvalue-step}
\end{align}
Consequently, for every $\Lambda\in\mathbb R$,
\begin{align}
N_{A_m}^{<}(\Lambda) &\leq r_m+N_{A_{m+1}}^{<}(\Lambda), \quad \quad 
\mathcal N_{A_m}(\Lambda) &\leq r_m+\mathcal N_{A_{m+1}}(\Lambda). \label{eq:abstract-ladder-counting-step}
\end{align}
\end{prop}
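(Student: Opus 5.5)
The plan combines the partner-spectra identity of Lemma~\ref{lem:abstract-partner-spectra} with the adjoint-kernel vanishing of Lemma~\ref{lem:abstract-adjoint-kernel-gap}, and then passes to the next level by min--max.

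\textbf{Step 1: the equality.} Since \eqref{eq:abstract-ladder-form-order} is assumed, Lemma~\ref{lem:abstract-adjoint-kernel-gap} gives $\ker B_m^*=\{0\}$, so $s_m=0$. Substituting this into \eqref{eq:abstract-partner-index-shift} yields
\[
\lambda_{j+r_m}(A_m)=\lambda_j(C_m)
\]
for every $j\geq0$. This is the equality in \eqref{eq:abstract-ladder-eigenvalue-step}.

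\textbf{Step 2: the inequality.} I will use the form version of the min--max principle. For a nonnegative self-adjoint operator $P$ with compact resolvent and closed form $\mathfrak p$,
\[
\lambda_j(P)=\min_{\substack{W\subset \operatorname{Dom}(\mathfrak p)\\ \dim W=j+1}}\ \max_{0\neq v\in W}\frac{\mathfrak p(v,v)}{\|v\|^2}.
\]
Take $W\subset\operatorname{Dom}(\mathfrak c_m)$ of dimension $j+1$ that realizes $\lambda_j(C_m)$; the span of the first $j+1$ eigenvectors of $C_m$ works. By the domain inclusion, $W\subset\operatorname{Dom}(\mathfrak a_{m+1})$, so $W$ is an admissible trial space for $A_{m+1}$. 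The form inequality then gives
\[
\lambda_j(A_{m+1})\leq \max_{v\in W\setminus\{0\}}\frac{\mathfrak a_{m+1}(v,v)}{\|v\|^2}\leq\max_{v\in W\setminus\{0\}}\frac{\mathfrak c_m(v,v)}{\|v\|^2}=\lambda_j(C_m).
\]
The only care needed is that both forms live on the same Hilbert space $H_{m+1}$ with the same norm, which holds by Notation~\ref{not:abstract-ladder-package}.

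\textbf{Step 3: the counting inequalities.} Let $\Lambda\in\mathbb R$ and put $n=N^{<}_{A_m}(\Lambda)$. If $n\leq r_m$ there is nothing to prove. Otherwise, for each $j$ with $0\leq j<n-r_m$, Steps 1 and 2 give
\[
\lambda_j(A_{m+1})\leq\lambda_{j+r_m}(A_m)<\Lambda .
\]
Hence $N^{<}_{A_{m+1}}(\Lambda)\geq n-r_m$, which is the first inequality in \eqref{eq:abstract-ladder-counting-step}. The closed count $\mathcal N$ follows by the same argument with $\leq\Lambda$ in place of $<\Lambda$. Both counts are finite because the resolvents are compact.

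The main point needing care is Step 2: the variational characterization must be applied on form domains rather than operator domains. This is exactly why the hypothesis is stated as a form-domain inclusion. Once that is in place, everything else is index bookkeeping.
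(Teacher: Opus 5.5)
Your proposal is correct and follows the paper's proof. Like the paper, you get the equality from $\ker B_m^*=\{0\}$ (Lemma~\ref{lem:abstract-adjoint-kernel-gap}) together with the partner index shift of Lemma~\ref{lem:abstract-partner-spectra}, and the inequality from form-domain min--max using the hypothesis \eqref{eq:abstract-ladder-form-order}; your direct index bookkeeping for the counting bounds merely makes explicit the paper's one-line remark, and it handles every $\Lambda\in\mathbb R$ uniformly without the case distinction $\Lambda>\alpha_m$ versus $\Lambda\leq\alpha_m$.
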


\begin{proof}
Lemma~\ref{lem:abstract-adjoint-kernel-gap} gives $\ker B_m^*=\{0\}$.  Therefore Lemma~\ref{lem:abstract-partner-spectra} gives the equality in \eqref{eq:abstract-ladder-eigenvalue-step}.

The form inequality \eqref{eq:abstract-ladder-form-order} and the min--max principle give
\begin{align}
\lambda_j(C_m)\geq\lambda_j(A_{m+1}), \qquad j\geq0. \nonumber
\end{align}
This proves \eqref{eq:abstract-ladder-eigenvalue-step}. 

Equivalently, the number of eigenvalues of $C_m$ below, or below and at, a fixed threshold does not exceed the corresponding number for $A_{m+1}$, this proves \eqref{eq:abstract-ladder-counting-step}.
\end{proof}

\begin{prop}\label{prop:abstract-ladder-iteration}
Let $0\leq q\leq l$, and assume that \eqref{eq:abstract-ladder-form-order} holds for $m=q,q+1,\ldots,l-1$.  Then
\begin{align}
N_{A_q}^{<}(\alpha_l) &\leq\sum_{m=q}^{l-1}r_m, \nonumber\\
\mathcal N_{A_q}(\alpha_l) &\leq\sum_{m=q}^{l}r_m. \label{eq:abstract-ladder-terminal-counts}
\end{align}
Here the first sum is understood to be zero when $q=l$.  
\end{prop}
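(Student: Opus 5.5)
The plan is a downward telescoping of the one-step recursion in Proposition~\ref{prop:abstract-ladder-recursion}, from level $q$ up to level $l$, followed by an evaluation of the terminal counts at level $l$ using only $A_l\geq\alpha_l I$.

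First, fix $\Lambda=\alpha_l$. For each $m=q,\ldots,l-1$ the form order \eqref{eq:abstract-ladder-form-order} holds by hypothesis, so \eqref{eq:abstract-ladder-counting-step} gives
\begin{align}
N_{A_m}^{<}(\alpha_l)\leq r_m+N_{A_{m+1}}^{<}(\alpha_l),\qquad
\mathcal N_{A_m}(\alpha_l)\leq r_m+\mathcal N_{A_{m+1}}(\alpha_l).\nonumber
\end{align}
Chaining these inequalities for $m=q,q+1,\ldots,l-1$ (formally, an induction on $l-q$) yields
\begin{align}
N_{A_q}^{<}(\alpha_l)\leq\sum_{m=q}^{l-1}r_m+N_{A_l}^{<}(\alpha_l),\qquad
\mathcal N_{A_q}(\alpha_l)\leq\sum_{m=q}^{l-1}r_m+\mathcal N_{A_l}(\alpha_l).\nonumber
\end{align}
When $q=l$ the sums are empty and these two inequalities are trivially equalities.

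Second, evaluate the terminal terms. Since $A_l=B_l^*B_l+\alpha_lI$, we have $\mathfrak a_l(u,u)=\|B_lu\|^2+\alpha_l\|u\|^2\geq\alpha_l\|u\|^2$, so every eigenvalue of $A_l$ is at least $\alpha_l$. Hence $N_{A_l}^{<}(\alpha_l)=0$.

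The eigenvalues equal to $\alpha_l$ are exactly those attained by eigenvectors $u$ with $\|B_lu\|^2=0$. Thus the $\alpha_l$-eigenspace is $\ker B_l$, which gives $\mathcal N_{A_l}(\alpha_l)=r_l$. This number is finite by the compact-resolvent assumption in Notation~\ref{not:abstract-ladder-package}. Substituting both values gives \eqref{eq:abstract-ladder-terminal-counts}.

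There is no real obstacle in this argument. The only points needing care are that the recursion is applied at the single fixed threshold $\Lambda=\alpha_l$ at every level, and that the identification of the bottom eigenspace of $A_l$ with $\ker B_l$ uses the closed form domain $\operatorname{Dom}(\mathfrak a_l)=\operatorname{Dom}(B_l)$. That identification is already recorded in the proof of Lemma~\ref{lem:abstract-partner-spectra}.
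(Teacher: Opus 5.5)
Your proposal is correct and matches the paper's proof: the paper likewise evaluates the terminal counts $N_{A_l}^{<}(\alpha_l)=0$ and $\mathcal N_{A_l}(\alpha_l)=\dim_{\mathbb F}\ker B_l=r_l$ from $A_l\geq\alpha_l I$, and then iterates the two inequalities in \eqref{eq:abstract-ladder-counting-step} at the fixed threshold $\alpha_l$ from level $q$ to level $l$. Your extra remarks (the induction on $l-q$, the trivial case $q=l$, and the identification of the $\alpha_l$-eigenspace of $A_l$ with $\ker B_l$) only spell out steps the paper leaves implicit.
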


\begin{proof}
Since
\begin{align}
A_l=B_l^*B_l+\alpha_l I\geq\alpha_l I, \nonumber
\end{align}
one has
\begin{align}
N_{A_l}^{<}(\alpha_l)=0, \qquad \mathcal N_{A_l}(\alpha_l)=\dim_{\mathbb F}\ker B_l=r_l. \nonumber
\end{align}
Iterating the two inequalities in \eqref{eq:abstract-ladder-counting-step} from level $q$ to level $l$ gives \eqref{eq:abstract-ladder-terminal-counts}. 
\end{proof}

\begin{prop}\label{prop:abstract-ladder-saturation}
Let $l\in\mathbb Z^+$, and assume that \eqref{eq:abstract-ladder-form-order} holds for $m=0,1,\ldots,l-1$.  Suppose that
\begin{align}
&r_m\leq\rho_m, \qquad 0\leq m\leq l, \nonumber\\
&\mathcal N_{A_0}(\alpha_l) =\sum_{m=0}^{l}\rho_m. \label{eq:abstract-ladder-saturation-hypothesis}
\end{align}
Then
\begin{align}
&r_m=\rho_m, \qquad 0\leq m\leq l, \label{eq:abstract-ladder-maximal-kernels}\\
&\mathcal N_{A_m}(\alpha_l) &=r_m+\mathcal N_{A_{m+1}}(\alpha_l), \nonumber\\
&\mathcal N_{C_m}(\alpha_l) &=\mathcal N_{A_{m+1}}(\alpha_l), \qquad 0\leq m\leq l-1. \label{eq:abstract-ladder-saturated-steps}
\end{align}
Define 
\begin{align}
E_l&:=\ker(C_{l-1}-\alpha_l I), \quad \quad V_l:=\ker B_l\cap\operatorname{Dom}(B_{l-1}^*). \nonumber
\end{align}
Then
\begin{align}
&E_l=V_l=\ker B_l, \qquad \dim_{\mathbb F}E_l=\rho_l, \label{eq:abstract-ladder-top-space}\\
&\mathfrak c_{l-1}(s,s) =\mathfrak a_l(s,s) =\alpha_l\|s\|_{H_l}^2, \quad \quad \forall s\in E_l. \label{eq:abstract-ladder-top-form-equality}
\end{align}
\end{prop}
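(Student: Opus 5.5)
The plan is to squeeze the chain of inequalities from Proposition~\ref{prop:abstract-ladder-recursion}.  That proposition gives, for each $0\le m\le l-1$, the closed-count bound $\mathcal N_{A_m}(\alpha_l)\le r_m+\mathcal N_{A_{m+1}}(\alpha_l)$.  More precisely, using Lemma~\ref{lem:abstract-partner-spectra} with $s_m=0$ (Lemma~\ref{lem:abstract-adjoint-kernel-gap}), this bound splits into two pieces:
\begin{align}
\mathcal N_{A_m}(\alpha_l)=r_m+\mathcal N_{C_m}(\alpha_l), \qquad \mathcal N_{C_m}(\alpha_l)\le\mathcal N_{A_{m+1}}(\alpha_l). \nonumber
\end{align}
The first holds because $\alpha_l\ge\alpha_m$; the second follows from the form order and min--max.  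Telescoping from $m=0$ to $l-1$ and using $\mathcal N_{A_l}(\alpha_l)=r_l$ gives
\begin{align}
\sum_{m=0}^l\rho_m=\mathcal N_{A_0}(\alpha_l)\le\sum_{m=0}^l r_m\le\sum_{m=0}^l\rho_m. \nonumber
\end{align}
So every inequality in the chain is an equality.  Since each $r_m\le\rho_m$, this yields $r_m=\rho_m$ for all $m$, and each step inequality $\mathcal N_{C_m}(\alpha_l)\le\mathcal N_{A_{m+1}}(\alpha_l)$ must be an equality; otherwise the total would drop.  This proves \eqref{eq:abstract-ladder-maximal-kernels} and \eqref{eq:abstract-ladder-saturated-steps}.

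For the top space, we specialize to $m=l-1$ and use $\mathcal N_{C_{l-1}}(\alpha_l)=\mathcal N_{A_l}(\alpha_l)=r_l=\rho_l$.  Since $C_{l-1}\ge A_l\ge\alpha_l$ in the form sense (min--max gives $\lambda_0(C_{l-1})\ge\lambda_0(A_l)\ge\alpha_l$), every eigenvalue of $C_{l-1}$ that is $\le\alpha_l$ equals $\alpha_l$.  Hence $\dim E_l=\rho_l$.

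Next, take $s\in E_l$.  Then $s\in\operatorname{Dom}(C_{l-1})\subset\operatorname{Dom}(\mathfrak c_{l-1})\subset\operatorname{Dom}(\mathfrak a_l)$, and
\begin{align}
\alpha_l\|s\|^2=\mathfrak c_{l-1}(s,s)\ge\mathfrak a_l(s,s)=\|B_ls\|^2+\alpha_l\|s\|^2. \nonumber
\end{align}
This forces $B_ls=0$ together with the equalities in \eqref{eq:abstract-ladder-top-form-equality}.  It also shows $s\in\ker B_l\cap\operatorname{Dom}(B_{l-1}^*)=V_l$.  We therefore have $E_l\subset V_l\subset\ker B_l$, with $\dim E_l=\rho_l=r_l=\dim\ker B_l$.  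Since all these spaces are finite dimensional, the inclusions are equalities, which gives \eqref{eq:abstract-ladder-top-space}.

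The only delicate point is keeping the counting bookkeeping exact.  The step $\mathcal N_{A_m}(\alpha_l)=r_m+\mathcal N_{C_m}(\alpha_l)$ requires $\alpha_l\ge\alpha_m$, which holds because the $\alpha$'s increase.  We also use that all these counts are finite, which follows from compact resolvent.  With both in place, the equality-forcing argument is a plain finite sum squeeze.
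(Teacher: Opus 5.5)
Your proposal is correct and matches the paper's proof step for step. You use the same splitting $\mathcal N_{A_m}(\alpha_l)=r_m+\mathcal N_{C_m}(\alpha_l)$ from the adjoint-kernel gap and the partner lemma, the same telescoping squeeze against $\sum_{m=0}^l\rho_m$, and the same sandwich $\alpha_l\|s\|^2\le\mathfrak a_l(s,s)\le\mathfrak c_{l-1}(s,s)=\alpha_l\|s\|^2$ on $E_l$, which gives $E_l\subset V_l\subset\ker B_l$ with equal dimensions.
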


\begin{proof}
Set
\begin{align}
n_m:=\mathcal N_{A_m}(\alpha_l), \qquad q_m:=\mathcal N_{C_m}(\alpha_l). \nonumber
\end{align}
By Lemma~\ref{lem:abstract-adjoint-kernel-gap}, $\ker B_m^*=\{0\}$ for $0\leq m\leq l-1$.  Since $\alpha_l\geq\alpha_m$, Lemma~\ref{lem:abstract-partner-spectra} gives
\begin{align}
n_m=r_m+q_m, \qquad 0\leq m\leq l-1. \nonumber
\end{align}
The form inequality gives $q_m\leq n_{m+1}$, while
\begin{align}
n_l=\mathcal N_{A_l}(\alpha_l)=r_l. \nonumber
\end{align}
Therefore
\begin{align}
n_0 &\leq r_0+n_1 \leq r_0+r_1+n_2 \leq\cdots \leq\sum_{m=0}^{l}r_m \leq\sum_{m=0}^{l}\rho_m. \nonumber
\end{align}
The first and last terms are equal by \eqref{eq:abstract-ladder-saturation-hypothesis}.  Hence equality holds at every stage.  This proves \eqref{eq:abstract-ladder-maximal-kernels} and \eqref{eq:abstract-ladder-saturated-steps}.

At the last step,
\begin{align}
\mathcal N_{C_{l-1}}(\alpha_l) =\mathcal N_{A_l}(\alpha_l) =r_l=\rho_l. \nonumber
\end{align}
Moreover,
\begin{align}
C_{l-1}\geq A_l\geq\alpha_l I \nonumber
\end{align}
in the quadratic-form sense.  Thus every eigenvalue of $C_{l-1}$ counted by $\mathcal N_{C_{l-1}}(\alpha_l)$ is equal to $\alpha_l$, and consequently
\begin{align}
\dim_{\mathbb F}E_l=r_l. \nonumber
\end{align}

Let $s\in E_l$.  Then $s\in\operatorname{Dom}(C_{l-1})\subset\operatorname{Dom}(\mathfrak c_{l-1})$ and
\begin{align}
\mathfrak c_{l-1}(s,s)=\alpha_l\|s\|_{H_l}^2. \nonumber
\end{align}
Using the form inequality and the definition of $\mathfrak a_l$, we obtain
\begin{align}
\alpha_l\|s\|_{H_l}^2 &\leq\mathfrak a_l(s,s) \leq\mathfrak c_{l-1}(s,s) =\alpha_l\|s\|_{H_l}^2. \nonumber
\end{align}
Hence $B_ls=0$, and $s\in\operatorname{Dom}(B_{l-1}^*)$.  Therefore
\begin{align}
E_l\subset V_l\subset\ker B_l. \nonumber
\end{align}
The first and last spaces have the same dimension $r_l$, so all three spaces coincide.  This proves \eqref{eq:abstract-ladder-top-space}, and the same argument gives \eqref{eq:abstract-ladder-top-form-equality}.
\end{proof}

\begin{cor}\label{cor:abstract-two-sphere-ladder}
Assume that
\begin{align}
\alpha_m=m(m+1), \qquad r_m\leq2m+1, \qquad m\geq0, \nonumber
\end{align}
and that \eqref{eq:abstract-ladder-form-order} holds at every level.  Then, for every $l\in\mathbb Z^+$,
\begin{align}
N_{A_0}^{<}\bigl(l(l+1)\bigr) \leq l^2, \quad \quad \mathcal N_{A_0}\bigl(l(l+1)\bigr) \leq(l+1)^2. \label{eq:abstract-two-sphere-threshold-counts}
\end{align}
If equality holds in the closed-counting estimate in \eqref{eq:abstract-two-sphere-threshold-counts}, then all conclusions of Proposition~\ref{prop:abstract-ladder-saturation} hold with
\begin{align}
\rho_m=2m+1, \qquad 0\leq m\leq l. \nonumber
\end{align}
\end{cor}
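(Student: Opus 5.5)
This is a direct specialization of Proposition~\ref{prop:abstract-ladder-iteration} and Proposition~\ref{prop:abstract-ladder-saturation}. First I check the standing hypotheses. The sequence $\alpha_m=m(m+1)$ satisfies $0=\alpha_0<\alpha_1<\alpha_2<\cdots$, since $\alpha_{m+1}-\alpha_m=2(m+1)>0$. The compact-resolvent assumptions are part of Notation~\ref{not:abstract-ladder-package}, and the form order \eqref{eq:abstract-ladder-form-order} is assumed at every level. So everything proved in the abstract section is available for any range of indices.

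For the inequalities, fix $l\in\mathbb Z^+$ and apply Proposition~\ref{prop:abstract-ladder-iteration} with $q=0$. It gives
\begin{align}
N_{A_0}^{<}\bigl(l(l+1)\bigr)\leq\sum_{m=0}^{l-1}r_m\leq\sum_{m=0}^{l-1}(2m+1)=l^2, \nonumber
\end{align}
and
\begin{align}
\mathcal N_{A_0}\bigl(l(l+1)\bigr)\leq\sum_{m=0}^{l}r_m\leq\sum_{m=0}^{l}(2m+1)=(l+1)^2, \nonumber
\end{align}
using the odd-number sum identity $\sum_{m=0}^{k-1}(2m+1)=k^2$. This proves \eqref{eq:abstract-two-sphere-threshold-counts}.

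For the equality case, suppose $\mathcal N_{A_0}(l(l+1))=(l+1)^2$ and set $\rho_m=2m+1$. Then the hypotheses \eqref{eq:abstract-ladder-saturation-hypothesis} hold: $r_m\leq\rho_m$ for $0\leq m\leq l$, and $\mathcal N_{A_0}(\alpha_l)=(l+1)^2=\sum_{m=0}^{l}\rho_m$. The form order is needed only for $m=0,\ldots,l-1$, and it holds there. Proposition~\ref{prop:abstract-ladder-saturation} therefore applies and gives all its conclusions with $\rho_m=2m+1$.

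There is no real obstacle here. The only points to check are the bookkeeping of the sums, namely that the strict count stops at $l-1$ and the closed count at $l$, and the monotonicity of $\alpha_m$ required by the notation.
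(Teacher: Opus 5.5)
Your proposal is correct and follows the paper's proof exactly. The threshold counts come from Proposition~\ref{prop:abstract-ladder-iteration} with $q=0$ together with the odd-number sum identities, and the equality case is Proposition~\ref{prop:abstract-ladder-saturation} with $\rho_m=2m+1$; your checks that $\alpha_m=m(m+1)$ is strictly increasing and that the form order is needed only for $0\leq m\leq l-1$ are bookkeeping the paper leaves implicit.
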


\begin{proof}
Proposition~\ref{prop:abstract-ladder-iteration} and the identities
\begin{align}
\sum_{m=0}^{l-1}(2m+1)=l^2, \qquad \sum_{m=0}^{l}(2m+1)=(l+1)^2 \nonumber
\end{align}
give \eqref{eq:abstract-two-sphere-threshold-counts}.  The eigenvalue statement is the strict-counting formulation of the first inequality.  The equality assertion follows from Proposition~\ref{prop:abstract-ladder-saturation}.
\end{proof}

\begin{remark}[Dictionary for the four geometric realizations]
The abstract results above will be used through the following identifications.
\begin{enumerate}
\item In the rotationally symmetric proof, $\mathbb F=\mathbb R$, the spaces $H_m$ are the radial Hilbert spaces, $B_m=\mathbf B_m$, and $\alpha_m=m(m+n-2)$.  The geometric work is the identification of the correct pole-compatible domains, the one-dimensional kernel computation $r_m=1$, and the radial curvature form inequality.  Proposition~\ref{prop:abstract-ladder-iteration} then supplies the spectral recursion.

\item In the smooth Riemannian proof, $\mathbb F=\mathbb C$, $H_m=L^2(E_m)$, $B_m$ is the closed bundle operator, and $\alpha_m=m(m+1)$.  The bundle construction, the curvature identity, and the kernel dimension $r_m=2m+1$ are the geometric inputs.  Corollary~\ref{cor:abstract-two-sphere-ladder} supplies the counting argument.

\item In the complex-geometric proof, the same ladder is written on the anticanonical powers $K^{-m}$.  Riemann--Roch and Serre duality provide the kernel information, while the Bochner--Kodaira identity provides the form inequality.  The Hilbert-space conclusion is again Corollary~\ref{cor:abstract-two-sphere-ladder}.

\item In the Alexandrov proof, $H_m=H_m^X$ and $B_m=B_m^X$ are the singular closed operators with their maximal Hilbert-space domains.  The singular analysis establishes compactness, the weak form inequality, and the cohomological estimate $r_m\leq2m+1$.  The closed-counting part of Corollary~\ref{cor:abstract-two-sphere-ladder} gives the finite bound, while Proposition~\ref{prop:abstract-ladder-saturation} gives the full top-rung equality space when the round count is saturated.
\end{enumerate}
Thus the later sections need only verify the geometric and domain-theoretic hypotheses appropriate to their setting; the partner-spectrum algebra, the index shift, the min--max recursion, and the equality propagation will not be repeated.
\end{remark}

\part{Rotationally symmetric case}

\section{Rotationally symmetric model and notation}\label{sec:rotational}

This part deals with the rotationally symmetric model in arbitrary dimension.  Its purpose is not to reduce the general theorem to a symmetric case, but to display the geometric input to the abstract ladder mechanism of Section~\ref{sec:abstract-ladder-counting} in the elementary language of separated variables.

\begin{definition}
Let $n\geq3$.  A smooth metric $g$ on $S^{n-1}$ is called rotationally symmetric if
\begin{align}
g=dr^2+f(r)^2g_{\mathbb S^{n-2}}, \qquad 0\leq r\leq a, \nonumber
\end{align}
where $g_{\mathbb S^{n-2}}$ is the unit round metric.  Smoothness at the two poles means that $f$ is smooth on $[0,a]$, positive on $(0,a)$, and
\begin{align}
f(0)=f(a)=0, \qquad f'(0)=1, \qquad f'(a)=-1, \nonumber
\end{align}
together with the usual even-order compatibility conditions at the poles.
\end{definition}

\begin{notation}
Throughout this part,
\begin{align}
g=dr^2+f(r)^2g_{\mathbb S^{n-2}}. \nonumber
\end{align}
All Hilbert spaces, inner products, forms, and dimensions are taken over the real field.  For $m\in\mathbb Z_{\geq0}$, let $\mathcal H_m(\mathbb R^{n-1})$ denote the restrictions to $\mathbb S^{n-2}$ of the homogeneous harmonic polynomials of degree $m$ on $\mathbb R^{n-1}$, and set
\begin{align}
d_{n-2,m} &:=\dim_{\mathbb R}\mathcal H_m(\mathbb R^{n-1}), & \lambda_m &:=m(m+n-3). \nonumber
\end{align}
Thus $\lambda_m$ is the Laplace eigenvalue corresponding to the space of degree-$m$ spherical harmonics on $\mathbb{S}^{n- 2}$.

We define 
\begin{align}
\mathscr D_m :=\Bigl\{u\in C^\infty([0,a]):\ &u(r)=r^m\psi(r^2) =(a-r)^m\varphi\bigl((a-r)^2\bigr) \nonumber\\
&\text{for some }\psi,\varphi\in C^\infty(\mathbb R)\Bigr\}. \nonumber
\end{align}
The radial Hilbert space is
\begin{align}
H :=L^2\bigl((0,a),f^{n-2}\,dr;\mathbb R\bigr), \qquad \langle u,v\rangle_H :=\int_0^a uvf^{n-2}\,dr. \label{eq:H-inner}
\end{align}
For later use, write
\begin{align}
K_{\mathrm{rad}} :=-\frac{f''}{f}, \qquad K_{\mathrm{tan}} :=\frac{1-(f')^2}{f^2} \qquad\text{on }(0,a). \nonumber
\end{align}
When $n\geq4$, these are respectively the radial and tangential sectional curvatures.  When $n=3$, $K_{\mathrm{tan}}$ is the auxiliary radial quantity which occurs in the separated form comparison.  The pole compatibility of $f$ implies that both quantities extend smoothly to the poles.
\end{notation}

\begin{definition}\label{def:Lm-operator}
For $m\geq0$, define on $\mathscr D_m$ the densely defined nonnegative symmetric form
\begin{align}
\mathfrak l_m[u,v]:=\int_0^a\left(u'v'+\frac{\lambda_m}{f^2}uv\right)f^{n-2}\,dr. \nonumber
\end{align}
For $m=0$, the angular term is identically zero; in particular, this definition does not involve a product of zero with a potentially divergent integral.
Lemma~\ref{lem:Hilbert-space-Lm} below shows that this form is closable in $H$.  We denote its closure, again, by $\mathfrak l_m$, and $\mathbf L_m$ denotes the nonnegative self-adjoint operator associated with the closed form $\mathfrak l_m$.  Thus
\begin{align}
\operatorname{Dom}(\mathbf L_m) :=\Bigl\{u\in\operatorname{Dom}(\mathfrak l_m):\ &\text{there exists }w\in H\text{ such that} \nonumber\\
&\mathfrak l_m[u,v]=\langle w,v\rangle_H \text{ for every }v\in\operatorname{Dom}(\mathfrak l_m)\Bigr\}, \label{eq:Lm-domain-def}
\end{align}
and $\mathbf L_m u=w$.  On the smooth core its formal expression is
\begin{align}
\mathcal L_m u :=-u''-(n-2)\frac{f'}{f}u' +\frac{m(m+n-3)}{f^2}u. \nonumber
\end{align}
\end{definition}

\begin{definition}\label{def:Bm-for-rotation-symm}
For $m\geq0$, define
\begin{align}
\mathcal B_m u :=u'-m\frac{f'}{f}u, \qquad u\in\mathscr D_m. \label{eq:Bm-rot}
\end{align}
We first regard this as the densely defined operator
\begin{align}
\mathcal B_m:\mathscr D_m\subset H\longrightarrow H. \nonumber
\end{align}
Its formal adjoint expression with respect to \eqref{eq:H-inner} is
\begin{align}
\mathcal B_m^\dagger v :=-v'-(m+n-2)\frac{f'}{f}v. \label{eq:Bm-adj-rot}
\end{align}
For $v\in\mathscr D_{m+1}$, the pole conditions imply that $\mathcal B_m^\dagger v\in H$.  Moreover, for $u\in\mathscr D_m$ and $v\in\mathscr D_{m+1}$, integration by parts gives
\begin{align}
\langle\mathcal B_m u,v\rangle_H =\langle u,\mathcal B_m^\dagger v\rangle_H. \nonumber
\end{align}
Since $\mathscr D_{m+1}$ is dense in $H$, $\mathcal B_m$ is closable.  We set
\begin{align}
\mathbf B_m :=\overline{\mathcal B_m}: \operatorname{Dom}(\mathbf B_m)\subset H\longrightarrow H, \nonumber
\end{align}
where
\begin{align}
\operatorname{Dom}(\mathbf B_m) =\overline{\mathscr D_m}^{\,\|\cdot\|_{\mathbf B_m}}, \qquad \|u\|_{\mathbf B_m}^2 :=\|u\|_H^2+\|\mathcal B_m u\|_H^2. \nonumber
\end{align}
Its Hilbert-space adjoint is denoted by
\begin{align}
\mathbf B_m^*: \operatorname{Dom}(\mathbf B_m^*)\subset H\longrightarrow H. \nonumber
\end{align}
Thus $v\in\operatorname{Dom}(\mathbf B_m^*)$ exactly when there exists $w\in H$ such that
\begin{align}
\langle\mathbf B_m u,v\rangle_H =\langle u,w\rangle_H, \qquad u\in\operatorname{Dom}(\mathbf B_m), \nonumber
\end{align}
and then $\mathbf B_m^*v=w$.  On $\mathscr D_{m+1}$, $\mathbf B_m^*$ agrees with the expression in \eqref{eq:Bm-adj-rot}.
\end{definition}

\begin{definition}\label{def:rot-bold-Am-operator}
For $m\geq0$, define the closed forms
\begin{align}
\mathfrak a_m[u,v] &:=\langle\mathbf B_m u,\mathbf B_m v\rangle_H +m(m+n-2)\langle u,v\rangle_H, & \operatorname{Dom}(\mathfrak a_m) &:=\operatorname{Dom}(\mathbf B_m), \nonumber\\
\mathfrak c_m[v,w] &:=\langle\mathbf B_m^*v,\mathbf B_m^*w\rangle_H +m(m+n-2)\langle v,w\rangle_H, & \operatorname{Dom}(\mathfrak c_m) &:=\operatorname{Dom}(\mathbf B_m^*). \nonumber
\end{align}
Let $\mathbf A_m$ and $\mathbf C_m$ be their associated self-adjoint operators.  Equivalently,
\begin{align}
\operatorname{Dom}(\mathbf A_m) &=\{u\in\operatorname{Dom}(\mathbf B_m): \mathbf B_m u\in\operatorname{Dom}(\mathbf B_m^*)\}, \nonumber\\
\mathbf A_m u &=\mathbf B_m^*\mathbf B_m u+m(m+n-2)u, \nonumber\\
\operatorname{Dom}(\mathbf C_m) &=\{v\in\operatorname{Dom}(\mathbf B_m^*): \mathbf B_m^*v\in\operatorname{Dom}(\mathbf B_m)\}, \nonumber\\
\mathbf C_m v &=\mathbf B_m\mathbf B_m^*v+m(m+n-2)v. \nonumber
\end{align}
\end{definition}

\section{Frequency decomposition and the radial ladder}\label{sec:rotational-ladder}

\begin{theorem}\label{thm:Fischer-decomposition}
Let $d\geq2$.  For $j\geq0$, let $\mathcal P_j(\mathbb R^d)$ be the homogeneous polynomials of degree $j$, and let
\begin{align}
\mathcal H_j(\mathbb R^d) :=\{P\in\mathcal P_j(\mathbb R^d):\Delta_{\mathbb R^d}P=0\}. \nonumber
\end{align}
Every $P_j\in\mathcal P_j(\mathbb R^d)$ has a unique decomposition
\begin{align}
P_j(x) =H_j(x)+|x|^2H_{j-2}(x)+\cdots+|x|^{2q}H_{j-2q}(x), \qquad q=\lfloor j/2\rfloor, \nonumber
\end{align}
where $H_{j-2\ell}\in\mathcal H_{j-2\ell}(\mathbb R^d)$.  Consequently,
\begin{align}
P_j|_{\mathbb S^{d-1}} \in\mathcal H_j(\mathbb S^{d-1})\oplus \mathcal H_{j-2}(\mathbb S^{d-1})\oplus\cdots. \label{eq:Fischer-sphere-restriction}
\end{align}
In particular, if $Y\in\mathcal H_m(\mathbb S^{d-1})$, then
\begin{align}
\int_{\mathbb S^{d-1}}P_j(\theta)Y(\theta)\,dV_{\mathbb S^{d-1}}(\theta)=0 \label{eq:Fischer-orthogonality}
\end{align}
unless $j=m+2\ell$ for some $\ell\geq0$.
\end{theorem}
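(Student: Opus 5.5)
The plan is the standard Fischer decomposition argument via an inner product on polynomials. On $\mathcal P_j(\mathbb R^d)$ I will use the Fischer inner product
\[
\langle P,Q\rangle_F:=P(\partial)\overline{Q}\quad(\text{a constant}),
\]
which is positive definite on monomials: $\langle x^\alpha,x^\beta\rangle_F=\alpha!\,\delta_{\alpha\beta}$.

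The key identity is that multiplication by $|x|^2$ from $\mathcal P_{j-2}$ to $\mathcal P_j$ is adjoint to $\Delta_{\mathbb R^d}$ from $\mathcal P_j$ to $\mathcal P_{j-2}$. This follows directly from $\langle |x|^2R,P\rangle_F=R(\partial)\Delta\overline{P}$. Hence
\[
\mathcal P_j=\ker\Delta\oplus|x|^2\mathcal P_{j-2}=\mathcal H_j\oplus|x|^2\mathcal P_{j-2}
\]
as an orthogonal decomposition. Applying this recursively to $\mathcal P_{j-2}$, $\mathcal P_{j-4}$, and so on gives existence of the decomposition. Uniqueness follows because the sum is direct at each step; multiplication by $|x|^2$ is injective, and the recursion places each $H_{j-2\ell}$ in an orthogonal summand.

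Restricting to $\mathbb S^{d-1}$, where $|x|^2=1$, immediately gives \eqref{eq:Fischer-sphere-restriction}, since $H_{j-2\ell}|_{\mathbb S^{d-1}}\in\mathcal H_{j-2\ell}(\mathbb S^{d-1})$ by definition of spherical harmonics as restrictions.

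For \eqref{eq:Fischer-orthogonality}, I will use that restrictions of $\mathcal H_k$ are eigenfunctions of $-\Delta_{\mathbb S^{d-1}}$ with eigenvalue $k(k+d-2)$. This comes from writing $\Delta_{\mathbb R^d}=\partial_r^2+\frac{d-1}{r}\partial_r+r^{-2}\Delta_{\mathbb S^{d-1}}$ and applying it to $r^kY$. Since $k\mapsto k(k+d-2)$ is strictly increasing for $k\geq0$, distinct degrees give distinct eigenvalues. Self-adjointness of $\Delta_{\mathbb S^{d-1}}$ then gives $L^2$-orthogonality of $\mathcal H_k$ and $\mathcal H_m$ for $k\neq m$. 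Therefore $\int P_jY=0$ unless $m$ equals one of $j,j-2,\dots$, that is, unless $j=m+2\ell$.

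The only point needing care is the adjointness computation and the positivity of the Fischer product. Both are short checks on monomials, so I expect no real obstacle.
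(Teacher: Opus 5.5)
Your proposal is correct and follows the paper's route. The paper simply cites the Fischer decomposition from Axler--Bourdon--Ramey, whose proof is exactly your Fischer inner product argument with the adjointness $\langle |x|^2R,P\rangle_F=\langle R,\Delta P\rangle_F$. It then restricts to the sphere and uses orthogonality of spherical harmonics of distinct degrees, which you justify via the distinct eigenvalues $k(k+d-2)$.
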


\begin{proof}
The decomposition is the standard Fischer decomposition; see \cite[Proposition~5.5 and Theorem~5.7, pp.~76--77]{AxlerBourdonRamey2001}.  Restriction to the unit sphere and orthogonality of spherical harmonics of different degrees give \eqref{eq:Fischer-sphere-restriction} and \eqref{eq:Fischer-orthogonality}.
\end{proof}

\begin{lemma}\label{lem:rot-angular-coefficients-Dm}
Let $w\in C^\infty(S^{n-1};\mathbb R)$, let $m\geq0$, and let $\varphi\in\mathcal H_m(\mathbb R^{n-1})|_{\mathbb S^{n-2}}$.  Then 
\begin{align}
u(r) :=\int_{\mathbb S^{n-2}}w(r,\theta)\varphi(\theta)\,dV_{\mathbb S^{n-2}}\in \mathscr D_m. \nonumber
\end{align}
\end{lemma}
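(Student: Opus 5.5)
Near each pole we work in Euclidean-type coordinates, and use Theorem~\ref{thm:Fischer-decomposition} to pick out which powers of \(r\) can appear in \(u\).

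Interior smoothness of \(u\) on \((0,a)\) is immediate by differentiating under the integral sign, so only the two poles matter. By symmetry it suffices to treat the pole \(r=0\); the pole \(r=a\) is identical after replacing \(r\) by \(a-r\).

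\emph{Step 1: coordinates at the pole.} Since \(g=dr^2+f(r)^2g_{\mathbb S^{n-2}}\) is smooth at \(r=0\) with \(f\) odd-compatible (\(f(r)=r\,\tilde f(r^2)\), \(\tilde f(0)=1\)), the map \(x=r\theta\in\mathbb R^{n-1}\), with \(|x|=r<\epsilon\), is a smooth chart at the pole. Smoothness of \(w\) on \(S^{n-1}\) means \(W(x):=w(|x|,x/|x|)\) is a smooth function near \(0\in\mathbb R^{n-1}\). The pole compatibility conditions say exactly that these polar coordinates are the geodesic polar coordinates, so the smooth structure agrees. Then
\[
u(r)=\int_{\mathbb S^{n-2}}W(r\theta)\varphi(\theta)\,dV(\theta),\qquad 0<r<\epsilon .
\]

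\emph{Step 2: Taylor expansion and Fischer orthogonality.} Fix \(N\) large and expand \(W(x)=\sum_{j\leq N}P_j(x)+R_N(x)\), where \(P_j\) is homogeneous of degree \(j\) and \(R_N=O(|x|^{N+1})\) with derivatives. Then
\[
\int_{\mathbb S^{n-2}} P_j(r\theta)\varphi(\theta)\,dV=r^j\int_{\mathbb S^{n-2}} P_j(\theta)\varphi(\theta)\,dV .
\]
By \eqref{eq:Fischer-orthogonality}, this vanishes unless \(j=m+2\ell\). Hence \(u(r)=r^m\,p_N(r^2)+\rho_N(r)\), where \(p_N\) is a polynomial and \(\rho_N(r)=O(r^{N+1})\).

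\emph{Step 3: the main obstacle, producing a genuinely smooth \(\psi\).} We must upgrade this formal expansion to \(u(r)=r^m\psi(r^2)\) with \(\psi\in C^\infty\). We argue as follows.

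First, \(u\) extends to a smooth function on \((-\epsilon,\epsilon)\) via the same integral formula, since \(W(r\theta)\) is smooth in \((r,\theta)\) for all real \(r\). Replacing \(\theta\) by \(-\theta\) and using \(\varphi(-\theta)=(-1)^m\varphi(\theta)\), we get \(u(-r)=(-1)^m u(r)\).

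Second, Step 2 shows \(u^{(k)}(0)=0\) for \(k<m\). Hence \(u(r)=r^m v(r)\) with \(v\) smooth, by Taylor's formula with integral remainder. The parity relation then makes \(v\) even.

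Third, Whitney's theorem on even functions gives \(v(r)=\psi(r^2)\) with \(\psi\) smooth; this can also be proved directly by Borel's lemma together with the flatness of the remainder. Finally, \(\psi\) is extended arbitrarily to a smooth function on \(\mathbb R\), and similarly \(\varphi\) at the pole \(r=a\).

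The main technical point is to check that the chart \(x=r\theta\) is compatible with the given smooth structure on \(S^{n-1}\), which is exactly where the even-order pole conditions on \(f\) are used. With that in place, the parity and vanishing-order argument is routine.
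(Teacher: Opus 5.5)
Your proof is correct and follows the paper's route: work in the normal chart $x=r\theta$ at each pole, Taylor-expand, and use the Fischer orthogonality of Theorem~\ref{thm:Fischer-decomposition} so that only the powers $r^{m+2\ell}$ survive. Your parity identity $u(-r)=(-1)^m u(r)$, combined with Whitney's even-function theorem, is a cleaner justification of the step the paper states tersely (``$r^{-m}u$ has only even Taylor coefficients, hence $u=r^m\psi(r^2)$''); one small correction is that $\mathscr D_m$ asks for the representation on all of $[0,a]$, so you should take $\psi(s)=s^{-m/2}u(\sqrt s)$ on $(0,a^2]$ and then extend it smoothly, as the paper does, rather than extending $\psi$ arbitrarily.
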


\begin{proof}
Smoothness on $(0,a)$ follows by differentiation under the integral sign.  Near $r=0$, use smooth normal coordinates $x=r\theta\in\mathbb R^{n-1}$ and write $w(r,\theta)=W_0(r\theta)$ with $W_0$ smooth.  For every $N$,
\begin{align}
W_0(r\theta) =\sum_{j=0}^{m+2N+1}r^jP_j(\theta)+O(r^{m+2N+2}), \nonumber
\end{align}
where $P_j$ is homogeneous of degree $j$.  By Theorem~\ref{thm:Fischer-decomposition}, the integral of $P_j$ against the degree-$m$ harmonic $\varphi$ vanishes unless $j=m+2\ell$.  Hence
\begin{align}
u(r)=\sum_{\ell=0}^{N}c_\ell r^{m+2\ell}+O(r^{m+2N+2}) \nonumber
\end{align}
for every $N$.  Therefore $r^{-m}u(r)$ extends smoothly across $r=0$ and has only even Taylor coefficients.  It follows that, near $r=0$,
\begin{align}
u(r)=r^m\psi_0(r^2) \nonumber
\end{align}
for a smooth function $\psi_0$.

The same argument in normal coordinates at the other pole gives
\begin{align}
u(r)=(a-r)^m\varphi_0\bigl((a-r)^2\bigr) \nonumber
\end{align}
near $r=a$.  The functions $r^{-m}u(r)$ and $(a-r)^{-m}u(r)$, viewed respectively as functions of $r^2$ and $(a-r)^2$, are smooth on their compact half-intervals and therefore admit smooth extensions to $\mathbb R$.  This is exactly the definition of $u\in\mathscr D_m$.
\end{proof}

\begin{lemma}\label{lem:Hilbert-space-Lm}
For every $m\geq0$:
\begin{enumerate}
\item the form $\mathfrak l_m$ initially defined on $\mathscr D_m$ is closable, and, with $\mathfrak l_m$ denoting its closure, the embedding
\begin{align}
\operatorname{Dom}(\mathfrak l_m)\hookrightarrow H \label{eq:compact-form-domain}
\end{align}
is compact;
\item $\mathbf L_m$ is nonnegative, self-adjoint, and has compact resolvent;
\item every eigenfunction of $\mathbf L_m$ belongs to $\mathscr D_m$.
\end{enumerate}
\end{lemma}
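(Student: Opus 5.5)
The most economical route is to transplant everything to the sphere. Fix $Y\in\mathcal H_m(\mathbb R^{n-1})|_{\mathbb S^{n-2}}$ with $\|Y\|_{L^2(\mathbb S^{n-2})}=1$. To each radial function $u$ on $(0,a)$ associate $U(r,\theta):=u(r)Y(\theta)$ on $S^{n-1}$. Since $dV_g=f^{n-2}\,dr\,dV_{\mathbb S^{n-2}}$ and $-\Delta_{\mathbb S^{n-2}}Y=\lambda_mY$, we get
\begin{align}
\|U\|_{L^2(g)}=\|u\|_H, \qquad \int_{S^{n-1}}|\nabla_gU|_g^2\,dV_g=\mathfrak l_m[u,u] \nonumber
\end{align}
for every $u\in\mathscr D_m$. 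For $u=r^m\psi(r^2)$ near $r=0$, the function $U=\psi(|x|^2)\,|x|^mY(x/|x|)$ is smooth in normal coordinates, because $|x|^mY(x/|x|)$ is a harmonic polynomial; the same holds at the other pole. Hence $u\mapsto U$ is an isometry of $(\mathscr D_m,\mathfrak l_m+\|\cdot\|_H^2)$ onto a subspace of $C^\infty(S^{n-1})$ equipped with the $H^1(g)$ norm. Let $\Pi_m$ denote the orthogonal projection $w\mapsto\bigl(\int w(\cdot,\theta)Y(\theta)\,dV_{\mathbb S^{n-2}}\bigr)Y$.

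For closability, suppose $u_k\in\mathscr D_m$, $u_k\to0$ in $H$, and $(u_k)$ is $\mathfrak l_m$-Cauchy. Then $U_k$ is Cauchy in $H^1(S^{n-1})$ and tends to $0$ in $L^2$. Since the Dirichlet form of $-\Delta_g$ is closed on $H^1$, this forces $U_k\to0$ in $H^1$, so $\mathfrak l_m[u_k]\to0$. The closure $\operatorname{Dom}(\mathfrak l_m)$ is therefore identified isometrically with the closed subspace $\overline{\{uY:u\in\mathscr D_m\}}^{H^1}\subset H^1(S^{n-1})$. Rellich's theorem on the compact manifold $S^{n-1}$ then gives compactness of the embedding \eqref{eq:compact-form-domain}, proving (1). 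Part (2) follows from standard form theory: a closed, nonnegative, densely defined form gives a nonnegative self-adjoint $\mathbf L_m$, and density holds because $\mathscr D_m\supset C_c^\infty(0,a)$. A compact form-domain embedding is equivalent to compact resolvent.

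The real content is (3). Let $\mathbf L_mu=\mu u$ and set $U=uY$. First I would show that $\operatorname{Dom}(\mathfrak l_m)$ corresponds exactly to $\Pi_mH^1(S^{n-1})$. The inclusion one way is clear. Conversely, for $w\in H^1$, approximate $w$ by smooth $w_k$ in $H^1$. Lemma~\ref{lem:rot-angular-coefficients-Dm} gives $\Pi_mw_k\in\mathscr D_mY$, and $\Pi_m$ is $H^1$-bounded because it commutes with the radial and angular parts of the Dirichlet energy. Next, for arbitrary $\Phi\in H^1(S^{n-1})$, the eigen-equation tested against $\Pi_m\Phi$, together with the orthogonality $\int\nabla U\cdot\nabla(\Phi-\Pi_m\Phi)=0=\int U(\Phi-\Pi_m\Phi)$ (which follows by decomposing into spherical harmonics in $\theta$), shows that $U$ is a weak $H^1$ solution of $-\Delta_gU=\mu U$ on all of $S^{n-1}$. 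Elliptic regularity then gives $U\in C^\infty(S^{n-1})$. Finally, $u=\int U(\cdot,\theta)Y(\theta)\,dV_{\mathbb S^{n-2}}$, and applying Lemma~\ref{lem:rot-angular-coefficients-Dm} to $w=U$ yields $u\in\mathscr D_m$.

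I expect the main obstacle to be the step showing that $\Pi_m$ is $H^1$-bounded and that the tested equation extends across the poles. The cleanest way to handle it is to note that on the smooth core $\Pi_m$ commutes with $\Delta_g$, because $\Delta_g$ preserves angular frequency. It is therefore a contraction for the norm $\|(1-\Delta_g)^{1/2}\cdot\|$, and this extends by density without any separate analysis at the poles.
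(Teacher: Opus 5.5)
Your proposal is correct and follows essentially the paper's route: the isometry $u\mapsto uY$ into $H^1(S^{n-1})$ gives closability and, via Rellich, the compact embedding. Regularity then comes from showing that $uY$ is a weak eigenfunction of $-\Delta_g$ and applying elliptic regularity followed by Lemma~\ref{lem:rot-angular-coefficients-Dm}.

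The one difference is in (3), where the paper avoids your flagged ``main obstacle''. It tests only against smooth $\Phi$, whose angular coefficient $v_\Phi$ already lies in the core $\mathscr D_m$ by Lemma~\ref{lem:rot-angular-coefficients-Dm}. So neither the $H^1$-boundedness of $\Pi_m$ nor the identification $\operatorname{Dom}(\mathfrak l_m)Y=\Pi_mH^1$ is needed. Your extra step is nonetheless valid, and it anticipates the decomposition used in Lemma~\ref{lem:rot-separation}.
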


\begin{proof}
Fix a real $L^2(\mathbb S^{n-2})$-normalized spherical harmonic $Y_m$ of degree $m$ and define
\begin{align}
J_m:H\longrightarrow L^2(S^{n-1},g;\mathbb R), \qquad J_m u(r,\theta):=u(r)Y_m(\theta). \nonumber
\end{align}
Then $J_m$ is an isometry and, for $u,v\in\mathscr D_m$,
\begin{align}
\langle J_mu,J_mv\rangle_{L^2(S^{n-1},g)} &=\langle u,v\rangle_H, \nonumber\\
\int_{S^{n-1}}\langle\nabla(J_mu),\nabla(J_mv)\rangle_g\,dV_g &=\mathfrak l_m[u,v]. \label{eq:Jm-form-identity}
\end{align}
The pole conditions defining $\mathscr D_m$ imply $J_m\mathscr D_m\subset C^\infty(S^{n-1})$.  Thus
\begin{align}
\|J_mu\|_{H^1(S^{n-1},g)}^2 =\|u\|_H^2+\mathfrak l_m[u,u], \qquad u\in\mathscr D_m. \label{eq:Jm-H1-norm}
\end{align}
If $u_j\to0$ in $H$ and $\mathfrak l_m[u_j-u_k,u_j-u_k]\to0$, then $J_mu_j$ converges in $H^1$ and converges to $0$ in $L^2$; hence its $H^1$ limit is $0$.  This proves closability.  After taking closures, \eqref{eq:Jm-H1-norm} identifies $\operatorname{Dom}(\mathfrak l_m)$ isometrically with a closed subspace of $H^1(S^{n-1},g)$.  Rellich compactness on the closed manifold therefore gives \eqref{eq:compact-form-domain}.

The representation theorem for closed nonnegative forms gives the nonnegative self-adjoint operator $\mathbf L_m$ characterized by \eqref{eq:Lm-domain-def}; see \cite[Proposition~3.1.7, p.~25]{NonnenmacherSpectralTheory}.  The compactness of the form-domain embedding implies that $(I+\mathbf L_m)^{-1}$ is compact.

It remains to prove the last assertion.  Let $\mathbf L_m u=\lambda u$.  For $\Phi\in C^\infty(S^{n-1})$, set
\begin{align}
v_\Phi(r) :=\int_{\mathbb S^{n-2}}\Phi(r,\theta)Y_m(\theta)\,d\theta. \nonumber
\end{align}
Lemma~\ref{lem:rot-angular-coefficients-Dm} gives $v_\Phi\in\mathscr D_m$.  Approximating $u$ by elements of $\mathscr D_m$ in the form norm and using \eqref{eq:Jm-form-identity}, we obtain
\begin{align}
\int_{S^{n-1}}\langle\nabla(J_mu),\nabla\Phi\rangle_g\,dV_g =\mathfrak l_m[u,v_\Phi] =\lambda\langle u,v_\Phi\rangle_H =\lambda\int_{S^{n-1}}J_mu\,\Phi\,dV_g. \nonumber
\end{align}
Thus $J_mu$ is a weak eigenfunction of $-\Delta_g$.  Elliptic regularity gives
\begin{align}
J_mu\in C^\infty(S^{n-1}); \nonumber
\end{align}
see \cite[Theorem~44, p.~68]{CanzaniLaplacian}.  Projecting onto $Y_m$ and applying Lemma~\ref{lem:rot-angular-coefficients-Dm} yields $u\in\mathscr D_m$.
\end{proof}

\begin{lemma}\label{lem:rot-separation}
For every $\Lambda\in\mathbb R$,
\begin{align}
N_{-\Delta_g}^{<}(\Lambda) =\sum_{m=0}^{\infty}d_{n-2,m} N_{\mathbf L_m}^{<}(\Lambda). \nonumber
\end{align}
The same identity holds with $N^{<}$ replaced by the closed counting function $\mathcal N$.
\end{lemma}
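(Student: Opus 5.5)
The plan is to decompose $L^2(S^{n-1},g)$ orthogonally into angular modes and to check that $-\Delta_g$ respects the decomposition. For each $m\geq0$ fix a real $L^2(\mathbb S^{n-2})$-orthonormal basis $Y_{m,1},\dots,Y_{m,d_{n-2,m}}$ of $\mathcal H_m(\mathbb R^{n-1})|_{\mathbb S^{n-2}}$. Let $J_{m,k}u:=u(r)Y_{m,k}(\theta)$ be the isometries from the proof of Lemma~\ref{lem:Hilbert-space-Lm}. Since $dV_g=f^{n-2}\,dr\,dV_{\mathbb S^{n-2}}$ and the spherical harmonics form a complete orthonormal system of $L^2(\mathbb S^{n-2})$, Fubini gives an orthogonal decomposition
\[
L^2(S^{n-1},g)=\bigoplus_{m\geq0}\bigoplus_{k=1}^{d_{n-2,m}}J_{m,k}(H).
\]
This unitary identification $U$ carries $L^2(S^{n-1},g)$ onto $\bigoplus_{m,k}H$.

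The main step is to show that $U$ intertwines $-\Delta_g$ with $\bigoplus_{m,k}\mathbf L_m$. I would argue at the level of eigenfunctions rather than operator domains. First, if $\mathbf L_m u=\lambda u$, then Lemma~\ref{lem:Hilbert-space-Lm}(3) gives $u\in\mathscr D_m$. Hence $J_{m,k}u$ is smooth, and by \eqref{eq:Jm-form-identity} together with the weak-eigenfunction computation in that proof, $J_{m,k}u$ is an eigenfunction of $-\Delta_g$ with eigenvalue $\lambda$. The spaces $J_{m,k}(H)$ are mutually orthogonal, so this embeds $\bigoplus_{m,k}\ker(\mathbf L_m-\lambda)$ injectively into $\ker(-\Delta_g-\lambda)$.

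Conversely, let $\Phi$ be a smooth eigenfunction of $-\Delta_g$ with eigenvalue $\lambda$, and take its coefficients $u_{m,k}(r)=\int\Phi\, Y_{m,k}\,d\theta$. Lemma~\ref{lem:rot-angular-coefficients-Dm} gives $u_{m,k}\in\mathscr D_m$. Testing the eigen-equation against $J_{m,k}v$ for $v\in\mathscr D_m$ and using \eqref{eq:Jm-form-identity} gives $\mathfrak l_m[u_{m,k},v]=\lambda\langle u_{m,k},v\rangle_H$ on the core, and by closure on $\operatorname{Dom}(\mathfrak l_m)$. This step needs the cross terms $\int\langle\nabla(\Phi-J_{m,k}u_{m,k}),\nabla J_{m,k}v\rangle$ to vanish. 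I would get this by noting that the angular projection onto $Y_{m,k}$ commutes with the gradient pairing, since the metric is a warped product and $Y_{m,k}$ is an eigenfunction of $\Delta_{\mathbb S^{n-2}}$. Concretely, integrate by parts on $\mathbb S^{n-2}$ in the tangential term, and use $\partial_r$ commuting with the projection in the radial term.

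It follows that each $u_{m,k}$ lies in $\ker(\mathbf L_m-\lambda)$. Since $\Phi=\sum U^{-1}u_{m,k}$, the eigenspaces correspond exactly:
\[
\dim\ker(-\Delta_g-\lambda)=\sum_m d_{n-2,m}\dim\ker(\mathbf L_m-\lambda).
\]
Both operators have compact resolvent (Lemma~\ref{lem:Hilbert-space-Lm} and the compactness of $S^{n-1}$), so their spectra are purely discrete. Summing over $\lambda<\Lambda$, respectively $\lambda\leq\Lambda$, gives both counting identities. The sums are finite in effect: $\mathbf L_m\geq\lambda_m\inf f^{-2}$ because $f\leq\sup f$, so $N_{\mathbf L_m}(\Lambda)=0$ for all large $m$.

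I expect the main obstacle to be the cross-term vanishing in the converse step, that is, rigorously commuting the angular projection with the Dirichlet form near the poles. There the coordinates $(r,\theta)$ degenerate, so I would cut off with $\chi_\varepsilon(r)$ supported away from the poles and let $\varepsilon\to0$, using the smoothness of $\Phi$ to control the boundary contributions.
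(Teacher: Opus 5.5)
Your proof is correct, but it takes a different route from the paper's: you work one eigenvalue at a time, while the paper proves an operator-level decomposition.

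The paper's argument runs at the level of closed forms. For finite families with $u_{m,\alpha}\in\mathscr D_m$, the Dirichlet form of $\sum u_{m,\alpha}\varphi_{m,\alpha}$ splits as $\sum\mathfrak l_m[u_{m,\alpha},u_{m,\alpha}]$. Lemma~\ref{lem:rot-angular-coefficients-Dm} puts the angular coefficients of smooth functions in $\mathscr D_m$, and finite spherical-harmonic truncations of smooth functions converge in $H^1$, so the splitting survives closure. This yields the unitary equivalence $-\Delta_g\cong\widehat{\bigoplus}_{m,\alpha}\mathbf L_m$, and both counting identities are read off from it.

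You instead prove $\dim\ker(-\Delta_g-\lambda)=\sum_m d_{n-2,m}\dim\ker(\mathbf L_m-\lambda)$ directly, staying in the smooth category. Lemma~\ref{lem:Hilbert-space-Lm}(3) lifts $\mathbf L_m$-eigenfunctions to $-\Delta_g$-eigenfunctions. Lemma~\ref{lem:rot-angular-coefficients-Dm}, together with closing the core identity from $\mathscr D_m$ to $\operatorname{Dom}(\mathfrak l_m)$, pushes smooth $-\Delta_g$-eigenfunctions down to $\mathbf L_m$-eigenfunctions. You then correctly use finite-dimensionality of $\ker(-\Delta_g-\lambda)$ to turn the two injections into a bijection.

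What each route buys: yours avoids identifying the form domain of an infinite direct sum and avoids the $H^1$-density of truncations. The price is dependence on the regularity statement in Lemma~\ref{lem:Hilbert-space-Lm}(3) and on discreteness of both spectra. The paper's route gives the stronger structural fact that the whole operator decomposes, which is more than the counting needs.

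The obstacle you anticipate at the poles does not actually arise, and no cut-off $\chi_\varepsilon$ is needed. For each fixed $0<r<a$, integrating by parts on the closed sphere $\{r\}\times\mathbb S^{n-2}$ and differentiating under the integral in $r$ gives exactly
$\int_{\mathbb S^{n-2}}\langle\nabla\Phi,\nabla(vY_{m,k})\rangle_g\,d\theta=u_{m,k}'v'+\lambda_mf^{-2}u_{m,k}v$.
Since $\Phi$ and $J_{m,k}v$ are smooth on $S^{n-1}$, the integrand is bounded, and the two poles have measure zero. Fubini with $dV_g=f^{n-2}\,dr\,d\theta$ therefore gives $\int_{S^{n-1}}\langle\nabla\Phi,\nabla J_{m,k}v\rangle\,dV_g=\mathfrak l_m[u_{m,k},v]$ with no boundary terms. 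This is the same slice identity the paper uses implicitly in Lemma~\ref{lem:Hilbert-space-Lm}(3).
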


\begin{proof}
For each $m$, choose a real orthonormal basis of $\mathcal H_m(\mathbb S^{n-2})$, denoted by
\begin{align}
\{\varphi_{m,\alpha}\}_{\alpha=1}^{d_{n-2,m}}. \nonumber
\end{align}
The spherical-harmonic decomposition
\begin{align}
L^2(\mathbb S^{n-2};\mathbb R) =\widehat{\bigoplus}_{m=0}^{\infty}\mathcal H_m(\mathbb S^{n-2}) \nonumber
\end{align}
induces the unitary decomposition
\begin{align}
L^2(S^{n-1},g;\mathbb R) \cong \widehat{\bigoplus}_{m=0}^{\infty} \widehat{\bigoplus}_{\alpha=1}^{d_{n-2,m}}H, \qquad \{u_{m,\alpha}\} \longmapsto \sum_{m,\alpha}u_{m,\alpha}(r)\varphi_{m,\alpha}(\theta). \nonumber
\end{align}
For a finite family with $u_{m,\alpha}\in\mathscr D_m$, orthogonality and integration by parts on $\mathbb S^{n-2}$ give
\begin{align}
\int_{S^{n-1}}|\nabla w|_g^2\,dV_g =\sum_{m,\alpha}\mathfrak l_m[u_{m,\alpha},u_{m,\alpha}]. \nonumber
\end{align}
Conversely, Lemma~\ref{lem:rot-angular-coefficients-Dm} shows that the angular coefficients of every smooth function lie in the corresponding $\mathscr D_m$.  Finite spherical-harmonic truncations of smooth functions converge in $H^1$, so closure of the preceding identity gives an orthogonal direct-sum decomposition of the Dirichlet form.  Hence
\begin{align}
-\Delta_g \cong \widehat{\bigoplus}_{m=0}^{\infty} \widehat{\bigoplus}_{\alpha=1}^{d_{n-2,m}}\mathbf L_m. \nonumber
\end{align}
The strict and closed counting identities follow, including multiplicities.  The sums are finite at each fixed threshold because
\begin{align}
\mathbf L_m \geq \frac{m(m+n-3)}{\max_{[0,a]}f^2}\,I, \nonumber
\end{align}
so only finitely many angular degrees can contribute below a prescribed number.
\end{proof}

\begin{lemma}
For every $m\geq0$,
\begin{align}
\mathcal B_m\mathscr D_m\subset\mathscr D_{m+1}. \label{eq:Bm-core-mapping-rot}
\end{align}
\end{lemma}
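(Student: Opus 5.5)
The proof is a local computation at each pole, and smoothness on $(0,a)$ is immediate since $f>0$ there. We treat the pole $r=0$; the pole $r=a$ follows by the substitution $r\mapsto a-r$, under which $f'$ changes sign. The expression $\mathcal B_m u=u'-m(f'/f)u$ is invariant under this reflection once we also replace $u'$ by $-\frac{d}{d(a-r)}u$. Concretely, $\mathcal B_m u=-(\partial_s u - m(\partial_s f/f)u)$ with $s=a-r$, so the overall sign is harmless.

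\textbf{Parity of $f$ at the pole.} Near $r=0$, the smoothness of $g$ at the pole (the ``even-order compatibility conditions'') means that $f$ is odd with $f'(0)=1$. We therefore write $f(r)=r\,\phi(r^2)$ with $\phi$ smooth and $\phi(0)=1$. Then
\[
f'(r)=\phi(r^2)+2r^2\phi'(r^2),
\qquad
\frac{f'}{f}=\frac{1}{r}\Bigl(1+\frac{2r^2\phi'(r^2)}{\phi(r^2)}\Bigr)=\frac1r\bigl(1+r^2\chi(r^2)\bigr),
\]
where $\chi$ is smooth near $0$.

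\textbf{Computing $\mathcal B_m u$.} Take $u=r^m\psi(r^2)$. Then
\[
u'=m r^{m-1}\psi(r^2)+2r^{m+1}\psi'(r^2),
\]
and
\[
m\frac{f'}{f}u=m r^{m-1}\psi(r^2)+m r^{m+1}\chi(r^2)\psi(r^2).
\]
The singular terms $m r^{m-1}\psi$ cancel exactly; this cancellation is the point of the choice of coefficient $m$ in $\mathcal B_m$. We are left with
\[
\mathcal B_m u=r^{m+1}\bigl(2\psi'(r^2)-m\chi(r^2)\psi(r^2)\bigr)=r^{m+1}\tilde\psi(r^2).
\]
Here $\tilde\psi$ is smooth near $0$, and after multiplying by a cutoff it extends smoothly to $\mathbb R$. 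For $m=0$ the formula reads $u'=2r\psi'(r^2)$, which is consistent.

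\textbf{Main obstacle.} The only real obstacle is justifying the odd-extension structure $f=r\phi(r^2)$ from the stated pole conditions. We would invoke the standard fact that $dr^2+f^2g_{\mathbb S^{n-2}}$ is smooth at the pole if and only if $f$ extends to a smooth odd function with $f'(0)=1$; this is the meaning of the even-order compatibility conditions. Equivalently, $f$ has only odd Taylor coefficients, and a smooth odd function is of the form $r\phi(r^2)$ by Whitney's theorem on even functions. The same fact then gives $\chi(r^2)=(rf'/f-1)/r^2$ as a smooth even function.

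Gluing the two local descriptions with the interior smoothness shows $\mathcal B_m u\in\mathscr D_{m+1}$.
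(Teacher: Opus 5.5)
Your proposal is correct and follows the paper's proof: the paper writes $f=rF(r^2)$ and $f'/f=\frac1r+rH_0(r^2)$ near $r=0$ (your $\phi$ and $\chi$), obtains $\mathcal B_m u=r^{m+1}\bigl(2\psi'(r^2)-mH_0(r^2)\psi(r^2)\bigr)$ through the same cancellation of the $mr^{m-1}\psi$ terms, and handles $r=a$ by the analogous computation. Your justification of $f=r\phi(r^2)$ via the odd extension and Whitney's theorem, and your gluing of the two pole expansions into one global representation, spell out steps the paper leaves implicit.
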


\begin{proof}
Near $r=0$, smoothness of the warping function gives
\begin{align}
f(r)=rF(r^2), \qquad \frac{f'}{f}=\frac1r+rH_0(r^2), \nonumber
\end{align}
with $F$ smooth and positive and $H_0$ smooth.  If $u(r)=r^m\psi(r^2)$, then
\begin{align}
\mathcal B_m u =r^{m+1}\bigl(2\psi'(r^2)-mH_0(r^2)\psi(r^2)\bigr). \nonumber
\end{align}
The analogous calculation at $r=a$ gives the required $(a-r)^{m+1}$ expansion.  Hence \eqref{eq:Bm-core-mapping-rot} holds.
\end{proof}

\begin{lemma}\label{lem:rot-adjoint-graph-core}
For every $m\geq0$,
\begin{align}
\operatorname{Dom}(\mathbf B_m^*) =\overline{\mathscr D_{m+1}}^{\,\|\cdot\|_{\mathbf B_m^*}}, \qquad \|v\|_{\mathbf B_m^*}^2 :=\|v\|_H^2+\|\mathbf B_m^*v\|_H^2. \nonumber
\end{align}
\end{lemma}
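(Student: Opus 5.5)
The plan is to show that $\mathscr D_{m+1}$ is dense in $\operatorname{Dom}(\mathbf B_m^*)$ for the graph norm, by the usual orthogonal-complement argument. Three preliminary facts: $\mathbf B_m^*$ is closed; $\mathscr D_{m+1}\subset\operatorname{Dom}(\mathbf B_m^*)$, with $\mathbf B_m^*=\mathcal B_m^\dagger$ there (integration by parts against $\mathscr D_m$, then closure); and the right-hand side of the lemma is therefore a closed subspace of $\operatorname{Dom}(\mathbf B_m^*)$. So it suffices to prove the following: if $v\in\operatorname{Dom}(\mathbf B_m^*)$ satisfies
\begin{align}
\langle v,\phi\rangle_H+\langle \mathbf B_m^*v,\mathcal B_m^\dagger\phi\rangle_H=0,\qquad \phi\in\mathscr D_{m+1}, \nonumber
\end{align}
then $v=0$.

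Set $w:=\mathbf B_m^*v\in H$. Testing with $\phi\in C_c^\infty((0,a))\subset\mathscr D_{m+1}$ shows that, in the sense of distributions on $(0,a)$,
\begin{align}
w'-m\frac{f'}{f}w=-v. \nonumber
\end{align}
Thus $w$ lies in the domain of the \emph{maximal} realization of $\mathcal B_m$, with $\mathcal B_m w=-v$. Suppose we can show that $w\in\operatorname{Dom}(\mathbf B_m)$ and $\mathbf B_mw=-v$. Then
\begin{align}
\|w\|_H^2=\langle \mathbf B_m^*v,w\rangle_H=\langle v,\mathbf B_mw\rangle_H=-\|v\|_H^2, \nonumber
\end{align}
so $v=w=0$. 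The whole lemma therefore reduces to a minimal-equals-maximal statement for $\mathbf B_m$: every $w\in H$ with $\mathcal B_mw\in H$ distributionally lies in the graph closure of $\mathscr D_m$.

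This reduction is the main obstacle, and it is local at the two poles. To handle it, I would conjugate away the zeroth-order term. The homogeneous solution is $f^m$, which equals $r^mF(r^2)^m$ near $r=0$, so $f^m\in\mathscr D_m$. Writing $w=f^mh$ gives
\begin{align}
\mathcal B_mw=f^mh'. \nonumber
\end{align}
The conditions $w\in H$ and $\mathcal B_m w\in H$ then say that $h$ and $h'$ lie in $L^2\bigl((0,a),f^{2m+n-2}dr\bigr)$. Near $r=0$ the weight is comparable to $r^{\beta}$ with $\beta=2m+n-2\geq1$, because $n\geq3$.

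I would then cut off near each pole with $\chi_\varepsilon$. When $\beta>1$ an ordinary cutoff at scale $\varepsilon$ suffices. In the borderline case $\beta=1$, that is $m=0$ and $n=3$, I would use a logarithmic cutoff supported in $[\varepsilon^2,\varepsilon]$. The cutoff error is
\begin{align}
\int |h|^2|\chi_\varepsilon'|^2 r^{\beta}\,dr, \nonumber
\end{align}
and I would control it by a weighted Hardy inequality near $0$. That inequality bounds $h$ by $|h(r_0)|+\bigl(\int r^\beta|h'|^2\bigr)^{1/2}$ times $r^{(1-\beta)/2}$, respectively by $|\log r|^{1/2}$ when $\beta=1$, and this growth makes the error tend to $0$ by the standard zero-capacity computation for a point.

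After the cutoff, $f^m\chi_\varepsilon h$ is compactly supported in $(0,a)$. Mollifying it gives elements of $C_c^\infty((0,a))\subset\mathscr D_m$ that converge in the graph norm, since $f$ is smooth and positive on the support. The piece near the pole that the cutoff removed tends to $0$ in graph norm by the capacity estimate. If needed, a multiple of $f^m\in\mathscr D_m$ absorbs any nonzero limiting behavior at the pole. This shows $w\in\operatorname{Dom}(\mathbf B_m)$ with $\mathbf B_mw=\mathcal B_mw=-v$, which completes the proof.
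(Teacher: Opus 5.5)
Your argument is correct, but it works on the opposite side of the duality from the paper's proof.

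The paper works directly with $v\in\operatorname{Dom}(\mathbf B_m^*)$. It writes the adjoint equation as $(f^{m+n-2}v)'=-f^{m+n-2}w$. The homogeneous solution $f^{-(m+n-2)}$ is not in $H$, so the integration constants at the poles vanish. Cauchy--Schwarz then gives $\int_\varepsilon^{2\varepsilon}|v|^2f^{n-2}\,dr=o(\varepsilon^2)$, and a single plain cutoff at scale $\varepsilon$ followed by mollification approximates $v$ by elements of $C_c^\infty((0,a))$, with no case distinction. In effect the paper shows that the maximal realization of $\mathcal B_m^\dagger$ coincides with its closure from $C_c^\infty((0,a))$.

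You instead prove the adjoint-equivalent statement for $\mathcal B_m$, and then return to $\mathbf B_m^*$ through the orthogonal-complement argument. On your side the homogeneous solution $f^m$ does lie in $H$, indeed in $\operatorname{Dom}(\mathbf B_m)$. So there is no constant to kill, and you must rely on the pole having zero capacity for the weight $r^{2m+n-2}$. This is why the borderline case $m=0$, $n=3$ forces a logarithmic cutoff on your side but not on the paper's. Both proofs rest on the same condition, $\int_0 r^{-(2m+n-2)}\,dr=\infty$. The paper uses it as non-integrability of the adjoint homogeneous solution; you use it as zero capacity of a pole.

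One point needs tightening. The pointwise bounds $|h(r)|\lesssim r^{(1-\beta)/2}$, and $|\log r|^{1/2}$ when $\beta=1$, only make the cutoff error bounded, not small: for example, a plain cutoff at scale $\varepsilon$ then gives an error of order $\varepsilon^{-2}\int_\varepsilon^{2\varepsilon}r\,dr=O(1)$. You need the little-$o$ versions of these bounds. They follow by estimating $h(r)-h(\rho)$ in terms of $\int_0^\rho s^\beta|h'|^2\,ds$ and then letting $\rho\downarrow0$. With these bounds the cutoff already approximates $f^m$ itself, so your final remark about absorbing a multiple of $f^m$ is unnecessary.
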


\begin{proof}
Let $v\in\operatorname{Dom}(\mathbf B_m^*)$ and set $w=\mathbf B_m^*v$.  On compact subintervals of $(0,a)$,
\begin{align}
-v'-(m+n-2)\frac{f'}{f}v=w \nonumber
\end{align}
holds distributionally, so $v\in H^1_{\mathrm{loc}}(0,a)$ and
\begin{align}
\bigl(f^{m+n-2}v\bigr)'=-f^{m+n-2}w. \label{eq:rot-adjoint-integrating-factor}
\end{align}
The integration constant at either endpoint is zero: otherwise $v\sim cf^{-(m+n-2)}$, which is not in $L^2((0,a),f^{n-2}dr)$ for $n\geq3$.

Near $r=0$, \eqref{eq:rot-adjoint-integrating-factor} and Cauchy--Schwarz imply
\begin{align}
\int_{\varepsilon}^{2\varepsilon}|v|^2f^{n-2}\,dr=o(\varepsilon^2) \qquad\text{as }\varepsilon\downarrow0. \label{eq:rot-adjoint-cutoff-estimate}
\end{align}
The same estimate holds near $r=a$.  Choose a cutoff $\chi_\varepsilon$ which vanishes in the two $\varepsilon$-neighborhoods of the endpoints, equals one outside the two $2\varepsilon$-neighborhoods, and satisfies $|\chi_\varepsilon'|\leq C\varepsilon^{-1}$.  Then $\chi_\varepsilon v\to v$ in $H$ and, by \eqref{eq:rot-adjoint-cutoff-estimate},
\begin{align}
\mathcal B_m^\dagger(\chi_\varepsilon v) =\chi_\varepsilon w-\chi_\varepsilon'v \longrightarrow w \qquad\text{in }H. \nonumber
\end{align}
After cutting off, ordinary mollification on compact subintervals produces a sequence in $C_c^\infty(0,a)\subset\mathscr D_{m+1}$ converging to $v$ in the graph norm of $\mathbf B_m^*$.
\end{proof}

\begin{lemma}\label{lem:Bm-qm-form-domain-rot}
For every $m\geq0$,
\begin{align}
\operatorname{Dom}(\mathbf B_m) =\operatorname{Dom}(\mathfrak l_m). \label{eq:Bm-qm-domain-rot}
\end{align}
Moreover, the graph norm of $\mathcal B_m$ and the $\mathfrak l_m$-form norm are equivalent on $\mathscr D_m$.
\end{lemma}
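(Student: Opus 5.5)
The plan is to compare the two quadratic expressions directly on the core $\mathscr D_m$ via one integration by parts, and then pass to closures. For $u\in\mathscr D_m$,
\begin{align}
\|\mathcal B_m u\|_H^2=\int_0^a\Bigl(u'^2-2m\frac{f'}{f}uu'+m^2\frac{(f')^2}{f^2}u^2\Bigr)f^{n-2}\,dr. \nonumber
\end{align}
The cross term is $-m\int_0^a (u^2)'f'f^{n-3}\,dr$. Near $r=0$ we have $u^2=O(r^{2m})$ and $f'f^{n-3}=O(r^{n-3})$, so the boundary terms vanish for $m\geq1$; for $m=0$ the cross term is absent. Integration by parts then converts the cross term into
\begin{align}
m\int_0^a u^2\bigl(f''f^{n-3}+(n-3)(f')^2f^{n-4}\bigr)\,dr. \nonumber
\end{align}
Collecting terms and using $f''/f=-K_{\mathrm{rad}}$ and $(f')^2/f^2=f^{-2}-K_{\mathrm{tan}}$, one should obtain an identity of the form
\begin{align}
\|\mathcal B_m u\|_H^2=\mathfrak l_m[u,u]+\int_0^a V_m u^2 f^{n-2}\,dr,\qquad V_m:=-mK_{\mathrm{rad}}-m(m+n-3)K_{\mathrm{tan}}, \nonumber
\end{align}
where the $f^{-2}$ coefficients combine as $m^2+m(n-3)=\lambda_m$. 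This matches exactly the angular term in $\mathfrak l_m$, and this cancellation of the singular $f^{-2}$ weight is the structural point of the argument.

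Since $K_{\mathrm{rad}}$ and $K_{\mathrm{tan}}$ extend smoothly to $[0,a]$ (as recorded in the Notation), $V_m$ is bounded, say $|V_m|\leq M_m$. The identity then gives
\begin{align}
\|u\|_H^2+\|\mathcal B_mu\|_H^2\leq (1+M_m)\bigl(\|u\|_H^2+\mathfrak l_m[u,u]\bigr), \nonumber
\end{align}
together with the reverse inequality with the same constant. This proves equivalence of the two norms on $\mathscr D_m$. Because $\operatorname{Dom}(\mathbf B_m)$ is by definition the closure of $\mathscr D_m$ in the graph norm, and $\operatorname{Dom}(\mathfrak l_m)$ is the closure of $\mathscr D_m$ in the form norm (Lemma~\ref{lem:Hilbert-space-Lm}), both are completions of $\mathscr D_m$ inside $H$ with respect to equivalent norms. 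They therefore coincide as subsets of $H$, which gives \eqref{eq:Bm-qm-domain-rot}.

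The main obstacle is justifying the integration by parts at the poles, which also handles the $m=0$ and $n=3$ cases uniformly. I would carry it out on $[\varepsilon,a-\varepsilon]$ and use the pole expansions $u=r^m\psi(r^2)$ and $f=rF(r^2)$. These show that the boundary term $m\,u^2f'f^{n-3}$ is $O(\varepsilon^{2m+n-3})$, which tends to $0$ when $m\geq1$ and $n\geq3$. They also show that every integrand is integrable. In particular, $u^2(f')^2f^{n-4}$ is $O(r^{2m+n-4})$, which is integrable for $m\geq1$, and for $m=0$ this term carries the factor $m$ and so vanishes.
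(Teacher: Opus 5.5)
Your proposal is correct and follows the paper's proof essentially step for step: you expand $\|\mathcal B_m u\|_H^2$ on $\mathscr D_m$, integrate the cross term by parts, and observe that the singular $f^{-2}$ terms cancel against $\lambda_m/f^2$. This leaves the bounded potential $m\bigl((m+n-3)((f')^2-1)+ff''\bigr)/f^2=-m\bigl(K_{\mathrm{rad}}+(m+n-3)K_{\mathrm{tan}}\bigr)$, from which you deduce equivalence of the norms and hence equality of the two completions in $H$. Your explicit check that the pole boundary terms vanish and that all integrands are integrable is a detail the paper leaves implicit.
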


\begin{proof}
For $u\in\mathscr D_m$, expanding the square and integrating the mixed term by parts gives
\begin{align}
\|\mathcal B_m u\|_H^2-\mathfrak l_m[u,u]=m\int_0^a u^2f^{n-2}\left[\frac{(m+n-3)((f')^2-1)+ff''}{f^2}\right]dr. \label{eq:Bm-qm-norm-difference}
\end{align}
The pole compatibility conditions make the coefficient in brackets smooth at both endpoints: its numerator is $O(f^2)$.  Hence the right-hand side of \eqref{eq:Bm-qm-norm-difference} is bounded in absolute value by a constant times $\|u\|_H^2$.  Therefore
\begin{align}
\|u\|_H^2+\|\mathcal B_m u\|_H^2 \asymp \|u\|_H^2+\mathfrak l_m[u,u] \qquad\text{on }\mathscr D_m. \nonumber
\end{align}
The two completions of $\mathscr D_m$ inside $H$ consequently coincide, proving \eqref{eq:Bm-qm-domain-rot}.
\end{proof}

\begin{lemma}[Radial ladder identity]
For every $m\geq0$,
\begin{align}
\mathbf L_0 &=\mathbf B_0^*\mathbf B_0 \qquad\text{on }\mathscr D_0, \label{eq:rot-L0-factorization}\\
\mathbf B_m\mathbf B_m^*- \mathbf B_{m+1}^*\mathbf B_{m+1} &=(2m+n-1)K_{\mathrm{rad}} \qquad\text{on }\mathscr D_{m+1}. \label{eq:ladder-comm-rot}
\end{align}
\end{lemma}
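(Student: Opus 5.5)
The plan is to verify both identities by a direct computation on the smooth cores. Before computing, I will check that every composite operator appearing in \eqref{eq:rot-L0-factorization} and \eqref{eq:ladder-comm-rot} is defined on the stated core and acts there by its formal expression.

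\textbf{Domain bookkeeping.} By \eqref{eq:Bm-core-mapping-rot}, $\mathcal B_m\mathscr D_m\subset\mathscr D_{m+1}$. By Definition~\ref{def:Bm-for-rotation-symm}, $\mathbf B_m^*$ agrees with $\mathcal B_m^\dagger$ on $\mathscr D_{m+1}$. Hence on $\mathscr D_m$ the composite $\mathbf B_m^*\mathbf B_m$ is defined and equals $\mathcal B_m^\dagger\mathcal B_m$; applied with $m+1$, the same holds for $\mathbf B_{m+1}^*\mathbf B_{m+1}$ on $\mathscr D_{m+1}$.

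For $\mathbf B_m\mathbf B_m^*$ on $\mathscr D_{m+1}$, I additionally need the companion mapping property $\mathcal B_m^\dagger\mathscr D_{m+1}\subset\mathscr D_m$. I will prove it exactly as for \eqref{eq:Bm-core-mapping-rot}. Near $r=0$ write $f'/f=1/r+rH_0(r^2)$ and $v=r^{m+1}\psi(r^2)$. Then
\begin{align}
\mathcal B_m^\dagger v=-r^m\bigl((2m+n-1)\psi(r^2)+2r^2\psi'(r^2)+(m+n-2)r^2H_0(r^2)\psi(r^2)\bigr), \nonumber
\end{align}
which has the form $r^m\cdot(\text{smooth in }r^2)$. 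The pole $r=a$ is handled symmetrically. Consequently $\mathbf B_m^*v\in\mathscr D_m\subset\operatorname{Dom}(\mathbf B_m)$, and $\mathbf B_m\mathbf B_m^*v=\mathcal B_m\mathcal B_m^\dagger v$.

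\textbf{Identity \eqref{eq:rot-L0-factorization}.} For $u\in\mathscr D_0$,
\begin{align}
\mathcal B_0^\dagger\mathcal B_0u=-u''-(n-2)\tfrac{f'}{f}u'=\mathcal L_0u. \nonumber
\end{align}
To identify this with $\mathbf L_0u$, I integrate by parts against any $w\in\mathscr D_0$; this gives $\mathfrak l_0[u,w]=\langle\mathcal L_0u,w\rangle_H$. Since $\mathscr D_0$ is a form core and $\mathcal L_0u\in H$, the characterization \eqref{eq:Lm-domain-def} yields $u\in\operatorname{Dom}(\mathbf L_0)$ with $\mathbf L_0u=\mathcal L_0u$. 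Alternatively, Lemma~\ref{lem:Bm-qm-form-domain-rot} with $m=0$ shows that $\mathfrak a_0$ and $\mathfrak l_0$ are literally the same closed form, so $\mathbf L_0=\mathbf B_0^*\mathbf B_0$ even as self-adjoint operators.

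\textbf{Identity \eqref{eq:ladder-comm-rot}.} Set $h=f'/f$, so that $h'=f''/f-h^2$. Expanding,
\begin{align}
\mathcal B_m\mathcal B_m^\dagger v&=-v''-(n-2)hv'-(m+n-2)h'v+m(m+n-2)h^2v,\nonumber\\
\mathcal B_{m+1}^\dagger\mathcal B_{m+1}v&=-v''-(n-2)hv'+(m+1)h'v+(m+1)(m+n-1)h^2v.\nonumber
\end{align}
The second-order and first-order terms cancel in the difference. Using $m(m+n-2)-(m+1)(m+n-1)=-(2m+n-1)$, the difference is
\begin{align}
-(2m+n-1)(h'+h^2)v=-(2m+n-1)\tfrac{f''}{f}v=(2m+n-1)K_{\mathrm{rad}}v. \nonumber
\end{align}

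\textbf{Main obstacle.} The algebra itself is routine. The only genuine point is the core-mapping statement $\mathcal B_m^\dagger\mathscr D_{m+1}\subset\mathscr D_m$ at both poles, since without it $\mathbf B_m\mathbf B_m^*$ need not be defined on $\mathscr D_{m+1}$. This requires checking that the singular term $1/r$ in $f'/f$ combines with the vanishing order $r^{m+1}$ of $v$ to produce exactly $r^m$ times an even smooth function, which is the computation displayed above.
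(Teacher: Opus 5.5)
Your proposal is correct and follows the paper's route: both arguments compute $\mathbf B_m^*\mathbf B_m$ and $\mathbf B_m\mathbf B_m^*$ explicitly on the smooth cores, set $m=0$ in the first to get $\mathbf L_0$, and subtract for the ladder identity. Your version also spells out what the paper leaves implicit in the phrase ``on the indicated cores'', namely the mapping property $\mathcal B_m^\dagger\mathscr D_{m+1}\subset\mathscr D_m$ at both poles and the identification of $\mathcal L_0u$ with $\mathbf L_0u$ through the form characterization.
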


\begin{proof}
Using \eqref{eq:Bm-rot} and \eqref{eq:Bm-adj-rot}, one obtains on the indicated cores
\begin{align}
\mathbf B_m^*\mathbf B_m &=-\frac{d^2}{dr^2}-(n-2)\frac{f'}{f}\frac{d}{dr} +m(m+n-3)\frac{(f')^2}{f^2}+m\frac{f''}{f}, \nonumber\\
\mathbf B_m\mathbf B_m^* &=-\frac{d^2}{dr^2}-(n-2)\frac{f'}{f}\frac{d}{dr} -(m+n-2)\frac{f''}{f} +(m+1)(m+n-2)\frac{(f')^2}{f^2}. \nonumber
\end{align}
The first formula with $m=0$ gives \eqref{eq:rot-L0-factorization}; subtraction gives \eqref{eq:ladder-comm-rot}.
\end{proof}

\section{Radial curvature and application of the abstract recursion}\label{sec:rotational-vanishing}

\begin{lemma}\label{lem:rot-curvature-ineq}
Assume $\Rc_g\geq(n-2)g$.  Then, on $(0,a)$,
\begin{align}
f''+f&\leq0, \label{eq:fconcave}\\
f^2+(f')^2&\leq1. \label{eq:one-minus-fprime}
\end{align}
Consequently,
\begin{align}
K_{\mathrm{rad}}\geq1, \qquad K_{\mathrm{tan}}\geq1. \nonumber
\end{align}
Moreover, $K_{\mathrm{rad}}\equiv1$ if and only if $(S^{n-1},g)$ is the unit round sphere.
\end{lemma}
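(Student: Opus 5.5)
We first translate the hypothesis $\Rc_g\geq(n-2)g$ into conditions on the warping function. For $g=dr^2+f^2g_{\mathbb S^{n-2}}$, the radial Ricci curvature is $(n-2)K_{\mathrm{rad}}$. The tangential Ricci curvature is $K_{\mathrm{rad}}+(n-3)K_{\mathrm{tan}}$, with $K_{\mathrm{rad}}=-f''/f$ and $K_{\mathrm{tan}}=(1-(f')^2)/f^2$.

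The radial condition gives $-f''/f\geq1$ on $(0,a)$, which is \eqref{eq:fconcave}. For $n=3$ the $S^2$ statement is just $K_g=-f''/f\geq1$, so \eqref{eq:fconcave} holds in every dimension $n\geq3$.

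For \eqref{eq:one-minus-fprime} we do not use the tangential Ricci condition; it follows from \eqref{eq:fconcave} alone. Set $E:=f^2+(f')^2$. Then
\[
E'=2f'(f+f'').
\]
The function $f$ is positive on $(0,a)$ and vanishes at both ends, so it attains its maximum at some $r_0\in(0,a)$, where $f'(r_0)=0$. Since $f''\leq -f<0$ on $(0,a)$, $f'$ is strictly decreasing. Hence $f'>0$ on $(0,r_0)$ and $f'<0$ on $(r_0,a)$. Combined with $f+f''\leq0$, this gives $E'\leq0$ on $(0,r_0)$ and $E'\geq0$ on $(r_0,a)$. So $E$ is maximized at the endpoints, where $E(0)=f'(0)^2=1$ and $E(a)=f'(a)^2=1$. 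Therefore $E\leq1$ on $(0,a)$, which is \eqref{eq:one-minus-fprime}.

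Dividing by $f^2>0$ gives $K_{\mathrm{tan}}=(1-(f')^2)/f^2\geq1$, and $K_{\mathrm{rad}}\geq1$ is \eqref{eq:fconcave} restated.

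For the rigidity statement, suppose $K_{\mathrm{rad}}\equiv1$. Then $f''+f=0$ with $f(0)=0$ and $f'(0)=1$, so $f=\sin r$. The condition $f(a)=0$ together with positivity on $(0,a)$ forces $a=\pi$, and then $g=dr^2+\sin^2r\,g_{\mathbb S^{n-2}}$ is the unit round metric. Conversely, the round metric has $f=\sin r$, hence $K_{\mathrm{rad}}\equiv1$.

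The only step needing care is the monotonicity argument for $E$. The sign of $f'$ must be controlled, and this comes from strict concavity of $f$ (via $f''\leq-f<0$) together with the boundary values. The endpoint evaluations are justified because $f$ and $f'$ extend smoothly to $[0,a]$ by the pole conditions. Everything else is direct computation.
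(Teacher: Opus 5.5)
Your proof is correct and follows essentially the same route as the paper. Both use the radial Ricci component to get $K_{\mathrm{rad}}\geq1$, then the sign of $E'=2f'(f+f'')$ on either side of the unique zero of the strictly decreasing $f'$ together with $E(0)=E(a)=1$ to get $f^2+(f')^2\leq1$ (and hence $K_{\mathrm{tan}}\geq1$), and finally solve $f''+f=0$, $f(0)=0$, $f'(0)=1$ to obtain $f=\sin r$ and $a=\pi$ for rigidity.
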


\begin{proof}
For a unit vector $X$ tangent to a principal orbit,
\begin{align}
\Rc_g(\partial_r,\partial_r) &=(n-2)K_{\mathrm{rad}}, \nonumber\\
\Rc_g(X,X) &=K_{\mathrm{rad}}+(n-3)K_{\mathrm{tan}}. \nonumber
\end{align}
The radial component of the Ricci lower bound gives $K_{\mathrm{rad}}\geq1$, equivalently \eqref{eq:fconcave}.

Since $f''\leq-f<0$, $f'$ decreases from $1$ to $-1$ and vanishes at a unique point.  For
\begin{align}
E(r):=f(r)^2+f'(r)^2, \nonumber
\end{align}
one has
\begin{align}
E'(r)=2f'(r)(f(r)+f''(r)). \nonumber
\end{align}
Thus $E$ decreases before the zero of $f'$ and increases after it.  Since $E(0)=E(a)=1$, one obtains $E\leq1$, which is \eqref{eq:one-minus-fprime} and gives $K_{\mathrm{tan}}\geq1$.

If $K_{\mathrm{rad}}\equiv1$, then $f''+f=0$, $f(0)=0$, and $f'(0)=1$, so $f(r)=\sin r$.  The conditions at the second pole force $a=\pi$.  The converse is immediate.
\end{proof}

\begin{lemma}\label{lem:rot-Am-Cm-compact-regularity}
For every $m\geq0$,
\begin{align}
\operatorname{Dom}(\mathfrak a_m) &=\operatorname{Dom}(\mathfrak l_m), \label{eq:rot-am-domain-qm}\\
\operatorname{Dom}(\mathfrak c_m) &=\operatorname{Dom}(\mathfrak a_{m+1}) =\operatorname{Dom}(\mathfrak l_{m+1}). \label{eq:rot-cm-domain-qmplus}
\end{align}
On this common domain,
\begin{align}
\mathfrak c_m[v,v]-\mathfrak a_{m+1}[v,v] =(2m+n-1)\int_0^a(K_{\mathrm{rad}}-1)v^2f^{n-2}\,dr. \label{eq:rot-cm-amplus-form-difference}
\end{align}
The common form domains in \eqref{eq:rot-am-domain-qm} and \eqref{eq:rot-cm-domain-qmplus} embed compactly into $H$ when equipped with any of the indicated form norms.  Consequently, the operators $\mathbf A_m$ and $\mathbf C_m$ have compact resolvent.  In addition, every eigenfunction of $\mathbf C_m$ belongs to $\mathscr D_{m+1}$.
\end{lemma}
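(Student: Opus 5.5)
The first identity \eqref{eq:rot-am-domain-qm} is immediate from the definitions: $\operatorname{Dom}(\mathfrak a_m)=\operatorname{Dom}(\mathbf B_m)$, and Lemma~\ref{lem:Bm-qm-form-domain-rot} identifies this with $\operatorname{Dom}(\mathfrak l_m)$, with equivalent norms. The substance is therefore \eqref{eq:rot-cm-domain-qmplus} and \eqref{eq:rot-cm-amplus-form-difference}. Both will be proved on the common core $\mathscr D_{m+1}$ and then extended by density.

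For $v\in\mathscr D_{m+1}$, integrate the ladder identity \eqref{eq:ladder-comm-rot} against $v$ with weight $f^{n-2}$. On this core $\mathbf B_m^*$ is the formal expression \eqref{eq:Bm-adj-rot}, and \eqref{eq:Bm-core-mapping-rot} gives $\mathcal B_{m+1}v\in\mathscr D_{m+2}$. Hence the integrations by parts are justified; the pole behaviour $v=O(r^{m+1})$ kills all boundary terms. This yields
\begin{align}
\|\mathcal B_m^\dagger v\|_H^2-\|\mathcal B_{m+1}v\|_H^2=(2m+n-1)\int_0^a K_{\mathrm{rad}}v^2f^{n-2}\,dr. \nonumber
\end{align}
Adding the shifts $m(m+n-2)$ and $(m+1)(m+n-1)$, whose difference is exactly $2m+n-1$, gives \eqref{eq:rot-cm-amplus-form-difference} on $\mathscr D_{m+1}$.

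Since $K_{\mathrm{rad}}$ extends smoothly to $[0,a]$, it is bounded, so the right side is bounded by $C\|v\|_H^2$. Consequently the graph norms of $\mathbf B_m^*$ and $\mathcal B_{m+1}$ are equivalent on $\mathscr D_{m+1}$. By Lemma~\ref{lem:rot-adjoint-graph-core}, $\operatorname{Dom}(\mathbf B_m^*)$ is the closure of $\mathscr D_{m+1}$ in the first norm. By definition, $\operatorname{Dom}(\mathbf B_{m+1})$ is the closure of $\mathscr D_{m+1}$ in the second norm. Equivalent norms give the same completion inside $H$, so $\operatorname{Dom}(\mathfrak c_m)=\operatorname{Dom}(\mathfrak a_{m+1})$, and \eqref{eq:rot-am-domain-qm} at level $m+1$ finishes \eqref{eq:rot-cm-domain-qmplus}. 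Both sides of \eqref{eq:rot-cm-amplus-form-difference} are continuous in this common norm, so the identity passes to the closure.

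The compactness statements follow from part (1) of Lemma~\ref{lem:Hilbert-space-Lm}. The form norms of $\mathfrak a_m$ and $\mathfrak c_m$ are equivalent to those of $\mathfrak l_m$ and $\mathfrak l_{m+1}$ respectively, and those form domains embed compactly into $H$. A compact form-domain embedding gives compact resolvent for $\mathbf A_m$ and $\mathbf C_m$.

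The only genuinely delicate point is the regularity claim that eigenfunctions of $\mathbf C_m$ lie in $\mathscr D_{m+1}$. Let $\mathbf C_m v=\mu v$. By the form identity, for all $w$ in the common domain,
\begin{align}
\mathfrak a_{m+1}[v,w]+(2m+n-1)\int_0^a (K_{\mathrm{rad}}-1)vwf^{n-2}\,dr=\mu\langle v,w\rangle_H. \nonumber
\end{align}
By the analogue of \eqref{eq:Bm-qm-norm-difference} at level $m+1$, $\mathfrak a_{m+1}$ differs from $\mathfrak l_{m+1}$ by a smooth bounded potential term. So $v$ is a weak eigenfunction of $\mathfrak l_{m+1}+Q$, where $Q$ is a potential $q(r)$ smooth up to the poles, including the $f^{-2}$-type coefficient whose numerator is $O(f^2)$.

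Lifting by $J_{m+1}$ as in the proof of Lemma~\ref{lem:Hilbert-space-Lm}, $J_{m+1}v$ is a weak solution of $(-\Delta_g+\tilde q)U=\mu U$ on $S^{n-1}$. Here $\tilde q(r)$ is a radial function, and it is smooth on $S^{n-1}$ provided $q$ is an even smooth function of $r$ near each pole. This evenness is the step that needs checking: it should follow from the pole compatibility conditions on $f$, since $K_{\mathrm{rad}}$ and $((f')^2-1)/f^2$, $f''/f$ are even in $r$. Once that holds, elliptic regularity gives $J_{m+1}v$ smooth, and Lemma~\ref{lem:rot-angular-coefficients-Dm} yields $v\in\mathscr D_{m+1}$.
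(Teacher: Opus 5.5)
Your proposal is correct and follows the paper's proof essentially step for step. The shared steps are: the ladder identity on the core $\mathscr D_{m+1}$; equivalence of the $\mathfrak c_m$- and $\mathfrak a_{m+1}$-norms from boundedness of $K_{\mathrm{rad}}$; identification of the two completions via Lemma~\ref{lem:rot-adjoint-graph-core} and the definition of $\mathbf B_{m+1}$; compactness from Lemma~\ref{lem:Hilbert-space-Lm}; and regularity by writing $\mathbf C_m$ as $\mathbf L_{m+1}$ plus a bounded radial potential and lifting with $J_{m+1}$. The evenness point you flag does hold, because $f(r)=rF(r^2)$ near each pole makes $K_{\mathrm{rad}}$ and $K_{\mathrm{tan}}$ smooth functions of $r^2$ (resp.\ $(a-r)^2$); the paper leaves this implicit when it asserts that the lifted equation has smooth coefficients.
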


\begin{proof}
The first domain identity is Lemma~\ref{lem:Bm-qm-form-domain-rot}.  On $\mathscr D_{m+1}$, the radial ladder identity \eqref{eq:ladder-comm-rot}, together with
\begin{align}
(m+1)(m+n-1)-m(m+n-2)=2m+n-1, \nonumber
\end{align}
gives \eqref{eq:rot-cm-amplus-form-difference}.  Since $K_{\mathrm{rad}}$ is bounded on $[0,a]$, the $\mathfrak c_m$- and $\mathfrak a_{m+1}$-form norms are equivalent on $\mathscr D_{m+1}$.  Lemma~\ref{lem:rot-adjoint-graph-core} identifies the first completion with $\operatorname{Dom}(\mathfrak c_m)$, while the definition of $\mathbf B_{m+1}$ identifies the second with $\operatorname{Dom}(\mathfrak a_{m+1})$.  This proves \eqref{eq:rot-cm-domain-qmplus} and extends \eqref{eq:rot-cm-amplus-form-difference} to the closed domains.

By Lemma~\ref{lem:Bm-qm-form-domain-rot} and Definition~\ref{def:rot-bold-Am-operator}, the $\mathfrak a_m$-form norm is equivalent to the $\mathfrak l_m$-form norm on their common domain.  The bounded-potential identity \eqref{eq:rot-cm-amplus-form-difference} likewise makes the $\mathfrak c_m$- and $\mathfrak a_{m+1}$-form norms equivalent.  Therefore \eqref{eq:compact-form-domain}, applied at levels $m$ and $m+1$, gives the asserted compact embeddings.  The standard closed-form resolvent argument then gives compact resolvent for both $\mathbf A_m$ and $\mathbf C_m$.

For the regularity assertion, polarizing the calculation in Lemma~\ref{lem:Bm-qm-form-domain-rot} gives
\begin{align}
\mathfrak l_{m+1}[v,\phi]-\mathfrak c_m[v,\phi] =(m+n-2)\int_0^a \bigl((m+1)(K_{\mathrm{tan}}-1)-(K_{\mathrm{rad}}-1)\bigr) v\phi f^{n-2}\,dr. \label{eq:rot-qmplus-cm-potential}
\end{align}
The coefficient is smooth and bounded on $[0,a]$.  If $\mathbf C_m v=\gamma v$, then \eqref{eq:rot-qmplus-cm-potential} shows that $v\in\operatorname{Dom}(\mathbf L_{m+1})$ and that $J_{m+1}v$ satisfies a second-order elliptic equation with smooth coefficients on $S^{n-1}$.  Elliptic regularity and Lemma~\ref{lem:rot-angular-coefficients-Dm} give $v\in\mathscr D_{m+1}$.
\end{proof}

\begin{lemma}[Comparison with the separated Laplacian]
Assume $\Rc_g\geq(n-2)g$.  Then
\begin{align}
\mathbf L_m\geq\mathbf A_m \nonumber
\end{align}
in the quadratic-form sense.  Consequently, for every $j\geq0$ and every $\Lambda\in\mathbb R$,
\begin{align}
\lambda_j(\mathbf L_m) &\geq\lambda_j(\mathbf A_m), \label{eq:rot-LA-eigenvalue-comparison}\\
N_{\mathbf L_m}^{<}(\Lambda) &\leq N_{\mathbf A_m}^{<}(\Lambda). \label{eq:rot-LA-counting}
\end{align}
\end{lemma}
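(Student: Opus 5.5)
Since $\operatorname{Dom}(\mathfrak a_m)=\operatorname{Dom}(\mathbf B_m)=\operatorname{Dom}(\mathfrak l_m)$ by Lemma~\ref{lem:Bm-qm-form-domain-rot}, the form-domain part of $\mathbf L_m\geq\mathbf A_m$ is automatic. It therefore suffices to prove the inequality
\[
\mathfrak l_m[u,u]\geq\mathfrak a_m[u,u]
\]
on the common closed domain. I will first establish it on the core $\mathscr D_m$ and then pass to the closure.

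For $u\in\mathscr D_m$, identity \eqref{eq:Bm-qm-norm-difference} gives
\begin{align}
\mathfrak a_m[u,u]-\mathfrak l_m[u,u]
&=m(m+n-2)\|u\|_H^2+m\int_0^a u^2f^{n-2}\,\frac{(m+n-3)((f')^2-1)+ff''}{f^2}\,dr. \nonumber
\end{align}
Rewrite the bracket using $((f')^2-1)/f^2=-K_{\mathrm{tan}}$ and $ff''/f^2=-K_{\mathrm{rad}}$. Together with $m(m+n-2)=m\bigl[(m+n-3)+1\bigr]$, this yields
\begin{align}
\mathfrak a_m[u,u]-\mathfrak l_m[u,u]
=-m\int_0^a\Bigl((m+n-3)(K_{\mathrm{tan}}-1)+(K_{\mathrm{rad}}-1)\Bigr)u^2f^{n-2}\,dr. \nonumber
\end{align}
This expression is consistent with the polarized identity \eqref{eq:rot-qmplus-cm-potential}, and that consistency gives a useful sign check. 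By Lemma~\ref{lem:rot-curvature-ineq}, $\Rc_g\geq(n-2)g$ implies $K_{\mathrm{rad}}\geq1$ and $K_{\mathrm{tan}}\geq1$. Since $m\geq0$ and $m+n-3\geq0$, the right-hand side is $\leq0$, so $\mathfrak l_m\geq\mathfrak a_m$ on $\mathscr D_m$. For $m=0$ both forms coincide, because $\mathfrak a_0$ is $\|\mathbf B_0u\|^2$ and $\mathcal B_0u=u'$.

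To extend the inequality, note that the coefficient multiplying $u^2f^{n-2}$ is bounded on $[0,a]$ by the pole compatibility. The difference of the two forms is therefore a bounded form on $H$ and extends continuously to the common closed domain. By construction, $\mathscr D_m$ is a core for both forms, and their graph norms are equivalent by Lemma~\ref{lem:Bm-qm-form-domain-rot}. Approximating in this norm therefore carries the inequality to all of $\operatorname{Dom}(\mathfrak l_m)$.

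The eigenvalue comparison \eqref{eq:rot-LA-eigenvalue-comparison} then follows from the min--max principle. Both operators have compact resolvent, by Lemma~\ref{lem:Hilbert-space-Lm} and Lemma~\ref{lem:rot-Am-Cm-compact-regularity}, and they have the same form domain with ordered forms. The counting inequality \eqref{eq:rot-LA-counting} is the equivalent counting reformulation: if $\lambda_j(\mathbf L_m)<\Lambda$, then $\lambda_j(\mathbf A_m)<\Lambda$.

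The main obstacle is only bookkeeping: the sign conventions when converting \eqref{eq:Bm-qm-norm-difference} into curvature quantities, and confirming that the extension to the closure uses only boundedness of the potential.
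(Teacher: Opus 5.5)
Your proposal is correct and is essentially the paper's proof. Both arguments use the same ingredients: equal closed form domains from Lemma~\ref{lem:Bm-qm-form-domain-rot}; the identity $\mathfrak l_m[u,u]-\mathfrak a_m[u,u]=m\int_0^a\bigl((m+n-3)(K_{\mathrm{tan}}-1)+(K_{\mathrm{rad}}-1)\bigr)u^2f^{n-2}\,dr$ on $\mathscr D_m$, which you derive correctly from \eqref{eq:Bm-qm-norm-difference}; extension to the closure by boundedness of the coefficient; nonnegativity from Lemma~\ref{lem:rot-curvature-ineq}; and min--max.

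Your sign check against \eqref{eq:rot-qmplus-cm-potential} is also consistent once it is combined with \eqref{eq:rot-cm-amplus-form-difference}, though the argument does not need it.
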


\begin{proof}
By Lemma~\ref{lem:Bm-qm-form-domain-rot}, equivalently by \eqref{eq:rot-am-domain-qm}, the closed forms $\mathfrak l_m$ and $\mathfrak a_m$ have the same domain.  For $u\in\mathscr D_m$, direct calculation gives
\begin{align}
\mathfrak l_m[u,u]-\mathfrak a_m[u,u] =m\int_0^a \bigl((m+n-3)(K_{\mathrm{tan}}-1)+(K_{\mathrm{rad}}-1)\bigr) u^2f^{n-2}\,dr. \label{eq:rot-qm-am-potential}
\end{align}
The coefficient is bounded, so this identity extends to every
$u\in\operatorname{Dom}(\mathfrak l_m)=\operatorname{Dom}(\mathfrak a_m)$; it also shows that the two form norms on the common domain are equivalent.  Lemma~\ref{lem:rot-curvature-ineq} makes the right-hand side nonnegative.  The min--max principle for the self-adjoint operators associated with $\mathfrak l_m$ and $\mathfrak a_m$, namely $\mathbf L_m$ and $\mathbf A_m$, gives \eqref{eq:rot-LA-eigenvalue-comparison}, and the counting inequality is equivalent to it.
\end{proof}

\begin{lemma}\label{lem:rot-kernel-dimension}
Assume $\Rc_g\geq(n-2)g$.  Then, for every $m\geq0$,
\begin{align}
\ker\mathbf B_m=\mathbb R f^m, \qquad \dim_{\mathbb R}\ker\mathbf B_m=1, \qquad \ker\mathbf B_m^*=\{0\}. \nonumber
\end{align}
In particular,
\begin{align}
\lambda_0(\mathbf A_m)=m(m+n-2), \label{eq:nubottom}
\end{align}
and this eigenvalue is simple.
\end{lemma}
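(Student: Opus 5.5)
The plan is to treat the kernel of $\mathbf B_m$ and the kernel of $\mathbf B_m^*$ separately. The second is obtained for free from the abstract machinery.

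\emph{Kernel of $\mathbf B_m$.} First, $f^m\in\mathscr D_m$. Near $r=0$ we have $f=rF(r^2)$, so $f^m=r^mF(r^2)^m$, and the analogous expansion holds at $r=a$. Moreover $\mathcal B_m f^m=mf^{m-1}f'-m\frac{f'}{f}f^m=0$, hence $\mathbb R f^m\subset\ker\mathbf B_m$.

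Conversely, let $u\in\operatorname{Dom}(\mathbf B_m)$ with $\mathbf B_mu=0$. Since $\mathbf B_m$ is the closure of $\mathcal B_m$, on compact subintervals of $(0,a)$ the equation $u'-m\frac{f'}{f}u=0$ holds distributionally. Indeed, graph-norm convergence of a sequence in $\mathscr D_m$ implies $L^2_{\mathrm{loc}}$ convergence of both $u_j$ and $u_j'-m\frac{f'}{f}u_j$, so $u\in H^1_{\mathrm{loc}}$. Then $(f^{-m}u)'=0$ in the weak sense on $(0,a)$, and $u=cf^m$. This gives $\ker\mathbf B_m=\mathbb Rf^m$ and $r_m=1$.

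An alternative route avoids the local argument: by Lemma~\ref{lem:Bm-qm-form-domain-rot}, $\operatorname{Dom}(\mathbf B_m)=\operatorname{Dom}(\mathfrak l_m)$, so $u$ is an $L^2_{\mathrm{loc}}$ function with a locally $L^2$ distributional derivative. Either way, this local identification of the closure is the only step needing care, and it is routine.

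\emph{Kernel of $\mathbf B_m^*$.} I would invoke Lemma~\ref{lem:abstract-adjoint-kernel-gap} with $\alpha_m=m(m+n-2)$, which is strictly increasing in $m$. The required form order holds as follows.
\begin{itemize}
\item Lemma~\ref{lem:rot-Am-Cm-compact-regularity} gives the domain identity $\operatorname{Dom}(\mathfrak c_m)=\operatorname{Dom}(\mathfrak a_{m+1})$.
\item Identity \eqref{eq:rot-cm-amplus-form-difference} expresses $\mathfrak c_m[v,v]-\mathfrak a_{m+1}[v,v]$ as an integral with weight $(K_{\mathrm{rad}}-1)$.
\item Lemma~\ref{lem:rot-curvature-ineq} gives $K_{\mathrm{rad}}\ge1$, so this difference is nonnegative.
\end{itemize}
Hence $\ker\mathbf B_m^*=\{0\}$.

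As a cross-check, one can argue directly, as in the proof of Lemma~\ref{lem:rot-adjoint-graph-core}. An element of $\ker\mathbf B_m^*$ satisfies $(f^{m+n-2}v)'=0$, so $v=cf^{-(m+n-2)}$. For $n\ge3$ this function is not in $H$ unless $c=0$.

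\emph{Bottom eigenvalue.} Since $\mathbf A_m=\mathbf B_m^*\mathbf B_m+m(m+n-2)I\ge m(m+n-2)I$, any $u$ with $\mathbf A_mu=m(m+n-2)u$ satisfies $\|\mathbf B_mu\|^2=0$, so $u\in\ker\mathbf B_m$. Conversely, every element of $\ker\mathbf B_m$ lies in $\operatorname{Dom}(\mathbf A_m)$ with eigenvalue $m(m+n-2)$. Thus $\lambda_0(\mathbf A_m)=m(m+n-2)$, and its eigenspace is $\ker\mathbf B_m=\mathbb Rf^m$, which is one-dimensional. This proves \eqref{eq:nubottom} together with simplicity.
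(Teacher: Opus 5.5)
Your proof is correct and follows the paper's argument. You find $\ker\mathbf B_m$ by solving $u'=m\frac{f'}{f}u$ on $(0,a)$ and checking $f^m\in\mathscr D_m$, you get the adjoint-kernel vanishing from Lemma~\ref{lem:abstract-adjoint-kernel-gap} via \eqref{eq:rot-cm-domain-qmplus}, \eqref{eq:rot-cm-amplus-form-difference} and $K_{\mathrm{rad}}\geq1$, and you spell out the eigenspace identification behind \eqref{eq:nubottom} more carefully than the paper does. Your ``cross-check'' is also a complete proof on its own, namely the integration-constant argument already used in Lemma~\ref{lem:rot-adjoint-graph-core}, and it shows that $\ker\mathbf B_m^*=\{0\}$, like $\ker\mathbf B_m=\mathbb R f^m$, holds without the curvature hypothesis.
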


\begin{proof}
If $u\in\operatorname{Dom}(\mathbf B_m)$ and $\mathbf B_m u=0$, then on $(0,a)$
\begin{align}
u'=m\frac{f'}{f}u \nonumber
\end{align}
in the distributional, hence classical, sense.  Thus $(f^{-m}u)'=0$, so $u=cf^m$.  Conversely, the pole expansions of $f$ show that $f^m\in\mathscr D_m$, and \eqref{eq:Bm-rot} gives $\mathbf B_m f^m=0$.

It remains to prove the adjoint-kernel assertion.  Apply Lemma~\ref{lem:abstract-adjoint-kernel-gap} to the following concrete realization of Notation~\ref{not:abstract-ladder-package}:
\begin{align}
&H_m=H_{m+1}=H, \qquad B_m=\mathbf B_m, \nonumber\\
&\alpha_m=m(m+n-2), \qquad \alpha_{m+1}=(m+1)(m+n-1). \nonumber
\end{align}
The required domain inclusion and form inequality \eqref{eq:abstract-ladder-form-order} are precisely \eqref{eq:rot-cm-domain-qmplus} and \eqref{eq:rot-cm-amplus-form-difference}, together with $K_{\mathrm{rad}}\geq1$ from Lemma~\ref{lem:rot-curvature-ineq}.  Since $\alpha_{m+1}-\alpha_m=2m+n-1>0$, Lemma~\ref{lem:abstract-adjoint-kernel-gap} yields $\ker\mathbf B_m^*=\{0\}$.  Formula \eqref{eq:nubottom} follows from the definition of $\mathbf A_m$.
\end{proof}

The preceding lemmas verify all hypotheses needed to use the separated radial result from Section~\ref{sec:abstract-ladder-counting}.  Explicitly, for every ladder level $m$ we take
\begin{align}
\mathbb F=\mathbb R, \qquad H_m=H, \qquad B_m=\mathbf B_m, \qquad \alpha_m=m(m+n-2). \label{eq:rot-abstract-dictionary}
\end{align}
The operators are closed and densely defined by Definition~\ref{def:Bm-for-rotation-symm}; the partner operators have compact resolvent by Lemma~\ref{lem:rot-Am-Cm-compact-regularity}; their kernel dimensions are $r_m=1$ by Lemma~\ref{lem:rot-kernel-dimension}; and the consecutive form order \eqref{eq:abstract-ladder-form-order} follows from \eqref{eq:rot-cm-domain-qmplus}, \eqref{eq:rot-cm-amplus-form-difference}, and Lemma~\ref{lem:rot-curvature-ineq}.  Thus the partner-spectrum algebra and the counting iteration need not be reproved in the radial setting.

\begin{prop}\label{prop:rot-counting-recursion-1}
Assume $\Rc_g\geq(n-2)g$.  For every $l\in\mathbb Z^+$,
\begin{align}
&N_{\mathbf L_m}^{<}\bigl(l(l+n-2)\bigr) \leq l-m, \qquad 0\leq m<l, \nonumber\\
&N_{\mathbf L_m}^{<}\bigl(l(l+n-2)\bigr)=0, \qquad m\geq l. \nonumber
\end{align}
\end{prop}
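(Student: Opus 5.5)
The plan is to combine the comparison lemma $\mathbf L_m\geq\mathbf A_m$ with the abstract iteration, Proposition~\ref{prop:abstract-ladder-iteration}, in the radial dictionary \eqref{eq:rot-abstract-dictionary}. Under this dictionary we have
\[
\alpha_m=m(m+n-2), \qquad \alpha_l=l(l+n-2).
\]
The sequence $\alpha_m$ is strictly increasing with $\alpha_0=0$, as required in Notation~\ref{not:abstract-ladder-package}.

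First we verify the hypotheses of Proposition~\ref{prop:abstract-ladder-iteration} for every pair of consecutive levels $m, m+1$.
\begin{enumerate}
\item Closedness and dense definition of $\mathbf B_m$ hold by Definition~\ref{def:Bm-for-rotation-symm}.
\item Compact resolvent of $\mathbf A_m$ and $\mathbf C_m$ is given by Lemma~\ref{lem:rot-Am-Cm-compact-regularity}.
\item The form order \eqref{eq:abstract-ladder-form-order} holds. The domain inclusion comes from \eqref{eq:rot-cm-domain-qmplus}. The inequality $\mathfrak c_m\geq\mathfrak a_{m+1}$ follows from \eqref{eq:rot-cm-amplus-form-difference} together with $K_{\mathrm{rad}}\geq1$ from Lemma~\ref{lem:rot-curvature-ineq}.
\item The kernel dimensions are $r_m=1$ by Lemma~\ref{lem:rot-kernel-dimension}.
\end{enumerate}

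For the case $0\leq m<l$, apply the strict-counting part of \eqref{eq:abstract-ladder-terminal-counts} with $q=m$. This gives
\[
N^{<}_{\mathbf A_m}(\alpha_l)\leq\sum_{k=m}^{l-1}r_k=l-m.
\]
Then \eqref{eq:rot-LA-counting} yields
\[
N^{<}_{\mathbf L_m}\bigl(l(l+n-2)\bigr)\leq N^{<}_{\mathbf A_m}\bigl(l(l+n-2)\bigr)\leq l-m.
\]

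For the case $m\geq l$, no ladder is needed. Since $\mathbf A_m=\mathbf B_m^*\mathbf B_m+\alpha_m I\geq\alpha_m I$ and $\alpha_m\geq\alpha_l$ (because $\alpha$ is increasing), no eigenvalue of $\mathbf A_m$ lies strictly below $\alpha_l$. Hence $N^{<}_{\mathbf A_m}(\alpha_l)=0$, and \eqref{eq:rot-LA-counting} again gives $N^{<}_{\mathbf L_m}(\alpha_l)=0$. Equivalently, this is the case $q=l$ of the proposition (empty sum) combined with monotonicity in $m$.

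There is essentially no obstacle, since all analytic work was done in the preceding lemmas. The only point needing care is that \eqref{eq:rot-LA-counting} is a statement between operators on the same Hilbert space $H$ with the same form domain, so min--max applies directly. One must also use the strict counting function, because the threshold $\alpha_l$ itself is attained by $\ker\mathbf B_l$.
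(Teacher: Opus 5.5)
Your proposal is correct and follows the paper's proof essentially verbatim. You apply Proposition~\ref{prop:abstract-ladder-iteration} in the radial dictionary with $r_m=1$ to bound $N^{<}_{\mathbf A_m}\bigl(l(l+n-2)\bigr)$, dispose of $m\geq l$ via $\mathbf A_m\geq\alpha_m I\geq\alpha_l I$, and transfer the bounds to $\mathbf L_m$ through \eqref{eq:rot-LA-counting}. Your parenthetical appeal to ``monotonicity in $m$'' is unneeded, since the direct argument you give for $m\geq l$ already suffices.
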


\begin{proof}
Apply Proposition~\ref{prop:abstract-ladder-iteration}  to the Hilbert spaces, operators, and shifts in \eqref{eq:rot-abstract-dictionary}, note $r_m=1$.  The verification of its hypotheses was given immediately above; we get 
\begin{align}
N_{\mathbf A_m}^{<}\bigl(l(l+n-2)\bigr) \leq l-m \qquad(0\leq m<l), \nonumber
\end{align}
and gives zero for $m\geq l$.  The comparison \eqref{eq:rot-LA-counting} transfers these bounds from $\mathbf A_m$ to $\mathbf L_m$.
\end{proof}

\section{The rotational comparison theorem and rigidity}\label{sec:rotational-comparison}

\begin{lemma}\label{lem:rot-branching-count}
For every $l\in\mathbb Z^+$,
\begin{align}
\sum_{m=0}^{l-1}d_{n-2,m}(l-m) =N_{-\Delta_{\mathbb S^{n-1}}}^{<}\bigl(l(l+n-2)\bigr). \nonumber
\end{align}
\end{lemma}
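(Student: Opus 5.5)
The plan is to compute both sides explicitly for the round sphere, using the separation of variables already established. The round sphere is the rotationally symmetric metric with $f(r)=\sin r$ and $a=\pi$, so Lemma~\ref{lem:rot-separation} gives
\begin{align}
N_{-\Delta_{\mathbb S^{n-1}}}^{<}\bigl(l(l+n-2)\bigr)=\sum_{m\geq0}d_{n-2,m}\,N_{\mathbf L_m}^{<}\bigl(l(l+n-2)\bigr). \nonumber
\end{align}
It therefore suffices to show that, for the round metric, $N_{\mathbf L_m}^{<}(l(l+n-2))=l-m$ when $m<l$ and $=0$ when $m\geq l$.

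To identify the spectrum of $\mathbf L_m$ for $f=\sin r$, I will use the classical description of spherical harmonics on $\mathbb S^{n-1}$. The eigenvalues of $-\Delta_{\mathbb S^{n-1}}$ are $k(k+n-2)$, with eigenspace $\mathcal H_k(\mathbb R^n)|_{\mathbb S^{n-1}}$. Under the branching restriction from $SO(n)$ to $SO(n-1)$, this space decomposes as
\begin{align}
\mathcal H_k(\mathbb R^n)\cong\bigoplus_{m=0}^{k}\mathcal H_m(\mathbb R^{n-1}), \nonumber
\end{align}
where each $\mathcal H_m(\mathbb R^{n-1})$-isotypic piece has the form $u_{k,m}(r)\varphi(\theta)$ and the radial factor is a Gegenbauer-type function $u_{k,m}=\sin^m r\,C^{(m+(n-2)/2)}_{k-m}(\cos r)$. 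Consequently the eigenvalues of $\mathbf L_m$ are exactly $k(k+n-2)$ for $k\geq m$, each simple. (Simplicity also follows because $\mathbf L_m$ is a regular Sturm--Liouville-type operator on the pole-compatible core $\mathscr D_m$.)

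Completeness of these eigenvalues comes from the unitary decomposition in Lemma~\ref{lem:rot-separation}. Every eigenfunction of $-\Delta_{\mathbb S^{n-1}}$ lies in some $\mathcal H_k(\mathbb R^n)$, so projecting onto $\varphi_{m,\alpha}$ shows that no other eigenvalues of $\mathbf L_m$ occur. Multiplicity one in $k$ for each fixed $m$ then follows from the branching rule by comparing dimensions:
\begin{align}
\dim\mathcal H_k(\mathbb R^n)=\sum_{m=0}^{k}d_{n-2,m}. \nonumber
\end{align}
The strict count $N_{\mathbf L_m}^{<}(l(l+n-2))$ is then the number of $k$ with $m\leq k<l$, which is $l-m$ for $m<l$ and $0$ otherwise. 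Summing against $d_{n-2,m}$ gives the claim.

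The main obstacle is justifying the branching decomposition cleanly. I would do this via the Fischer decomposition in Theorem~\ref{thm:Fischer-decomposition}: expand a harmonic polynomial of degree $k$ in $\mathbb R^n=\mathbb R^{n-1}\times\mathbb R$ in powers of the last variable, and observe that the $\mathcal H_m(\mathbb R^{n-1})$ components give a unique solution for each $m\leq k$. The dimension identity then follows from the standard formula $d_{n-1,k}=\sum_{m\leq k}d_{n-2,m}$. An alternative cross-check is to verify the final identity directly: since $\sum_{k<l}\sum_{m\leq k}d_{n-2,m}=\sum_{m<l}(l-m)\,d_{n-2,m}$, the right-hand side equals $\sum_{k<l}\dim\mathcal H_k(\mathbb R^n)$, which is the strict count by the known spectrum of the round sphere.
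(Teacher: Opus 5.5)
Your argument is correct, but its main line takes a different route from the paper's.

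The paper's proof is purely combinatorial. From $d_{n-2,m}=\binom{n+m-2}{n-2}-\binom{n+m-4}{n-2}$ it telescopes to the branching identity $\sum_{s\le k}d_{n-2,s}=d_{n-1,k}$. It then swaps the order of summation,
\[
\sum_{m<l}d_{n-2,m}(l-m)=\sum_{k<l}\sum_{m\le k}d_{n-2,m}=\sum_{k<l}d_{n-1,k},
\]
and reads off the right side from the known round spectrum. This is exactly your closing ``cross-check''.

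Your primary route instead applies Lemma~\ref{lem:rot-separation} to $f=\sin r$. You identify $\operatorname{spec}(\mathbf L_m)=\{k(k+n-2):k\geq m\}$, all simple, using the Gegenbauer eigenfunctions $\sin^m r\,C^{(m+(n-2)/2)}_{k-m}(\cos r)\in\mathscr D_m$ and the Fischer/branching argument. This is valid.

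Note, however, that your simplicity step uses the identity $\dim\mathcal H_k(\mathbb R^n)=\sum_{m\le k}d_{n-2,m}$. That identity alone already finishes the lemma via the swap of sums, so for the lemma itself the spectral analysis is logically superfluous. What it does buy is extra information: for the round metric one has exactly $N^{<}_{\mathbf L_m}(l(l+n-2))=l-m$. So each radial bound in Proposition~\ref{prop:rot-counting-recursion-1} is attained rung by rung, which explains why summing those bounds lands precisely on the round count.

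One small imprecision: $\mathbf L_m$ is not a regular Sturm--Liouville operator, since its coefficients blow up at $r=0$ and $r=a$. The ODE route to simplicity still works for a different reason. At a pole the indicial roots are $m$ and $-(m+n-3)$, with a logarithm when $m=0$ and $n=3$. Membership in $\mathscr D_m$ selects the $r^m$ branch, leaving a one-dimensional solution space.
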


\begin{proof}
The standard dimension formula is
\begin{align}
d_{n-2,m} =\binom{n+m-2}{n-2}-\binom{n+m-4}{n-2}, \label{eq:rot-harmonic-dim-formula}
\end{align}
with the convention that the binomial coefficient vanishes when the upper index is smaller than $n-2$.  The branching identity
\begin{align}
\sum_{s=0}^{m}d_{n-2,s}=d_{n-1,m} \nonumber
\end{align}
follows by summing \eqref{eq:rot-harmonic-dim-formula}.  Therefore
\begin{align}
\sum_{m=0}^{l-1}d_{n-2,m}(l-m) &=\sum_{k=0}^{l-1}\sum_{m=0}^{k}d_{n-2,m} =\sum_{k=0}^{l-1}d_{n-1,k} =N_{-\Delta_{\mathbb S^{n-1}}}^{<}\bigl(l(l+n-2)\bigr). \nonumber
\end{align}
\end{proof}

\begin{prop}\label{prop:main-corollary-rotation-symm}
Let $n\geq3$, and let $(S^{n-1},g)$ be rotationally symmetric with $\Rc_g\geq(n-2)g$.  Then, for every $l\in\mathbb Z^+$,
\begin{align}
N_{-\Delta_g}^{<}\bigl(l(l+n-2)\bigr) \leq N_{-\Delta_{\mathbb S^{n-1}}}^{<}\bigl(l(l+n-2)\bigr). \nonumber
\end{align}
\end{prop}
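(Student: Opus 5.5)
The argument assembles three earlier results: the separation of variables in Lemma~\ref{lem:rot-separation}, the radial counting bounds of Proposition~\ref{prop:rot-counting-recursion-1}, and the branching identity of Lemma~\ref{lem:rot-branching-count}. No new analysis is needed; everything reduces to bookkeeping over the angular degrees.

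Fix $l\in\mathbb Z^+$ and set $\Lambda=l(l+n-2)$. By Lemma~\ref{lem:rot-separation}, applied with the strict counting function,
\begin{align}
N_{-\Delta_g}^{<}(\Lambda)=\sum_{m=0}^{\infty}d_{n-2,m}\,N_{\mathbf L_m}^{<}(\Lambda). \nonumber
\end{align}
The sum has only finitely many nonzero terms. I will split it into the ranges $m\geq l$ and $0\leq m<l$.

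For $m\geq l$, Proposition~\ref{prop:rot-counting-recursion-1} gives $N_{\mathbf L_m}^{<}(\Lambda)=0$. This also follows directly: since $\alpha_m=m(m+n-2)$ is increasing in $m$, we have $\mathbf A_m\geq \alpha_m I\geq\Lambda I$, and the comparison \eqref{eq:rot-LA-counting} transfers this bound to $\mathbf L_m$.

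For $0\leq m<l$, Proposition~\ref{prop:rot-counting-recursion-1} gives $N_{\mathbf L_m}^{<}(\Lambda)\leq l-m$. Since each multiplicity $d_{n-2,m}$ is nonnegative, it follows that
\begin{align}
N_{-\Delta_g}^{<}(\Lambda)\leq\sum_{m=0}^{l-1}d_{n-2,m}(l-m). \nonumber
\end{align}

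Lemma~\ref{lem:rot-branching-count} identifies the right-hand side with $N_{-\Delta_{\mathbb S^{n-1}}}^{<}\bigl(l(l+n-2)\bigr)$, which is the claimed inequality.

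Two points deserve attention, though neither is a real obstacle.
\begin{enumerate}
\item Lemma~\ref{lem:rot-separation} holds for arbitrary real $\Lambda$, including the threshold values $\Lambda=l(l+n-2)$ used here.
\item The hypotheses of Proposition~\ref{prop:rot-counting-recursion-1} (the Ricci bound $\Rc_g\geq(n-2)g$) are exactly those assumed in the statement.
\end{enumerate}

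As a consistency check, the round sphere has $f=\sin r$ and $K_{\mathrm{rad}}=K_{\mathrm{tan}}=1$. In that case every inequality in the chain is an equality, since $\mathbf L_m=\mathbf A_m$ and $\mathbf C_m=\mathbf A_{m+1}$. This agrees with Lemma~\ref{lem:rot-branching-count} being an identity for the round spectrum.
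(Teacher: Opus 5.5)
Your proposal is correct and follows essentially the same route as the paper's proof. Both write $N^{<}_{-\Delta_g}$ as a sum over angular degrees via Lemma~\ref{lem:rot-separation}, bound each radial count by Proposition~\ref{prop:rot-counting-recursion-1}, and identify the resulting sum with the round count using Lemma~\ref{lem:rot-branching-count}. Your separate check that the terms with $m\geq l$ vanish, via $\mathbf A_m\geq m(m+n-2)I\geq l(l+n-2)I$ and $\mathbf L_m\geq\mathbf A_m$, is correct but redundant.
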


\begin{proof}
By Lemma~\ref{lem:rot-separation} and Proposition~\ref{prop:rot-counting-recursion-1},
\begin{align}
N_{-\Delta_g}^{<}\bigl(l(l+n-2)\bigr) &=\sum_{m=0}^{\infty}d_{n-2,m} N_{\mathbf L_m}^{<}\bigl(l(l+n-2)\bigr) \leq\sum_{m=0}^{l-1}d_{n-2,m}(l-m). \nonumber
\end{align}
Lemma~\ref{lem:rot-branching-count} identifies the last expression with the round strict counting function.
\end{proof}

\begin{lemma}\label{lem:strict-ladder-eigenvalues-rot}
Let $\mathfrak s$ and $\mathfrak t$ be lower-bounded closed forms in the radial Hilbert space $H$ defined in (\ref{eq:H-inner}), with common form domain $\mathcal V :=\operatorname{Dom}(\mathfrak s) =\operatorname{Dom}(\mathfrak t). $

Let $S$ and $T$ be the self-adjoint operators associated with $\mathfrak s$ and $\mathfrak t$, respectively.  Suppose that
\begin{align}
\mathfrak s[v,v] =\mathfrak t[v,v]+\int_0^a P|v|^2f^{n- 2}, \qquad v\in\mathcal V, \nonumber
\end{align}
for a bounded function $P\geq0$.   Assume that the inclusion $\mathcal V\hookrightarrow\mathcal H$ is compact and that $P$ is positive on a nonempty open set $U\subseteq (0, a)$.  Assume also that, for every $j\geq0$, no nonzero element of the span of the first $j+1$ eigenfunctions of $S$ vanishes identically on $U$.  Then
\begin{align}
\lambda_j(S)>\lambda_j(T), \qquad j=0,1,2,\ldots . \label{eq:strict-nu-ladder-rot}
\end{align}
\end{lemma}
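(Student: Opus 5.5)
The plan is to use the min--max characterization of $\lambda_j(S)$ and test it on the span of the first $j+1$ eigenfunctions of $S$. First, the compact embedding $\mathcal V\hookrightarrow H$ together with lower-boundedness of both forms gives that $S$ and $T$ have compact resolvent, with discrete spectra $\lambda_0\leq\lambda_1\leq\cdots$ counted with multiplicity. The min--max principle then reads
\[
\lambda_j(T)=\min_{\substack{W\subset\mathcal V\\ \dim W=j+1}}\ \max_{v\in W\setminus\{0\}}\frac{\mathfrak t[v,v]}{\|v\|_H^2},
\]
and similarly for $S$. Since $P\geq0$, the weak inequality $\lambda_j(S)\geq\lambda_j(T)$ is immediate; the task is strictness.

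Fix $j$ and let $\phi_0,\dots,\phi_j$ be orthonormal eigenfunctions of $S$ with eigenvalues $\lambda_0(S),\dots,\lambda_j(S)$. Set $W=\operatorname{span}\{\phi_0,\dots,\phi_j\}\subset\operatorname{Dom}(S)\subset\mathcal V$, so $\dim W=j+1$. For $v=\sum c_i\phi_i\in W$ we have $\mathfrak s[v,v]=\sum\lambda_i(S)|c_i|^2\leq\lambda_j(S)\|v\|_H^2$. Hence
\[
\mathfrak t[v,v]=\mathfrak s[v,v]-\int_0^aP|v|^2f^{n-2}\,dr\leq\lambda_j(S)\|v\|_H^2-\int_0^aP|v|^2f^{n-2}\,dr .
\]

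The key step is to show that the last integral is strictly positive on the unit sphere of $W$, uniformly. The map $v\mapsto\int_0^aP|v|^2f^{n-2}\,dr$ is continuous on the finite-dimensional space $W$, since $P$ is bounded. Its minimum $\delta$ over the compact unit sphere $\{v\in W:\|v\|_H=1\}$ is attained at some $v_*$. If $\delta=0$, then $P|v_*|^2=0$ a.e. Since $P>0$ on $U$ and $f>0$ on $(0,a)$, this gives $v_*=0$ a.e. on $U$. That contradicts the hypothesis that no nonzero element of $W$ vanishes identically on $U$. Therefore $\delta>0$, and consequently $\max_{v\in W,\|v\|=1}\mathfrak t[v,v]\leq\lambda_j(S)-\delta$. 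Min--max then gives $\lambda_j(T)\leq\lambda_j(S)-\delta<\lambda_j(S)$.

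The main subtlety is this uniform positivity. It is exactly what the finite-dimensionality of $W$ and the non-vanishing hypothesis provide: a pointwise positivity for each $v$ alone would not suffice without compactness of the unit sphere in $W$. One small point to check is the meaning of ``vanishes identically on $U$'' for $L^2$ elements. The hypothesis should be read as vanishing almost everywhere on $U$, which is what the argument uses. In the intended applications eigenfunctions lie in $\mathscr D_{m+1}$ and solve an ODE, so this is harmless.
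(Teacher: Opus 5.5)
Your proposal is correct and follows essentially the same argument as the paper. Both test the min--max characterization of $\lambda_j(T)$ on the span of the first $j+1$ eigenfunctions of $S$, and both use compactness of that span's unit sphere to make the strict inequality uniform. The only difference is order: you extract the explicit gap $\delta>0$ first, while the paper bounds the supremum of $\mathfrak t[v,v]/\|v\|^2$ directly (its smooth analogue, Lemma~\ref{lem:smooth-strict-form-comparison}, uses your $\delta$ formulation).
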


\begin{proof}
Let $F_j$ be the span of the first $j+1$ eigenfunctions of $S$, counted with multiplicity.  For every $0\neq v\in F_j$,
\begin{align}
\frac{\mathfrak s[v,v]}{\|v\|^2} \leq\lambda_j(S) ,\nonumber
\end{align}
and the spectral-subspace hypothesis gives$\int_0^a P|v|^2f^{n- 2}>0.$ 

Hence
\begin{align}
\frac{\mathfrak t[v,v]}{\|v\|^2} <\lambda_j(S) \qquad(0\neq v\in F_j). \nonumber
\end{align}
The unit sphere of $F_j$ is compact, so the supremum of the left-hand side over $F_j\setminus\{0\}$ is still strictly smaller than $\lambda_j(S)$.  The min--max principle for $T$ gives \eqref{eq:strict-nu-ladder-rot}.
\end{proof}

In the radial applications below, the spectral-subspace nonvanishing hypothesis follows from Lemmas~\ref{lem:Hilbert-space-Lm} and \ref{lem:rot-Am-Cm-compact-regularity}.  Indeed, after decomposing a finite spectral sum into its distinct eigenvalue components by polynomial spectral projections, each component satisfies a second-order ordinary differential equation with smooth coefficients on $(0,a)$; vanishing on an open interval therefore forces that component to vanish identically.  The same argument, with elliptic unique continuation, applies to the smooth bundle operators used later in the paper.

\begin{lemma}\label{lem:rot-strict-separated-spectrum}
Assume that $(S^{n-1},g)$ is rotationally symmetric, $\Rc_g\geq(n-2)g$, and $g$ is not round.  Then
\begin{align}
\lambda_j(\mathbf L_m) >(m+j)(m+j+n-2) \qquad\text{whenever }(m,j)\neq(0,0). \label{eq:rot-strict-separated-spectrum}
\end{align}
\end{lemma}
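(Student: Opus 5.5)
We propose to prove \eqref{eq:rot-strict-separated-spectrum} by running the ladder chain $\mathbf L_m\geq\mathbf A_m$, $\mathbf A_m\leftrightarrow\mathbf C_m$, $\mathbf C_m\geq\mathbf A_{m+1}$, and making one link strict at every step. Since $g$ is not round, Lemma~\ref{lem:rot-curvature-ineq} shows that $K_{\mathrm{rad}}\not\equiv1$. As $K_{\mathrm{rad}}-1\geq0$ is continuous, it is strictly positive on a nonempty open interval $U\subset(0,a)$.

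The first step handles the link $\mathbf C_m\geq\mathbf A_{m+1}$. By Lemma~\ref{lem:rot-Am-Cm-compact-regularity}, $\mathfrak c_m$ and $\mathfrak a_{m+1}$ have a common form domain that embeds compactly into $H$. Their difference is the form of the bounded potential $P=(2m+n-1)(K_{\mathrm{rad}}-1)\geq0$, which is positive on $U$. Eigenfunctions of $\mathbf C_m$ lie in $\mathscr D_{m+1}$ and solve a second-order ODE with smooth coefficients on $(0,a)$. By the remark following Lemma~\ref{lem:strict-ladder-eigenvalues-rot}, no nonzero finite spectral combination vanishes on $U$. Lemma~\ref{lem:strict-ladder-eigenvalues-rot} therefore gives $\lambda_j(\mathbf C_m)>\lambda_j(\mathbf A_{m+1})$ for all $j\geq0$. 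Combining this with Lemma~\ref{lem:abstract-partner-spectra}, where $r_m=1$ and $s_m=0$ by Lemma~\ref{lem:rot-kernel-dimension}, we get
\begin{align}
\lambda_{j+1}(\mathbf A_m)=\lambda_j(\mathbf C_m)>\lambda_j(\mathbf A_{m+1}),\qquad j\geq0. \nonumber
\end{align}

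The second step is an induction on $j$ for the claim
\begin{align}
\lambda_j(\mathbf A_m)\geq(m+j)(m+j+n-2), \nonumber
\end{align}
with strict inequality when $j\geq1$. For $j=0$ we have equality by \eqref{eq:nubottom}. For $j\geq1$, the display in the first step gives $\lambda_j(\mathbf A_m)>\lambda_{j-1}(\mathbf A_{m+1})$. The inductive hypothesis at level $m+1$, index $j-1$, gives $\lambda_{j-1}(\mathbf A_{m+1})\geq(m+j)(m+j+n-2)$. This proves the strict bound for $\mathbf A_m$ whenever $j\geq1$.

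The third step transfers the bounds to $\mathbf L_m$. For $j\geq1$, \eqref{eq:rot-LA-eigenvalue-comparison} gives $\lambda_j(\mathbf L_m)\geq\lambda_j(\mathbf A_m)$, and the second step finishes the case. The remaining case is $j=0$ with $m\geq1$, where we need $\lambda_0(\mathbf L_m)>m(m+n-2)$. Here we use \eqref{eq:rot-qm-am-potential}: the potential $m\bigl((m+n-3)(K_{\mathrm{tan}}-1)+(K_{\mathrm{rad}}-1)\bigr)$ is nonnegative by Lemma~\ref{lem:rot-curvature-ineq} and strictly positive on $U$ because $m\geq1$. The ground state of $\mathbf L_m$ lies in $\mathscr D_m$ by Lemma~\ref{lem:Hilbert-space-Lm}(3) and solves a smooth ODE, so it cannot vanish on $U$. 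Lemma~\ref{lem:strict-ladder-eigenvalues-rot}, applied with $\mathfrak s=\mathfrak l_m$ and $\mathfrak t=\mathfrak a_m$ on their common domain (Lemma~\ref{lem:Bm-qm-form-domain-rot}, with compactness from \eqref{eq:compact-form-domain}), then gives $\lambda_0(\mathbf L_m)>\lambda_0(\mathbf A_m)=m(m+n-2)$.

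The main obstacle is the unique-continuation hypothesis in Lemma~\ref{lem:strict-ladder-eigenvalues-rot}. A nonzero element of the span of the first $j+1$ eigenfunctions must not vanish on $U$. We handle this by splitting such an element into its distinct-eigenvalue components. Each component solves a regular linear second-order ODE on the interior interval, so if the combination vanished on $U$, applying polynomial spectral projections would show each component vanishes there, and ODE uniqueness would then make it vanish identically. The rest of the argument is bookkeeping.
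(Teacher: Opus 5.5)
Your proof is correct and follows the paper's argument: a strict comparison $\mathbf C_m>\mathbf A_{m+1}$ via Lemma~\ref{lem:strict-ladder-eigenvalues-rot}, the partner shift with $r_m=1$, $s_m=0$, iteration down to \eqref{eq:nubottom}, and a separate strict comparison of $\mathbf L_m$ with $\mathbf A_m$ for $m\geq1$ to cover $j=0$. The only cosmetic difference is that you use this last strict comparison only at $j=0$ and rely on the non-strict \eqref{eq:rot-LA-eigenvalue-comparison} for $j\geq1$, whereas the paper states the strict comparison for all $j$.
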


\begin{proof}
\textbf{Step (1)}. By Lemma~\ref{lem:rot-curvature-ineq}, nonroundness implies that there is a nonempty open interval $U\subset(0,a)$ on which
\begin{align}
K_{\mathrm{rad}}>1. \nonumber
\end{align}

We first compare the two consecutive partner operators.  Apply Lemma~\ref{lem:strict-ladder-eigenvalues-rot} in the Hilbert space $H$ with
\begin{align}
\mathfrak s=\mathfrak c_m, \qquad \mathfrak t=\mathfrak a_{m+1}, \qquad S=\mathbf C_m, \qquad T=\mathbf A_{m+1}. \nonumber
\end{align}
The operators $\mathbf C_m$ and $\mathbf A_{m+1}$ are, by Definition~\ref{def:rot-bold-Am-operator}, the self-adjoint operators associated with $\mathfrak c_m$ and $\mathfrak a_{m+1}$, respectively.  By \eqref{eq:rot-cm-domain-qmplus}, their forms have the common domain
\begin{align}
\operatorname{Dom}(\mathfrak c_m) =\operatorname{Dom}(\mathfrak a_{m+1}) =\operatorname{Dom}(\mathfrak l_{m+1}). \nonumber
\end{align}

The identity \eqref{eq:rot-cm-amplus-form-difference} holds on the common closed form domain and has the bounded potential
\begin{align}
P= (2m+n-1)(K_{\mathrm{rad}}-1).\nonumber
\end{align}
Hence \eqref{eq:compact-form-domain}, applied at level $m+1$, shows that the common form domain embeds compactly into $H$.  The displayed potential $P$ is nonnegative everywhere and positive on $U$.  

Finally, the regularity assertion in Lemma~\ref{lem:rot-Am-Cm-compact-regularity}, followed by the ordinary-differential-equation argument stated after Lemma~\ref{lem:strict-ladder-eigenvalues-rot}, verifies that no nonzero element of a finite $\mathbf C_m$-spectral subspace can vanish identically on $U$.  

Thus all hypotheses of Lemma~\ref{lem:strict-ladder-eigenvalues-rot} are satisfied, and
\begin{align}
\lambda_j(\mathbf C_m) >\lambda_j(\mathbf A_{m+1}), \qquad m,j\geq0. \label{eq:strict-nu-ladder-rot-1}
\end{align}

We next use the exact partner shift from Section~\ref{sec:abstract-ladder-counting}.  Apply Lemma~\ref{lem:abstract-partner-spectra} with $H_m=H$, $B_m=\mathbf B_m$, and $\alpha_m=m(m+n-2)$.  Lemma~\ref{lem:rot-kernel-dimension} gives $r_m=1$ and $s_m=0$, so \eqref{eq:abstract-partner-index-shift} becomes
\begin{align}
\lambda_j(\mathbf A_m) =\lambda_{j-1}(\mathbf C_m), \qquad j\geq1. \nonumber
\end{align}
Combining this with \eqref{eq:strict-nu-ladder-rot-1} and iterating yields
\begin{align}
\lambda_j(\mathbf A_m) &>\lambda_{j-1}(\mathbf A_{m+1}) >\cdots>\lambda_0(\mathbf A_{m+j}) \nonumber\\
&=(m+j)(m+j+n-2), \qquad j\geq1, \label{eq:strict-nu-rot}
\end{align}
where the last equality is \eqref{eq:nubottom}.

\textbf{Step (2)}. For $m=0$, \eqref{eq:rot-am-domain-qm} and \eqref{eq:rot-qm-am-potential} show that the closed forms $\mathfrak l_0$ and $\mathfrak a_0$ coincide.  Since $\mathbf L_0$ and $\mathbf A_0$ are their associated self-adjoint operators, respectively, one has $\mathbf L_0=\mathbf A_0$, and \eqref{eq:strict-nu-rot} proves the assertion for $j\geq1$.

\textbf{Step (3)}. Assume now that $m\geq1$.  We again apply Lemma~\ref{lem:strict-ladder-eigenvalues-rot} in $H$, this time with
\begin{align}
\mathfrak s=\mathfrak l_m, \qquad \mathfrak t=\mathfrak a_m, \qquad S=\mathbf L_m, \qquad T=\mathbf A_m. \nonumber
\end{align}
By Definition~\ref{def:Lm-operator}, $\mathbf L_m$ is the self-adjoint operator associated with $\mathfrak l_m$, while Definition~\ref{def:rot-bold-Am-operator} identifies $\mathbf A_m$ as the self-adjoint operator associated with $\mathfrak a_m$.  The form-domain identity \eqref{eq:rot-am-domain-qm} gives
\begin{align}
\operatorname{Dom}(\mathfrak l_m) =\operatorname{Dom}(\mathfrak a_m), \nonumber
\end{align}
On this common closed form domain, \eqref{eq:rot-qm-am-potential} reads
\begin{align}
\mathfrak l_m[u,u] &=\mathfrak a_m[u,u] \nonumber\\
&\quad+m\int_0^a \bigl((m+n-3)(K_{\mathrm{tan}}-1)+(K_{\mathrm{rad}}-1)\bigr) u^2f^{n-2}\,dr. \nonumber
\end{align}
The coefficient is bounded, so the $\mathfrak l_m$- and $\mathfrak a_m$-form norms are equivalent.  

Consequently, \eqref{eq:compact-form-domain}, applied at level $m$, shows that their common form domain embeds compactly into $H$.  Lemma~\ref{lem:rot-curvature-ineq} makes the potential nonnegative, and, because $m\geq1$, it is strictly positive on $U$.  

By Lemma~\ref{lem:Hilbert-space-Lm} and the ordinary-differential-equation argument stated after Lemma~\ref{lem:strict-ladder-eigenvalues-rot}, no nonzero element of a finite $\mathbf L_m$-spectral subspace vanishes identically on $U$.  

Hence Lemma~\ref{lem:strict-ladder-eigenvalues-rot} gives
\begin{align}
\lambda_j(\mathbf L_m)>\lambda_j(\mathbf A_m), \qquad j\geq0. \nonumber
\end{align}
Together with \eqref{eq:nubottom} and \eqref{eq:strict-nu-rot}, this proves \eqref{eq:rot-strict-separated-spectrum} in all cases except $(m,j)=(0,0)$, where both sides are zero.
\end{proof}

\begin{theorem}
Let $n\geq3$, and let $(S^{n-1},g)$ be a smooth rotationally symmetric Riemannian manifold with $\Rc_g\geq(n-2)g$.  Then
\begin{align}
\lambda_i(S^{n-1},g) \geq \lambda_i(\mathbb S^{n-1},g_{\mathrm{round}}), \qquad i\geq1. \label{eq-main-comparison-1}
\end{align}
Consequently,
\begin{align}
\mathcal N_{-\Delta_g}\bigl(l(l+n-2)\bigr) \leq \mathcal N_{-\Delta_{\mathbb S^{n-1}}}\bigl(l(l+n-2)\bigr), \qquad l\in\mathbb Z_{\geq0}. \label{eq:rot-nonstrict-counting-general}
\end{align}
Moreover, if equality holds in \eqref{eq-main-comparison-1} for some $i\geq1$, or if equality holds in \eqref{eq:rot-nonstrict-counting-general} for some $l\in\mathbb Z^+$, then $(S^{n-1},g)$ is isometric to the unit round sphere.
\end{theorem}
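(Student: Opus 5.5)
The proof reduces everything to the separated spectra $\lambda_j(\mathbf L_m)$ through Lemma~\ref{lem:rot-separation}, and compares them with the round separated spectra. On the unit round sphere ($f=\sin r$, $a=\pi$) all curvature potentials vanish. Then $\mathbf L_m=\mathbf A_m$, and the ladder inequalities become equalities, so $\lambda_j(\mathbf L_m^{\mathrm{round}})=(m+j)(m+j+n-2)$.

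\emph{Comparison.} First I show the non-strict bound
\[
\lambda_j(\mathbf L_m)\geq(m+j)(m+j+n-2)\quad\text{for all } m,j\geq0 .
\]
This follows from \eqref{eq:rot-LA-eigenvalue-comparison}, the partner shift $\lambda_j(\mathbf A_m)=\lambda_{j-1}(\mathbf C_m)$ (Lemma~\ref{lem:abstract-partner-spectra} with $r_m=1$, $s_m=0$), Proposition~\ref{prop:abstract-ladder-recursion}, and \eqref{eq:nubottom}, iterated as in \eqref{eq:strict-nu-rot} with non-strict inequalities.

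Next, I view the spectrum of $-\Delta_g$ as the multiset union, over $m$, of $d_{n-2,m}$ copies of $\{\lambda_j(\mathbf L_m)\}_j$. The round spectrum is the same union with $\lambda_j$ replaced by $\mu_{m,j}:=(m+j)(m+j+n-2)$. Since there is an entrywise inequality between two multisets indexed by the same set (each finite below any threshold), the counting functions satisfy $\mathcal N_{-\Delta_g}(\Lambda)\leq\mathcal N_{-\Delta_{\mathbb S^{n-1}}}(\Lambda)$ for all $\Lambda$. The ordered eigenvalues then satisfy $\lambda_i(g)\geq\lambda_i(\mathrm{round})$: if $\lambda_i(g)<\lambda_i(\mathrm{round})=:\mu$, then at $\Lambda=\lambda_i(g)$ we would get $\mathcal N_g(\Lambda)\geq i+1>\mathcal N_{\mathrm{round}}(\Lambda)$, a contradiction. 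This gives \eqref{eq-main-comparison-1} and \eqref{eq:rot-nonstrict-counting-general}. The case $l=0$ is trivial, since both counts equal $1$.

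\emph{Rigidity.} Assume $g$ is not round. Lemma~\ref{lem:rot-strict-separated-spectrum} makes every inequality $\lambda_j(\mathbf L_m)>\mu_{m,j}$ strict for $(m,j)\neq(0,0)$. The injection of index sets then gives
\[
\mathcal N_g(\Lambda)\leq N^{<}_{\mathrm{round}}(\Lambda)+1_{\Lambda\geq0}\cdot 0 .
\]
More precisely, every index $(m,j,\alpha)\neq(0,0,1)$ with $\lambda_j(\mathbf L_m)\leq\Lambda$ has $\mu_{m,j}<\Lambda$, while the constant mode contributes $1$. Hence
\[
\mathcal N_g\bigl(l(l+n-2)\bigr)\leq N^{<}_{\mathrm{round}}\bigl(l(l+n-2)\bigr)<\mathcal N_{\mathrm{round}}\bigl(l(l+n-2)\bigr)\quad\text{for } l\geq1,
\]
because the round cluster at $l(l+n-2)$ is nonempty. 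This is strict counting.

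For ordered eigenvalues, suppose $\lambda_i(g)=\lambda_i(\mathrm{round})=\mu$ with $i\geq1$, so $\mu>0$. The number of $g$-eigenvalues $\leq\mu$ is at least $i+1$. By strictness, this number is at most $1+\#\{(m,j,\alpha)\neq(0,0,1):\mu_{m,j}<\mu\}=N^{<}_{\mathrm{round}}(\mu)$. But $N^{<}_{\mathrm{round}}(\mu)\leq i$, since $\lambda_i(\mathrm{round})=\mu$ is not below $\mu$. This is a contradiction.

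The main obstacle is the bookkeeping: making rigorous the identification of the global spectrum with the multiset union and the monotone index injection. The analytic inputs, including the strictness coming from unique continuation, are already provided by Lemmas~\ref{lem:rot-separation} and \ref{lem:rot-strict-separated-spectrum}. I will phrase the injection argument via counting functions to avoid ordering issues.
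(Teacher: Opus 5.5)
Your proposal is correct and is essentially the paper's argument. It uses the same ingredients: separation into the radial operators $\mathbf L_m$ (Lemma~\ref{lem:rot-separation}), the comparison $\mathbf L_m\geq\mathbf A_m$ combined with the partner shift iterated down to $\lambda_0(\mathbf A_{m+j})=(m+j)(m+j+n-2)$, and Lemma~\ref{lem:rot-strict-separated-spectrum} for rigidity; your injection argument for strict counting and strict ordered eigenvalues is exactly the paper's Steps~(2)--(3). The only difference is in the non-strict comparison step. You use the termwise bound $\lambda_j(\mathbf L_m)\geq(m+j)(m+j+n-2)$ with the round separated spectrum as the comparison multiset, whereas the paper uses the threshold recursion of Proposition~\ref{prop:rot-counting-recursion-1} plus the branching identity of Lemma~\ref{lem:rot-branching-count}. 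Your version gives $\mathcal N_{-\Delta_g}(\Lambda)\leq\mathcal N_{-\Delta_{\mathbb S^{n-1}}}(\Lambda)$ for every $\Lambda$ directly, which is equivalent to the paper's ordered comparison.
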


\begin{proof}
\textbf{Step (1)}. Fix $i\geq1$, and choose the unique $l\geq1$ such that the $i$-th round eigenvalue is $l(l+n-2)$.  Equivalently,
\begin{align}
N_{-\Delta_{\mathbb S^{n-1}}}^{<}\bigl(l(l+n-2)\bigr) \leq i < N_{-\Delta_{\mathbb S^{n-1}}}^{<}\bigl((l+1)(l+n-1)\bigr). \nonumber
\end{align}
Proposition~\ref{prop:main-corollary-rotation-symm} gives
\begin{align}
\lambda_{N_{-\Delta_{\mathbb S^{n-1}}}^{<}(l(l+n-2))}(S^{n-1},g) \geq l(l+n-2). \nonumber
\end{align}
Since the eigenvalues are nondecreasing and $i$ is not smaller than this index,
\begin{align}
\lambda_i(S^{n-1},g) \geq l(l+n-2) =\lambda_i(\mathbb S^{n-1},g_{\mathrm{round}}). \nonumber
\end{align}
This proves \eqref{eq-main-comparison-1}.  The closed-counting comparison \eqref{eq:rot-nonstrict-counting-general} follows immediately; for $l=0$, both connected manifolds have exactly one eigenvalue at zero.

\textbf{Step (2)}. For rigidity, suppose that $g$ is not round.  If
\begin{align}
\lambda_j(\mathbf L_m) \leq l(l+n-2), \nonumber
\end{align}
then Lemma~\ref{lem:rot-strict-separated-spectrum} implies $m+j\leq l-1$.  Hence Lemmas~\ref{lem:rot-separation} and \ref{lem:rot-branching-count} give, for every $l\geq1$,
\begin{align}
\mathcal N_{-\Delta_g}\bigl(l(l+n-2)\bigr) &=\sum_{m=0}^{\infty}d_{n-2,m} \mathcal N_{\mathbf L_m}\bigl(l(l+n-2)\bigr) \leq\sum_{m=0}^{l-1}d_{n-2,m}(l-m) \nonumber \\
&=N_{-\Delta_{\mathbb S^{n-1}}}^{<}\bigl(l(l+n-2)\bigr)\label{eq:rot-strict-closed-counting-nonround} \\
&<\mathcal N_{-\Delta_{\mathbb S^{n-1}}}\bigl(l(l+n-2)\bigr). \nonumber
\end{align}
Thus equality in \eqref{eq:rot-nonstrict-counting-general} is impossible for a nonround metric.

\textbf{Step (3)}. The same estimate also makes every positive ordered comparison strict.  Indeed, if the $i$-th round eigenvalue equals $l(l+n-2)$, then
\begin{align}
i\geq N_{-\Delta_{\mathbb S^{n-1}}}^{<}\bigl(l(l+n-2)\bigr), \nonumber
\end{align}
whereas \eqref{eq:rot-strict-closed-counting-nonround} shows that at most that many eigenvalues of $-\Delta_g$ are less than or equal to $l(l+n-2)$.  Therefore
\begin{align}
\lambda_i(S^{n-1},g)>l(l+n-2) =\lambda_i(\mathbb S^{n-1},g_{\mathrm{round}}). \nonumber
\end{align}
Equality in \eqref{eq-main-comparison-1} is likewise impossible unless $g$ is round.
\end{proof}

\part{Smooth Riemannian \(2\)-Spheres with \(K\ge 1\)}

Throughout this part, \(m\in\mathbb Z_{\geq0}\).  The basic objects are the
line bundles \(E_m=(T^{1,0}S^2)^{\otimes m}\), the Hilbert spaces
\(H_m=L^2(E_m)\), and the closed first-order operators
\(B_m:H_m\supset\operatorname{Dom}(B_m)\to H_{m+1}\).  They are constructed
in Section~\ref{sec:riemannian-ladder}.  The shifted forms
\(\mathfrak a_m,\mathfrak c_m\) and their operators \(A_m,C_m\) are fixed at
the beginning of Section~\ref{sec:comparison-rigidity}.

In this part, we always assume that $(S^2, g)$ is a smooth Riemannian sphere satisfying $K_g\geq 1$ unless otherwise mentioned.

\section{Riemannian ladder identity}\label{sec:riemannian-ladder}

The purpose of this section is to remove the artificial symmetry from Section~\ref{sec:rotational} for $S^2$.  In the rotational case, $m$ measured angular frequency.  For a general metric there is no distinguished angular coordinate, but there is still a canonical rotation group acting on oriented orthonormal frames. 

Fix the standard orientation on \(S^2\). The metric and the orientation define the bundle map $J:TS^2\longrightarrow TS^2$ by the rule that, for every local oriented orthonormal frame \(e_1,e_2\) with respect to $g$,
\begin{align}
Je_1=e_2,\qquad Je_2=-e_1. \nonumber
\end{align}
Thus \(J^2=-\Id\). Let
\begin{align}
T_{\C}S^2:=TS^2\otimes_{\R}\C. \nonumber
\end{align}

Since \(S^2\) is oriented, let \(dA_g\) denote the Riemannian area form of \(g\). 

\begin{definition}\label{def:Em bundle}
{Extend \(\C\)-linearly $J:TS^2\longrightarrow TS^2$ to \(T_{\C}S^2\), we define the \(+i\)-eigenbundle of \(J\) by
\begin{align}
T^{1,0}S^2:=\ker(J-i\Id)=\{W\in T_{\C}S^2:JW=iW\}. \nonumber
\end{align}
For each integer \(m\geq0\), define
\begin{align}
E_m:=(T^{1,0}S^2)^{\otimes m},\qquad E_0:=S^2\times\C. \nonumber
\end{align}
}
\end{definition}

The replacement of the $m$-th Fourier mode is realized geometrically as the line bundle $E_m=(T^{1,0}S^2)^{\otimes m}$. This is the first conceptual upgrade in the paper. 

In a local oriented orthonormal frame, set
\begin{align}
Z:=\frac{1}{\sqrt2}(e_1-i e_2). \nonumber
\end{align}
If another local oriented orthonormal frame is written as
\begin{align}
e_1'=\cos\alpha\,e_1+\sin\alpha\,e_2,\qquad e_2'=-\sin\alpha\,e_1+\cos\alpha\,e_2, \nonumber
\end{align}
then
\begin{align}
Z'=\frac{1}{\sqrt2}(e_1'-ie_2')=e^{i\alpha}Z. \label{eq:Z-frame-change-new}
\end{align}

In an oriented orthonormal frame, set
\begin{align}
Z^{\otimes 0}:=1,\qquad Z^{\otimes m}:=Z\otimes\cdots\otimes Z\quad \text{with }m\text{ tensor factors, for }m\geq1. \nonumber
\end{align}
Under the frame change \eqref{eq:Z-frame-change-new}, the local tensors satisfy
\begin{align}
(Z')^{\otimes m}=e^{im\alpha}Z^{\otimes m}. \nonumber
\end{align}
Thus a local section \(s\) of \(E_m\) is written as $s=u\cdot Z^{\otimes m}$, where \(u\) is a complex-valued function, and on an overlap the coefficient changes by
\begin{align}
u'=e^{-im\alpha}u. \nonumber
\end{align}
This gives a globally defined rank-one complex vector bundle.

The Hermitian metric $h_m$ on \(E_m\) is defined by declaring each frame \(Z^{\otimes m}\) to be unit:
\begin{align}
|u\cdot Z^{\otimes m}|^2=|u|^2. \nonumber
\end{align}
Since \(|e^{im\alpha}|=1\), this definition is independent of the chosen oriented orthonormal frame.

\begin{notation}
Let \(\pi_m:E_m\to S^2\) denote the bundle projection.  We use the following spaces of sections:
\begin{align}
C^\infty(E_m)&:=\{s:S^2\to E_m:\ s \text{ is smooth and } \pi_m\circ s=\operatorname{id}_{S^2}\}, \nonumber\\
\Gamma_{\mathrm{meas}}(E_m)&:=\{s:S^2\to E_m:\ s \text{ is Borel measurable and } \pi_m\circ s=\operatorname{id}_{S^2}\ \text{a.e.}\}/\sim, \nonumber
\end{align}
where \(\sim\) means equality \(dA_g\)-almost everywhere.  The Hilbert space \(L^2(E_m)\) is
\begin{align}
L^2(E_m):=\left\{s\in\Gamma_{\mathrm{meas}}(E_m): \int_{S^2}|s|_{h_m}^2\,dA_g<\infty\right\}. \nonumber
\end{align}
Its Hermitian inner product is
\begin{align}
\langle s,t\rangle_{L^2(E_m)}:=\int_{S^2}h_m(s,t)\,dA_g. \nonumber
\end{align}
We use the convention that the inner product is linear in the first variable and conjugate-linear in the second variable.  Since \(S^2\) is compact and \(E_m\) is smooth, \(C^\infty(E_m)\) is dense in \(L^2(E_m)\). 
\end{notation}

Let \(\omega\) be the Levi-Civita connection one-form in the oriented orthonormal frame \(e_1,e_2\), defined by
\begin{align}
\nabla e_1=\omega\otimes e_2,\qquad \nabla e_2=-\omega\otimes e_1. \label{eq:connection-form-new}
\end{align}
Then $\nabla Z=i\omega\otimes Z$.

With the sign convention in \eqref{eq:connection-form-new}, the curvature equation is
\begin{align}
d\omega=-K_g\,dA_g. \nonumber
\end{align}
For instance, on the unit round sphere $\mathbb{S}^2$ with \(e_1=\partial_r\) and \(e_2=(\sin r)^{-1}\partial_\theta\), one has \(\omega=\cos r\,d\theta\), hence \(d\omega=-\sin r\,dr\wedge d\theta=-dA\).

The connection induced on \(E_m\) is defined as follows:
\begin{align}
\nabla^{(m)}(u\cdot Z^{\otimes m})=(du+im\omega u)\otimes Z^{\otimes m}. \nonumber
\end{align}
For \(k\in\mathbb Z_{\geq0}\), define \(H^k(E_m)\) to be the Sobolev space of sections obtained by completing \(C^\infty(E_m)\) for the norm
\begin{align}
\|s\|_{H^k(E_m)}^2:=\sum_{r=0}^{k}\int_{S^2}\left|\big(\nabla^{(m)}\big)^r s\right|^2\,dA_g. \nonumber
\end{align}
In particular, \(H^0(E_m)=L^2(E_m)\). 

\begin{notation}
{For a vector field \(X\) on $(S^2, g)$ and any complex-valued function $u$ defined on $(S^2, g)$, define 
\begin{align}
D_X^{(m)}u:=Xu+im\omega(X)u, \quad \quad \forall m\in \mathbb{Z}_{\geq0}. \nonumber
\end{align}
}
\end{notation}

\begin{definition}\label{def:Bm operator}
The first-order differential expression is
\begin{align}
\mathcal B_m:C^\infty(E_m)\longrightarrow C^\infty(E_{m+1}), \quad \quad \mathcal B_m(u\cdot Z^{\otimes m}):=\bigl(D_{e_1}^{(m)}u+iD_{e_2}^{(m)}u\bigr)Z^{\otimes(m+1)}. \label{eq:Bm-definition-new}
\end{align}
It defines an initial densely defined operator:
\begin{align}
B_m^{(0)}:\operatorname{Dom}(B_m^{(0)})=C^\infty(E_m)\subset L^2(E_m) \longrightarrow L^2(E_{m+1}), \qquad B_m^{(0)}s:=\mathcal B_m s. \nonumber
\end{align}
The operator \(B_m^{(0)}\) is closable because its formal adjoint is also a first-order differential operator defined on the dense subspace \(C^\infty(E_{m+1})\).  We write
\begin{align}
B_m:=\overline{B_m^{(0)}}:\operatorname{Dom}(B_m)\subset L^2(E_m)\longrightarrow L^2(E_{m+1}) \nonumber
\end{align}
for its closure.  

Standard elliptic theory identifies \(\operatorname{Dom}(B_m)\) with \(H^1(E_m)\).  Thus
\begin{align}
B_m:\operatorname{Dom}(B_m)=H^1(E_m)\subset L^2(E_m)\longrightarrow L^2(E_{m+1}) . \nonumber
\end{align}
It is a closed unbounded operator in the Hilbert-space sense; it is not an everywhere-defined map on \(L^2(E_m)\).  When acting on smooth sections, we keep the notation \(B_m\) for the differential expression \(\mathcal B_m\).
\end{definition}

This is a global operator. Indeed, under the frame change \eqref{eq:Z-frame-change-new}, the connection form changes by
\begin{align}
\omega'=\omega+d\alpha. \nonumber
\end{align}
Using \(u'=e^{-im\alpha}u\), \(e_1'=\cos\alpha\,e_1+\sin\alpha\,e_2\), and \(e_2'=-\sin\alpha\,e_1+\cos\alpha\,e_2\), one obtains
\begin{align}
D_{e_1'}^{\prime(m)}u'+iD_{e_2'}^{\prime(m)}u' =e^{-i(m+1)\alpha}\bigl(D_{e_1}^{(m)}u+iD_{e_2}^{(m)}u\bigr). \nonumber
\end{align}
Multiplying by \((Z')^{\otimes(m+1)}=e^{i(m+1)\alpha}Z^{\otimes(m+1)}\) gives the same section of \(E_{m+1}\).

\begin{definition}\label{def:Bm* operator}
The Hilbert-space adjoint of the closed densely defined operator
\begin{align}
B_m:\operatorname{Dom}(B_m)=H^1(E_m)\subset L^2(E_m)\longrightarrow L^2(E_{m+1}) \nonumber
\end{align}
is denoted by \(B_m^*\).  Thus
\begin{align}
B_m^*:\operatorname{Dom}(B_m^*)\subset L^2(E_{m+1})\longrightarrow L^2(E_m) \nonumber
\end{align}
is the closed densely defined operator characterized by
\begin{align}
\langle B_ms,t\rangle_{L^2(E_{m+1})}=\langle s,B_m^*t\rangle_{L^2(E_m)}, \qquad s\in\operatorname{Dom}(B_m),\quad t\in\operatorname{Dom}(B_m^*). \nonumber
\end{align}
On smooth sections it agrees with the formal \(L^2\)-adjoint.  More precisely, \(C^\infty(E_{m+1})\subset\operatorname{Dom}(B_m^*)\), and if \(t=vZ^{\otimes(m+1)}\), then
\begin{align}
B_m^*(vZ^{\otimes(m+1)}) =-\bigl(D_{e_1}^{(m+1)}v-iD_{e_2}^{(m+1)}v\bigr)Z^{\otimes m}. \label{eq:Bm-star-local-new}
\end{align}
Again \(\operatorname{Dom}(B_m^*)=H^1(E_{m+1})\) by elliptic theory on the closed surface.
\end{definition}

{
\begin{remark}[The rotationally symmetric ladder as a symmetry reduction]

The intrinsic construction above contains the rotationally symmetric
two-sphere as a special case.  More precisely, this literal identification
concerns the case \(n=3\) in the rotationally symmetric part, beginning
with Section~\ref{sec:rotational}.

Let
\begin{align}
g=dr^2+f(r)^2\,d\theta^2 \nonumber
\end{align}
be a rotationally symmetric metric on \(S^2\), and use the adapted
oriented orthonormal frame
\begin{align}
e_1=\partial_r, \qquad e_2=\frac{1}{f(r)}\partial_\theta . \nonumber
\end{align}
Its Levi-Civita connection form is
\begin{align}
\omega=f'(r)\,d\theta, \qquad \omega(e_1)=0, \qquad \omega(e_2)=\frac{f'}{f}. \nonumber
\end{align}
Consequently, for a local section
\(s=u(r,\theta)Z^{\otimes m}\) of \(E_m\), the general operator \(B_m\)
takes the form
\begin{align}
B_m s = \left( \partial_r u+\frac{i}{f}\partial_\theta u -m\frac{f'}{f}u \right)Z^{\otimes(m+1)}. \label{eq:Bm-rotational-restriction}
\end{align}

In the adapted moving frame \(Z^{\otimes m}\), the \(m\)-th rotational
weight is carried by the frame itself.  Hence the complexification of the
\(m\)-th Fourier sector is represented by sections whose coefficient is
radial.  Restricting \eqref{eq:Bm-rotational-restriction} to \(u=u(r)\)
gives
\begin{align}
B_m\bigl(uZ^{\otimes m}\bigr) &= \left(u'-m\frac{f'}{f}u\right)Z^{\otimes(m+1)}= (\mathbf B_m u)Z^{\otimes(m+1)}, \nonumber
\end{align}
where \(\mathbf B_m\) is exactly the radial operator in
\eqref{eq:Bm-rot}.  

On the pole-compatible radial graph core $\left\{
vZ^{\otimes(m+1)}:
v\in\mathscr D_{m+1}\otimes_{\mathbb R}\mathbb C
\right\}$, the adjoint satisfies
\begin{align}
B_m^*\bigl(vZ^{\otimes(m+1)}\bigr) &= \left(-v'-(m+1)\frac{f'}{f}v\right)Z^{\otimes m} = (\mathbf B_m^*v)Z^{\otimes m}, \nonumber
\end{align}
which agrees with the formal adjoint expression
\eqref{eq:Bm-adj-rot} when \(n=3\).

The Hilbert-space structures also agree.  For a radial coefficient,
\begin{align}
\bigl\|uZ^{\otimes m}\bigr\|_{L^2(E_m)}^2 = 2\pi\int_0^a |u(r)|^2f(r)\,dr. \nonumber
\end{align}
Thus, up to the harmless factor \(2\pi\), this is the complexification of
the radial space \(H\) from Section~\ref{sec:rotational}.  Moreover,
smooth extension across the two poles imposes precisely the complexified
pole conditions
\begin{align}
u\in\mathscr D_m\otimes_{\mathbb R}\mathbb C. \nonumber
\end{align}
Taking graph closures therefore identifies the restriction of the closed
bundle operator \(B_m\) with the complexification of the closed radial
operator \(\mathbf B_m\), and similarly for their adjoints.

Finally, since
\begin{align}
K_g=-\frac{f''}{f}=K_{\mathrm{rad}}, \nonumber
\end{align}
the Riemannian ladder identity
\eqref{eq:global-ladder-identity}, when restricted to these
radial-coefficient subspaces, becomes
\begin{align}
\mathbf B_m\mathbf B_m^* - \mathbf B_{m+1}^*\mathbf B_{m+1} = 2(m+1)K_{\mathrm{rad}}, \nonumber
\end{align}
which is exactly the radial ladder identity
\eqref{eq:ladder-comm-rot} for \(n=3\).

Thus the rotationally symmetric proof on \(S^2\) is the
separated-variable, symmetry-reduced realization of the intrinsic
Riemannian construction.  For a general metric there is no global angular
coordinate, and the bundles \(E_m\) provide the invariant replacement for
the Fourier-mode decomposition.
\end{remark}
}

The key identity is the following Weitzenbock-type formula.
\begin{lemma}[Riemannian ladder identity]\label{lem:cr-ladder} 
On $C^\infty(S^2; \mathbb{C})$,
\begin{align}
B_0^*B_0=-\Delta_g \label{eq:B0-lap-new}
\end{align}
For every \(m\in\mathbb{Z}_{\geq0}\),
\begin{align}
B_mB_m^*-B_{m+1}^*B_{m+1}=2(m+1)K_g \label{eq:global-ladder-identity}
\end{align}
as identities of differential expressions on \(C^\infty(E_{m+1})\). The right-hand side means multiplication by the scalar function \(2(m+1)K_g\).
\end{lemma}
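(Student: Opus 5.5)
The identity is local, so I will verify it in a fixed local oriented orthonormal frame $e_1,e_2$ with connection form $\omega$. The definitions of $B_m$ and $B_m^*$ are frame independent, so nothing is lost. First I introduce the complex vector fields
\[
\partial:=\tfrac12(e_1-ie_2),\qquad \bar\partial:=\tfrac12(e_1+ie_2),
\]
and the complex numbers $\omega_1:=\omega(e_1)$, $\omega_2:=\omega(e_2)$. In this notation, \eqref{eq:Bm-definition-new} and \eqref{eq:Bm-star-local-new} read
\[
B_m(uZ^{\otimes m})=\bigl(2\bar\partial u+im(\omega_1+i\omega_2)u\bigr)Z^{\otimes(m+1)},
\]
\[
B_m^*(vZ^{\otimes(m+1)})=-\bigl(2\partial v+i(m+1)(\omega_1-i\omega_2)v\bigr)Z^{\otimes m}.
\]
Before using the adjoint formula, I will check it directly. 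For compactly supported coefficients, integrate by parts using $\operatorname{div}e_1=-\omega_2$ and $\operatorname{div}e_2=\omega_1$; these divergences follow from \eqref{eq:connection-form-new}. The resulting zeroth-order terms should combine exactly as written. This check also confirms the sign conventions.

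For \eqref{eq:B0-lap-new}, put $m=0$. Then
\[
B_0^*B_0u=-\bigl(2\partial+i(\omega_1-i\omega_2)\bigr)(2\bar\partial u).
\]
Expanding, $4\partial\bar\partial u=(e_1^2+e_2^2)u+i[e_1,e_2]u$. The bracket is obtained from torsion-freeness:
\[
[e_1,e_2]=\nabla_{e_1}e_2-\nabla_{e_2}e_1=-\omega_1e_1-\omega_2e_2.
\]
The Laplacian in the frame is $\Delta u=e_1^2u+e_2^2u-(\nabla_{e_1}e_1)u-(\nabla_{e_2}e_2)u=e_1^2u+e_2^2u-\omega_1e_2u+\omega_2e_1u$. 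Substituting, all first-order terms should cancel against $i(\omega_1-i\omega_2)\,2\bar\partial u$, leaving $-\Delta u$.

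For \eqref{eq:global-ladder-identity}, write $\theta:=\omega_1+i\omega_2$, so that $\bar\theta=\omega_1-i\omega_2$. On coefficients of sections of $E_{m+1}$ the two operators are
\[
B_mB_m^*=-\bigl(2\bar\partial+im\theta\bigr)\bigl(2\partial+i(m+1)\bar\theta\bigr),
\]
\[
B_{m+1}^*B_{m+1}=-\bigl(2\partial+i(m+2)\bar\theta\bigr)\bigl(2\bar\partial+i(m+1)\theta\bigr).
\]
I expand both products and subtract. The second-order part of the difference is $-4[\bar\partial,\partial]=-i[e_1,e_2]$, which is a first-order operator. The first-order terms come from this commutator and from the cross terms $-2i(m+1)\bar\theta\bar\partial-2im\theta\partial+2i(m+2)\bar\theta\bar\partial+2i(m+1)\theta\partial$. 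I then check that these cancel identically once $[e_1,e_2]=-\omega_1e_1-\omega_2e_2$ is inserted.

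The zeroth-order terms are $-2i(m+1)\bar\partial\bar\theta+2i(m+1)\partial\theta+(m+1)\bigl(m-(m+2)\bigr)|\theta|^2$. Here $-2(m+1)|\theta|^2$ combines with the derivative terms into $2(m+1)\bigl(i(\partial\theta-\bar\partial\bar\theta)-|\theta|^2\bigr)$. The final step is to identify the bracket with $K_g$ through $d\omega=-K_g\,dA_g$. Evaluating on $(e_1,e_2)$ gives
\[
-K_g=d\omega(e_1,e_2)=e_1\omega_2-e_2\omega_1-\omega([e_1,e_2])=e_1\omega_2-e_2\omega_1+\omega_1^2+\omega_2^2 .
\]
Also $i(\partial\theta-\bar\partial\bar\theta)=-(e_1\omega_2-e_2\omega_1)$. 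Combining these yields exactly $K_g$.

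The main obstacle is bookkeeping of signs and of the factors $m$ versus $m+1$, since the weight depends on which bundle each operator acts on. If a discrepancy appears, I will cross-check against the round sphere in the frame $e_1=\partial_r$, $e_2=(\sin r)^{-1}\partial_\theta$ with $\omega=\cos r\,d\theta$. That case reduces to the radial identity \eqref{eq:ladder-comm-rot} with $n=3$ and $K_{\mathrm{rad}}=1$, as in the preceding remark.
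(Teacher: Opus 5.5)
Your strategy is correct and is essentially the paper's: a direct local computation in an oriented orthonormal frame. Your treatment of $B_0^*B_0=-\Delta_g$ is the paper's Step (1). For the ladder identity the paper takes a frame with $\nabla e_1(p)=\nabla e_2(p)=0$, so that $\omega(p)=0$ and $[e_1,e_2](p)=0$. All first-order terms then vanish at $p$, and only $e_1\omega_2-e_2\omega_1=-K_g$ survives. Keeping a general frame, as you do, is legitimate and makes the role of torsion-freeness explicit. However, it forces you to verify the first-order cancellation by hand. Two of your stated intermediate values are wrong, and with them two of the checks you announce would fail.

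\textbf{The divergences have the wrong sign.} From \eqref{eq:connection-form-new}, $\operatorname{div}e_1=g(\nabla_{e_2}e_1,e_2)=\omega_2$ and $\operatorname{div}e_2=g(\nabla_{e_1}e_2,e_1)=-\omega_1$. This is also what your own formula $\Delta u=e_1^2u+e_2^2u+\omega_2e_1u-\omega_1e_2u$ encodes, so the proposal is internally inconsistent. With your signs, integrating by parts yields $B_m^*(vZ^{\otimes(m+1)})=-\bigl(2\partial v+i(m-1)\bar\theta v\bigr)Z^{\otimes m}$, with weight $m-1$ instead of $m+1$. The check would therefore contradict \eqref{eq:Bm-star-local-new} rather than confirm it.

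\textbf{The commutator value is wrong.} In fact $-4[\bar\partial,\partial]=4[\partial,\bar\partial]=[e_1-ie_2,e_1+ie_2]=2i[e_1,e_2]=-2i(\omega_1e_1+\omega_2e_2)$, not $-i[e_1,e_2]$. This also follows from your own $4\partial\bar\partial=e_1^2+e_2^2+i[e_1,e_2]$ together with $4\bar\partial\partial=e_1^2+e_2^2-i[e_1,e_2]$.

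Your four cross terms sum to $2i(\theta\partial+\bar\theta\bar\partial)=2i(\omega_1e_1+\omega_2e_2)$. This cancels the correct commutator exactly. With your value, the first-order part of the difference would instead be $3i(\omega_1e_1+\omega_2e_2)\neq0$.

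Your fallback check would not catch this second slip. In the rotational frame $\omega_1=0$, and the spurious term $3i\omega_2e_2$ annihilates $\theta$-independent coefficients, which are all that \eqref{eq:ladder-comm-rot} tests. To detect it on the round sphere you would need non-radial coefficients.

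Once these two values are corrected, the rest of your computation is right. The zeroth-order terms $2(m+1)\bigl(i(\partial\theta-\bar\partial\bar\theta)-|\theta|^2\bigr)$ reduce to $2(m+1)K_g$ via $d\omega(e_1,e_2)=e_1\omega_2-e_2\omega_1+\omega_1^2+\omega_2^2=-K_g$, which completes the proof.
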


\begin{proof}
	\textbf{Step (1)}. We prove \(B_0^*B_0=-\Delta_g\).  For \(m=0\),
	\begin{align}
B_0u=(e_1u+i e_2u)Z. \nonumber
\end{align}
	Using the local formula for the adjoint,
	\begin{align}
B_0^*(vZ) = -\left(D_{e_1}^{(1)}v-iD_{e_2}^{(1)}v\right), \nonumber
\end{align}
	we get
	\begin{align}
B_0^*B_0u = -\left(D_{e_1}^{(1)}-iD_{e_2}^{(1)}\right) (e_1u+i e_2u). \nonumber
\end{align}
	Since
	\begin{align}
D_{e_1}^{(1)}=e_1+i\omega_1, \qquad D_{e_2}^{(1)}=e_2+i\omega_2, \nonumber
\end{align}
	where \(\omega_j=\omega(e_j)\), this becomes
	\begin{align}
B_0^*B_0u = -\left(e_1-i e_2+i\omega_1+\omega_2\right) (e_1u+i e_2u). \nonumber
\end{align}
	Expanding,
	\begin{align}
(e_1-i e_2)(e_1u+i e_2u) = e_1e_1u+e_2e_2u+i[e_1,e_2]u. \nonumber
\end{align}
	Since the Levi-Civita connection is torsion-free,
	\begin{align}
[e_1,e_2] = \nabla_{e_1}e_2-\nabla_{e_2}e_1 = -\omega_1e_1-\omega_2e_2. \nonumber
\end{align}
	Hence
	\begin{align}
i[e_1,e_2]u = -i\omega_1e_1u-i\omega_2e_2u. \nonumber
\end{align}
	Therefore
	\begin{align}
\left(e_1-i e_2+i\omega_1+\omega_2\right)(e_1u+i e_2u)=e_1e_1u+e_2e_2u+\omega_2e_1u-\omega_1e_2u. \nonumber
\end{align}
	On the other hand,
	\begin{align}
\operatorname{div}e_1=\omega_2, \qquad \operatorname{div}e_2=-\omega_1. \nonumber
\end{align}
	Thus
	\begin{align}
\Delta_g u = e_1e_1u+e_2e_2u +(\operatorname{div}e_1)e_1u +(\operatorname{div}e_2)e_2u = e_1e_1u+e_2e_2u+\omega_2e_1u-\omega_1e_2u. \nonumber
\end{align}
	Consequently,
	\begin{align}
B_0^*B_0u=-\Delta_g u. \nonumber
\end{align}

\textbf{Step (2)}. The assertion is local. Fix a point \(p\in S^2\), and choose the oriented orthonormal frame so that
\begin{align}
\nabla e_1(p)=0,\qquad \nabla e_2(p)=0. \nonumber
\end{align}
Then \(\omega_1(p)=\omega_2(p)=0\), but the first derivatives of \(\omega\) at \(p\) remain. Since \([e_1,e_2](p)=0\), we have
\begin{align}
-K_g(p)= d\omega(e_1,e_2)(p)=e_1\omega_2(p)-e_2\omega_1(p). \label{eq:domega-point-new}
\end{align}

Let \(s=u\cdot Z^{\otimes m}\) be a local section of \(E_m\). At the point \(p\), formulas \eqref{eq:Bm-definition-new} and \eqref{eq:Bm-star-local-new} give
\begin{align}
B_m^*B_m(u\cdot Z^{\otimes m}) &=-\bigl(D_{e_1}^{(m+1)}-iD_{e_2}^{(m+1)}\bigr) \bigl(D_{e_1}^{(m)}+iD_{e_2}^{(m)}\bigr)u\,Z^{\otimes m}. \nonumber
\end{align}
Expanding the right-hand side at \(p\), and using \(\omega_1(p)=\omega_2(p)=0\), \eqref{eq:domega-point-new} gives
\begin{align}
B_m^*B_m(u\cdot Z^{\otimes m}) &=\bigl[-e_1e_1u-e_2e_2u-im(e_1\omega_1+e_2\omega_2)u +m(e_1\omega_2-e_2\omega_1)u\bigr]Z^{\otimes m} \nonumber\\
&= \bigl[-e_1e_1u-e_2e_2u-im(e_1\omega_1+e_2\omega_2)u -mK_g u\bigr]Z^{\otimes m}. \label{eq:BstarB-expansion-new}
\end{align}

\textbf{Step (3)}. Next let \(s=u\cdot Z^{\otimes m}\) be a local section of \(E_m\), now viewed as the target bundle of \(B_{m-1}\). From \eqref{eq:Bm-definition-new} and \eqref{eq:Bm-star-local-new},
\begin{align}
B_{m-1}B_{m-1}^*(u\cdot Z^{\otimes m}) &=-\bigl(D_{e_1}^{(m-1)}+iD_{e_2}^{(m-1)}\bigr) \bigl(D_{e_1}^{(m)}-iD_{e_2}^{(m)}\bigr)u\,Z^{\otimes m}. \nonumber
\end{align}

The same expansion gives
\begin{align}
B_{m-1}B_{m-1}^*(u\cdot Z^{\otimes m}) &=\bigl[-e_1e_1u-e_2e_2u-im(e_1\omega_1+e_2\omega_2)u -m(e_1\omega_2-e_2\omega_1)u\bigr]Z^{\otimes m} \nonumber\\
&= \bigl[-e_1e_1u-e_2e_2u-im(e_1\omega_1+e_2\omega_2)u +mK_g u\bigr]Z^{\otimes m} .\label{eq:BBstar-expansion-new}
\end{align}

Subtracting \eqref{eq:BstarB-expansion-new} from \eqref{eq:BBstar-expansion-new} gives
\begin{align}
B_{m-1}B_{m-1}^*-B_m^*B_m=2mK_g \qquad\text{on }E_m. \nonumber
\end{align}
Replacing \(m\) by \(m+1\) proves \eqref{eq:global-ladder-identity}.
\end{proof}

\section{The Vanishing Theorem}\label{sec:Vanishing-theorem}

The ladder identity alone is not enough for a finite eigenvalue comparison.  The counting recursion also needs the exact size of the unpaired kernel and the absence of an adjoint kernel.  In the rotational proof, these facts were endpoint facts for the one-dimensional equation $B_mu=0$.  Globally, endpoint analysis is replaced by the geometry of holomorphic sections.

This section is therefore the bridge between the Riemannian ladder and complex analysis.  We need two facts:
\begin{align}
\dim_\C\ker B_m=2m+1,\qquad \ker B_m^*=\{0\}. \nonumber
\end{align}

The identity $\dim_\C\ker B_m=2m+1$ is first proved in an elementary way: conformal invariance reduces the problem to the round conformal sphere, and stereographic coordinates identify the kernel with polynomials of degree at most $2m$.  

The adjoint vanishing $\ker B_m^*=0$ is a Bochner consequence of the curvature term in the ladder identity.  

\begin{lemma}\label{lem:conformal-invariance-Bm}
Let \(\widehat g=e^{2\varphi}g\), where $\varphi\in C^\infty(S^2)$. Define \(\widehat B_m\) from \(\widehat g\) in the same way as \(B_m\) is defined from \(g\). Then
\begin{align}
\ker \widehat B_m=\ker B_m. \nonumber
\end{align}
\end{lemma}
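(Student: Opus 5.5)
We work in local coordinates: under \(\widehat g=e^{2\varphi}g\), with \(\widehat e_j=e^{-\varphi}e_j\), the kernel of \(\widehat B_m\) equals the kernel of \(B_m\) once sections are written invariantly. Note that the bundles \(E_m=(T^{1,0}S^2)^{\otimes m}\) depend only on \(J\), and \(J\) is conformally invariant, so \(E_m\) is the same complex line bundle for both metrics. Only the unit frames change. The frame \(e_1,e_2\) for \(g\) gives the oriented \(\widehat g\)-orthonormal frame \(\widehat e_j:=e^{-\varphi}e_j\), hence \(\widehat Z=e^{-\varphi}Z\) and \(\widehat Z^{\otimes m}=e^{-m\varphi}Z^{\otimes m}\). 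A section \(s=u\,Z^{\otimes m}\) therefore has \(\widehat g\)-coefficient \(\widehat u=e^{m\varphi}u\). The claim is then the pointwise statement that \(\widehat B_m s=0\) if and only if \(B_m s=0\), first for smooth sections and then for the closed domains \(H^1\).

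\textbf{Step 1: the conformal connection form.} From the torsion-free structure equations \(de^1=\omega\wedge e^2\) and \(de^2=-\omega\wedge e^1\) (dual coframe, with the sign convention \eqref{eq:connection-form-new}), substitute \(\widehat e^{\,j}=e^{\varphi}e^j\). This gives
\[
\widehat\omega=\omega-(e_2\varphi)\,e^1+(e_1\varphi)\,e^2 = \omega+{\star}d\varphi
\]
up to a sign to be checked against the round example. Consequently
\[
\widehat\omega(\widehat e_1)+i\,\widehat\omega(\widehat e_2)=e^{-\varphi}\bigl(\omega_1+i\omega_2+c\,(e_2\varphi-i e_1\varphi)\bigr)
\]
for an explicit constant \(c=\pm1\). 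This can be written as \(e^{-\varphi}\bigl(\omega_1+i\omega_2+c'\,i\,(e_1+ie_2)\varphi\bigr)\) with \(c'=\pm1\).

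\textbf{Step 2: computing \(\widehat B_m\).} Set \(\bar\partial_Z:=e_1+ie_2\). Then
\[
\widehat D^{(m)}_{\widehat e_1}\widehat u+i\widehat D^{(m)}_{\widehat e_2}\widehat u
=e^{-\varphi}\Bigl(\bar\partial_Z(e^{m\varphi}u)+im(\omega_1+i\omega_2)e^{m\varphi}u+im\,c'\,i\,(\bar\partial_Z\varphi)\,e^{m\varphi}u\Bigr).
\]
With the correct sign \(c'\) the last term equals \(-m(\bar\partial_Z\varphi)e^{m\varphi}u\), which cancels the term \(m(\bar\partial_Z\varphi)e^{m\varphi}u\) produced by differentiating \(e^{m\varphi}\). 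The result is
\[
\widehat B_m s=e^{(m-1)\varphi}\bigl(D^{(m)}_{e_1}u+iD^{(m)}_{e_2}u\bigr)\,\widehat Z^{\otimes(m+1)}
=e^{-2\varphi}\,B_m s,
\]
since \(\widehat Z^{\otimes(m+1)}=e^{-(m+1)\varphi}Z^{\otimes(m+1)}\). So \(\widehat B_m=e^{-2\varphi}B_m\) as differential expressions on \(E_m\). This exponent is consistent with the Dolbeault picture, where \(B_m\) is \(\bar\partial\) composed with the metric identification \(\Lambda^{0,1}\otimes E_m\simeq E_{m+1}\), and that identification scales by the conformal factor.

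\textbf{Step 3: passing to the closed operators.} Because \(\varphi\) is smooth, the \(L^2\) and \(H^1\) norms of \(g\) and \(\widehat g\) on \(E_m\) are equivalent. (The section \(s\) is fixed; the Hermitian metrics \(h_m\) and \(\widehat h_m\) differ by \(e^{2m\varphi}\) and the area forms by \(e^{2\varphi}\).) Hence \(\operatorname{Dom}(B_m)=H^1(E_m)=\operatorname{Dom}(\widehat B_m)\) as subspaces of the same section space. The identity \(\widehat B_m=e^{-2\varphi}B_m\) extends from smooth sections to \(H^1\) by density, and multiplication by \(e^{-2\varphi}\) is injective. Therefore \(\ker\widehat B_m=\ker B_m\).

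The main obstacle is the sign bookkeeping in Step 1, which must be done carefully enough that the two \(\bar\partial_Z\varphi\) terms cancel rather than add. I will check it against the round model \(\omega=\cos r\,d\theta\), or equivalently against the flat metric \(|dz|^2\) where \(\omega=0\) and \(B_m\) is a multiple of \(\partial_{\bar z}\). An alternative route avoids the frame calculation: write \(B_m\) as \(\sqrt2\,\nabla^{(m)}_{\bar Z}\), the \((0,1)\)-part of the Chern connection on \(E_m\). The \((0,1)\)-part of the Chern connection is the holomorphic structure \(\bar\partial_{E_m}\), which depends only on \(J\). This directly gives \(\ker B_m=\{\text{holomorphic sections}\}\) for every metric in the conformal class.
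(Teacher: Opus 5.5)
Your proposal is correct and follows essentially the same route as the paper. Both use the frame $\widehat e_j=e^{-\varphi}e_j$, the coefficient law $\widehat u=e^{m\varphi}u$ and the conformal change of the connection form, arriving at $\widehat B_m=e^{-2\varphi}B_m$. The sign you left open is already settled by your own Step~1 formula, which is exactly the paper's $\widehat\omega=\omega-d\varphi\circ J$; this gives $c'=+1$, so the two $\bar\partial_Z\varphi$ terms do cancel. Your Step~3 passage to the closed $H^1$ domains is a harmless detail that the paper leaves implicit.
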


\begin{proof}
Let \(e_1,e_2\) be a local oriented \(g\)-orthonormal frame and put
\begin{align}
\widehat e_j=e^{-\varphi}e_j, \qquad j=1,2. \nonumber
\end{align}
Then \(\widehat e_1,\widehat e_2\) is an oriented \(\widehat g\)-orthonormal frame, and $\widehat Z=e^{-\varphi}Z$.

If \(s=u\cdot Z^{\otimes m}=\widehat u\widehat Z^{\otimes m}\), then
\begin{align}
\widehat u=e^{m\varphi}u. \label{eq:u-hat-transform-new}
\end{align}
The connection one-form changes by
\begin{align}
\widehat\omega=\omega-d\varphi\circ J. \label{eq:omega-conformal-transform-new}
\end{align}
In components, if \(\varphi_j=e_j\varphi\), then
\begin{align}
\widehat\omega(e_1)=\omega(e_1)-\varphi_2,\qquad \widehat\omega(e_2)=\omega(e_2)+\varphi_1. \nonumber
\end{align}
Using \eqref{eq:u-hat-transform-new} and \eqref{eq:omega-conformal-transform-new}, we compute
\begin{align}
\widehat D_{\widehat e_1}^{(m)}\widehat u+i\widehat D_{\widehat e_2}^{(m)}\widehat u &=e^{(m-1)\varphi}\bigl(D_{e_1}^{(m)}u+iD_{e_2}^{(m)}u\bigr). \nonumber
\end{align}
Therefore
\begin{align}
\widehat B_m s &=e^{(m-1)\varphi}\bigl(D_{e_1}^{(m)}u+iD_{e_2}^{(m)}u\bigr)\widehat Z^{\otimes(m+1)} \nonumber\\
&=e^{(m-1)\varphi}\bigl(D_{e_1}^{(m)}u+iD_{e_2}^{(m)}u\bigr)e^{-(m+1)\varphi}Z^{\otimes(m+1)}=e^{-2\varphi}B_m s. \nonumber
\end{align}
This implies the conclusion.
\end{proof}

\begin{lemma}\label{lem:ker-Bm-dimension-new}
For every \(m\geq0\), $\dim_\C\ker B_m=2m+1.$
\end{lemma}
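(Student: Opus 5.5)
By Lemma~\ref{lem:conformal-invariance-Bm}, $\ker B_m$ depends only on the conformal class of $g$. By the uniformization theorem, every smooth metric on $S^2$ is of the form $g=e^{2\varphi}g_{\mathrm{round}}$ after pulling back by a diffeomorphism. Pullback by a diffeomorphism carries the whole construction of $E_m$, $h_m$, $\nabla^{(m)}$ and $B_m$ equivariantly, so kernel dimensions are preserved. It therefore suffices to compute $\ker B_m$ for the unit round metric. Since $B_m$ is elliptic, any $L^2$, hence $H^1$, element of the kernel is smooth by elliptic regularity. So we only need to count smooth global solutions of $\mathcal B_m s=0$.

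On the round sphere, I would use the stereographic chart $z=x+iy$ from the north pole. In this chart the metric is conformally flat, $g_{\mathrm{round}}=\rho^2|dz|^2$ with $\rho=2/(1+|z|^2)$. Applying the conformal invariance again, now locally on $\C$, I may compute with the flat metric $|dz|^2$ and frame $e_1=\partial_x$, $e_2=\partial_y$, for which $\omega=0$. Then $Z=\frac{1}{\sqrt2}(\partial_x-i\partial_y)=\sqrt2\,\partial_z$. For $s=u\,Z^{\otimes m}$, the operator becomes $\mathcal B_m s=(\partial_x+i\partial_y)u\,Z^{\otimes(m+1)}=2\partial_{\bar z}u\,Z^{\otimes(m+1)}$. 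Thus $s\in\ker B_m$ over $\C$ if and only if $s=f(z)\,(\partial_z)^{\otimes m}$ with $f$ entire holomorphic.

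The crucial point is to check that the kernel equation itself, not just the frame, is locally conformally invariant. The proof of Lemma~\ref{lem:conformal-invariance-Bm} is local: it gives $\widehat B_m s=e^{-2\varphi}B_m s$ on any open set. So the local kernel equations agree, and the reduction is legitimate.

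It remains to impose smoothness at $\infty$. In the chart $w=1/z$, one has $\partial_z=-w^2\partial_w$, so $f(z)(\partial_z)^{\otimes m}=(-1)^m w^{2m}f(1/w)(\partial_w)^{\otimes m}$. This is a smooth, hence holomorphic, section near $w=0$ exactly when $w^{2m}f(1/w)$ extends holomorphically across $0$. By Liouville and Riemann removability, this happens exactly when $f$ is a polynomial of degree at most $2m$. Conversely, each such polynomial gives a global smooth kernel element. Hence $\ker B_m\cong\{f\in\C[z]:\deg f\leq2m\}$, which has complex dimension $2m+1$.

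I expect the main obstacle to be the bookkeeping: verifying that the chart-frame description agrees with the global bundle $E_m=(T^{1,0}S^2)^{\otimes m}$, and that $(\partial_z)^{\otimes m}$ transforms with the factor $w^{2m}$. The identification $Z=\sqrt2\,\partial_z$ in the flat frame makes this transparent, since $E_m$ is then literally $(T^{1,0})^{\otimes m}$ with holomorphic transition $(dz/dw)^{m}$.
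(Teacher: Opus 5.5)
Your proposal is correct and follows essentially the same route as the paper: uniformization and Lemma~\ref{lem:conformal-invariance-Bm} reduce to the round sphere, stereographic coordinates identify $\ker B_m$ with sections $f(z)(\partial_z)^{\otimes m}$ with $f$ entire, and the chart $w=1/z$ forces $\deg f\le 2m$. The only difference is cosmetic: you remove the connection form by applying the conformal identity locally to the flat metric on the chart, while the paper keeps the round frame and derives $\partial_{\bar z}(e^{-m\psi}u)=0$, which is the same change of frame written out explicitly.
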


\begin{proof}
By the uniformization theorem, every smooth metric on \(S^2\) is conformal, after an orientation-preserving diffeomorphism, to the round metric \cite{JostRiemann}. 

The construction of \(E_m\) and \(B_m\) is natural under diffeomorphisms, and Lemma \ref{lem:conformal-invariance-Bm} shows that \(\ker B_m\) is unchanged by a conformal change of metric. 

Hence it is enough to compute \(\ker B_m\) for the round metric.

Let \(z=x+iy\) be the stereographic coordinate on \(\mathbb S^2\setminus\{\infty\}\). The round metric is
\begin{align}
g_{\mathrm{round}}=e^{2\psi}(dx^2+dy^2),\qquad e^\psi=\frac{2}{1+|z|^2}. \nonumber
\end{align}
Take
\begin{align}
e_1=e^{-\psi}\partial_x,\qquad e_2=e^{-\psi}\partial_y. \nonumber
\end{align}
Then
\begin{align}
\omega=-\psi_y\,dx+\psi_x\,dy. \label{eq:omega-stereo-new}
\end{align}
Let \(s=u\cdot Z^{\otimes m}\) be a section on \(\mathbb S^2\setminus\{\infty\}\). Since
\begin{align}
\partial_{\bar z}:=\frac12(\partial_x+i\partial_y), \nonumber
\end{align}
formulas \eqref{eq:Bm-definition-new} and \eqref{eq:omega-stereo-new} give
\begin{align}
B_m s=0 &\Longleftrightarrow \partial_{\bar z}u-m(\partial_{\bar z}\psi)u=0 \Longleftrightarrow \partial_{\bar z}(e^{-m\psi}u)=0. \nonumber
\end{align}

Thus $P(z):=e^{-m\psi}u(z)$ is an entire complex analytic function on \(\C\).

Now write the section itself in the coordinate vector frame. Since
\begin{align}
Z=\frac{1}{\sqrt2}(e_1-i e_2)=\sqrt2\,e^{-\psi}\partial_z,\qquad \partial_z:=\frac12(\partial_x-i\partial_y), \nonumber
\end{align}
we have
\begin{align}
s=u\cdot Z^{\otimes m} =e^{m\psi}P(z)\bigl(\sqrt2\,e^{-\psi}\partial_z\bigr)^{\otimes m} =2^{m/2}P(z)(\partial_z)^{\otimes m}. \label{eq:s-P-dz-new}
\end{align}

Near infinity, use the coordinate \(w=1/z\). Then
\begin{align}
\partial_z=-w^2\partial_w. \nonumber
\end{align}
Therefore \eqref{eq:s-P-dz-new} becomes
\begin{align}
s=2^{m/2}(-1)^m w^{2m}P(1/w)(\partial_w)^{\otimes m}. \nonumber
\end{align}
The section \(s\) is smooth at \(w=0\) if and only if the coefficient
\begin{align}
w^{2m}P(1/w) \nonumber
\end{align}
has no negative powers in its Laurent expansion at \(w=0\). Since \(P\) is entire, this is equivalent to saying that \(P\) is a polynomial of degree at most \(2m\):
\begin{align}
P(z)=a_0+a_1z+\cdots+a_{2m}z^{2m}. \nonumber
\end{align}
Conversely, every such polynomial gives a smooth global section satisfying \(B_ms=0\). Hence
\begin{align}
\ker B_m=\{2^{m/2}P(z)(\partial_z)^{\otimes m}:\ P\text{ is a polynomial of degree }\leq2m\}. \nonumber
\end{align}
The complex dimension is therefore \(2m+1\).
\end{proof}

\begin{lemma}\label{lem:adjoint-kernel-new}
For every \(m\geq0\),
\begin{align}
\ker B_m^*=\{0\}. \nonumber
\end{align}
\end{lemma}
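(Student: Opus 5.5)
The plan is a Bochner argument built on the ladder identity of Lemma~\ref{lem:cr-ladder} together with $K_g\geq1$. The cleanest way to package it is through Lemma~\ref{lem:abstract-adjoint-kernel-gap}, with $\alpha_m=m(m+1)$. Since $\alpha_{m+1}-\alpha_m=2(m+1)$, the required form order $\mathfrak c_m\geq\mathfrak a_{m+1}$ reads
\begin{align}
\|B_m^*t\|^2+m(m+1)\|t\|^2\geq\|B_{m+1}t\|^2+(m+1)(m+2)\|t\|^2 .\nonumber
\end{align}
Equivalently, we need $\|B_m^*t\|^2-\|B_{m+1}t\|^2\geq 2(m+1)\|t\|^2$, together with the domain inclusion $\operatorname{Dom}(B_m^*)\subset\operatorname{Dom}(B_{m+1})$.

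First, for $t\in C^\infty(E_{m+1})$, I pair \eqref{eq:global-ladder-identity} with $t$ in $L^2(E_{m+1})$ and integrate by parts. On smooth sections $B_m^*$ and $B_{m+1}^*$ agree with the formal adjoints (Definition~\ref{def:Bm* operator}), so the integration by parts is legitimate on the closed surface. This gives
\begin{align}
\|B_m^*t\|^2-\|B_{m+1}t\|^2=2(m+1)\int_{S^2}K_g|t|^2\,dA_g\geq 2(m+1)\|t\|^2 .\nonumber
\end{align}

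Second, I extend this to the closed domains. By Definitions~\ref{def:Bm operator} and \ref{def:Bm* operator}, elliptic theory gives $\operatorname{Dom}(B_m^*)=H^1(E_{m+1})=\operatorname{Dom}(B_{m+1})$, and $C^\infty(E_{m+1})$ is dense in $H^1(E_{m+1})$. Both $\|B_m^*t\|$ and $\|B_{m+1}t\|$ are continuous in the $H^1$ norm, and $\int K_g|t|^2$ is continuous in $L^2$. Hence the identity, and therefore the inequality, pass to all $t\in H^1(E_{m+1})$. This establishes \eqref{eq:abstract-ladder-form-order} at level $m$.

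Finally, Lemma~\ref{lem:abstract-adjoint-kernel-gap} yields $\ker B_m^*=\{0\}$. Concretely: if $B_m^*t=0$, then $0\geq\|B_{m+1}t\|^2+2(m+1)\|t\|^2$, which forces $t=0$.

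The only point needing care is the second step, the identification of the Hilbert-space adjoint domain with $H^1(E_{m+1})$. That identification is elliptic regularity for the first-order elliptic operator $B_m^*$, already recorded in Definition~\ref{def:Bm* operator}. Alternatively, one can argue directly: an element of $\ker B_m^*$ is a weak solution of the elliptic equation $(D_{e_1}-iD_{e_2})v=0$, hence smooth by elliptic regularity, and the smooth identity from the first step then applies to it directly.
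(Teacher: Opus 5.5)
Your proposal is correct and follows essentially the same route as the paper. It integrates the ladder identity of Lemma~\ref{lem:cr-ladder} against smooth sections, uses $K_g\geq1$, and extends the resulting form inequality by density to $H^1(E_{m+1})=\operatorname{Dom}(B_m^*)=\operatorname{Dom}(B_{m+1})$, then concludes via Lemma~\ref{lem:abstract-adjoint-kernel-gap}.
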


\begin{proof}
We apply Lemma~\ref{lem:abstract-adjoint-kernel-gap} to the concrete Hilbert
spaces
\begin{align}
H_m=L^2(E_m),\qquad H_{m+1}=L^2(E_{m+1}), \nonumber
\end{align}
the concrete closed operator
\(B_m:\operatorname{Dom}(B_m)=H^1(E_m)\to L^2(E_{m+1})\), and the shifts
\(\alpha_m=m(m+1)\).  Indeed, Lemma~\ref{lem:cr-ladder} and \(K_g\geq1\)
give, first on smooth sections and then on all of \(H^1(E_{m+1})\) by
density,
\begin{align}
&\|B_m^*t\|_{L^2(E_m)}^2+m(m+1)\|t\|_{L^2(E_{m+1})}^2\geq \|B_{m+1}t\|_{L^2(E_{m+2})}^2+(m+1)(m+2)\|t\|_{L^2(E_{m+1})}^2 . \nonumber
\end{align}
Here both form domains are \(H^1(E_{m+1})\), so the domain inclusion required
in Lemma~\ref{lem:abstract-adjoint-kernel-gap} is automatic.  Since
\((m+1)(m+2)>m(m+1)\), that lemma yields \(\ker B_m^*=\{0\}\).
\end{proof}

\section{The comparison theorem and rigidity}
\label{sec:comparison-rigidity}

For the remainder of Part~II we use the following notation.  For every
\(m\geq0\), let
\begin{align}
H_m:=L^2(E_m),\qquad \alpha_m:=m(m+1), \nonumber
\end{align}
and define the closed forms
\begin{align}
\mathfrak a_m(s,t) &:=\langle B_ms,B_mt\rangle_{H_{m+1}} +m(m+1)\langle s,t\rangle_{H_m}, &\operatorname{Dom}(\mathfrak a_m)&=H^1(E_m), \nonumber\\
\mathfrak c_m(s,t) &:=\langle B_m^*s,B_m^*t\rangle_{H_m} +m(m+1)\langle s,t\rangle_{H_{m+1}}, &\operatorname{Dom}(\mathfrak c_m)&=H^1(E_{m+1}). \nonumber
\end{align}
Let \(A_m\) and \(C_m\) be their associated self-adjoint operators.  Thus
\begin{align}
A_m&=B_m^*B_m+m(m+1)I\quad\text{in }H_m, \nonumber\\
C_m&=B_mB_m^*+m(m+1)I\quad\text{in }H_{m+1}. \nonumber
\end{align}
Elliptic regularity and Rellich compactness on the closed surface show that
both operators have compact resolvent.  Their eigenvalues are counted with
complex multiplicity and are denoted by \(\lambda_j(A_m)\) and \(\lambda_j(C_m)\),
starting at \(j=0\).

The geometric content of the comparison is the following exact form identity:
for every \(t\in H^1(E_{m+1})\),
\begin{align}
\mathfrak c_m(t,t)-\mathfrak a_{m+1}(t,t) =2(m+1)\int_{S^2}(K_g-1)|t|^2\,dA_g. \label{eq:smooth-ladder-defect-form}
\end{align}
It follows from Lemma~\ref{lem:cr-ladder} on smooth sections and extends to
\(H^1(E_{m+1})\) by density.  In particular,
\begin{align}
\operatorname{Dom}(\mathfrak c_m) &=\operatorname{Dom}(\mathfrak a_{m+1}), \nonumber\\
\mathfrak c_m(t,t)&\geq\mathfrak a_{m+1}(t,t) \qquad\text{when }K_g\geq1. \label{Cm-greater-Am+1}
\end{align}

\begin{prop}\label{prop:crucial induction ineq}
For every \(m,j\geq0\),
\begin{align}
\lambda_{j+2m+1}(A_m)=\lambda_j(C_m)\geq\lambda_j(A_{m+1}). \label{eq:a-ladder-global}
\end{align}
Consequently, for every \(l\in\mathbb Z^+\),
\begin{align}
N_{A_m}^{<}\bigl(l(l+1)\bigr) \leq(2m+1)+N_{A_{m+1}}^{<}\bigl(l(l+1)\bigr). \nonumber
\end{align}
The same assertion holds for the closed counting functions \(\mathcal N\).
\end{prop}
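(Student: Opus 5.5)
The plan is to obtain Proposition~\ref{prop:crucial induction ineq} as a direct instance of Proposition~\ref{prop:abstract-ladder-recursion}, applied to the concrete data fixed at the start of this section:
\[
\mathbb F=\mathbb C,\qquad H_m=L^2(E_m),\qquad B_m \text{ as in Definition~\ref{def:Bm operator}},\qquad \alpha_m=m(m+1).
\]
First I will check the hypotheses of Notation~\ref{not:abstract-ladder-package}.
\begin{itemize}
\item Each $B_m$ is closed and densely defined with domain $H^1(E_m)$, by Definition~\ref{def:Bm operator}.
\item The sequence $\alpha_m=m(m+1)$ is strictly increasing and starts at $\alpha_0=0$.
\item $A_m$ and $C_m$ have compact resolvent. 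The form domains of $\mathfrak a_m$ and $\mathfrak c_m$ are $H^1(E_m)$ and $H^1(E_{m+1})$, and on them the form norms are equivalent to the $H^1$ norms by elliptic estimates for the first-order elliptic operators $B_m$ and $B_m^*$. Rellich's theorem on the closed surface then gives compact embedding into $L^2$, hence compact resolvent.
\end{itemize}
Next I will verify the consecutive form order \eqref{eq:abstract-ladder-form-order} at every level $m$.

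The domain inclusion is an equality, $\operatorname{Dom}(\mathfrak c_m)=H^1(E_{m+1})=\operatorname{Dom}(\mathfrak a_{m+1})$.

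For the inequality I will use the exact defect identity \eqref{eq:smooth-ladder-defect-form}. To derive it, pair the ladder identity \eqref{eq:global-ladder-identity} of Lemma~\ref{lem:cr-ladder} with a smooth section $t$ and integrate by parts; this gives
\[
\|B_m^*t\|^2-\|B_{m+1}t\|^2=2(m+1)\int_{S^2} K_g|t|^2\,dA_g .
\]
Subtracting $\bigl((m+1)(m+2)-m(m+1)\bigr)\|t\|^2=2(m+1)\|t\|^2$ from both sides turns this into \eqref{eq:smooth-ladder-defect-form}. Both sides are continuous in the $H^1$ norm, since $K_g$ is bounded, so the identity extends to $H^1(E_{m+1})$ by density. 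Because $K_g\geq1$, the right-hand side is nonnegative, which is \eqref{Cm-greater-Am+1}.

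Proposition~\ref{prop:abstract-ladder-recursion} then yields
\[
\lambda_{j+r_m}(A_m)=\lambda_j(C_m)\geq\lambda_j(A_{m+1}),
\]
together with both counting inequalities, each carrying the additive term $r_m$. Lemma~\ref{lem:ker-Bm-dimension-new} gives $r_m=\dim_{\mathbb C}\ker B_m=2m+1$. Substituting this into the eigenvalue relation gives \eqref{eq:a-ladder-global}. Specializing the counting inequalities to $\Lambda=l(l+1)$ gives the stated strict and closed counting recursions.

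Lemma~\ref{lem:adjoint-kernel-new}, $\ker B_m^*=\{0\}$, is consistent with this but is not needed as a separate input, since Lemma~\ref{lem:abstract-adjoint-kernel-gap} recovers it inside the abstract argument.

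The only step that takes real care is the functional-analytic one. The smooth ladder identity has to become a statement about the closed forms, which requires that $\operatorname{Dom}(B_m^*)$ is exactly $H^1(E_{m+1})$ and that $C^\infty(E_{m+1})$ is a core for both $\mathfrak c_m$ and $\mathfrak a_{m+1}$. I will take both facts from the elliptic identifications in Definitions~\ref{def:Bm operator} and~\ref{def:Bm* operator}. Given those, the integration by parts on the closed surface has no boundary terms, and the density extension is routine.
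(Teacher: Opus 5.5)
Your proposal is correct and matches the paper's proof: both apply Proposition~\ref{prop:abstract-ladder-recursion} with $H_m=L^2(E_m)$ and $\alpha_m=m(m+1)$, and both rely on three inputs, namely compact resolvent from elliptic regularity and Rellich, the form order \eqref{Cm-greater-Am+1} obtained from the ladder identity by density, and $r_m=2m+1$ from Lemma~\ref{lem:ker-Bm-dimension-new}. The only difference is that you derive the defect identity \eqref{eq:smooth-ladder-defect-form} explicitly, whereas the paper records it just before the proposition.
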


\begin{proof}
Apply Proposition~\ref{prop:abstract-ladder-recursion} to the concrete spaces
\(H_m=L^2(E_m)\), the closed operator \(B_m\), and the shift
\(\alpha_m=m(m+1)\).  The compact-resolvent assumption in
Notation~\ref{not:abstract-ladder-package} was verified above.  The required
form-domain inclusion and form order are exactly
\eqref{Cm-greater-Am+1}.  Finally,
Lemma~\ref{lem:ker-Bm-dimension-new} gives
\(r_m=\dim_{\C}\ker B_m=2m+1\).  Formula
\eqref{eq:a-ladder-global} and both counting recursions are therefore the
specialization of \eqref{eq:abstract-ladder-eigenvalue-step} and
\eqref{eq:abstract-ladder-counting-step}.
\end{proof}

\begin{theorem}\label{thm:cluster-bottom-comparison}
Let \((S^2,g)\) be a smooth Riemannian two-sphere with \(K_g\geq1\).  Then,
for every \(l\in\mathbb Z^+\),
\begin{align}
N_{-\Delta_g}^{<}\bigl(l(l+1)\bigr)&\leq l^2, \label{eq:smooth-strict-count-comparison}\\
\mathcal N_{-\Delta_g}\bigl(l(l+1)\bigr)&\leq(l+1)^2. \label{eq:smooth-closed-count-comparison}
\end{align}
Equivalently,
\begin{align}
\lambda_i(S^2,g)\geq\lambda_i(\mathbb S^2,g_{\mathrm{round}}), \qquad i\geq1. \label{eq:smooth-ordered-comparison}
\end{align}
\end{theorem}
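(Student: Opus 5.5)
The plan is to apply the abstract two-sphere ladder, Corollary~\ref{cor:abstract-two-sphere-ladder}, to the smooth data of this section, and then to read off the ordered comparison from the counting bounds. The realization is
\[
H_m=L^2(E_m),\qquad B_m,\qquad \alpha_m=m(m+1).
\]
Its hypotheses are already available:
\begin{itemize}
\item[(a)] the operators $A_m,C_m$ have compact resolvent, by elliptic regularity and Rellich compactness as recorded above;
\item[(b)] $r_m=2m+1$ by Lemma~\ref{lem:ker-Bm-dimension-new}, so in particular $r_m\leq 2m+1$;
\item[(c)] the form order \eqref{eq:abstract-ladder-form-order} holds at every level by \eqref{Cm-greater-Am+1}, since $\operatorname{Dom}(\mathfrak c_m)=H^1(E_{m+1})=\operatorname{Dom}(\mathfrak a_{m+1})$ and $K_g\geq1$.
\end{itemize}
Corollary~\ref{cor:abstract-two-sphere-ladder} then gives
\[
N^{<}_{A_0}\bigl(l(l+1)\bigr)\leq l^2,\qquad \mathcal N_{A_0}\bigl(l(l+1)\bigr)\leq(l+1)^2.
\]
Equivalently, one iterates Proposition~\ref{prop:crucial induction ineq} from $m=0$ up to $m=l$ and uses $A_l\geq l(l+1)I$ together with $\dim\ker B_l=2l+1$.

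Next I identify $A_0$ with $-\Delta_g$. Since $\alpha_0=0$, the operator $A_0$ is associated with the form $\|B_0u\|^2$ on $H^1(E_0)=H^1(S^2;\mathbb C)$. By \eqref{eq:B0-lap-new}, integrated by parts on smooth functions, $\|B_0u\|^2=\int|\nabla u|^2\,dA_g$. This form is the Dirichlet form, and closing it from smooth functions gives $A_0=-\Delta_g$ on complex-valued functions. The complex eigenvalue multiplicities of $-\Delta_g$ on $L^2(S^2;\mathbb C)$ agree with the real ones, because $-\Delta_g$ commutes with complex conjugation. This yields \eqref{eq:smooth-strict-count-comparison} and \eqref{eq:smooth-closed-count-comparison}.

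Finally I convert the strict counting bound into \eqref{eq:smooth-ordered-comparison}. Fix $i\geq1$ and let $l\geq1$ be such that $\lambda_i(\mathbb S^2)=l(l+1)$, that is, $l^2\leq i<(l+1)^2$. By \eqref{eq:smooth-strict-count-comparison}, at most $l^2$ eigenvalues of $-\Delta_g$ lie below $l(l+1)$, so $\lambda_{l^2}(S^2,g)\geq l(l+1)$. Monotonicity of the ordered eigenvalues then gives $\lambda_i(S^2,g)\geq l(l+1)=\lambda_i(\mathbb S^2,g_{\mathrm{round}})$.

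For the converse implication, the ordered comparison gives the strict count directly. For the closed count, the ordered comparison gives $\lambda_{(l+1)^2}(g)\geq(l+1)(l+2)>l(l+1)$, which implies \eqref{eq:smooth-closed-count-comparison}.

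None of these steps is deep once the earlier results are in hand. The most delicate point is the domain bookkeeping: checking that the closed-form realization of $B_0^*B_0$ is exactly the Friedrichs Laplacian, and that the complex counting matches the real spectral counting used in the definition of $\lambda_i(S^2,g)$.
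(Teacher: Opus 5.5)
Your proposal is correct and is essentially the paper's own proof. The paper applies Corollary~\ref{cor:abstract-two-sphere-ladder} to $H_m=L^2(E_m)$ with $\alpha_m=m(m+1)$, using \eqref{Cm-greater-Am+1}, compact resolvent, Lemma~\ref{lem:ker-Bm-dimension-new}, and $A_0=-\Delta_g$ from Lemma~\ref{lem:cr-ladder}; it then takes $l^2\leq i\leq l^2+2l$ and uses monotonicity. Your remarks on real versus complex multiplicities and on the converse direction are correct details that the paper leaves implicit.
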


\begin{proof}
We apply Corollary~\ref{cor:abstract-two-sphere-ladder} to
\begin{align}
H_m=L^2(E_m),\qquad B_m, \qquad \alpha_m=m(m+1). \nonumber
\end{align}
The form order is \eqref{Cm-greater-Am+1}, compact resolvent was recorded
above, and Lemma~\ref{lem:ker-Bm-dimension-new} gives the required exact
kernel dimensions \(2m+1\).  Lemma~\ref{lem:cr-ladder} gives
\(A_0=B_0^*B_0=-\Delta_g\).  Hence
\eqref{eq:abstract-two-sphere-threshold-counts} is precisely
\eqref{eq:smooth-strict-count-comparison}--
\eqref{eq:smooth-closed-count-comparison}.

For \eqref{eq:smooth-ordered-comparison}, choose \(l\geq1\) so that
\(l^2\leq i\leq l^2+2l\).  The round eigenvalue at every index in this
cluster is \(l(l+1)\), while the strict-counting estimate gives
\(\lambda_{l^2}(g)\geq l(l+1)\).  Monotonicity of the ordered spectrum gives the
claim.
\end{proof}

\begin{lemma}\label{lem:smooth-strict-form-comparison}
Assume \(K_g\geq1\) and \(K_g\not\equiv1\).  Then, for every \(m,j\geq0\),
\begin{align}
\lambda_j(C_m)>\lambda_j(A_{m+1}). \nonumber
\end{align}
\end{lemma}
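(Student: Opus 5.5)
The argument mirrors Step~(1) of Lemma~\ref{lem:rot-strict-separated-spectrum}, with the radial interval replaced by the bundle \(E_{m+1}\) over \(S^2\). The plan is to prove a bundle analogue of Lemma~\ref{lem:strict-ladder-eigenvalues-rot} and then verify its hypotheses. Take the Hilbert space \(H_{m+1}=L^2(E_{m+1})\), with forms \(\mathfrak s=\mathfrak c_m\) and \(\mathfrak t=\mathfrak a_{m+1}\) on the common domain \(H^1(E_{m+1})\). By \eqref{eq:smooth-ladder-defect-form} the forms differ by the bounded potential
\[
P=2(m+1)(K_g-1)\geq0 .
\]
Since \(K_g\) is continuous, \(K_g\geq1\) and \(K_g\not\equiv1\), there is a nonempty open set \(U\subset S^2\) on which \(P>0\).

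The abstract strict min--max step goes as follows. Let \(F_j\) be the span of the first \(j+1\) eigensections of \(C_m\). For \(0\neq t\in F_j\) we have \(\mathfrak c_m(t,t)\leq\lambda_j(C_m)\|t\|^2\). If moreover \(t\) does not vanish identically on \(U\), then \(\int P|t|^2\,dA_g>0\), so \(\mathfrak a_{m+1}(t,t)<\lambda_j(C_m)\|t\|^2\). Because \(F_j\) is finite dimensional, its unit sphere is compact and the supremum of the Rayleigh quotient of \(\mathfrak a_{m+1}\) over \(F_j\) is still strictly below \(\lambda_j(C_m)\). The min--max principle for \(A_{m+1}\) then gives \(\lambda_j(A_{m+1})<\lambda_j(C_m)\). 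The compactness needed for min--max is already recorded: both operators have compact resolvent.

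The main obstacle is showing that no nonzero \(t\in F_j\) vanishes on \(U\). Write \(t=\sum_k t_k\), where the \(t_k\) are eigensections of \(C_m\) for distinct eigenvalues \(\mu_k\). Suppose \(t|_U=0\).
\begin{enumerate}
\item Each eigensection is smooth by elliptic regularity, since \(C_m=B_mB_m^*+m(m+1)\) is a second-order elliptic operator with smooth coefficients and scalar (Laplace-type) principal symbol.
\item Because \(C_m\) is a differential operator, \(C_m^r t=\sum_k\mu_k^r t_k\) also vanishes on \(U\) for every \(r\).
\item A Vandermonde argument (equivalently, applying polynomial spectral projections pointwise on \(U\)) then shows that each \(t_k\) vanishes on \(U\).
\item Each \(t_k\) solves \((C_m-\mu_k)t_k=0\). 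In a local unitary frame this is a scalar elliptic equation with principal part \(-\Delta_g\) and smooth lower-order terms, so Aronszajn's unique continuation theorem applies.
\item Since \(S^2\) is connected, vanishing on the open set \(U\) forces \(t_k\equiv0\), hence \(t=0\).
\end{enumerate}
This is the ``elliptic unique continuation'' remark stated after Lemma~\ref{lem:strict-ladder-eigenvalues-rot}.

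Combining the two steps yields \(\lambda_j(C_m)>\lambda_j(A_{m+1})\) for all \(m,j\geq0\).
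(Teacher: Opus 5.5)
Your proof is correct and follows essentially the same route as the paper's. Both write $C_m=A_{m+1}+2(m+1)(K_g-1)$ in the form sense and get positivity of the potential term on the finite spectral subspace from the Vandermonde decomposition plus elliptic unique continuation. Both then use compactness of the unit sphere to extract a uniform gap and finish with min--max for $A_{m+1}$.
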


\begin{proof}
Fix \(m\) and set
\begin{align}
V:=2(m+1)(K_g-1). \nonumber
\end{align}
By \eqref{eq:smooth-ladder-defect-form}, \(C_m=A_{m+1}+V\) in the
quadratic-form sense.  The smooth function \(V\) is nonnegative and is
positive on a nonempty open set \(\Omega\).

Let \(S_j\) be the direct sum of the eigenspaces of \(C_m\) belonging to its
first \(j+1\) eigenvalues, counted with multiplicity.  We claim that
\begin{align}
\int_{S^2}V|s|^2\,dA_g>0 \qquad\text{for every }0\neq s\in S_j. \nonumber
\end{align}
Otherwise \(s\) vanishes on \(\Omega\).  Decompose \(s\) into the finite sum
of its components corresponding to the distinct eigenvalues of \(C_m\).

Because \(s\) vanishes on \(\Omega\), so do \(C_m^qs\) for all
\(q\geq0\).  A Vandermonde argument therefore shows that each individual eigencomponent vanishes on \(\Omega\). 

The unique-continuation theorem for
second-order elliptic operators with smooth coefficients forces each such
component to vanish identically, a contradiction.

The unit sphere of the finite-dimensional space \(S_j\) is compact, so there
is \(\delta_j>0\) such that
\begin{align}
\int_{S^2}V|s|^2\,dA_g\geq\delta_j\|s\|^2 \qquad (s\in S_j). \nonumber
\end{align}
Consequently,
\begin{align}
\sup_{0\neq s\in S_j} \frac{\mathfrak a_{m+1}(s,s)}{\|s\|^2} \leq\lambda_j(C_m)-\delta_j. \nonumber
\end{align}
The min--max principle for \(A_{m+1}\) now gives
\(\lambda_j(A_{m+1})<\lambda_j(C_m)\).
\end{proof}

\begin{theorem}\label{thm:rigidity-main}
Let \((S^2,g)\) be a smooth Riemannian two-sphere with \(K_g\geq1\).  If
\begin{align}
\lambda_i(S^2,g)=\lambda_i(\mathbb S^2,g_{\mathrm{round}}) \label{eq:smooth-individual-equality}
\end{align}
for some \(i\geq1\), or if
\begin{align}
\mathcal N_{-\Delta_g}\bigl(l(l+1)\bigr)=(l+1)^2 \label{eq:smooth-counting-equality}
\end{align}
holds for some \(l\in\mathbb Z^+\), then \((S^2,g)\) is isometric to the
unit round sphere.
\end{theorem}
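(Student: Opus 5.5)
We argue by contraposition: assume \(g\) is not round and show that both equalities \eqref{eq:smooth-individual-equality} and \eqref{eq:smooth-counting-equality} are impossible. The first step is to see that \(K_g\not\equiv1\). If \(K_g\equiv1\), then the simply connected surface \(S^2\) with constant curvature one is isometric to the unit round sphere. Hence Lemma~\ref{lem:smooth-strict-form-comparison} applies and gives the strict partner comparison
\[
\lambda_j(C_m)>\lambda_j(A_{m+1})\qquad\text{for all }m,j\geq0.
\]

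The second step mirrors the rotational proof: turn this strictness into a strict bound on the separated spectrum. By Proposition~\ref{prop:crucial induction ineq}, with \(r_m=2m+1\) and \(s_m=0\), we have \(\lambda_{j+2m+1}(A_m)=\lambda_j(C_m)\). Combining this with the strict inequality and iterating down the ladder gives, for \(0\leq m<l\) and \(\Lambda=l(l+1)\),
\[
\mathcal N_{A_m}(\Lambda)=(2m+1)+\mathcal N_{C_m}(\Lambda)\leq(2m+1)+N^{<}_{A_{m+1}}(\Lambda).
\]
The inequality holds because any eigenvalue \(\lambda_j(C_m)\leq\Lambda\) forces \(\lambda_j(A_{m+1})<\Lambda\). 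Continuing with the strict counting recursion of Proposition~\ref{prop:crucial induction ineq} for the later rungs, and using \(N^{<}_{A_l}(\Lambda)=0\), we obtain
\[
\mathcal N_{-\Delta_g}\bigl(l(l+1)\bigr)=\mathcal N_{A_0}(\Lambda)\leq\sum_{m=0}^{l-1}(2m+1)=l^2<(l+1)^2 .
\]
Here the identification \(A_0=-\Delta_g\) comes from Lemma~\ref{lem:cr-ladder}. This excludes \eqref{eq:smooth-counting-equality} for \(l\in\mathbb Z^+\).

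The third step handles individual eigenvalues. Suppose \eqref{eq:smooth-individual-equality} holds with \(i\geq1\), and choose \(l\geq1\) with \(l^2\leq i\leq l^2+2l\), so that the round eigenvalue is \(l(l+1)\). By the previous bound at most \(l^2\) eigenvalues of \(-\Delta_g\) are \(\leq l(l+1)\). Since \(i\geq l^2\), this gives \(\lambda_i(S^2,g)>l(l+1)\), contradicting equality.

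The main point requiring care is the second step. We must check that strictness at the single rung \(C_m\) versus \(A_{m+1}\) genuinely converts a closed count into a strict count at the next level, including the multiplicity bookkeeping when several eigenvalues equal \(\Lambda\). This follows because strict inequality holds for every index \(j\), so \(\mathcal N_{C_m}(\Lambda)\leq N^{<}_{A_{m+1}}(\Lambda)\) holds exactly.
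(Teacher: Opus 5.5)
Your proof is correct and follows essentially the paper's argument: for nonround \(g\), the strict rung comparison of Lemma~\ref{lem:smooth-strict-form-comparison}, combined with the partner index shift of Proposition~\ref{prop:crucial induction ineq}, gives \(\mathcal N_{-\Delta_g}\bigl(l(l+1)\bigr)\leq l^2\), which is equivalent to \(\lambda_{l^2}(S^2,g)>l(l+1)\). The only difference is bookkeeping: you run the iteration with counting functions, as the paper does in its rotational rigidity proof, and rule out the counting equality directly, whereas the paper's smooth proof iterates ordered eigenvalues and reduces the counting equality to an individual equality at \(i=l^2+2l\).
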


\begin{proof}
Suppose first that \(K_g\not\equiv1\).  Combining
Proposition~\ref{prop:crucial induction ineq} with
Lemma~\ref{lem:smooth-strict-form-comparison} gives
\begin{align}
\lambda_{j+2m+1}(A_m)>\lambda_j(A_{m+1}) \qquad(m,j\geq0). \nonumber
\end{align}
For fixed \(l\geq1\), iterate this inequality for
\(m=0,1,\ldots,l-1\).  Since
\(\sum_{m=0}^{l-1}(2m+1)=l^2\), one obtains
\begin{align}
\lambda_{l^2}(g)=\lambda_{l^2}(A_0)> \lambda_0(A_l)=l(l+1); \nonumber
\end{align}
the last equality uses any nonzero element of \(\ker B_l\), whose existence
follows from Lemma~\ref{lem:ker-Bm-dimension-new}.  If
\eqref{eq:smooth-individual-equality} holds and
\(l^2\leq i\leq l^2+2l\), then
\begin{align}
\lambda_i(g)\geq\lambda_{l^2}(g)>l(l+1) =\lambda_i(\mathbb S^2,g_{\mathrm{round}}), \nonumber
\end{align}
a contradiction.  Thus individual eigenvalue equality forces
\(K_g\equiv1\).

Now assume \eqref{eq:smooth-counting-equality}, and set $i=(l+1)^2-1=l^2+2l$.  The counting equality implies $\lambda_i(S^2,g)\leq l(l+1)$, whereas the ordered comparison \eqref{eq:smooth-ordered-comparison} gives
\begin{align}
\lambda_i(S^2,g)\geq\lambda_i(\mathbb S^2,g_{\mathrm{round}})=l(l+1). \nonumber
\end{align}
Thus \eqref{eq:smooth-individual-equality} holds at the index $i$, and the preceding argument gives $K_g\equiv1$.

In either case, a complete simply connected surface of constant curvature
\(1\) is the unit round sphere.  Hence \((S^2,g)\) is round.
\end{proof}

Theorems~\ref{thm:cluster-bottom-comparison} and
\ref{thm:rigidity-main} prove the smooth comparison and rigidity assertions
of Theorem~\ref{thm:main-eigenvalue}.  The role of
Section~\ref{sec:abstract-ladder-counting} is now explicit: the present part
supplies the bundle operator, the curvature form order, and the kernel
dimension; Section~\ref{sec:abstract-ladder-counting} supplies the partner
spectra, index shifts, counting recursion, and equality propagation.

The proof is genuinely two-dimensional.  Section~\ref{sec:S3-counterexample}
shows that the direct \(S^3\) analogue with \(\operatorname{Ric}\geq2\) is
false, and the higher-dimensional examples in \cite{Aryan2026} show that a
Ricci lower bound alone does not yield the same finite-index rigidity.

\section{A complex-geometric proof}\label{sec:complex-geometric-proof}

This section gives a complex-geometric proof of Theorem~\ref{thm:main-eigenvalue}.  The point is that the bundles appearing in the ladder are powers of the anticanonical bundle on the Riemann surface underlying the oriented Riemannian sphere, and the first-order operators are Dolbeault operators after a metric identification.

We use Griffiths--Harris as the standard reference for the complex-geometric background: Dolbeault's theorem, the correspondence
between divisor classes and holomorphic line bundles on compact Riemann surfaces, Riemann--Roch for compact Riemann surfaces, global duality, Hermitian holomorphic bundles, Chern connections, curvature; see \cite[Ch.~0, \S3, p.~45; Ch.~0, \S5, pp.~66--79; Ch.~0, \S7, pp.~106--122; Ch.~1, \S1, pp.~129--139; Ch.~2, \S3, pp.~240--246; Ch.~5, \S4, pp.~705--707]{GriffithsHarrisPAG}.  

The one-dimensional Bochner--Kodaira formula needed below is proved explicitly for the reader's convenience.


If \(L\to\Sigma\) is a holomorphic line bundle over a compact Riemann surface, let \(\mathcal O(L)\) denote the sheaf of holomorphic sections of \(L\), and set
\begin{align}
h^q(L):=\dim_\C H^q(\Sigma,\mathcal O(L)), \qquad q=0,1. \nonumber
\end{align}

Let $K_\Sigma:=\Lambda^{1,0}T^*\Sigma$ be the canonical holomorphic line bundle, then $K_\Sigma^{-1}=T^{1,0}\Sigma.$

For each integer \(m\geq0\), define
\begin{align}
L_m:=K_\Sigma^{-m}=(T^{1,0}\Sigma)^{\otimes m}, \qquad L_0:=\Sigma\times\C. \nonumber
\end{align}
Thus \(L_m\) is the same smooth complex line bundle as the bundle \(E_m\) used in Sections~\ref{sec:riemannian-ladder}--\ref{sec:comparison-rigidity}, now equipped with its natural holomorphic structure.

\begin{theorem}[Dolbeault, Riemann--Roch, Serre duality]\label{thm:RR-Serre-used}
Let \(\Sigma\) be a compact Riemann surface of genus \(\mathfrak{g}\), and let \(L\to\Sigma\) be a holomorphic line bundle.  Then
\begin{align}
&H^{0,q}_{\bar\partial}(\Sigma,L)\simeq H^q(\Sigma,\mathcal O(L)), \qquad q=0,1. \nonumber\\
&h^0(L)-h^1(L)=\deg L+1-\mathfrak{g}, \nonumber\\
&H^1(\Sigma,\mathcal O(L))\simeq H^0(\Sigma,\mathcal O(K_\Sigma\otimes L^{-1}))^*. \nonumber
\end{align}
\end{theorem}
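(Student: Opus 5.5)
The statement collects three classical facts about compact Riemann surfaces. I would not reprove them from scratch. Instead I would give a short structured argument that pins each assertion to its standard source, \cite{GriffithsHarrisPAG}, and makes explicit the conventions used later: that $h^q$ is computed by $\bar\partial$-cohomology with values in $L$, and that $\deg K_\Sigma=2\mathfrak g-2$.

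\textbf{Dolbeault isomorphism.} Let $\mathcal A^{0,q}(L)$ be the sheaf of smooth $L$-valued $(0,q)$-forms. The complex
\begin{align}
0\to\mathcal O(L)\to\mathcal A^{0,0}(L)\xrightarrow{\bar\partial}\mathcal A^{0,1}(L)\to0 \nonumber
\end{align}
is a resolution of $\mathcal O(L)$. Exactness at $\mathcal A^{0,1}(L)$ is the local solvability of $\bar\partial u=f$ in one complex variable. One obtains it from the Cauchy transform after trivializing $L$ by a holomorphic frame. The sheaves $\mathcal A^{0,q}(L)$ are fine, since they admit partitions of unity, so they are acyclic. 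Taking global sections therefore computes sheaf cohomology, which gives $H^{0,q}_{\bar\partial}(\Sigma,L)\simeq H^q(\Sigma,\mathcal O(L))$.

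\textbf{Riemann--Roch.} Write $L=\mathcal O(D)$ for a divisor $D$; every holomorphic line bundle on a compact Riemann surface has a meromorphic section. Induct on the divisor using the skyscraper sequence
\begin{align}
0\to\mathcal O(D)\to\mathcal O(D+p)\to\mathbb C_p\to0. \nonumber
\end{align}
Its long exact sequence shows that $\chi(\mathcal O(D))-\deg D$ is unchanged when $D$ is replaced by $D+p$. It therefore equals $\chi(\mathcal O)=1-h^1(\mathcal O)$, and $h^1(\mathcal O)=\mathfrak g$ by definition of the genus, through the Hodge decomposition of $H^1$.

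\textbf{Serre duality.} For $\alpha\in A^{0,1}(L)$ and a holomorphic section $\beta$ of $K_\Sigma\otimes L^{-1}$, consider the pairing
\begin{align}
(\alpha,\beta)\mapsto\int_\Sigma\alpha\wedge\beta. \nonumber
\end{align}
It vanishes on $\bar\partial$-exact $\alpha$ by Stokes, so it descends to $H^{0,1}$. For nondegeneracy, pick a Hermitian metric and use Hodge theory for $\bar\partial_L$. Every class has a harmonic representative $\alpha$, and $\bar\partial^*\alpha=0$. Through the conjugate-linear Hodge star this means $\overline{*\alpha}$, suitably twisted by the metric, is a holomorphic section of $K_\Sigma\otimes L^{-1}$. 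Pairing $\alpha$ with that section gives $\|\alpha\|^2$.

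\textbf{Main obstacle.} The step with the most real content is the Hodge-theoretic input to Serre duality: finite dimensionality and the harmonic representative. This is exactly elliptic theory for the operator $\bar\partial_L^*\bar\partial_L$ on a closed surface. That operator is the same kind of operator as $B_m^*B_m$ from Part~2, so it fits the paper's framework.

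For the application $L=K^{-m}$ on $\mathbb{CP}^1$ one has $\deg L=2m$ and $h^0(K^{1+m})=0$ because $\deg K^{1+m}=-2-2m<0$. This recovers $h^0=2m+1$, consistent with Lemma~\ref{lem:ker-Bm-dimension-new}.
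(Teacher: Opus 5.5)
Your proposal is correct and takes essentially the paper's approach: the paper also proves this statement purely by citation to Griffiths--Harris, and your sketches (the fine resolution by $\mathcal A^{0,q}(L)$, the skyscraper-sequence induction on $\chi(\mathcal O(D))-\deg D$, and the Hodge-theoretic pairing through the conjugate-linear star) are the standard arguments behind those references. One small point for the Serre duality sketch: to get an isomorphism rather than only injectivity of $H^1\to (H^0)^*$, use that $\alpha\mapsto\overline{*\alpha}$ is a conjugate-linear bijection from harmonic $L$-valued $(0,1)$-forms onto $H^0(\Sigma,K_\Sigma\otimes L^{-1})$, which gives equality of the two finite dimensions.
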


\begin{proof}
These are standard results.  For Dolbeault's theorem, see Griffiths--Harris \cite[Ch.~0, \S3, p.~45]{GriffithsHarrisPAG}.  For Riemann--Roch on compact Riemann surfaces, see \cite[Ch.~2, \S3, pp.~240--246]{GriffithsHarrisPAG}.  The divisor--line-bundle correspondence used to write the statement for holomorphic line bundles is recalled in \cite[Ch.~1, \S1, pp.~129--139]{GriffithsHarrisPAG}.  The Serre-duality statement used here is the one-dimensional line-bundle case of global duality; see \cite[Ch.~5, \S4, pp.~705--707]{GriffithsHarrisPAG}.
\end{proof}

\begin{lemma}[Anticanonical powers on the conformal sphere]
\label{lem:CP1-anticanonical-cohomology}
Let \(\Sigma\) be a compact Riemann surface biholomorphic to \(\CP^1\). Then
\begin{align}
h^0(L_m)=\dim_\C H^0(\Sigma,L_m)=2m+1, \label{eq:h0-Lm-2m1}
\end{align}
and
\begin{align}
H^1(\Sigma,\mathcal O(L_m))=0. \label{eq:H1-Lm-zero}
\end{align}
Finally, for every \(p\in\Sigma\), the evaluation map
\begin{align}
\operatorname{ev}_p:H^0(\Sigma,L_m)\longrightarrow (L_m)_p \nonumber
\end{align}
is surjective.  Consequently,
\begin{align}
\dim_\C\{s\in H^0(\Sigma,L_m):s(p)=0\}=2m. \label{eq:CP1-evaluation-kernel-dim}
\end{align}
\end{lemma}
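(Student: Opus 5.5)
Since \(\Sigma\) is biholomorphic to \(\CP^1\), the plan is to reduce everything to degree computations on \(\CP^1\), where \(\mathfrak g=0\). The first step is to compute \(\deg L_m\). The canonical bundle satisfies \(\deg K_\Sigma=2\mathfrak g-2=-2\). Concretely, on \(\CP^1\) the form \(dz\) extends to infinity as \(-w^{-2}dw\), so it is a meromorphic section with a single double pole. Hence \(\deg L_m=\deg K_\Sigma^{-m}=2m\).

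Next I would prove the vanishing \eqref{eq:H1-Lm-zero}. By Serre duality in Theorem~\ref{thm:RR-Serre-used},
\begin{align}
H^1(\Sigma,\mathcal O(L_m))\simeq H^0(\Sigma,\mathcal O(K_\Sigma^{m+1}))^*. \nonumber
\end{align}
The bundle \(K_\Sigma^{m+1}\) has degree \(-2(m+1)<0\). A nonzero holomorphic section of a line bundle has a nonnegative number of zeros, and that number equals the degree. So \(K_\Sigma^{m+1}\) has no nonzero holomorphic section, which gives \(H^1=0\). Riemann--Roch then yields
\begin{align}
h^0(L_m)=2m+1-0=2m+1, \nonumber
\end{align}
which is \eqref{eq:h0-Lm-2m1}.

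As a cross-check, the stereographic computation in Lemma~\ref{lem:ker-Bm-dimension-new} already exhibits the explicit basis \(P(z)(\partial_z)^{\otimes m}\) with \(\deg P\le 2m\). This description is also the most convenient one for the evaluation statement.

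For surjectivity of \(\operatorname{ev}_p\), it suffices to produce one section that is nonzero at \(p\), since the fiber \((L_m)_p\) is one-dimensional. Using a Möbius transformation I may assume \(p\neq\infty\), say \(p=z_0\). The constant polynomial \(P\equiv1\) gives the section \((\partial_z)^{\otimes m}\), which does not vanish at \(z_0\). If instead \(p=\infty\), I take \(P=z^{2m}\). In the coordinate \(w=1/z\) this section becomes \((-1)^m(\partial_w)^{\otimes m}\), which is nonzero at \(w=0\).

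An alternative route to surjectivity is the exact sequence
\begin{align}
0\to L_m(-p)\to L_m\to L_m|_p\to0. \nonumber
\end{align}
Here \(\deg L_m(-p)=2m-1\ge -1\), so \(H^1(L_m(-p))=0\) by the same Serre duality argument, and \(\operatorname{ev}_p\) is surjective. Either way, rank--nullity gives the kernel dimension \(2m+1-1=2m\), which is \eqref{eq:CP1-evaluation-kernel-dim}.

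I see no serious obstacle here. The only point that needs care is the degree count \(\deg K_{\CP^1}=-2\) and the resulting sign in the Serre duality step, which I would pin down through the explicit transition \(\partial_z=-w^2\partial_w\) already used in the paper.
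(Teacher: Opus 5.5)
Your proof is correct and follows the paper's argument. Both compute \(\deg L_m=2m\) from \(\deg K_\Sigma=-2\), use Serre duality together with the absence of sections of the negative-degree bundle \(K_\Sigma^{m+1}\) to get \(H^1=0\), and use Riemann--Roch to get \(h^0(L_m)=2m+1\). For the surjectivity of \(\operatorname{ev}_p\), the paper only cites global generation of \(\mathcal O_{\CP^1}(2m)\). Your explicit sections (\(P\equiv1\) at finite points, \(P=z^{2m}\) at \(\infty\)), or alternatively the vanishing of \(H^1(L_m(-p))\), verify that fact directly.
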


\begin{proof}
\textbf{Step (1)}. Equivalently, \(\Sigma\) has genus \(\mathfrak g=0\).  The canonical bundle
of a compact Riemann surface of genus \(\mathfrak g\) has degree
\[
\deg K_\Sigma=2\mathfrak g-2;
\]
see \cite[\S5.4, Lemma~5.4.4, p.~215]{JostRiemann}.  Hence, by additivity
of degree under tensor products and dualization,
\begin{align}
\deg L_m
&=\deg\bigl(K_\Sigma^{-m}\bigr)
=-m\,\deg K_\Sigma
=2m.
\nonumber
\end{align}

Theorem \ref{thm:RR-Serre-used} gives
\begin{align}
h^0(L_m)-h^0(K_\Sigma\otimes L_m^{-1})=2m+1. \label{eq:RR-Lm}
\end{align}
But
\begin{align}
K_\Sigma\otimes L_m^{-1}=K_\Sigma^{m+1}, \qquad \deg(K_\Sigma^{m+1})=-2(m+1)<0. \nonumber
\end{align}
A holomorphic line bundle of negative degree has no nonzero holomorphic section: if \(s\not\equiv0\) is holomorphic, then its zero divisor is effective and has degree equal to the degree of the line bundle, which is impossible when the degree is negative.  Thus
\begin{align}
H^0(\Sigma,K_\Sigma^{m+1})=0. \nonumber
\end{align}
Consequently, \eqref{eq:RR-Lm} gives \eqref{eq:h0-Lm-2m1}.  Serre duality gives
\begin{align}
H^1(\Sigma,\mathcal O(L_m)) \simeq H^0(\Sigma,K_\Sigma\otimes L_m^{-1})^* =H^0(\Sigma,K_\Sigma^{m+1})^*=0, \nonumber
\end{align}
which proves \eqref{eq:H1-Lm-zero}.


\textbf{Step (2)}. Since
\begin{align}
K_{\CP^1}\simeq\mathcal O_{\CP^1}(-2), \nonumber
\end{align}
one has
\begin{align}
L_m\simeq\mathcal O_{\CP^1}(2m). \label{eq:CP1-anticanonical-O2m}
\end{align}
Moreover,

By \eqref{eq:CP1-anticanonical-O2m}, \(L_m\) is identified with \(\mathcal O_{\CP^1}(2m)\), and \(\mathcal O_{\CP^1}(2m)\) is globally generated for \(m\geq0\).  

Therefore for every \(p\in\Sigma\) there exists a holomorphic section of \(L_m\) which does not vanish at \(p\).  Since the target \((L_m)_p\) is one-dimensional, \(\operatorname{ev}_p\) is surjective.  Its kernel has codimension one in \(H^0(\Sigma,L_m)\), and \eqref{eq:h0-Lm-2m1} gives \eqref{eq:CP1-evaluation-kernel-dim}.
\end{proof}


Let \((S^2,g)\) be oriented.  The orientation and metric define the complex structure
\begin{align}
J:TS^2\longrightarrow TS^2, \qquad Je_1=e_2, \qquad Je_2=-e_1 \nonumber
\end{align}
for every local oriented \(g\)-orthonormal frame \(e_1,e_2\).  Denote the resulting compact Riemann surface by
\begin{align}
\Sigma:=(S^2,J). \nonumber
\end{align}

For a holomorphic line bundle \(L\to\Sigma\), write
\begin{align}
\Omega^{0,q}(\Sigma,L):=C^\infty\bigl(\Lambda^{0,q}T^*\Sigma\otimes L\bigr), \qquad q=0,1. \nonumber
\end{align}
The Dolbeault operator is
\begin{align}
\bar\partial_L:\Omega^{0,0}(\Sigma,L)\longrightarrow \Omega^{0,1}(\Sigma,L), \nonumber
\end{align}
and for \(L=L_m\) we write $\bar\partial_m:=\bar\partial_{L_m}$.

Let \(e_1,e_2\) be a local oriented orthonormal frame, with dual coframe \(\theta^1,\theta^2\).  Set
\begin{align}
Z:=\frac{1}{\sqrt2}(e_1-i e_2), \qquad \zeta:=\frac{1}{\sqrt2}(\theta^1+i\theta^2), \qquad \bar\zeta:=\frac{1}{\sqrt2}(\theta^1-i\theta^2). \nonumber
\end{align}
Then \(Z\) is a local unitary frame of \(T^{1,0}\Sigma=K_\Sigma^{-1}\), \(\zeta\) is the dual local unitary frame of \(K_\Sigma\), and \(\bar\zeta\) is a local unitary frame of \(\Lambda^{0,1}T^*\Sigma\).

The metric gives a unitary bundle isomorphism
\begin{align}
\mathcal I_m:\Lambda^{0,1}T^*\Sigma\otimes L_m\longrightarrow L_{m+1}, \quad\quad \mathcal I_m(\bar\zeta\otimes Z^{\otimes m})=Z^{\otimes(m+1)}. \nonumber
\end{align}

This definition is independent of the oriented orthonormal frame: if \(Z'=e^{i\alpha}Z\), then \(\bar\zeta'=e^{i\alpha}\bar\zeta\), and both sides acquire the same factor \(e^{i(m+1)\alpha}\).  In particular, $\mathcal I_m^*=\mathcal I_m^{-1}.$

Define
\begin{align}
B_m^{\mathrm{Dol}}:=\sqrt2\,\mathcal I_m\circ\bar\partial_m: C^\infty(L_m)\longrightarrow C^\infty(L_{m+1}). \label{eq:BDol-definition}
\end{align}

Let \(\omega\) be the Levi-Civita connection one-form defined by
\begin{align}
\nabla e_1=\omega\otimes e_2, \qquad \nabla e_2=-\omega\otimes e_1. \nonumber
\end{align}

The induced Chern connection on \(L_m\) is
\begin{align}
\nabla^{(m)}(u\cdot Z^{\otimes m})=(du+i\cdot m u\omega)\otimes Z^{\otimes m}. \nonumber
\end{align}

The complexified cotangent bundle splits as
\begin{align}
T^*\Sigma\otimes_{\mathbb R}\mathbb C = \Lambda^{1,0}T^*\Sigma\oplus \Lambda^{0,1}T^*\Sigma. \nonumber
\end{align}
Let
\begin{align}
\Pi^{1,0}:T^*\Sigma\otimes_{\mathbb R}\mathbb C\to \Lambda^{1,0}T^*\Sigma, \qquad \Pi^{0,1}:T^*\Sigma\otimes_{\mathbb R}\mathbb C\to \Lambda^{0,1}T^*\Sigma \nonumber
\end{align}
be the corresponding projections.  Equivalently, for a complex-valued one-form \(\alpha\),
\begin{align}
\Pi^{1,0}\alpha = \frac12\bigl(\alpha-i\,\alpha\circ J\bigr), \qquad \Pi^{0,1}\alpha = \frac12\bigl(\alpha+i\,\alpha\circ J\bigr), \nonumber
\end{align}
where \((\alpha\circ J)(X)=\alpha(JX)\).

We define
\begin{align}
(\nabla^{(m)})' := (\Pi^{1,0}\otimes \operatorname{Id}_{L_m})\circ \nabla^{(m)}, \qquad (\nabla^{(m)})'' := (\Pi^{0,1}\otimes \operatorname{Id}_{L_m})\circ \nabla^{(m)}. \nonumber
\end{align}
Thus
\begin{align}
\nabla^{(m)}=(\nabla^{(m)})'+(\nabla^{(m)})'', \nonumber
\end{align}
with
\begin{align}
(\nabla^{(m)})': C^\infty(L_m)\to \Omega^{1,0}(\Sigma,L_m), \qquad (\nabla^{(m)})'': C^\infty(L_m)\to \Omega^{0,1}(\Sigma,L_m). \nonumber
\end{align}

Writing
\begin{align}
D^{(m)}_{e_j}u:=e_ju+imu\omega(e_j), \qquad j=1,2, \nonumber
\end{align}

One obtains
\begin{align}
\bar\partial_m(u\cdot Z^{\otimes m}) =\frac{1}{\sqrt2}\bigl(D_{e_1}^{(m)}u+iD_{e_2}^{(m)}u\bigr)\bar\zeta\otimes Z^{\otimes m}. \label{eq:dbar-local-B}
\end{align}
Combining \eqref{eq:BDol-definition} and \eqref{eq:dbar-local-B},
\begin{align}
B_m^{\mathrm{Dol}}(u\cdot Z^{\otimes m})= \bigl(D_{e_1}^{(m)}u+iD_{e_2}^{(m)}u\bigr)Z^{\otimes(m+1)}, \nonumber
\end{align}
which is exactly the operator \(B_m\) from \eqref{eq:Bm-definition-new}.  Hence we write simply
\begin{align}
B_m=B_m^{\mathrm{Dol}}. \nonumber
\end{align}

As before, \(B_m\) denotes the closed operator initially defined on \(C^\infty(L_m)\):
\begin{align}
B_m:\operatorname{Dom}(B_m)=H^1(L_m)\subset L^2(L_m)\longrightarrow L^2(L_{m+1}), \nonumber
\end{align}
and \(B_m^*\) denotes its Hilbert-space adjoint.

For $m=0$, $L_0$ is trivial, and $B_0^*B_0=-\Delta_g$ is exactly \eqref{eq:B0-lap-new} from Lemma~\ref{lem:cr-ladder}.

We now apply the metric-independent cohomology from Section \ref{sec:complex-geometric-proof}.  Since \(\Sigma\) is a compact Riemann surface of genus zero, \(\Sigma\simeq\CP^1\).  By construction,
\begin{align}
\ker B_m=\ker\bar\partial_m=H^0(\Sigma,L_m). \nonumber
\end{align}
Applying Lemma~\ref{lem:CP1-anticanonical-cohomology} gives
\begin{align}
\dim_\C\ker B_m=2m+1. \label{eq:complex-ker-Bm-dim}
\end{align}

For the adjoint-kernel vanishing, the same lemma gives
\begin{align}
H^1(\Sigma,\mathcal O(L_m))=0. \nonumber
\end{align}
By Dolbeault's theorem, this is equivalent to
\begin{align}
H^{0,1}_{\bar\partial}(\Sigma,L_m)=0. \label{eq:Dolbeault-H01-zero}
\end{align}

Let \(\Psi\in\operatorname{Dom}(B_m^*)\) satisfy \(B_m^*\Psi=0\).  By elliptic regularity for \(\bar\partial_m^*\), it is enough to argue in the smooth category.  Put
\begin{align}
\eta:=\mathcal I_m^{-1}\Psi\in\Omega^{0,1}(\Sigma,L_m). \nonumber
\end{align}
Since \(B_m=\sqrt2\,\mathcal I_m\bar\partial_m\), the equation \(B_m^*\Psi=0\) is equivalent to
\begin{align}
\bar\partial_m^*\eta=0. \nonumber
\end{align}
Because \(\Sigma\) has complex dimension one, there are no \((0,2)\)-forms, so automatically
\begin{align}
\bar\partial_m\eta=0. \nonumber
\end{align}
Thus \(\eta\) defines a Dolbeault cohomology class in \(H^{0,1}_{\bar\partial}(\Sigma,L_m)\).  By \eqref{eq:Dolbeault-H01-zero}, this class is zero, so there exists \(s\in\Omega^{0,0}(\Sigma,L_m)\) such that
\begin{align}
\eta=\bar\partial_m s. \nonumber
\end{align}
Then
\begin{align}
\|\eta\|_{L^2}^2 =\langle \eta,\bar\partial_m s\rangle_{L^2} =\langle \bar\partial_m^*\eta,s\rangle_{L^2}=0. \nonumber
\end{align}
Hence \(\eta=0\), and therefore \(\Psi=0\).  We have proved
\begin{align}
\ker B_m^*=\{0\}. \label{eq:complex-adjoint-kernel-zero}
\end{align}
This is the complex-geometric replacement for Lemma~\ref{lem:adjoint-kernel-new}.

\begin{remark}
The proof of \eqref{eq:complex-ker-Bm-dim} and \eqref{eq:complex-adjoint-kernel-zero} uses only the complex geometry of \(\mathbb{CP}^1\), together with the smooth Hilbert adjoint after the metric is chosen.  It does not use the curvature lower bound \(K_g\geq1\).
\end{remark}

It remains to recover the curvature ladder.  This is the one place where the Hermitian metric and the above Chern connections \(\nabla^{(m)}\) are essential.

Let $\omega_\Sigma:=dA_g$ be the Kähler form of \((\Sigma,g,J)\).  For \(m\geq0\), set
\begin{align}
\Theta_m:=(\nabla^{(m)})^2. \nonumber
\end{align}
and on \(L_m\)-valued zero-forms define
\begin{align}
\Delta_m'=((\nabla^{(m)})')^*(\nabla^{(m)})', \qquad \Delta_m''=((\nabla^{(m)})'')^*(\nabla^{(m)})''. \nonumber
\end{align}
By \eqref{eq:dbar-local-B},
\begin{align}
(\nabla^{(m)})''=\bar\partial_m. \label{eq:nabla-double-prime-dbar}
\end{align}

\begin{theorem}[One-dimensional Bochner--Kodaira identity]\label{thm:one-dimensional-BK}
For every \(m\geq0\), on smooth sections of \(L_m\),
\begin{align}
\Delta_m'-\Delta_m''=\Lambda_{\omega_\Sigma}(i\Theta_m). \label{eq:one-dimensional-BK-statement}
\end{align}
Equivalently, if \(i\Theta_m=f\omega_\Sigma\), then
\begin{align}
\Delta_m'-\Delta_m''=f. \nonumber
\end{align}
\end{theorem}

\begin{remark}
{This is the one-dimensional line-bundle case of the classical
Bochner--Kodaira identity, specialized here to the bundles \(L_m\) and to the Chern connections \(\nabla^{(m)}\) already introduced above.  The differential-geometric method goes back to Bochner and Kodaira; see Kodaira's 1953 paper
\cite{Kodaira1953DifferentialGeometric}.  The vector-bundle form is
closely related to the Akizuki--Nakano/Nakano identities
\cite{AkizukiNakano1954, Nakano1955}.
We include the proof in this special case to fix the sign convention.
}
\end{remark}

\begin{proof}
The identity is local.  Fix \(p\in\Sigma\), and choose a local oriented orthonormal frame so that the associated unitary \((1,0)\)-coframe \(\zeta\), the dual vector \(Z\), and the unitary frame \(Z^{\otimes m}\) of \(L_m\) are normal at \(p\).  Thus
\begin{align}
\omega_\Sigma=i\zeta\wedge\bar\zeta \nonumber
\end{align}
at \(p\), the first-order frame-connection terms vanish at \(p\), and \([Z,\bar Z](p)=0\).  For a smooth section \(s\) of \(L_m\),
\begin{align}
(\nabla^{(m)})'s=(\nabla^{(m)}_Zs)\zeta, \qquad (\nabla^{(m)})''s=(\nabla^{(m)}_{\bar Z}s)\bar\zeta. \nonumber
\end{align}
At \(p\), the formal adjoints satisfy
\begin{align}
((\nabla^{(m)})')^*(a\zeta)=-\nabla^{(m)}_{\bar Z}a, \qquad ((\nabla^{(m)})'')^*(b\bar\zeta)=-\nabla^{(m)}_Zb. \nonumber
\end{align}
Therefore
\begin{align}
\Delta_m's=-\nabla^{(m)}_{\bar Z}\nabla^{(m)}_Zs, \qquad \Delta_m''s=-\nabla^{(m)}_Z\nabla^{(m)}_{\bar Z}s. \nonumber
\end{align}
Subtracting gives
\begin{align}
(\Delta_m'-\Delta_m'')s =(\nabla^{(m)}_Z\nabla^{(m)}_{\bar Z}-\nabla^{(m)}_{\bar Z}\nabla^{(m)}_Z)s =\Theta_m(Z,\bar Z)s. \nonumber
\end{align}
Write \(i\Theta_m=f\omega_\Sigma\).  Since \(\omega_\Sigma(Z,\bar Z)=i\), one has
\begin{align}
\Theta_m(Z,\bar Z)=f. \nonumber
\end{align}
Thus
\begin{align}
(\Delta_m'-\Delta_m'')s=f s =\Lambda_{\omega_\Sigma}(i\Theta_m)s. \nonumber
\end{align}
Since \(p\) was arbitrary, the identity holds globally.
\end{proof}

The next identity is not a new curvature formula; it is the preceding
one-dimensional Bochner--Kodaira identity rewritten in the ladder notation of this paper.
\begin{lemma}[Dolbeault curvature ladder]\label{lem:complex-curvature-ladder}
For every \(m\geq0\), as operators on \(C^\infty(L_{m+1})\),
\begin{align}
B_mB_m^*-B_{m+1}^*B_{m+1}=2(m+1)K_g. \nonumber
\end{align}
\end{lemma}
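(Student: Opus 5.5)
The plan is to rewrite both sides of the identity in terms of the Dolbeault Laplacians $\Delta''$ and $\Delta'$ on the single bundle $L_{m+1}$, and then invoke Theorem~\ref{thm:one-dimensional-BK}. The two ingredients are:
\begin{itemize}
\item[(i)] the identification of $B_mB_m^*$ and $B_{m+1}^*B_{m+1}$ with $2\Delta'_{m+1}$ and $2\Delta''_{m+1}$;
\item[(ii)] the computation of the curvature $\Theta_{m+1}$ of the Chern connection $\nabla^{(m+1)}$.
\end{itemize}

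\textbf{The operator $B_{m+1}^*B_{m+1}$.} By \eqref{eq:BDol-definition} and \eqref{eq:nabla-double-prime-dbar}, $B_{m+1}=\sqrt2\,\mathcal I_{m+1}(\nabla^{(m+1)})''$. Since $\mathcal I_{m+1}$ is unitary, this gives
\[
B_{m+1}^*B_{m+1}=2\,((\nabla^{(m+1)})'')^*(\nabla^{(m+1)})''=2\Delta''_{m+1}.
\]

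\textbf{The operator $B_mB_m^*$.} Here I would argue locally in a frame normal at $p$, as in the proof of Theorem~\ref{thm:one-dimensional-BK}. For $t=vZ^{\otimes(m+1)}$, formula \eqref{eq:Bm-star-local-new} reads
\[
B_m^*t=-\bigl(D^{(m+1)}_{e_1}-iD^{(m+1)}_{e_2}\bigr)v\,Z^{\otimes m}.
\]
Since $Z=\tfrac1{\sqrt2}(e_1-ie_2)$, this equals $-\sqrt2\,(\nabla^{(m+1)}_Z t)$, viewed in $L_m$ via the contraction $L_{m+1}\otimes K\simeq L_m$. In the same way, $B_m$ acts as $\sqrt2\,\nabla_{\bar Z}$ on the coefficient. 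At $p$ the frame-connection terms vanish, so
\[
B_mB_m^*t=-2\,\nabla_{\bar Z}\nabla_Z t=2\Delta'_{m+1}t .
\]
This step needs care: the index shift between $D^{(m)}$ and $D^{(m+1)}$ must be tracked, and one must check that at $p$ the connection of $L_m$ applied after contracting with $\zeta$ agrees with $\nabla^{(m+1)}_{\bar Z}$ up to terms involving $\omega(p)=0$. This is exactly the content of the expansions \eqref{eq:BstarB-expansion-new}--\eqref{eq:BBstar-expansion-new}. An alternative route is to verify directly that $\Delta'_{m+1}$ matches \eqref{eq:BBstar-expansion-new}.

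\textbf{The curvature.} For $\Theta_{m+1}$, the connection form of $\nabla^{(m+1)}$ in the unitary frame is $i(m+1)\omega$. Hence
\[
\Theta_{m+1}=i(m+1)\,d\omega=-i(m+1)K_g\,dA_g,
\]
so that $i\Theta_{m+1}=(m+1)K_g\,\omega_\Sigma$ and $f=(m+1)K_g$.

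\textbf{Conclusion.} Theorem~\ref{thm:one-dimensional-BK} now gives
\[
B_mB_m^*-B_{m+1}^*B_{m+1}=2(\Delta'_{m+1}-\Delta''_{m+1})=2(m+1)K_g .
\]

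The main obstacle is sign consistency, namely whether $B_mB_m^*$ corresponds to $\Delta'$ or to $\Delta''$ and the sign of $\Theta(Z,\bar Z)$. I would cross-check against the round sphere, using Lemma~\ref{lem:cr-ladder} with $m=0$ as a normalization. There the Riemannian ladder identity already gives the same right-hand side, $2(m+1)K_g$, so the two constructions must agree.
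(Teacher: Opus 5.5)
Your proposal is correct and follows the paper's proof. Both arguments write $B_{m+1}^*B_{m+1}=2\Delta''_{m+1}$ and $B_mB_m^*=2\Delta'_{m+1}$, compute $i\Theta_{m+1}=(m+1)K_g\,\omega_\Sigma$, and then apply Theorem~\ref{thm:one-dimensional-BK}. The only difference is that the paper obtains $B_mB_m^*=2\Delta'_{m+1}$ globally from $B_m^*=-\sqrt2\,\mathcal J_m(\nabla^{(m+1)})'$ with the unitary contraction $\mathcal J_m\colon\Lambda^{1,0}T^*\Sigma\otimes L_{m+1}\to L_m$, which avoids your normal-frame bookkeeping of the $D^{(m)}$ versus $D^{(m+1)}$ index shift.
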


\begin{proof}
By \eqref{eq:nabla-double-prime-dbar} and the definition of \(\Delta_{m+ 1}''\),
\begin{align}
\Delta_{m+ 1}''=\bar\partial_{m+ 1}^*\bar\partial_{m+ 1}. \nonumber
\end{align}
By \eqref{eq:BDol-definition} and the unitarity of \(\mathcal I_{m+ 1}\),
\begin{align}
B_{m+ 1}^*B_{m+ 1}=2\bar\partial_{m+ 1}^*\bar\partial_{m+ 1}=2\Delta_{m+ 1}'' \qquad\text{on }C^\infty(L_{m+ 1}). \label{eq:Bell-Bell-dbar-Laplace}
\end{align}

For the \((\nabla^{(m+ 1)})'\)-part, define the unitary metric identification
\begin{align}
\mathcal J_{m}:\Lambda^{1,0}T^*\Sigma\otimes L_{m+ 1}\longrightarrow L_{m}, \qquad \mathcal J_{m}(\zeta\otimes Z^{\otimes(m+ 1)})=Z^{\otimes(m)}. \nonumber
\end{align}
The local formula for \(\nabla^{(m+ 1)}\) gives
\begin{align}
(\nabla^{(m+ 1)})'(u\cdot Z^{\otimes(m+ 1)}) =\frac1{\sqrt2}\bigl(D_{e_1}^{(m+ 1)}u-iD_{e_2}^{(m+ 1)}u\bigr)\zeta\otimes Z^{\otimes(m+ 1)}. \nonumber
\end{align}
The formal adjoint of \(B_{m}\) is locally
\begin{align}
B_{m}^*(u\cdot Z^{\otimes(m+ 1)}) =-\bigl(D_{e_1}^{(m+ 1)}u-iD_{e_2}^{(m+ 1)}u\bigr)Z^{\otimes(m)}. \nonumber
\end{align}
Hence
\begin{align}
B_{m}^*=-\sqrt2\,\mathcal J_{m}(\nabla^{(m+ 1)})'. \nonumber
\end{align}
Since \(\mathcal J_{m}\) is unitary,
\begin{align}
B_{m}B_{m}^*=2((\nabla^{(m+ 1)})')^*(\nabla^{(m+ 1)})'=2\Delta_{m+ 1}' \qquad\text{on }C^\infty(L_{m+ 1}). \label{eq:Bellminus-Bellminus-Dprime-Laplace}
\end{align}
Combining \eqref{eq:Bell-Bell-dbar-Laplace} and \eqref{eq:Bellminus-Bellminus-Dprime-Laplace},
\begin{align}
B_{m}B_{m}^*-B_{m+ 1}^*B_{m+ 1} =2(\Delta_{m+ 1}'-\Delta_{m+ 1}''). \label{eq:ladder-as-Laplace-difference}
\end{align}

It remains to evaluate \(\Theta_{m+ 1}\).  We use the curvature convention
\begin{align}
d\omega=-K_g\,dA_g. \nonumber
\end{align}
By the local formula for \(\nabla^{(m+ 1)}\),
\begin{align}
\Theta_{m+ 1}=(\nabla^{(m+ 1)})^2=i(m+ 1)\,d\omega=-i(m+ 1) K_g\omega_\Sigma, \qquad i\Theta_{m+ 1}=(m+ 1) K_g\omega_\Sigma. \nonumber
\end{align}
Therefore
\begin{align}
\Lambda_{\omega_\Sigma}(i\Theta_{m+ 1})=(m+ 1) K_g. \nonumber
\end{align}
By \eqref{eq:one-dimensional-BK-statement},
\begin{align}
\Delta_{m+ 1}'-\Delta_{m+ 1}''=(m+ 1) K_g. \nonumber
\end{align}
Substituting this into \eqref{eq:ladder-as-Laplace-difference} gives
\begin{align}
B_{m}B_{m}^*-B_{m+ 1}^*B_{m+ 1}=2(m+ 1) K_g \qquad\text{on }C^\infty(L_{m+ 1}). \nonumber
\end{align}
\end{proof}

We now apply the abstract mechanism of Section~\ref{sec:abstract-ladder-counting}. For the present Dolbeault realization, take
\begin{align}
H_m=L^2(L_m),\qquad B_m=\sqrt2\,\mathcal I_m\bar\partial_m, \qquad \alpha_m=m(m+1). \nonumber
\end{align}
The associated forms and self-adjoint operators are precisely the forms
\(\mathfrak a_m,\mathfrak c_m\) and operators \(A_m,C_m\) introduced in
Section~\ref{sec:comparison-rigidity}.  Ellipticity on the compact smooth
surface gives compact resolvent.  Lemma~\ref{lem:CP1-anticanonical-cohomology}
gives
\begin{align}
\dim_{\C}\ker B_m=2m+1, \nonumber
\end{align}
and the Bochner--Kodaira identity, in the form of
Lemma~\ref{lem:complex-curvature-ladder}, gives, for every
\(t\in H^1(L_{m+1})\),
\begin{align}
\mathfrak c_m(t,t)-\mathfrak a_{m+1}(t,t) =2(m+1)\int_{S^2}(K_g-1)|t|^2\,dA_g\geq0. \nonumber
\end{align}
Thus the hypotheses of Corollary~\ref{cor:abstract-two-sphere-ladder} hold
on the concrete spaces \(H_m=L^2(L_m)\) for the concrete closed operators
\(B_m\).  That corollary gives the counting and eigenvalue comparison in
Theorem~\ref{thm:cluster-bottom-comparison}.  The equality cases are then
handled by the strict form comparison and by
Proposition~\ref{prop:abstract-ladder-saturation}, exactly as in the proof of
Theorem~\ref{thm:rigidity-main}.  

\part{Alexandrov \(2\)-Spheres with Curvature \(\geq1\)}

Throughout this part, \(X\) is a compact Alexandrov surface homeomorphic to
\(S^2\) and with Alexandrov curvature bounded below by \(1\).   

\section{Preliminary results on Alexandrov \(2\)-spheres}\label{sec:alexandrov-preliminaries}

This section collects the external Alexandrov-surface, conformal-metric, and spectral-convergence results used later.  

From \cite[Proposition A.13]{Richard2012}, we know $X$ is a
surface of bounded integral curvature. 

Note 		\[
		\operatorname{Pot}(\mathbb{CP}^1,g_0)
		:=
		\left\{
		w\in L^1(\mathbb{CP}^1,dA_0)
		:
		\Delta_0 w\in\mathcal{M}(\mathbb{CP}^1)
		\right\},
		\]
where \(\mathcal{M}(\mathbb{CP}^1)\) denotes the space of finite signed Radon measures on \(\mathbb{CP}^1\). The condition $
		\Delta_0 w\in\mathcal{M}(\mathbb{CP}^1)$	is understood in the distributional sense: there exists a finite signed 	Radon measure \(\mu_w\) such that
		\[
		\int_{\mathbb{CP}^1}
		w\,\Delta_0\varphi\,dA_0
		=
		\int_{\mathbb{CP}^1}
		\varphi\,d\mu_w
		\qquad
		\text{for every }
		\varphi\in C^\infty(\mathbb{CP}^1).
		\]
where \(\Delta_0\) is the Laplacian of unit round sphere $\mathbb{S}^2$.

Then by Troyanov~\cite[Theorem~7.3(b)]{Troyanov22}, there exist a smooth metric \(h\) on \(X\) and		\(v\in\operatorname{Pot}(X,h)\) such that
$d_X=d_{h,v}$;	where
\[
d_{h,v}(x,y)
:=
\inf_{\gamma}
\int_0^1 e^{v(\gamma(t))}|\dot\gamma(t)|_h\,dt,
\]
and the infimum is over absolutely continuous curves
$\gamma:[0,1]\to X$ joining $x$ to $y$.
	
	\begin{notation}[Fixed conformal parametrization of $X$]
Fix a conformal homeomorphism
\[
F:\CP^1\longrightarrow X.
\]
If $h$ and $v$ are the smooth metric and potential on $X$ given above, write
\[
F^*h=e^{2\psi}g_0,
\qquad
\widetilde v:=v\circ F,
\qquad
u:=\widetilde v+\psi=v\circ F+\psi.
\]
Define the pulled-back distance
\[
d_X^F(p,q):=d_X(F(p),F(q)).
\]
Then
\[
d_X^F=d_{g_0,u}.
\]
After this declaration only, one may identify $X$ with $\CP^1$ and
suppress the superscript $F$.
\end{notation}	
		
Thus, in the sense of singular conformal metrics,
\begin{align}
g_X=e^{2u}g_0,\qquad dA_X=e^{2u}\,dA_0. \label{eq:alex-conformal-model}
\end{align}



Let \(\omega_X\) denote the Alexandrov curvature measure.  In the fixed round
conformal model,
\begin{align}
\omega_X:=dA_0-\Delta_0u. \nonumber 
\end{align}
in the distributional sense.  In a local coordinate in which
\(g_X=e^{2U}|dz|^2\), the same formula is
\begin{align}
\omega_X=-\Delta_{\mathrm{euc}}U \nonumber
\end{align}
as a Radon measure (see \cite[Theorem~7.1.1, \S7.3,
pp.~112--119]{Reshetnyak1993GeometryIV}).

Recall \cite[Theorem~2.0]{Machigashira1998Gaussian} identifies the synthetic lower bound with
the measure inequality
\begin{align}
\omega_X\geq dA_X=e^{2u}dA_0. \label{eq:cp-curv-lower-measure}
\end{align}

\begin{lemma}\label{lem:u-has-lower-bound}
{$u\geq -C$.
}
\end{lemma}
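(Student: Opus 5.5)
Since \(\omega_X=dA_0-\Delta_0u\) and \(\omega_X\geq e^{2u}dA_0\geq0\) by \eqref{eq:cp-curv-lower-measure}, we get \(\Delta_0u\leq dA_0\) as measures. Thus \(u-\tfrac14|x|^2\)-type corrections are not needed globally: the plan is to show that \(u\) is superharmonic up to a smooth term, and then to use the mean-value inequality with a Green-function representation. Concretely, on \(\CP^1\) with the round metric, let \(G\geq0\) be a Green function, normalized by \(-\Delta_0G(\cdot,y)=\delta_y-\frac{1}{4\pi}\) and \(G\) bounded below. We write
\[
u(x)=\bar u+\int_{\CP^1}G(x,y)\,d\mu(y),\qquad \mu:=-\Delta_0u=\omega_X-dA_0,
\]
valid for \(u\in\operatorname{Pot}(\CP^1,g_0)\) (a.e., and everywhere for the natural l.s.c. representative). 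Here \(\bar u\) is the round average of \(u\), finite since \(u\in L^1\).

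Decompose \(\mu=\omega_X-dA_0\). Since \(\omega_X\geq0\) and \(G\) is bounded below, say \(G\geq -c_0\) (or \(G\geq0\) after adding a constant, which is harmless because \(\mu(\CP^1)=0\)), we get \(\int G\,d\omega_X\geq -c_0\,\omega_X(\CP^1)\). The total mass is \(\omega_X(\CP^1)=4\pi\) by Gauss--Bonnet for surfaces of bounded curvature, or simply because \(\int d\mu=0\) and so \(\omega_X(\CP^1)=\operatorname{Area}(\CP^1,g_0)=4\pi\). The remaining term satisfies \(\int G(x,y)\,dA_0(y)=\mathrm{const}\), since \(G(\cdot,y)\) has a logarithmic singularity, which is integrable, and by symmetry the integral is independent of \(x\). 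Hence
\[
u(x)\geq \bar u-4\pi c_0-C_1=: -C .
\]
This gives the bound for the l.s.c. (superharmonic-type) representative everywhere, and therefore a.e.

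The main obstacle is justifying the Green representation with the correct pointwise representative, and checking that the positive atoms of \(\omega_X\) (cone points with angle less than \(2\pi\)) only push \(u\) upward: near an atom, \(u\sim -\frac{\omega(\{p\})}{2\pi}\log|z|\to+\infty\), which is consistent with a lower bound. I would first prove the representation for mollified \(u_\varepsilon=u*\rho_\varepsilon\), using the heat regularization on \(\CP^1\), where \(-\Delta_0u_\varepsilon\) equals the corresponding regularization of \(\mu\), whose positive part is still controlled by \(\omega_X\). I would then pass to the limit using lower semicontinuity of \(x\mapsto\int G(x,y)\,d\omega_X(y)\), which follows from Fatou's lemma because \(G\) is l.s.c. and bounded below. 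Note that the curvature lower bound \(\omega_X\geq dA_X\) is used only through \(\omega_X\geq0\). Curvature \(\geq1\) suffices, since nonnegativity of curvature is what makes \(\mu^-\leq dA_0\) bounded.
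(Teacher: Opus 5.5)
Your proposal is correct and follows the paper's proof essentially step for step: the round-sphere Green representation $u=\bar u+\int G\,d\omega_X-\int G\,dA_0$, the lower bound $G\geq -C_G$ coming from the positive logarithmic singularity, positivity and total mass $4\pi$ of $\omega_X$, and the fact that $x\mapsto\int G(x,y)\,dA_0(y)$ is constant. One small improvement over the paper is that you obtain $\omega_X(\CP^1)=4\pi$ directly from $\Delta_0u(\CP^1)=0$ (test against $\varphi\equiv1$), which removes the paper's forward reference to the Alexandrov Gauss--Bonnet identity.
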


\pf
{Let \(G\) be the Green kernel of \(-\Delta_0\), normalized to have zero
mean.  Since
\(-\Delta_0u=\omega_X-dA_0\), the Green representation is
\begin{align}
u(x)=\overline u+ \int_{\CP^1}G(x,y)\,d\omega_X(y) - \int_{\CP^1}G(x,y)\,dA_0(y). \nonumber
\end{align}
where $\overline u:=\frac{1}{\operatorname{Area}_{g_0}(\CP^1)}
\int_{\CP^1}u\,dA_0.$ 

Since the canonical Reshetnyak representative satisfies
\(u\in L^1(\CP^1,dA_0)\), one has \(\overline u\in\mathbb R\).

Since the Green kernel of \(-\Delta_0\) has a positive logarithmic
singularity, there is \(C_G<\infty\) such that
\[
G(x,y)\geq-C_G
\qquad (x\neq y).
\]

As \(\omega_X\) is a positive Radon measure and
\(\omega_X(\CP^1)=4\pi\) from \eqref{eq:alex-gauss-bonnet}, it follows that
\[
\int_{\CP^1}G(x,y)\,d\omega_X(y)\geq-4\pi C_G.
\]
Moreover,
\[
x\longmapsto\int_{\CP^1}G(x,y)\,dA_0(y)
\]
is bounded; in fact, it vanishes for the zero-mean normalization.
Consequently,
\[
u(x)\geq \overline u-4\pi C_G-C_0,
\]
and hence \(u\geq-C\).
}
\qed

\begin{notation}\label{notation:nuX}
{We set
\begin{align}
\nu_X:=\omega_X-dA_X\geq0. \nonumber
\end{align}
}
\end{notation}

For every \(p\in X\),
\begin{align}
\omega_X(\{p\})=2\pi-L(\Sigma_pX), \label{eq:alex-angle-defect}
\end{align}
where \(L(\Sigma_pX)\) is the length of the space of directions (see \cite[Corollary~2.2]{Machigashira1998Gaussian}).  Hence every atom has the form
\(2\pi\gamma\) with \(0<\gamma<1\).  The Gauss--Bonnet identity 
\begin{align}
\omega_X(X)=4\pi, \label{eq:alex-gauss-bonnet}
\end{align}
(see \cite[p.~861]{Machigashira1998Gaussian}).

\begin{notation}\label{not:alexandrov-sphere-data}
We use the fixed conformal model \eqref{eq:alex-conformal-model}.  

For \(m\in\mathbb Z_{\geq0}\), set
\begin{align}
L_m:=K_{\CP^1}^{-m}=(T^{1,0}\CP^1)^{\otimes m},
\qquad
L_0:=\CP^1\times\C.
\label{eq:alex-Lm-definition}
\end{align}
All these are smooth holomorphic line bundles over the fixed conformal
model \(\CP^1\); only their Hermitian metrics will depend on the metric
under consideration.

Let \(h_{m,0}\) be the Hermitian metric on \(L_m\) induced by the unit
round metric \(g_0\).  Explicitly, if \(e_1,e_2\) is a local oriented
\(g_0\)-orthonormal frame and
\begin{align}
Z_0:=\frac{1}{\sqrt2}(e_1-i e_2),
\nonumber
\end{align}
then \(Z_0\) is a unitary local frame of \(T^{1,0}\CP^1\), and
\begin{align}
h_{m,0}\bigl(Z_0^{\otimes m},Z_0^{\otimes m}\bigr)=1,
\qquad
h_{0,0}\equiv1.
\label{eq:alex-hm0-definition}
\end{align}

Whenever \(Y\) denotes \(X\), the round metric \(g_0\), or one of the
smooth conformal metrics used below, write
\begin{align}
g_Y=e^{2u_Y}g_0,
\qquad
dA_Y=e^{2u_Y}dA_0,
\nonumber
\end{align}
where \(u_X:=u\), and where the index \(Y=0\) means \(u_0:=0\).
The metric induced by \(g_Y\) on \(L_m\) is
\begin{align}
h_{m,Y}:=e^{2m u_Y}h_{m,0}
\label{eq:alex-hmY-definition}
\end{align}
almost everywhere on \(\CP^1\).  In the singular case \(Y=X\), this is
a measurable Hermitian metric.  Its values, and the value of \(u\), on
a \(dA_0\)-null set may be chosen arbitrarily.

For the round reference metric \(g_0\), equivalently for \(Y=0\), this gives
\begin{align}
H_m^0
:=
L_0^2(L_m)
=
L^2(\CP^1,L_m;h_{m,0},dA_0).
\label{eq:alex-Hm0-definition}
\end{align}
Thus
\begin{align}
\langle s,t\rangle_{H_m^0}
&=
\int_{\CP^1}h_{m,0}(s,t)\,dA_0, \quad \quad 
\|s\|_{H_m^0}^2= \int_{\CP^1}|s|_{h_{m,0}}^2\,dA_0.
\nonumber
\end{align}

Let \(\Gamma_{\mathrm{meas}}(L_m)\) denote the measurable sections of
\(L_m\), and write \(s\sim_Y t\) when \(s=t\) \(dA_Y\)-almost
everywhere.  We define
\begin{align}
L_Y^2(L_m)
:={}&
\left\{
s\in\Gamma_{\mathrm{meas}}(L_m):
\int_{\CP^1}|s|_{h_{m,Y}}^2\,dA_Y<\infty
\right\}\big/\!\sim_Y .
\label{eq:alex-L2Y-definition}
\end{align}
Its Hermitian inner product and norm are
\begin{align}
\langle s,t\rangle_{L_Y^2(L_m)}
&:=
\int_{\CP^1}h_{m,Y}(s,t)\,dA_Y
\nonumber\\
&=
\int_{\CP^1}
h_{m,0}(s,t)e^{2(m+1)u_Y}\,dA_0,
\label{eq:alex-L2Y-inner-product}\\
\|s\|_{L_Y^2(L_m)}^2
&=
\int_{\CP^1}
|s|_{h_{m,0}}^2e^{2(m+1)u_Y}\,dA_0.
\nonumber
\end{align}
Thus \(L_Y^2(L_m)\) is the weighted \(L^2\)-space of sections of the
fixed smooth bundle \(L_m\), with fiber metric \(h_{m,Y}\) and base
measure \(dA_Y\).  We set
\begin{align}
H_m^Y:=L_Y^2(L_m).
\label{eq:alex-HmY-definition}
\end{align}
In particular,
\begin{align}
H_0^X=L^2(\CP^1,dA_X),
\nonumber
\end{align}
which is identified with \(L^2(X,dA_X)\) through the fixed conformal
model.

If \(z=x+iy\) is a holomorphic coordinate,
\[
g_0=e^{2\psi}|dz|^2,
\qquad
U_Y:=\psi+u_Y,
\]
and
\[
s=f(\partial_z)^{\otimes m},
\]
then the exact local density formula is
\begin{align}
|s|_{h_{m,Y}}^2\,dA_Y
=
2^{-m}|f|^2e^{2(m+1)U_Y}\,dx\,dy.
\label{eq:alex-local-L2-density}
\end{align}

We next define the metric identification.  With \(e_1,e_2\) as above
and dual coframe \(\theta^1,\theta^2\), put
\begin{align}
\zeta_0
:=
\frac{1}{\sqrt2}(\theta^1+i\theta^2),
\qquad
\bar\zeta_0
:=
\frac{1}{\sqrt2}(\theta^1-i\theta^2).
\nonumber
\end{align}
Then \(Z_0\) is a unitary frame of \(T^{1,0}\CP^1\), while
\(\bar\zeta_0\) is a unitary frame of
\(\Lambda^{0,1}T^*\CP^1\).  Define
\begin{align}
\mathcal I_{m,0}:
\Lambda^{0,1}T^*\CP^1\otimes L_m
&\longrightarrow L_{m+1},
\nonumber\\
\mathcal I_{m,0}
\bigl(\bar\zeta_0\otimes Z_0^{\otimes m}\bigr)
&:=
Z_0^{\otimes(m+1)}.
\label{eq:alex-Im0-definition}
\end{align}
This definition does not depend on the chosen oriented
\(g_0\)-orthonormal frame.  The map \(\mathcal I_{m,0}\) is fiberwise
complex-linear and unitary.  In particular,
\begin{align}
\mathcal I_{m,0}^*=\mathcal I_{m,0}^{-1}.
\nonumber
\end{align}
Equivalently, in the coordinate above,
\begin{align}
\mathcal I_{m,0}
\bigl(d\bar z\otimes(\partial_z)^{\otimes m}\bigr)
=
2e^{-2\psi}(\partial_z)^{\otimes(m+1)}.
\label{eq:alex-Im0-coordinate}
\end{align}

Let \(h_Y^{0,1}\) be the Hermitian metric on
\(\Lambda^{0,1}T^*\CP^1\) induced by \(g_Y\).  Since
\begin{align}
h_Y^{0,1}=e^{-2u_Y}h_0^{0,1},
\nonumber
\end{align}
the corresponding measurable fiber map is
\begin{align}
\mathcal I_{m,Y}
:=
e^{-2u_Y}\mathcal I_{m,0}:
\Lambda^{0,1}T^*\CP^1\otimes L_m
\longrightarrow L_{m+1}
\label{eq:alex-ImY-definition}
\end{align}
almost everywhere on \(\CP^1\).  It is unitary from
\[
\bigl(
\Lambda^{0,1}T^*\CP^1\otimes L_m,\,
h_Y^{0,1}\otimes h_{m,Y}
\bigr)
\]
to
\[
(L_{m+1},h_{m+1,Y});
\]
that is,
\begin{align}
|\mathcal I_{m,Y}\eta|_{h_{m+1,Y}}
=
|\eta|_{h_Y^{0,1}\otimes h_{m,Y}}
\qquad\text{almost everywhere}.
\label{eq:alex-ImY-unitary}
\end{align}
Indeed, the squares of both sides scale from their round values by
\(e^{2(m-1)u_Y}\).

Equivalently, for the measurable unitary frames
\begin{align}
Z_Y:=e^{-u_Y}Z_0,
\qquad
\bar\zeta_Y:=e^{u_Y}\bar\zeta_0,
\nonumber
\end{align}
one has
\begin{align}
\mathcal I_{m,Y}
\bigl(\bar\zeta_Y\otimes Z_Y^{\otimes m}\bigr)
=
Z_Y^{\otimes(m+1)}.
\label{eq:alex-ImY-frame}
\end{align}
In particular,
\begin{align}
\mathcal I_{m,X}
=
e^{-2u}\mathcal I_{m,0}
\nonumber
\end{align}
almost everywhere on \(\CP^1\).  In the coordinate above,
\begin{align}
\mathcal I_{m,X}
\bigl(d\bar z\otimes(\partial_z)^{\otimes m}\bigr)
=
2e^{-2U}(\partial_z)^{\otimes(m+1)},
\qquad
U:=u+\psi.
\label{eq:alex-ImX-coordinate}
\end{align}

Assume \(L^1_{\mathrm{loc}}(L_m)\) denotes the space of measurable
sections \(s\) of the smooth line bundle \(L_m\) such that
\[
\int_K |s|_{h_{m,0}}\,dA_0<\infty
\]
for every compact \(K\subset\CP^1\), where \(h_{m,0}\) is the
Hermitian metric induced by \(g_0\). Equivalently, in every local
holomorphic coordinate \(z\), if
\[
s=f(\partial_z)^{\otimes m},
\]
then \(f\in L^1_{\mathrm{loc}}(dx\,dy)\). This definition is
independent of the chosen smooth background Hermitian metric and
smooth positive area form.

The operator
\begin{align}
B_m^X:\operatorname{Dom}(B_m^X)\subset H_m^X
\longrightarrow H_{m+1}^X \nonumber
\end{align}
is defined in \eqref{eq:BDol-definition} as follows,
\begin{align}
\operatorname{Dom}(B_m^X)
:=
\Bigl\{s\in H_m^X\cap L^1_{\mathrm{loc}}(L_m):
\ &\bar\partial_m s
\text{ is represented by a measurable }
L_m\text{-valued }(0,1)\text{-form }\eta, \nonumber\\
&\sqrt2\,\mathcal I_{m,X}\eta\in H_{m+1}^X
\Bigr\}, \nonumber
\end{align}
and, for \(s\) in this domain,
\begin{align}
B_m^Xs:=\sqrt2\,\mathcal I_{m,X}\eta. \nonumber
\end{align}

Equivalently, if \(g_0=e^{2\psi}|dz|^2\),
\(U:=u+\psi\), and
\(s=f(\partial_z)^{\otimes m}\), then, up to fixed positive
smooth factors, the local graph norm is
\begin{align}
\int |f|^2e^{2(m+1)U}\,dx\,dy
+
\int |\partial_{\bar z}f|^2e^{2mU}\,dx\,dy.
\label{eq:alex-local-Bm-graph-norm}
\end{align}
The transformation laws under holomorphic coordinate changes show
that this definition is intrinsic.
\end{notation}

\begin{lemma}\label{lem:alex-Bm-closed-dense}
For every \(m\geq0\), the operator $B_m^X:\operatorname{Dom}(B_m^X)\subset H_m^X
\longrightarrow H_{m+1}^X$ is closed and densely defined.
\end{lemma}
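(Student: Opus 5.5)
The plan is to prove density and closedness separately, and to work in local holomorphic coordinates using the weighted description \eqref{eq:alex-local-Bm-graph-norm}. The two facts available for the singular weight are Lemma~\ref{lem:u-has-lower-bound}, namely $u\geq -C$, and the integrability $e^{2u}\in L^1(dA_0)$, which says the area of $X$ is finite. The main idea is that $B_m^X$ is a weighted version of the distributional operator $\bar\partial_m$, so closedness should come from the continuity of distributional derivatives.

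\textbf{Density.} I will show that $C^\infty(L_m)\subset\operatorname{Dom}(B_m^X)$ and that $C^\infty(L_m)$ is dense in $H_m^X$. Take a smooth section $s=f(\partial_z)^{\otimes m}$. Then $s\in L^1_{\mathrm{loc}}$, and $\bar\partial_m s$ is smooth. The $H^X_m$-norm of $s$ involves the weight $e^{2(m+1)u}$, and the norm of $B_m^Xs$ involves $e^{2mu}$.

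For $m=0$ both weights are at most $e^{2u}$ or constant, so both integrals are finite. For $m\geq1$ finiteness needs $e^{2(m+1)u}\in L^1$, which is not automatic from $e^{2u}\in L^1$. So I will define the core more carefully as
\[
\mathcal C_m:=\{s\in C^\infty(L_m): \textstyle\int|s|^2_{h_{m,0}}e^{2(m+1)u}dA_0+\int|\bar\partial_m s|^2 e^{2mu}dA_0<\infty\}.
\]
This set is nonempty and contains, for example, smooth sections supported where $u$ is bounded.

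For density, I argue through the orthogonal complement. Suppose $t\in H_m^X$ is orthogonal to $\mathcal C_m$. The core contains all products $\chi s$ with $\chi$ a cutoff supported in a sublevel set $\{u\leq N\}$ and $s$ smooth. The measurable sets involved are handled by approximating indicator functions, using Lusin's theorem and the $L^1$ integrability of the weights. This should force $t\,e^{2(m+1)u}=0$ almost everywhere on each sublevel set, hence everywhere, so $t=0$.

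I expect this density step to be the main obstacle. The reason is that $u$ is only quasi-continuous, and its sublevel sets need not be open, so the cutoff construction must be done with care.

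A cleaner alternative is to truncate the weight instead of the section. For $s$ smooth, consider $s\,\min(1,e^{-k(u-N)_+})$. This requires $u\in W^{1,q}$ for $q<2$, which holds because $\Delta_0u$ is a finite measure. I would then check that $\bar\partial$ of the truncation stays in the weighted $L^2$ space via the chain rule. If that computation does not close up, I fall back to Lusin plus mollification.

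\textbf{Closedness.} Let $s_k\in\operatorname{Dom}(B_m^X)$ with $s_k\to s$ in $H_m^X$ and $B_m^Xs_k\to w$ in $H_{m+1}^X$. Because $u\geq -C$, the weights satisfy $e^{2(m+1)U}\geq c>0$ locally. Hence convergence in $H_m^X$ implies convergence of the coefficients $f_k\to f$ in $L^2_{\mathrm{loc}}(dx\,dy)$, and in particular in $L^1_{\mathrm{loc}}$.

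Similarly, $\partial_{\bar z}f_k$ converges in $L^2_{\mathrm{loc}}$ with weight $e^{2mU}\geq c$, to the coefficient of $\mathcal I_{m,X}^{-1}w/\sqrt2$; here I use that $\mathcal I_{m,X}$ is a pointwise isomorphism almost everywhere. Distributional derivatives commute with $L^1_{\mathrm{loc}}$ limits. Therefore $\partial_{\bar z}f$ equals that limit, so $s\in\operatorname{Dom}(B_m^X)$ and $B_m^Xs=w$.

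The intrinsic, coordinate-free statement then follows by patching charts, since the definition of $B_m^X$ is coordinate-invariant.
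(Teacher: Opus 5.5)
Your closedness argument is correct and coincides with the paper's. The lower bound $u\geq -C$ bounds the local weights $e^{2(m+1)U}$ and $e^{2mU}$ from below, so graph convergence gives $L^2_{\mathrm{loc}}$ convergence of $f_k$ and $\partial_{\bar z}f_k$, and the distributional equation passes to the limit.

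The density part has a genuine gap for $m\geq1$: you never exhibit a dense subset of $\operatorname{Dom}(B_m^X)$.

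\emph{Sublevel cutoffs.} Cutoffs supported in $\{u\leq N\}$ cannot supply such a subset in general. Near an atom $p$ of $\omega_X$ of mass $2\pi\gamma$ one has $U\geq -\gamma\log|z|-C$, so $u$ is essentially unbounded on every neighbourhood of $p$. Nothing in the hypotheses prevents the atoms from being dense, since only their total mass is controlled. In that case $u$ is essentially unbounded on every open set, and no nonzero smooth section is supported in a sublevel set. Even your claim that $\mathcal C_m\neq\{0\}$ is then unsupported.

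\emph{Measurable cutoffs.} These do not help either. $\bar\partial(\mathbf 1_E s)$ is a boundary measure rather than an $L^2$ form. Lusin-type continuous approximants leave the region where the weight is controlled. So the conclusion $t\,e^{2(m+1)u}=0$ is never actually derived.

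\emph{Truncation.} Your truncation alternative is only sketched. The regularity you cite, $u\in W^{1,q}$ for $q<2$, cannot bound the chain-rule term $\int|s|^2|\phi'(u)|^2|\nabla u|^2e^{2mU}$. The reason is that $|\nabla u|^2$ is not locally integrable near atoms, and $W^{1,q}$ control says nothing relating the size of $\nabla u$ to the size of $u$.

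The missing idea, which the paper uses, is local exponential integrability from small local curvature mass. The set $S_m:=\{p:\omega_X(\{p\})\geq 2\pi/(m+1)\}$ is finite because $\omega_X$ has total mass $4\pi$. Every $p\notin S_m$ has a coordinate disc $D$ with $\omega_X(D)<2\pi/(m+1)$. Write $U$ on $D$ as a harmonic function plus $\frac1{2\pi}\int_D\log\frac1{|z-w|}\,d\omega_X(w)$. Jensen's inequality then gives $e^{2(m+1)U},e^{2mU}\in L^1_{\mathrm{loc}}(D)$; this is the Brezis--Merle mechanism. Hence every smooth section supported in the complement of $S_m$ lies in $\operatorname{Dom}(B_m^X)$. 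Cutting off near the finitely many points of $S_m$ and mollifying gives density.

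Alternatively, your truncation route can be completed with one extra potential-theoretic fact. $U$ is locally superharmonic and bounded below, so for $j>0$ the function $e^{-jU}$ is bounded, nonnegative and subharmonic, and therefore lies in $W^{1,2}_{\mathrm{loc}}$.
\begin{itemize}
\item On $\{u>N\}$ this controls $|\nabla e^{-k(u-N)_+}|^2e^{2mu}$ by a constant times $|\nabla e^{-(k-m)u}|^2$.
\item So for $k\geq m+1$ and smooth $s$, the section $s\,e^{-k(u-N)_+}$ lies in $\operatorname{Dom}(B_m^X)$.
\item Density then follows by approximating $t\,e^{k(u-N)_+}$ by smooth sections in the weighted $L^2$ space with weight $e^{-2k(u-N)_+}e^{2(m+1)u}$, which is bounded.
\end{itemize}
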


\begin{proof}
Suppose that
\[
s_j\longrightarrow s\quad\text{in }H_m^X,
\qquad
B_m^Xs_j\longrightarrow t\quad\text{in }H_{m+1}^X.
\]
In a holomorphic coordinate write
\[
s_j=f_j(\partial_z)^{\otimes m},
\qquad
\partial_{\bar z}f_j=g_j
\]
distributionally.  

The conformal factor \(U\) is locally bounded
from below by the Green-kernel representation used in
Lemma~\ref{lem:u-has-lower-bound}.  Hence the weighted convergences
imply
\[
f_j\longrightarrow f,
\qquad
g_j\longrightarrow g
\]
in ordinary \(L^2_{\mathrm{loc}}\).  Passing to the limit in the
distributional identities gives $\partial_{\bar z}f=g$.

Therefore \(s\in\operatorname{Dom}(B_m^X)\) and \(B_m^Xs=t\).

For density, let
\[
S_m:=
\left\{
p\in\CP^1:
\omega_X(\{p\})\geq\frac{2\pi}{m+1}
\right\}.
\]
Since \(\omega_X(\CP^1)=4\pi\), the set \(S_m\) is finite.  On
\(\CP^1\setminus S_m\), every point has a coordinate disc \(D\)
such that
\[
\omega_X(D)<\frac{2\pi}{m+1}.
\]
The logarithmic-potential representation and Jensen's inequality
then imply the local integrability of both
\[
e^{2(m+1)U}
\qquad\text{and}\qquad
e^{2mU}.
\]
Consequently, smooth sections supported in
\(\CP^1\setminus S_m\) belong to \(\operatorname{Dom}(B_m^X)\).
Cutting off near the finitely many points of \(S_m\), followed by
ordinary local smoothing, shows that these sections are dense in
\(H_m^X\).
\end{proof}

\begin{notation}
The Hilbert adjoint of $B_m^X$ is \((B_m^X)^*\). The shifted forms are
\begin{align}
\mathfrak a_m^X(s,t) &:=\langle B_m^Xs,B_m^Xt\rangle_{H_{m+1}^X} +m(m+1)\langle s,t\rangle_{H_m^X}, &\operatorname{Dom}(\mathfrak a_m^X)&=\operatorname{Dom}(B_m^X), \label{eq:alex-am-form}\\
\mathfrak c_m^X(s,t) &:=\langle (B_m^X)^*s,(B_m^X)^*t\rangle_{H_m^X} +m(m+1)\langle s,t\rangle_{H_{m+1}^X}, &\operatorname{Dom}(\mathfrak c_m^X)&=\operatorname{Dom}((B_m^X)^*). \label{eq:alex-cm-form}
\end{align}
Their associated self-adjoint operators are denoted by
\begin{align}
A_m^X=(B_m^X)^*B_m^X+m(m+1)I, \qquad C_m^X=B_m^X(B_m^X)^*+m(m+1)I. \nonumber
\end{align}

For \(l\geq1\), define
\[
V_l^X:=\ker B_l^X\cap\operatorname{Dom}\bigl((B_{l-1}^X)^*\bigr).
\]
Thus \(V_l^X\) is the subspace of \(\ker B_l^X\) consisting of
vectors that lie in the domain of the preceding adjoint
\((B_{l-1}^X)^*\). It is the terminal-level space used in the rigidity argument.
\end{notation}

Let $p_\tau(x,y)$ be the heat kernel of $(\CP^1,g_0)$.  For a
function $f$ and a finite Radon measure $\mu$, define
\[
(P_\tau f)(x)
:=
\int_{\CP^1}p_\tau(x,y)f(y)\,dA_0(y),
\]
and
Let $\mu\in\mathcal M(\CP^1)$ and let $\tau>0$.  Define
\[
f_{\tau,\mu}(x)
:=
\int_{\CP^1}p_\tau(x,y)\,d\mu(y).
\]
Then $f_{\tau,\mu}\in C^\infty(\CP^1)$, and we define the finite signed Radon measure
\[
P_\tau\mu
:=
f_{\tau,\mu}\,dA_0.
\]

Set
\begin{align}
u_\tau:=P_\tau u, \qquad g_\tau:=e^{2u_\tau}g_0. \label{def:g_tau-by-heat}
\end{align}

\begin{lemma}\label{lem:cp-utau-domination}
There is \(C<\infty\), independent of \(0<\tau\leq1\), such that
\begin{align}
u_\tau&\geq-C, \label{eq:cp-utau-lower}\\
u_\tau&\leq u+\tau \quad\quad dA_0\text{-a.e.} \label{eq:cp-utau-upper}
\end{align}
Moreover, for every fixed \(q\geq0\),
\begin{align}
e^{qu_\tau}F&\longrightarrow e^{qu}F &&\text{strongly in }L^2(dA_0) \quad\text{if }e^{qu}F\in L^2(dA_0), \label{eq:cp-positive-weight-conv}\\
e^{-qu_\tau}\phi&\longrightarrow e^{-qu}\phi &&\text{strongly in }L^2(dA_0) \quad\text{for every smooth }\phi. \label{eq:cp-negative-weight-conv}
\end{align}
\end{lemma}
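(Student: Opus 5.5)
The plan is to use the Green representation from Lemma~\ref{lem:u-has-lower-bound} together with the positivity and mass conservation of the heat kernel \(p_\tau\) on \((\CP^1,g_0)\).

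\textbf{Lower bound \eqref{eq:cp-utau-lower}.}
Lemma~\ref{lem:u-has-lower-bound} gives \(u\geq-C\) almost everywhere. Since \(p_\tau\geq0\) and \(\int p_\tau(x,y)\,dA_0(y)=1\), we get \(u_\tau=P_\tau u\geq -C\) for every \(\tau>0\). No further work is needed here.

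\textbf{Upper bound \eqref{eq:cp-utau-upper}.}
The idea is that \(u\) is superharmonic up to a bounded correction. From \(-\Delta_0u=\omega_X-dA_0\) and \(\omega_X\geq0\), we have
\[
\Delta_0 u\leq dA_0
\]
as measures.

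Differentiating along the heat flow gives
\[
\partial_\tau P_\tau u=P_\tau(\Delta_0u)\leq P_\tau(dA_0)=1 .
\]
This is justified distributionally by pairing with nonnegative test functions and using self-adjointness of \(P_\tau\).

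Integrating from \(0\) to \(\tau\) yields \(P_\tau u\leq u+\tau\) at every Lebesgue point of \(u\), since \(P_\sigma u\to u\) in \(L^1\) as \(\sigma\to0\) and almost everywhere along a subsequence. To make this rigorous I would test against a nonnegative \(\varphi\in C^\infty\) and write
\[
\int(P_\tau u-u)\varphi\,dA_0=\int_0^\tau\!\!\int P_\sigma\varphi\,d(\Delta_0u)\,d\sigma\leq\tau\int\varphi\,dA_0 .
\]
This holds for all \(\varphi\geq0\), so the inequality holds \(dA_0\)-a.e.

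\textbf{Weighted convergence \eqref{eq:cp-positive-weight-conv}.}
Since \(u\in L^1\), we have \(u_\tau\to u\) in \(L^1\), hence almost everywhere along any sequence \(\tau_k\to0\) after passing to a subsequence. By the two bounds,
\[
e^{qu_\tau}|F|\leq e^{q\tau}e^{qu}|F|\leq e^{q}e^{qu}|F|,
\]
and the right-hand side lies in \(L^2\). Dominated convergence then gives convergence along the subsequence. A subsequence-of-subsequence argument upgrades this to full convergence as \(\tau\to0\).

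\textbf{Weighted convergence \eqref{eq:cp-negative-weight-conv}.}
This is easier. We have \(e^{-qu_\tau}|\phi|\leq e^{qC}\|\phi\|_\infty\), and the limit \(e^{-qu}\phi\) is also bounded because \(u\geq-C\). Combined with almost-everywhere convergence along subsequences, dominated convergence applies as before.

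\textbf{Main obstacle.}
The only delicate point is the rigorous heat-flow comparison in the upper bound, where \(\Delta_0u\) is merely a measure. The duality formulation above avoids pointwise issues, since \(P_\sigma\varphi\) is smooth and \(\geq0\), so pairing it with the signed measure \(\Delta_0u\leq dA_0\) is legitimate.
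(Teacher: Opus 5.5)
Your proposal is correct and follows the paper's proof step for step. The lower bound comes from positivity and unit mass of the heat kernel. The upper bound comes from $P_\tau u-u=\int_0^\tau P_s\Delta_0u\,ds$ together with $\Delta_0u\leq 1$. The weighted limits come from dominated convergence with the majorants $e^{q}e^{qu}|F|$ and $e^{qC}|\phi|$. Your duality formulation of the Duhamel step, and your subsequence-of-subsequence argument in place of the paper's direct appeal to almost-everywhere convergence of $P_\tau u$, are more careful versions of the same steps rather than a different route.
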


\begin{proof}
From Lemma \ref{lem:u-has-lower-bound}, the positivity of the heat kernel gives \eqref{eq:cp-utau-lower}.

The positivity of \(\omega_X=(1-\Delta_0u)dA_0\) gives
\(\Delta_0u\leq1\) distributionally.  Therefore
\begin{align}
P_\tau u-u=\int_0^\tau P_s\Delta_0u\,ds\leq\tau, \nonumber
\end{align}
which proves \eqref{eq:cp-utau-upper}.  

Heat regularization converges to \(u\) almost everywhere and in \(L^1\).  For the positive weights,
\begin{align}
e^{2qu_\tau}|F|^2\leq e^{2q}e^{2qu}|F|^2, \nonumber
\end{align}
and for the negative weights,
\begin{align}
|e^{-qu_\tau}\phi|\leq e^{qC}|\phi|. \nonumber
\end{align}

Dominated convergence proves
\eqref{eq:cp-positive-weight-conv}--
\eqref{eq:cp-negative-weight-conv}.
\end{proof}

\begin{lemma}\label{lem:cp-heat-preserves-curv}
For every \(\tau>0\), \(g_\tau\) is smooth and
\begin{align}
K_{g_\tau}dA_{g_\tau}&=P_\tau\omega_X, \quad \quad K_{g_\tau}\geq1. \label{eq:cp-Kgtau-ge-one}
\end{align}
For every sequence \(\tau_i\downarrow0\), the smooth metrics $h_i:=e^{2P_{\tau_i}u}g_0$ satisfy 
\begin{align}
\sup_{p,q\in S^2} \bigl|d_{h_i}(p,q)-d_X(p,q))\bigr|\longrightarrow0. \label{eq:alex-uniform-distance-approx}
\end{align}
\end{lemma}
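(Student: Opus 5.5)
The plan has three parts: the curvature identity for $g_\tau$, the lower bound $K_{g_\tau}\geq1$, and uniform distance convergence.

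\textbf{Curvature identity.} Since $u\in L^1(dA_0)$, $u_\tau=P_\tau u$ is smooth, and so $g_\tau=e^{2u_\tau}g_0$ is a smooth metric. For a smooth conformal metric $e^{2w}g_0$ on the unit sphere the curvature satisfies
\[
K\,e^{2w}dA_0=(1-\Delta_0 w)\,dA_0 .
\]
Take $w=u_\tau$. The heat semigroup commutes with $\Delta_0$ on distributions and fixes the measure $dA_0$, since constants are invariant. Hence
\[
(1-\Delta_0u_\tau)dA_0=P_\tau\bigl(dA_0-\Delta_0u\bigr)=P_\tau\omega_X ,
\]
which is the first identity in \eqref{eq:cp-Kgtau-ge-one}.

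\textbf{Lower bound $K_{g_\tau}\geq1$.} I must show $f_{\tau,\omega_X}\geq e^{2u_\tau}$ pointwise. Heat-kernel positivity and \eqref{eq:cp-curv-lower-measure} give
\[
f_{\tau,\omega_X}(x)\geq\int p_\tau(x,y)e^{2u(y)}dA_0(y).
\]
Because $p_\tau(x,\cdot)\,dA_0$ is a probability measure, Jensen's inequality for the convex function $e^{2t}$ gives
\[
\int p_\tau(x,y)e^{2u(y)}dA_0(y)\geq \exp\Bigl(2\int p_\tau(x,y)u(y)dA_0(y)\Bigr)=e^{2u_\tau(x)} .
\]
So $K_{g_\tau}e^{2u_\tau}\geq e^{2u_\tau}$, which is the desired bound. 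This is the conceptual heart, and it is clean.

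\textbf{Uniform convergence of distances.} This is the step I expect to be hardest. Write $d_i=d_{h_i}=d_{g_0,u_{\tau_i}}$ and use the bounds from Lemma~\ref{lem:cp-utau-domination}.

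The upper bound $\limsup \sup(d_i-d_X)\leq0$ follows from $u_\tau\leq u+\tau$. Any curve gives $d_i\leq e^{\tau_i}\,\mathrm{length}_{u}$. One must be careful that $u_\tau\leq u+\tau$ holds only $dA_0$-a.e., not along curves. I would avoid this by working with the upper semicontinuous / fine representative of $u$, which is subharmonic up to a smooth term since $\Delta_0u\leq1$; for that representative the inequality holds everywhere, because $P_\tau u-u=\int_0^\tau P_s\Delta_0u\,ds\leq\tau$ pointwise. This gives $d_i\leq e^{\tau_i}d_X$. Since $d_X$ is bounded (the diameter is at most $\pi$ by Bonnet--Myers for Alexandrov spaces), the error is at most $(e^{\tau_i}-1)\pi\to0$.

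For the lower bound I would use compactness. By the first two parts each $(S^2,h_i)$ has $K\geq1$, so its diameter is at most $\pi$. The $d_i$ are also uniformly controlled from below by the $g_0$-distance near regular points, but since they are only bounded above by $e^{\tau_i}d_X$, equicontinuity holds with respect to $d_X$. This suggests the following steps.
\begin{enumerate}
\item Arzel\`a--Ascoli gives a subsequence $d_i\to d_\infty\leq d_X$ uniformly.
\item $d_\infty$ is a length metric, as a uniform limit of length metrics.
\item Lower semicontinuity of conformal length under $u_{\tau_i}\to u$ should give $d_\infty\geq d_X$.
\end{enumerate}
For step 3 I would invoke Reshetnyak's theorem. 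The curvature measures $P_{\tau_i}\omega_X\to\omega_X$ weakly, with positive parts having no atoms of mass at least $2\pi$ (all atoms are less than $2\pi$ by \eqref{eq:alex-angle-defect}). Under these hypotheses Reshetnyak's convergence theorem \cite{Reshetnyak1993GeometryIV} states that the metrics $d_{g_0,u_{\tau_i}}$ converge uniformly to $d_{g_0,u}$, which is exactly \eqref{eq:alex-uniform-distance-approx}.

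I would in fact use Reshetnyak's theorem directly for the whole convergence statement, with the Jensen argument as the only genuinely new input.
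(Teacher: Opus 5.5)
Your proposal follows the paper's proof: $P_\tau$ commutes with $\Delta_0$ and fixes $dA_0$, which gives $K_{g_\tau}dA_{g_\tau}=P_\tau\omega_X$; Jensen's inequality for the probability measure $p_\tau(x,\cdot)\,dA_0$, combined with $\omega_X\geq e^{2u}dA_0$, gives $K_{g_\tau}\geq1$; and Reshetnyak's convergence theorem, using $P_{\tau_i}\omega_X\rightharpoonup\omega_X$ and the fact that all atoms of $\omega_X$ are below $2\pi$, gives the uniform distance convergence. When you invoke Reshetnyak's theorem you should also state the normalization the paper checks, namely $\int e^{2u_{\tau_i}}dA_0\to\int e^{2u}dA_0$ (equivalently, the additive constants of the potentials are controlled, e.g. because $P_\tau$ preserves the mean of $u$), and in your optional direct upper bound the relevant representative of $u$ is the lower semicontinuous one, since $\Delta_0u\leq1$ makes $u$ superharmonic, not subharmonic, up to a smooth term.
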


\begin{proof}
\textbf{Step (1)}. The heat semigroup commutes with \(\Delta_0\) on distributions, so
\begin{align}
K_{g_\tau}dA_{g_\tau} =(1-\Delta_0u_\tau)dA_0 =P_\tau((1-\Delta_0u)dA_0) =P_\tau\omega_X. \nonumber
\end{align}
By \eqref{eq:cp-curv-lower-measure},
\(P_\tau\omega_X\geq P_\tau(e^{2u})dA_0\).  If
\(p_\tau(x,y)\) is the heat kernel, Jensen's inequality gives
\begin{align}
e^{2u_\tau(x)} \leq\int p_\tau(x,y)e^{2u(y)}\,dA_0(y). \nonumber
\end{align}
Thus \(P_\tau\omega_X\geq e^{2u_\tau}dA_0=dA_{g_\tau}\), which is
\eqref{eq:cp-Kgtau-ge-one}.

\textbf{Step (2)}. Put
\[
\omega_i:=P_{\tau_i}\omega_X,
\qquad
u_i:=P_{\tau_i}u.
\]
Then
\[
-\Delta_0u_i\,dA_0
=
\omega_i-dA_0.
\]
Thus, in the notation of Reshetnyak's convergence theorem \cite[Theorem~7.3.1]{Reshetnyak1993GeometryIV}, we may take
\[
\mu_i^+:=\omega_i=P_{\tau_i}\omega_X,
\qquad
\mu_i^-:=dA_0.
\]
These are nonnegative measures of equal total mass \(4\pi\), and
\[
\mu_i^+\rightharpoonup\omega_X,
\qquad
\mu_i^-\rightharpoonup dA_0.
\]
Moreover, every atom of \(\omega_X\) has mass strictly less than
\(2\pi\), so the no-cusp condition holds.

Set $V_i:=\int_{S^2}e^{2u_i}\,dA_0, V:=\int_{S^2}e^{2u}\,dA_0.$ Lemma~\ref{lem:cp-utau-domination}, with \(q=1\) and \(F=1\), gives
\[
e^{u_i}\longrightarrow e^u
\qquad\text{strongly in }L^2(dA_0).
\]
Consequently,
\[
\begin{aligned}
\lim_{i\rightarrow\infty}\|e^{2u_i}-e^{2u}\|_{L^1(dA_0)}
&\leq \lim_{i\rightarrow\infty}
\|e^{u_i}-e^u\|_{L^2(dA_0)}
\bigl(\|e^{u_i}\|_{L^2(dA_0)}
+\|e^u\|_{L^2(dA_0)}
\bigr)= 0.
\end{aligned}
\]

In particular \(V_i\to V>0\). Therefore all the hypotheses of
Reshetnyak's convergence theorem are satisfied.

By \eqref{eq:alex-angle-defect}, every atom of the limiting curvature measure \(\omega_X\) has mass strictly less than \(2\pi\); this is the no-cusp atom condition in Reshetnyak's convergence theorem. 

Thus all hypotheses of \cite[Theorem $7.3.1$]{Reshetnyak1993GeometryIV} are satisfied.  The conclusion is exactly
\eqref{eq:alex-uniform-distance-approx}.  
\end{proof}

\begin{lemma}\label{lem:alex-spectral-convergence}
We have 
\begin{align}
\lim_{\tau_k\rightarrow 0}\lambda_j(S^2,g_{\tau_k})= \lambda_j(X) , \quad \quad \forall j\geq 0. \label{eq:alex-spectral-convergence}
\end{align}
\end{lemma}

\begin{proof}
Lemma~\ref{lem:cp-heat-preserves-curv} gives Gromov--Hausdorff convergence. The areas converge by Lemma~\ref{lem:cp-utau-domination}, hence the convergence is non-collapse. 

Therefore \cite[Theorem~1.1]{Shioya2001} applies and gives
\eqref{eq:alex-spectral-convergence}.
\end{proof}

\section{The weak Bochner--Kodaira inequality}
\label{sec:alexandrov-weak-bk}

For \(Y=X\) or \(Y=g_\tau\), Notation~\ref{not:alexandrov-sphere-data}
gives the unitary identification
\begin{align}
J_{m,Y}:H_m^Y\longrightarrow H_m^0,
\qquad
J_{m,Y}s=e^{(m+1)u_Y}s.
\nonumber
\end{align}
Indeed, \eqref{eq:alex-L2Y-inner-product} gives
\[
\|J_{m,Y}s\|_{H_m^0}^2=\|s\|_{H_m^Y}^2.
\]

\begin{definition}
Assume \(V_i,V\) are closed subspaces a Hilbert space \(H\), we say \(V_i\) converge in the Mosco sense to $V$, if weak limits of sequences \(v_i\in V_i\) belong to \(V\), and every
\(v\in V\) is a strong limit of some \(v_i\in V_i\).
\end{definition}

Define
\begin{align}
\widehat B_m^Y:=J_{m+1,Y}B_m^YJ_{m,Y}^{-1}: H_m^0\supset\operatorname{Dom}(\widehat B_m^Y) \longrightarrow H_{m+1}^0. \nonumber
\end{align}

For \(Y=X\) or \(Y=g_\tau\), define
\begin{align}
\operatorname{graph}(\widehat B_m^Y)
:=
\left\{
\bigl(v,\widehat B_m^Yv\bigr):
v\in\operatorname{Dom}(\widehat B_m^Y)
\right\}
\subset H_m^0\oplus H_{m+1}^0. \nonumber
\end{align}
Since \(\widehat B_m^Y\) is closed, its graph is a closed linear subspace of
\(H_m^0\oplus H_{m+1}^0\).

\begin{lemma}\label{lem:cp-graph-convergence}
For every fixed \(m\geq0\) and every \(\tau_i\downarrow0\),
\begin{align}
\operatorname{graph}(\widehat B_m^{g_{\tau_i}}) \xrightarrow{\mathrm{Mosco}} \operatorname{graph}(\widehat B_m^X) \nonumber
\end{align}
in \(H_m^0\oplus H_{m+1}^0\).
\end{lemma}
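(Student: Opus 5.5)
The plan is to unwind the conjugation by $J_{m,Y}$, so that every graph lives in the fixed space $H_m^0\oplus H_{m+1}^0$ and is described by one metric-independent distributional equation with weights $e^{\pm q u_Y}$. Then I will prove the two halves of Mosco convergence with Lemma~\ref{lem:cp-utau-domination}, using \eqref{eq:cp-negative-weight-conv} for the weak-limit half and \eqref{eq:cp-utau-upper} with \eqref{eq:cp-positive-weight-conv} for the recovery half.

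\textbf{Step 1 (graph description).} I will write $v=J_{m,Y}s=e^{(m+1)u_Y}s$. Using $\mathcal I_{m,Y}=e^{-2u_Y}\mathcal I_{m,0}$ from \eqref{eq:alex-ImY-definition}, one gets
\begin{align}
\widehat B_m^Y v=\sqrt2\,e^{m u_Y}\,\mathcal I_{m,0}\,\bar\partial_m\bigl(e^{-(m+1)u_Y}v\bigr). \nonumber
\end{align}
Hence $(v,w)\in\operatorname{graph}(\widehat B_m^Y)$ exactly when $s:=e^{-(m+1)u_Y}v$ lies in $L^1_{\mathrm{loc}}$ and the following holds distributionally:
\begin{align}
\sqrt2\,\mathcal I_{m,0}\bar\partial_m s=e^{-m u_Y}w. \nonumber
\end{align}
Here $\bar\partial_m$ and $\mathcal I_{m,0}$ do not depend on $Y$. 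The $L^1_{\mathrm{loc}}$ requirement is automatic for $Y=X$ and $Y=g_{\tau_i}$. Indeed $u_Y\geq -C$ by Lemma~\ref{lem:u-has-lower-bound} and \eqref{eq:cp-utau-lower}, so $e^{-(m+1)u_Y}$ is bounded and $s\in L^2(dA_0)$.

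Pairing with smooth test sections $\phi$ of $L_{m+1}$ and integrating by parts against the round formal adjoint gives the weak form of the graph condition. It reads
\begin{align}
\bigl\langle v,\,e^{-(m+1)u_Y}\,\Phi\bigr\rangle_{H_m^0}=\bigl\langle w,\,e^{-m u_Y}\phi\bigr\rangle_{H_{m+1}^0} \nonumber
\end{align}
for all smooth $\phi$. Here $\Phi:=(\sqrt2\,\mathcal I_{m,0}\bar\partial_m)^{\dagger}\phi$ is smooth, and the fixed positive smooth density factors relating $h_{m,0}\,dA_0$ to the coordinate expressions are absorbed into $\phi$ and $\Phi$.

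\textbf{Step 2 (weak limits stay in the limit graph).} Let $(v_i,w_i)\in\operatorname{graph}(\widehat B_m^{g_{\tau_i}})$ converge weakly to $(v,w)$. By \eqref{eq:cp-negative-weight-conv}, $e^{-(m+1)u_{\tau_i}}\Phi\to e^{-(m+1)u}\Phi$ and $e^{-m u_{\tau_i}}\phi\to e^{-mu}\phi$ strongly in $L^2(dA_0)$. A weakly convergent sequence paired with a strongly convergent one converges, so the weak identity of Step 1 passes to the limit with $u$ in place of $u_{\tau_i}$. Therefore $s=e^{-(m+1)u}v$ solves $\sqrt2\,\mathcal I_{m,0}\bar\partial_m s=e^{-mu}w$ distributionally. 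Since $e^{-mu}w\in L^2$ and $w\in H^0_{m+1}$, this says precisely that $J_{m,X}^{-1}v\in\operatorname{Dom}(B_m^X)$ with image $J_{m+1,X}^{-1}w$. Hence $(v,w)\in\operatorname{graph}(\widehat B_m^X)$.

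\textbf{Step 3 (recovery sequence).} Given $(v,w)\in\operatorname{graph}(\widehat B_m^X)$, I will not smooth $v$. Instead I keep the metric-independent section $s=e^{-(m+1)u}v$ fixed and set
\begin{align}
v_i:=e^{(m+1)u_{\tau_i}}s,\qquad w_i:=e^{m u_{\tau_i}}\,\sqrt2\,\mathcal I_{m,0}\bar\partial_m s=e^{m(u_{\tau_i}-u)}w. \nonumber
\end{align}
By \eqref{eq:cp-utau-upper}, $e^{(m+1)u_{\tau_i}}|s|\leq e^{(m+1)}e^{(m+1)u}|s|$, and the right side lies in $L^2$, so $v_i\in H_m^0$; similarly $w_i\in H_{m+1}^0$. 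Hence $(v_i,w_i)\in\operatorname{graph}(\widehat B_m^{g_{\tau_i}})$ by Step 1, noting that $u_{\tau_i}$ is smooth. Applying \eqref{eq:cp-positive-weight-conv} with $q=m+1$, $F=s$ gives $v_i\to v$ strongly. Applying it with $q=m$, $F=\sqrt2\,\mathcal I_{m,0}\bar\partial_m s$ gives $w_i\to w$ strongly.

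\textbf{Main obstacle.} The only delicate point is Step 1's bookkeeping. I need to check that the distributional $\bar\partial_m s$ in $\operatorname{Dom}(B_m^X)$ is the same object for every $Y$. I also need to confirm that the exponent powers coming from $h_{m,Y}$, $dA_Y$ and $\mathcal I_{m,Y}$ combine to exactly $e^{(m+1)u_Y}$ on the source side and $e^{mu_Y}$ on the target side. This is what ensures that only nonnegative-power weights appear in Step 3 and only negative-power weights multiply smooth functions in Step 2, which is what Lemma~\ref{lem:cp-utau-domination} controls. I would verify this first in a local coordinate using \eqref{eq:alex-local-L2-density} and \eqref{eq:alex-local-Bm-graph-norm}.
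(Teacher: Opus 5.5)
Your proposal is correct and follows the paper's proof essentially step for step. It uses the same reduction of both graphs to the fixed round equation $\sqrt2\,\mathcal I_{m,0}\bar\partial_m\bigl(e^{-(m+1)u_Y}v\bigr)=e^{-mu_Y}w$ (the paper writes $D_m$ for $\sqrt2\,\mathcal I_{m,0}\bar\partial_m$), the same weak-limit argument by testing against smooth sections with the negative-weight convergence of Lemma~\ref{lem:cp-utau-domination}, and the same recovery pair $v_i=e^{(m+1)u_{\tau_i}}s$, $w_i=e^{m(u_{\tau_i}-u)}w$ controlled by $u_{\tau_i}\leq u+\tau_i$.
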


\begin{proof}
\textbf{Step (1)}. Write
\begin{align}
u_i:=u_{\tau_i},
\qquad
D_m:=B_m^{g_0}:C^\infty(L_m)\longrightarrow C^\infty(L_{m+1}),
\nonumber
\end{align}
and let \(D_m^\dagger\) denote the formal adjoint of \(D_m\) with
respect to the fixed round Hermitian metrics and \(dA_0\).

For \(Y=X\) or \(Y=g_{\tau_i}\), let \(u_Y=u\) or \(u_Y=u_i\),
respectively.  We claim that
\begin{align}
(F,G)\in\operatorname{graph}(\widehat B_m^Y)
\quad\Longleftrightarrow\quad
D_m\bigl(e^{-(m+1)u_Y}F\bigr)
=
e^{-mu_Y}G
\quad\text{in }\mathcal D'(\CP^1,L_{m+1}).
\label{eq:cp-fixed-round-graph-equation}
\end{align}

For \(Y=g_{\tau_i}\), the conformal covariance of the ladder operator gives
\begin{align}
B_m^{g_{\tau_i}}
=
e^{-2u_i}D_m.
\nonumber
\end{align}
Hence, if \(F=J_{m,g_{\tau_i}}s=e^{(m+1)u_i}s\), then
\begin{align}
\widehat B_m^{g_{\tau_i}}F
&=
J_{m+1,g_{\tau_i}}
B_m^{g_{\tau_i}}
J_{m,g_{\tau_i}}^{-1}F
=
e^{(m+2)u_i}
e^{-2u_i}
D_m\bigl(e^{-(m+1)u_i}F\bigr)
=
e^{mu_i}
D_m\bigl(e^{-(m+1)u_i}F\bigr),
\nonumber
\end{align}
which is equivalent to \eqref{eq:cp-fixed-round-graph-equation}.

For \(Y=X\), the same equation is precisely the maximal
distributional definition of \(B_m^X\).  

\textbf{Step (2)}
Suppose $(F_i,G_i)\in \operatorname{graph}(\widehat B_m^{g_{\tau_i}})$ and
\begin{align}
(F_i,G_i)
\rightharpoonup
(F,G)
\quad\text{weakly in }
H_m^0\oplus H_{m+1}^0.
\label{eq:cp-graph-weak-convergence}
\end{align}
By \eqref{eq:cp-fixed-round-graph-equation},
\begin{align}
D_m\bigl(e^{-(m+1)u_i}F_i\bigr)
=
e^{-mu_i}G_i
\quad\text{in }\mathcal D'(\CP^1,L_{m+1}).
\label{eq:cp-graph-equation-i}
\end{align}

Let \(\Phi\in C^\infty(\CP^1,L_{m+1})\).  Testing
\eqref{eq:cp-graph-equation-i} against \(\Phi\) gives
\begin{align}
\left\langle
G_i,e^{-mu_i}\Phi
\right\rangle_{H_{m+1}^0}
=
\left\langle
F_i,e^{-(m+1)u_i}D_m^\dagger\Phi
\right\rangle_{H_m^0}.
\label{eq:cp-global-test-identity-i}
\end{align}

The bundle-valued version of
\eqref{eq:cp-negative-weight-conv} gives
\begin{align}
e^{-mu_i}\Phi
&\longrightarrow e^{-mu}\Phi
&&\text{strongly in }H_{m+1}^0,
\nonumber\\
e^{-(m+1)u_i}D_m^\dagger\Phi
&\longrightarrow
e^{-(m+1)u}D_m^\dagger\Phi
&&\text{strongly in }H_m^0.
\label{eq:cp-negative-bundle-convergence}
\end{align}
Indeed, this follows directly from the pointwise bundle norm, the
uniform lower bound \(u_i\geq-C\), and dominated convergence.

Passing to the limit in
\eqref{eq:cp-global-test-identity-i}, using the weak convergence
\eqref{eq:cp-graph-weak-convergence}, gives
\begin{align}
\left\langle
G,e^{-mu}\Phi
\right\rangle_{H_{m+1}^0}
=
\left\langle
F,e^{-(m+1)u}D_m^\dagger\Phi
\right\rangle_{H_m^0}.
\label{eq:cp-global-test-identity-limit}
\end{align}
Since this holds for every smooth test section \(\Phi\), it says exactly that
\begin{align}
D_m\bigl(e^{-(m+1)u}F\bigr)=e^{-mu}G
\quad\text{in }\mathcal D'(\CP^1,L_{m+1}).
\nonumber
\end{align}
By \eqref{eq:cp-fixed-round-graph-equation},
\begin{align}
(F,G)\in\operatorname{graph}(\widehat B_m^X).
\nonumber
\end{align}
This proves the weak Mosco condition.

\textbf{Step (3)}.
Let
\begin{align}
(F,G)\in\operatorname{graph}(\widehat B_m^X).
\nonumber
\end{align}
Set
\begin{align}
s:=e^{-(m+1)u}F,
\qquad
h:=D_ms=e^{-mu}G.
\label{eq:cp-limit-s-h}
\end{align}
The lower bound \(u\geq-C\) implies that \(s\) and \(h\) are locally
\(L^2\), and the second identity in
\eqref{eq:cp-limit-s-h} is understood distributionally.

Define
\begin{align}
F_i:=e^{(m+1)u_i}s,
\qquad
G_i:=e^{mu_i}h.
\label{eq:cp-global-recovery-pair}
\end{align}
Then
\begin{align}
e^{-(m+1)u_i}F_i=s,
\qquad
D_ms=h=e^{-mu_i}G_i.
\nonumber
\end{align}
Therefore
\begin{align}
D_m\bigl(e^{-(m+1)u_i}F_i\bigr)
=
e^{-mu_i}G_i
\quad\text{distributionally}.
\label{eq:cp-recovery-graph-equation}
\end{align}

We next prove the strong convergence globally.  This is the global
bundle-valued form of \eqref{eq:cp-positive-weight-conv} and the
positive-weight counterpart of Step~(2).  There the lower bound
\(u_i\geq-C\) controls the negative weights multiplying smooth test
sections.  Here \eqref{eq:cp-utau-upper} controls the positive weights,
while \(F\in H_m^0\) and \(G\in H_{m+1}^0\) provide the required global
\(L^2\)-majorants.

As recalled in the proof of Lemma~\ref{lem:cp-utau-domination},
\[
u_i\longrightarrow u
\qquad dA_0\text{-almost everywhere},
\]
and \eqref{eq:cp-utau-upper} gives
\[
u_i\leq u+\tau_i
\qquad dA_0\text{-almost everywhere}.
\]

After changing representatives on a \(dA_0\)-null set if necessary,
the definitions \eqref{eq:cp-limit-s-h} and \eqref{eq:cp-global-recovery-pair} therefore give
\begin{align}
F_i
&=
e^{(m+1)(u_i-u)}F,
&
G_i
&=
e^{m(u_i-u)}G
\qquad dA_0\text{-almost everywhere}.
\nonumber
\end{align}
Since \(u_i\to u\) almost everywhere, both scalar factors converge
almost everywhere to \(1\).  Moreover, for all sufficiently large \(i\),
one has \(\tau_i\leq1\), and hence
\begin{align}
|F_i-F|_{h_{m,0}}^2
&=
\left|
e^{(m+1)(u_i-u)}-1
\right|^2
|F|_{h_{m,0}}^2
\leq
\bigl(1+e^{m+1}\bigr)^2
|F|_{h_{m,0}}^2,
\nonumber\\
|G_i-G|_{h_{m+1,0}}^2
&=
\left|
e^{m(u_i-u)}-1
\right|^2
|G|_{h_{m+1,0}}^2
\leq
\bigl(1+e^m\bigr)^2
|G|_{h_{m+1,0}}^2.
\nonumber
\end{align}
The two functions on the right are integrable because
\[
F\in H_m^0,
\qquad
G\in H_{m+1}^0.
\]

Dominated convergence in the fixed global bundle norms therefore yields
\begin{align}
\|F_i-F\|_{H_m^0}^2
&=
\int_{\CP^1}
|F_i-F|_{h_{m,0}}^2\,dA_0
\longrightarrow0,
\nonumber\\
\|G_i-G\|_{H_{m+1}^0}^2
&=
\int_{\CP^1}
|G_i-G|_{h_{m+1,0}}^2\,dA_0
\longrightarrow0.
\nonumber
\end{align}
Consequently,
\begin{align}
F_i\longrightarrow F
\quad\text{in }H_m^0,
\qquad
G_i\longrightarrow G
\quad\text{in }H_{m+1}^0.
\label{eq:cp-recovery-strong-convergence}
\end{align}

It follows from
\eqref{eq:cp-recovery-graph-equation} and
\eqref{eq:cp-fixed-round-graph-equation} that
\begin{align}
(F_i,G_i)
\in
\operatorname{graph}(\widehat B_m^{g_{\tau_i}}).
\nonumber
\end{align}
Together with \eqref{eq:cp-recovery-strong-convergence}, this proves the
strong recovery condition and therefore the Mosco convergence of the
graphs.
\end{proof}

If $\mathfrak{q}$ is a closed nonnegative form on a Hilbert space, we define
the extended quadratic functionals on \(H_0\):
\begin{align}
\mathfrak q[v]
&:=
\begin{cases}
\mathfrak{q}(v, v),
&
(v, v)\in\operatorname{Dom}(q),\\[2mm]
+\infty,
&
(v, v)\notin\operatorname{Dom}(q),
\end{cases}
\nonumber
\end{align}

\begin{definition}
Assume \(\mathfrak q_i, \mathfrak q\) are closed nonnegative forms on a Hilbert space \(H\), we say that \(\mathfrak q_i\) Mosco converge to \(\mathfrak q\) if:
\begin{enumerate}
\item whenever \(v_i\rightharpoonup v\),
$\displaystyle \mathfrak q[v]\leq\varliminf_i\mathfrak q_i[v_i]$;
\item for every \((v, v)\in\operatorname{Dom}(\mathfrak q)\), there are
\(v_i\to v\) strongly with
\(\mathfrak q_i[v_i]\to\mathfrak q[v]\).
\end{enumerate}
\end{definition}

\begin{lemma}\label{lem:cp-graph-to-form}
Let \(H_0\) and \(H_1\) be Hilbert spaces, and let
\begin{align}
T_i:\operatorname{Dom}(T_i)\subset H_0\longrightarrow H_1,
\qquad
T:\operatorname{Dom}(T)\subset H_0\longrightarrow H_1
\nonumber
\end{align}
be closed densely defined linear operators.  Assume that
\begin{align}
\operatorname{graph}(T_i)
\xrightarrow{\mathrm{Mosco}}
\operatorname{graph}(T)
\nonumber
\end{align}
as closed subspaces of \(H_0\oplus H_1\).

Fix \(\alpha\geq0\).  Define nonnegative forms on \(H_0\) by
\begin{align}
\mathfrak q_i^\alpha(v,w)
&:=
\langle T_i v,T_i w\rangle_{H_1}
+
\alpha\langle v,w\rangle_{H_0},
\qquad
\operatorname{Dom}(\mathfrak q_i^\alpha)
=
\operatorname{Dom}(T_i)\times \operatorname{Dom}(T_i),
\nonumber\\
\mathfrak q^\alpha(v,w)
&:=
\langle Tv,Tw\rangle_{H_1}
+
\alpha\langle v,w\rangle_{H_0},
\qquad
\operatorname{Dom}(\mathfrak q^\alpha)
=
\operatorname{Dom}(T)\times \operatorname{Dom}(T).
\nonumber
\end{align}
Then
\begin{align}
\mathfrak q_i^\alpha
\xrightarrow{\mathrm{Mosco}}
\mathfrak q^\alpha
\qquad\text{as closed nonnegative forms on }H_0.
\nonumber
\end{align}
\end{lemma}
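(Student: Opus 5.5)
The plan is to verify the two Mosco conditions directly from the two graph conditions. Since \(\alpha\langle v,v\rangle\) is weakly lower semicontinuous and strongly continuous, the shift \(\alpha\) only adds a harmless term. The whole argument reduces to boundedness plus weak closedness of the limit graph.

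\textbf{Liminf inequality.} Let \(v_i\rightharpoonup v\) weakly in \(H_0\). It suffices to treat the case \(\varliminf_i\mathfrak q_i^\alpha[v_i]<\infty\); otherwise there is nothing to prove. Pass to a subsequence realizing the \(\liminf\) along which \(\mathfrak q_i^\alpha[v_i]\) is finite and bounded, so that \(v_i\in\operatorname{Dom}(T_i)\) and \(\|T_iv_i\|_{H_1}\) is bounded. By weak compactness, after a further subsequence \(T_iv_i\rightharpoonup w\) in \(H_1\), and hence \((v_i,T_iv_i)\rightharpoonup(v,w)\) in \(H_0\oplus H_1\).

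The weak half of graph Mosco convergence gives \((v,w)\in\operatorname{graph}(T)\), that is, \(v\in\operatorname{Dom}(T)\) and \(Tv=w\). Weak lower semicontinuity of the norms in \(H_1\) and \(H_0\) then yields
\[
\|Tv\|^2+\alpha\|v\|^2\leq\varliminf\|T_iv_i\|^2+\alpha\varliminf\|v_i\|^2\leq\varliminf_i\mathfrak q_i^\alpha[v_i].
\]
The last inequality uses \(\alpha\geq0\) together with superadditivity of \(\liminf\). Since the subsequence realized the original \(\liminf\), this is condition (1). One point needs care here: the subsequence extraction is legitimate because the Mosco condition on graphs is stated for arbitrary sequences, hence also for subsequences. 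The weak-limit condition therefore applies to a sequence indexed by a subsequence \(i_k\), since it is itself a sequence of elements of \(\operatorname{graph}(T_{i_k})\), and graph Mosco convergence passes to subsequences.

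\textbf{Recovery sequence.} Let \(v\in\operatorname{Dom}(T)\). The strong half of graph Mosco convergence supplies \((v_i,w_i)\in\operatorname{graph}(T_i)\) with \(v_i\to v\) in \(H_0\) and \(w_i=T_iv_i\to Tv\) in \(H_1\). Strong continuity of the norms then gives \(\mathfrak q_i^\alpha[v_i]\to\mathfrak q^\alpha[v]\), which is condition (2).

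\textbf{Main obstacle.} I expect the only delicate point to be the bookkeeping in the liminf step. One must make sure that the weak-limit clause in the Mosco definition is applied to the pair sequence \((v_i,T_iv_i)\), which is not given as weakly convergent a priori. Its boundedness, and hence weak subsequential convergence of \(T_iv_i\), comes only from finiteness of the \(\liminf\). Closedness and density of the operators are not needed beyond ensuring that the forms are closed, so that the Mosco notion is meaningful.
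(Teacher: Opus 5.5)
Your proposal is correct and is essentially the paper's own proof. It extracts a subsequence realizing the finite \(\liminf\), uses boundedness and weak compactness of \(T_{i_k}v_{i_k}\), applies the weak clause of graph Mosco convergence to get \(v\in\operatorname{Dom}(T)\) with \(Tv=w\), concludes by weak lower semicontinuity, and uses the strong graph recovery sequence for condition (2). The paper additionally checks that the forms are closed, by showing the form norm is equivalent to the graph norm.

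One correction concerns your stated reason for the subsequence step. Applying the weak clause along \(i_k\) is valid because the standard Mosco definition quantifies that clause over subsequences \(v_k\in V_{i_k}\), which the paper also uses tacitly. It is not valid on the grounds that a clause for full sequences implies one for subsequences. That inference fails: let \(T_i\) alternate between \(T\) and a proper closed extension \(S\) of \(T\). Then the full-sequence clause and the strong clause both hold, yet \(v_i\equiv w\in\operatorname{Dom}(S)\setminus\operatorname{Dom}(T)\) violates the \(\liminf\) inequality.
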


\begin{proof}
\textbf{Step (1)}. We first verify that the forms appearing in the statement are indeed
densely defined closed nonnegative forms on \(H_0\).

Since \(T_i\) is densely defined,
\(\operatorname{Dom}(\mathfrak q_i^\alpha)
=\operatorname{Dom}(T_i)\) is dense in \(H_0\).  Since \(T_i\) is closed,
its domain is complete with respect to the graph norm
\begin{align}
\|v\|_{\operatorname{graph}(T_i)}^2
:=
\|v\|_{H_0}^2+\|T_i v\|_{H_1}^2.
\nonumber
\end{align}
The form norm associated with \(\mathfrak q_i^\alpha\) is
\begin{align}
\|v\|_{\mathfrak q_i^\alpha}^2
&:=
\|v\|_{H_0}^2+\mathfrak q_i^\alpha[v]
=
(\alpha+1)\|v\|_{H_0}^2+\|T_i v\|_{H_1}^2.
\nonumber
\end{align}
For \(v\in\operatorname{Dom}(T_i)\),
\begin{align}
\|v\|_{\operatorname{graph}(T_i)}^2
\leq
\|v\|_{\mathfrak q_i^\alpha}^2
\leq
(\alpha+1)
\|v\|_{\operatorname{graph}(T_i)}^2.
\nonumber
\end{align}
Thus the form norm is equivalent to the graph norm.  Consequently
\(\operatorname{Dom}(\mathfrak q_i^\alpha)\) is complete in the form
norm, so \(\mathfrak q_i^\alpha\) is closed.  The same argument applies
to \(T\) and \(\mathfrak q^\alpha\).

\textbf{Step (2)}. We now prove the two Mosco conditions.

Let
\begin{align}
v_i\rightharpoonup v
\qquad\text{weakly in }H_0.
\nonumber
\end{align}
We must prove
\begin{align}
\mathfrak q^\alpha[v]
\leq
\varliminf_{i\to\infty}\mathfrak q_i^\alpha[v_i].
\label{eq:cp-graph-form-liminf}
\end{align}
Put
\begin{align}
L:=
\varliminf_{i\to\infty}\mathfrak q_i^\alpha[v_i].
\nonumber
\end{align}
If \(L=+\infty\), then
\eqref{eq:cp-graph-form-liminf} is automatic.  Suppose therefore that
\(L<+\infty\).  Choose a subsequence, denoted by \(i_k\), such that
\begin{align}
\mathfrak q_{i_k}^\alpha[v_{i_k}]
\longrightarrow L.
\label{eq:cp-graph-form-subsequence}
\end{align}
After discarding finitely many terms, all the quantities in
\eqref{eq:cp-graph-form-subsequence} are finite, and hence
\begin{align}
v_{i_k}\in\operatorname{Dom}(T_{i_k}).
\nonumber
\end{align}
Moreover,
\begin{align}
\|T_{i_k}v_{i_k}\|_{H_1}^2
\leq
\mathfrak q_{i_k}^\alpha[v_{i_k}],
\nonumber
\end{align}
so the sequence \(T_{i_k}v_{i_k}\) is bounded in \(H_1\).  By weak
compactness of bounded subsets of a Hilbert space, after passing to a
further subsequence, which we do not relabel, there is \(w\in H_1\)
such that
\begin{align}
T_{i_k}v_{i_k}\rightharpoonup w
\qquad\text{weakly in }H_1.
\nonumber
\end{align}
Therefore
\begin{align}
\bigl(v_{i_k},T_{i_k}v_{i_k}\bigr)
\rightharpoonup
(v,w)
\qquad\text{weakly in }H_0\oplus H_1.
\nonumber
\end{align}
For every \(k\),
\begin{align}
\bigl(v_{i_k},T_{i_k}v_{i_k}\bigr)
\in\operatorname{graph}(T_{i_k}).
\nonumber
\end{align}

The weak-limit condition in the Mosco convergence of the graphs therefore
implies $(v,w)\in\operatorname{graph}(T)$. Thus
\begin{align}
v\in\operatorname{Dom}(T),
\qquad
w=Tv.
\label{eq:cp-graph-form-identification}
\end{align}

We have weak convergence
\begin{align}
\bigl(T_{i_k}v_{i_k},\sqrt{\alpha}\,v_{i_k}\bigr)
\rightharpoonup
\bigl(Tv,\sqrt{\alpha}\,v\bigr)
\qquad\text{in }H_1\oplus H_0.
\nonumber
\end{align}
Weak lower semicontinuity of the squared norm in \(H_1\oplus H_0\)
now gives
\begin{align}
\mathfrak q^\alpha[v]
&=
\|Tv\|_{H_1}^2+\alpha\|v\|_{H_0}^2
=
\left\|
\bigl(Tv,\sqrt{\alpha}\,v\bigr)
\right\|_{H_1\oplus H_0}^2
\leq
\varliminf_{k\to\infty}
\left\|
\bigl(T_{i_k}v_{i_k},\sqrt{\alpha}\,v_{i_k}\bigr)
\right\|_{H_1\oplus H_0}^2
\nonumber\\
&=
\varliminf_{k\to\infty}
\mathfrak q_{i_k}^\alpha[v_{i_k}]
=
L.
\nonumber
\end{align}
By the definition of \(L\), this is precisely
\eqref{eq:cp-graph-form-liminf}.

\textbf{Step (3)}.
Let
\begin{align}
v\in\operatorname{Dom}(\mathfrak q^\alpha)
=
\operatorname{Dom}(T).
\nonumber
\end{align}
Then
\begin{align}
(v,Tv)\in\operatorname{graph}(T).
\nonumber
\end{align}
The strong recovery condition in the Mosco convergence of the graphs
provides pairs
\begin{align}
(v_i,w_i)\in\operatorname{graph}(T_i)
\nonumber
\end{align}
such that
\begin{align}
(v_i,w_i)\longrightarrow(v,Tv)
\qquad\text{strongly in }H_0\oplus H_1.
\label{eq:cp-graph-form-strong-graph-recovery}
\end{align}
Since \((v_i,w_i)\in\operatorname{graph}(T_i)\), we have
\begin{align}
v_i\in\operatorname{Dom}(T_i),
\qquad
w_i=T_i v_i.
\nonumber
\end{align}
The strong convergence in
\eqref{eq:cp-graph-form-strong-graph-recovery} therefore means
\begin{align}
v_i\longrightarrow v
\quad\text{strongly in }H_0,
\qquad
T_i v_i\longrightarrow Tv
\quad\text{strongly in }H_1.
\nonumber
\end{align}

Consequently,
\begin{align}
\mathfrak q_i^\alpha[v_i]
&=
\|T_i v_i\|_{H_1}^2+\alpha\|v_i\|_{H_0}^2
\longrightarrow
\|Tv\|_{H_1}^2+\alpha\|v\|_{H_0}^2
=
\mathfrak q^\alpha[v].
\nonumber
\end{align}
Thus \(v_i\) is a recovery sequence for \(v\).  This proves the strong
recovery condition and completes the proof of Mosco convergence.
\end{proof}

\begin{lemma}\label{lem:cp-adjoint-graphs}
Under the hypotheses of Lemma~\ref{lem:cp-graph-to-form},
\begin{align}
\operatorname{graph}(T_i^*)\xrightarrow{\mathrm{Mosco}} \operatorname{graph}(T^*) \nonumber
\end{align}
in \(H_1\oplus H_0\).
\end{lemma}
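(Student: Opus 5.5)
The plan is to use the standard identity relating the graph of an adjoint to the orthogonal complement of the graph. Let $W:H_0\oplus H_1\to H_1\oplus H_0$ be the unitary map $W(v,w):=(-w,v)$. For a closed densely defined operator $S$ one has
\begin{align}
\operatorname{graph}(S^*)=\bigl(W\operatorname{graph}(S)\bigr)^{\perp}
\nonumber
\end{align}
in $H_1\oplus H_0$. Since $W$ is unitary, it maps Mosco convergent sequences of closed subspaces to Mosco convergent ones. The hypothesis therefore gives $W\operatorname{graph}(T_i)\to W\operatorname{graph}(T)$ in the Mosco sense, and it will suffice to prove the abstract fact that orthogonal complements of Mosco convergent closed subspaces converge in the Mosco sense.

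To prove this, let $V_i\to V$ be Mosco convergent in a Hilbert space $H$. We check the two conditions for $V_i^\perp\to V^\perp$.

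\textbf{Weak-limit condition.} Let $x_i\in V_i^\perp$ with $x_i\rightharpoonup x$ along a subsequence. Fix $v\in V$. The strong recovery condition for $V_i\to V$ gives $v_i\in V_i$ with $v_i\to v$ strongly. Then $0=\langle x_i,v_i\rangle\to\langle x,v\rangle$, because this is a weak-times-strong pairing. Hence $x\in V^\perp$.

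\textbf{Strong recovery.} Let $x\in V^\perp$ and set $x_i:=P_{V_i^\perp}x=x-P_{V_i}x$. It suffices to show $P_{V_i}x\to0$ strongly.
\begin{enumerate}
\item The vectors $y_i:=P_{V_i}x$ are bounded by $\|x\|$.
\item Any weak subsequential limit $y$ of the $y_i$ lies in $V$, by the weak-limit condition for $V_i\to V$.
\item We have $\|y_i\|^2=\langle x,y_i\rangle\to\langle x,y\rangle=0$ along that subsequence, since $x\perp V$.
\end{enumerate}
Every subsequence of $(y_i)$ therefore has a further subsequence converging strongly to $0$, so the whole sequence does.

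The only delicate point is that the weak-limit part of Mosco convergence is stated for subsequences. In the first condition above, we test weak limits of subsequences $x_{i_k}$, and we pair them with the recovery sequence $v_{i_k}$ taken along the same indices, so the argument is unchanged. I expect no real obstacle here; the key is choosing the rotation $W$ with the correct sign, so that $\langle W(v,Tv),(s,t)\rangle=-\langle Tv,s\rangle+\langle v,t\rangle$ vanishes for all $v\in\operatorname{Dom}(T)$ exactly when $(s,t)\in\operatorname{graph}(T^*)$.
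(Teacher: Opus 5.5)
Your proof is correct and follows the paper's route. The paper also writes $\operatorname{graph}(T^*)=\mathcal U(\operatorname{graph}(T)^\perp)$ with a fixed unitary rotation, and then uses that Mosco convergence of closed subspaces is preserved by orthogonal complements and by fixed unitaries; the only difference is that the paper gets the complement step by quoting the equivalence of Mosco convergence with strong convergence of the orthogonal projections, whereas you verify it directly from the two Mosco conditions, correctly reading the weak-limit condition along subsequences as the paper's definition leaves implicit.
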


\begin{proof}
For a closed densely defined \(T\),
\begin{align}
\operatorname{graph}(T^*) =\mathcal U(\operatorname{graph}(T)^\perp), \qquad \mathcal U(a,b)=(b,-a). \nonumber
\end{align}
Mosco convergence of closed subspaces is equivalent to strong convergence of
their orthogonal projections, and is therefore preserved by orthogonal
complements and by the fixed unitary \(\mathcal U\).
\end{proof}

For \(Y=X\) or \(Y=g_\tau\), let \(\mathfrak a_m^Y\) and
\(\mathfrak c_m^Y\) be the forms in
\eqref{eq:alex-am-form}--\eqref{eq:alex-cm-form}, with \(X\) replaced by
\(Y\).

\begin{prop}\label{prop:cp-mosco-ladder-forms}
For every \(m\geq0\), as \(\tau_i\downarrow0\),
\begin{align}
\mathfrak a_{m+1}^{g_{\tau_i}} \xrightarrow{\mathrm{Mosco}}\mathfrak a_{m+1}^X, \qquad \mathfrak c_m^{g_{\tau_i}} \xrightarrow{\mathrm{Mosco}}\mathfrak c_m^X, \nonumber
\end{align}
after the unitary identifications \(J_{m+ 1,Y}\).
\end{prop}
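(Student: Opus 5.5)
The proof should be a formal consequence of three results already available: Lemma~\ref{lem:cp-graph-convergence} (graph convergence), Lemma~\ref{lem:cp-graph-to-form} (graphs to forms) and Lemma~\ref{lem:cp-adjoint-graphs} (adjoint graphs). No new analysis should be needed.

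\textbf{First assertion.} Apply Lemma~\ref{lem:cp-graph-convergence} at level $m+1$. It gives Mosco convergence of $\operatorname{graph}(\widehat B_{m+1}^{g_{\tau_i}})$ to $\operatorname{graph}(\widehat B_{m+1}^X)$ in $H_{m+1}^0\oplus H_{m+2}^0$. Then apply Lemma~\ref{lem:cp-graph-to-form} with $T_i=\widehat B_{m+1}^{g_{\tau_i}}$, $T=\widehat B_{m+1}^X$ and $\alpha=(m+1)(m+2)$. Each $T_i$ and $T$ is closed and densely defined, since $J_{\cdot,Y}$ is unitary and Lemma~\ref{lem:alex-Bm-closed-dense} applies (the smooth case being standard). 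This yields Mosco convergence of the transported forms. Finally, unitarity of $J_{m+1,Y}$ and $J_{m+2,Y}$ shows that the transported form equals $\mathfrak a_{m+1}^Y\circ J_{m+1,Y}^{-1}$. That is exactly the claim after the identification.

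\textbf{Second assertion.} For $\mathfrak c_m$, first record that conjugation by unitaries commutes with adjoints:
\[
(\widehat B_m^Y)^*=J_{m,Y}(B_m^Y)^*J_{m+1,Y}^{-1}.
\]
This is a one-line check from the definition of the adjoint. Lemma~\ref{lem:cp-graph-convergence} at level $m$, combined with Lemma~\ref{lem:cp-adjoint-graphs}, gives Mosco convergence of the adjoint graphs in $H_{m+1}^0\oplus H_m^0$. The adjoints are closed and densely defined, because $B_m^Y$ is closed and densely defined. Applying Lemma~\ref{lem:cp-graph-to-form} again, with $T_i=(\widehat B_m^{g_{\tau_i}})^*$ and $\alpha=m(m+1)$, gives Mosco convergence of $\mathfrak c_m^{g_{\tau_i}}\circ J_{m+1,g_{\tau_i}}^{-1}$ to $\mathfrak c_m^X\circ J_{m+1,X}^{-1}$ on $H_{m+1}^0$.

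\textbf{Main obstacle.} The only nontrivial point is Lemma~\ref{lem:cp-adjoint-graphs}. It rests on the fact that Mosco convergence of closed subspaces is equivalent to strong convergence of the orthogonal projections, and is therefore inherited by orthogonal complements. If I were to make this explicit, I would argue as follows. Weak-limit closedness together with strong recovery gives $P_{V_i}\to P_V$ strongly. Then $P_{V_i^\perp}=I-P_{V_i}\to P_{V^\perp}$ strongly. The converse direction follows from taking $P_{V_i}v_i=v_i$ and passing to weak limits. Otherwise the proof is bookkeeping of the two unitary identifications: $J_{m+1,Y}$ on the common space $H_{m+1}^0$, where both forms $\mathfrak a_{m+1}$ and $\mathfrak c_m$ live, and $J_{m,Y}$ or $J_{m+2,Y}$ on the auxiliary target spaces.
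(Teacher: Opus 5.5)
Your proposal is correct and follows the paper's own proof. For $\mathfrak a_{m+1}$ you combine Lemma~\ref{lem:cp-graph-convergence} and Lemma~\ref{lem:cp-graph-to-form} at level $m+1$ with shift $(m+1)(m+2)$; for $\mathfrak c_m$ you add Lemma~\ref{lem:cp-adjoint-graphs} and the identity $(\widehat B_m^Y)^*=J_{m,Y}(B_m^Y)^*J_{m+1,Y}^{-1}$, with shift $m(m+1)$. Your sketch of why Mosco convergence of closed subspaces is equivalent to strong convergence of the orthogonal projections, which the paper only asserts, is also sound, provided the weak-limit condition is read along subsequences, as the paper itself does in the proof of Lemma~\ref{lem:cp-graph-to-form}.
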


\begin{proof}
Apply Lemmas~\ref{lem:cp-graph-convergence} and
\ref{lem:cp-graph-to-form} to \(B_{m+1}\), with shift
\((m+1)(m+2)\), to obtain the first convergence.  Apply
Lemmas~\ref{lem:cp-graph-convergence}, \ref{lem:cp-adjoint-graphs}, and
\ref{lem:cp-graph-to-form} to \(B_m^*\), with shift \(m(m+1)\), to obtain
the second.  Unitarity gives
\begin{align}
(\widehat B_m^Y)^*=J_{m,Y}(B_m^Y)^*J_{m+1,Y}^{-1}, \nonumber
\end{align}
so the pulled-back adjoint forms are precisely \(\mathfrak c_m^Y\).
\end{proof}

\begin{prop}\label{prop:weak-Bochner-Kodaira}
For every \(m\geq0\),
\begin{align}
\operatorname{Dom}(\mathfrak c_m^X) &\subset\operatorname{Dom}(\mathfrak a_{m+1}^X), \quad 
\mathfrak c_m^X(t,t)\geq\mathfrak a_{m+1}^X(t,t), \qquad \forall t\in\operatorname{Dom}(\mathfrak c_m^X)). \nonumber
\end{align}
\end{prop}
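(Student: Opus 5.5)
The plan is to pass the smooth form order \eqref{Cm-greater-Am+1} to the limit using the Mosco convergence in Proposition~\ref{prop:cp-mosco-ladder-forms}. Throughout we work in the fixed Hilbert space \(H_{m+1}^0\), transporting all forms by the unitary maps \(J_{m+1,Y}\).

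\textbf{Step 1 (smooth input).} Lemma~\ref{lem:cp-heat-preserves-curv} shows that each \(g_{\tau_i}\) is smooth with \(K_{g_{\tau_i}}\geq1\). Part~II then gives
\[
\operatorname{Dom}(\mathfrak c_m^{g_{\tau_i}})=\operatorname{Dom}(\mathfrak a_{m+1}^{g_{\tau_i}})=H^1(L_{m+1}),\qquad \mathfrak c_m^{g_{\tau_i}}\geq\mathfrak a_{m+1}^{g_{\tau_i}}.
\]
The smooth operators \(B_m^{g_{\tau_i}}\) are the closures from smooth sections, so this domain statement needs care. Because \(u_{\tau_i}\) is smooth and the weights are bounded above and below, the maximal distributional domain defining \(\widehat B_m^{g_{\tau_i}}\) in \eqref{eq:cp-fixed-round-graph-equation} coincides with \(H^1\), so the two descriptions agree. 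In extended-functional notation the inequality reads \(\mathfrak c_m^{g_{\tau_i}}[v]\geq\mathfrak a_{m+1}^{g_{\tau_i}}[v]\) for every \(v\in H_{m+1}^0\), where both sides are \(+\infty\) off the common domain.

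\textbf{Step 2 (limit).} Fix \(t\in\operatorname{Dom}(\mathfrak c_m^X)\). The recovery condition for \(\mathfrak c_m^{g_{\tau_i}}\to\mathfrak c_m^X\) gives \(t_i\to t\) strongly with \(\mathfrak c_m^{g_{\tau_i}}[t_i]\to\mathfrak c_m^X[t]<\infty\). Strong convergence implies weak convergence, so the liminf condition for \(\mathfrak a_{m+1}^{g_{\tau_i}}\to\mathfrak a_{m+1}^X\) yields
\[
\mathfrak a_{m+1}^X[t]\leq\varliminf_i\mathfrak a_{m+1}^{g_{\tau_i}}[t_i]\leq\varliminf_i\mathfrak c_m^{g_{\tau_i}}[t_i]=\mathfrak c_m^X[t]<\infty.
\]
Finiteness of the left side gives \(t\in\operatorname{Dom}(\mathfrak a_{m+1}^X)\), and the inequality is the claimed form order. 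Undoing the unitary identification \(J_{m+1,X}\) returns the statement on \(H_{m+1}^X\).

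\textbf{Main obstacle.} The delicate point is consistency of identifications. Both forms act on \(H_{m+1}^Y\) and must be transported by the same map \(J_{m+1,Y}\); the \(\mathfrak c\)-form is transported by \(J_{m+1,Y}\) on its argument via \((\widehat B_m^Y)^*=J_{m,Y}(B_m^Y)^*J_{m+1,Y}^{-1}\). The Mosco statements therefore refer to one space, \(H_{m+1}^0\). A secondary point is that Lemma~\ref{lem:cp-graph-convergence} is stated with the maximal distributional definition. We must confirm that this definition agrees with the smooth closure at each \(\tau_i\), so that the Part~II inequality applies to the same forms that Mosco-converge.
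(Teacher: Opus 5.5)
Your proposal is correct and follows the same route as the paper. Both arguments apply the smooth form order \eqref{Cm-greater-Am+1} to each $g_{\tau_i}$, which has $K_{g_{\tau_i}}\geq1$. They then take a recovery sequence for $\mathfrak c_m^X$ from Proposition~\ref{prop:cp-mosco-ladder-forms} and apply the $\liminf$ inequality for $\mathfrak a_{m+1}^X$ along that same sequence.

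Your two extra checks are points the paper uses only implicitly, and you handle both correctly. First, for smooth $u_{\tau_i}$ the maximal distributional domain coincides with $H^1$, so the forms that Mosco-converge are exactly the Part~II forms. Second, $\mathfrak c_m^Y$ and $\mathfrak a_{m+1}^Y$ are transported to $H_{m+1}^0$ by the same map $J_{m+1,Y}$.
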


\begin{proof}
For each smooth \(g_{\tau_i}\), Lemma~\ref{lem:complex-curvature-ladder}
gives
\begin{align}
\mathfrak c_m^{g_{\tau_i}}(t,t)- \mathfrak a_{m+1}^{g_{\tau_i}}(t,t) =2(m+1)\int(K_{g_{\tau_i}}-1)|t|^2\,dA_{g_{\tau_i}}\geq0 \label{ineq-am+1-cm}
\end{align}
by Lemma~\ref{lem:cp-heat-preserves-curv}.

From Proposition~\ref{prop:cp-mosco-ladder-forms}, for \(v\in\operatorname{Dom}(\mathfrak c_m^X)\), we can find a sequence $v_i\in \mathfrak c_m^{g_{\tau_i}}$ converging to $v$ in $H_{m+ 1}^0$ with $\displaystyle \lim_i\mathfrak c_m^{g_{\tau_i}}(v_i, v_i)=\mathfrak c_m^X(v, v)$. 

From (\ref{ineq-am+1-cm}) and Proposition~\ref{prop:cp-mosco-ladder-forms}, we get
\begin{align}
\mathfrak a_{m+ 1}^X(v, v) \leq\varliminf_i\mathfrak a_{m+1}^{g_{\tau_i}}(v_i, v_i) \leq\lim_i\mathfrak c_m^{g_{\tau_i}}(v_i, v_i)=\mathfrak c_m^X(v, v). \nonumber
\end{align}
\end{proof}

\begin{lemma}\label{lem:atomic-kernel-holomorphic}
For every \(m\geq0\),
\begin{align}
\ker B_m^X\subset H^0(\CP^1,K_{\CP^1}^{-m}), \qquad \dim_{\C}\ker B_m^X\leq2m+1. \nonumber
\end{align}
\end{lemma}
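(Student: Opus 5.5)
The plan is to show that every element of $\ker B_m^X$ is, after modification on a null set, a global holomorphic section of $L_m=K_{\CP^1}^{-m}$. The dimension bound then follows at once from Lemma~\ref{lem:CP1-anticanonical-cohomology}. The argument is local elliptic regularity for $\bar\partial$, used with the maximal distributional definition of $\operatorname{Dom}(B_m^X)$ in Notation~\ref{not:alexandrov-sphere-data}.

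\textbf{Step 1: local reduction.} Let $s\in\ker B_m^X$. By definition $s\in H_m^X\cap L^1_{\mathrm{loc}}(L_m)$, and $\bar\partial_m s$ is represented by a measurable $(0,1)$-form $\eta$ with $\sqrt2\,\mathcal I_{m,X}\eta=B_m^Xs=0$ in $H_{m+1}^X$. Since $\mathcal I_{m,X}=e^{-2u}\mathcal I_{m,0}$ is a fiberwise isomorphism almost everywhere, $\eta=0$ almost everywhere, so $\bar\partial_m s=0$ in $\mathcal D'$. In a holomorphic chart $z$ write $s=f(\partial_z)^{\otimes m}$. Then $f\in L^1_{\mathrm{loc}}(dx\,dy)$ and $\partial_{\bar z}f=0$ distributionally.

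\textbf{Step 2: regularity across the whole sphere.} Weyl's lemma (hypoellipticity of $\partial_{\bar z}$, e.g.\ via $\Delta f=4\partial_z\partial_{\bar z}f=0$) gives that $f$ agrees almost everywhere with a holomorphic function on the chart. This is the step I expect to need the most care. The danger is that $s$ might fail to extend holomorphically across curvature atoms, where the weights $e^{2(m+1)U}$ can degenerate. The definition of $\operatorname{Dom}(B_m^X)$ avoids this: it requires $s\in L^1_{\mathrm{loc}}$ on all of $\CP^1$, and it requires the distributional identity against all smooth test sections, including those supported near atoms. Hence no removable-singularity argument is needed, and the chartwise holomorphic representatives patch to a global section $\tilde s\in H^0(\CP^1,L_m)$. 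I would double-check that the transition law $(\partial_z)^{\otimes m}=(dw/dz)^m(\partial_w)^{\otimes m}$ preserves both holomorphy and $L^1_{\mathrm{loc}}$, so the identification is intrinsic.

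\textbf{Step 3: injectivity and the dimension count.} The map $s\mapsto\tilde s$ is linear. It is injective on the equivalence classes of $H_m^X$ for the following reason. By Lemma~\ref{lem:u-has-lower-bound}, $u\geq-C$, so $dA_X=e^{2u}dA_0\geq e^{-2C}dA_0$. Thus $dA_X$-null sets are $dA_0$-null. Conversely $dA_X$ is absolutely continuous with respect to $dA_0$, so the two measures have the same null sets. Two holomorphic sections that agree $dA_0$-almost everywhere coincide. Therefore $\ker B_m^X$ embeds linearly into $H^0(\CP^1,K_{\CP^1}^{-m})$. By \eqref{eq:h0-Lm-2m1} this space has dimension $2m+1$, which gives $\dim_{\C}\ker B_m^X\leq2m+1$. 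Equality need not hold, since a holomorphic section may fail to lie in $H_m^X$; that is exactly what the atomic analysis later exploits.
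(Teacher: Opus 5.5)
Your proof is correct and follows the paper's argument essentially step for step: the kernel equation forces \(\eta=0\) almost everywhere, Weyl's lemma makes the local coefficient \(f\) holomorphic, the chartwise representatives patch because holomorphic functions agreeing almost everywhere agree everywhere, and Lemma~\ref{lem:CP1-anticanonical-cohomology} gives \(\dim_{\C}\ker B_m^X\leq 2m+1\). The only difference is cosmetic. The paper uses \(u\geq -C\) to obtain \(H_m^X\hookrightarrow H_m^0\), and hence \(f\in L^2_{\mathrm{loc}}\); you take \(L^1_{\mathrm{loc}}\) directly from the domain definition and spend the lower bound on matching the null sets of \(dA_X\) and \(dA_0\), which in fact already follows from \(u\) being finite almost everywhere.
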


\begin{proof}
Fix \(m\geq0\), and let
\begin{align}
s\in\ker B_m^X.
\nonumber
\end{align}
Thus
\begin{align}
s\in\operatorname{Dom}(B_m^X)\subset H_m^X,
\qquad
B_m^Xs=0.
\nonumber
\end{align}

We first compare the singular \(H_m^X\)-norm with the fixed smooth round
norm.  Since
\begin{align}
g_X=e^{2u}g_0,
\nonumber
\end{align}
the Hermitian metrics induced on
\begin{align}
L_m=K_{\CP^1}^{-m}
=(T^{1,0}\CP^1)^{\otimes m}
\nonumber
\end{align}
satisfy
\begin{align}
h_{m,X}=e^{2mu}h_{m,0},
\qquad
dA_X=e^{2u}dA_0.
\nonumber
\end{align}
Consequently,
\begin{align}
\|s\|_{H_m^X}^2
&=
\int_{\CP^1}|s|_{h_{m,X}}^2\,dA_X
\nonumber\\
&=
\int_{\CP^1}
|s|_{h_{m,0}}^2e^{2(m+1)u}\,dA_0.
\label{eq:atomic-HX-H0-comparison}
\end{align}

Lemma~\ref{lem:u-has-lower-bound} gives
\begin{align}
u\geq-C
\qquad dA_0\text{-a.e. on }\CP^1
\label{eq:atomic-u-lower}
\end{align}
for some finite constant \(C\).  Therefore
\begin{align}
\|s\|_{H_m^0}^2
&=
\int_{\CP^1}|s|_{h_{m,0}}^2\,dA_0
\leq
e^{2(m+1)C}
\int_{\CP^1}
|s|_{h_{m,0}}^2e^{2(m+1)u}\,dA_0
=
e^{2(m+1)C}\|s\|_{H_m^X}^2
<\infty.
\label{eq:atomic-HX-embeds-H0}
\end{align}
In particular $s\in H_m^0.$

Now fix a holomorphic coordinate disc \(D\), write
\begin{align}
g_0=e^{2\psi}|dz|^2,
\qquad
g_X=e^{2U}|dz|^2,
\qquad
U=u+\psi,
\qquad
s=f(\partial_z)^{\otimes m}.
\nonumber
\end{align}
For the round metric,
\begin{align}
|(\partial_z)^{\otimes m}|_{h_{m,0}}^2
=
2^{-m}e^{2m\psi},
\qquad
dA_0=e^{2\psi}\,dx\,dy.
\nonumber
\end{align}
Hence, for every \(D'\Subset D\),
\begin{align}
\int_{D'}|s|_{h_{m,0}}^2\,dA_0
=
2^{-m}
\int_{D'}|f|^2e^{2(m+1)\psi}\,dx\,dy.
\label{eq:atomic-round-local-norm}
\end{align}
Because \(\psi\) is smooth, the function
\(e^{2(m+1)\psi}\) is bounded above and bounded away from zero on
\(\overline{D'}\).  From
\eqref{eq:atomic-HX-embeds-H0} and
\eqref{eq:atomic-round-local-norm}, it follows that
\begin{align}
f\in L^2(D',dx\,dy).
\nonumber
\end{align}
Since \(D'\) has finite Euclidean measure,
\begin{align}
f\in L^1(D',dx\,dy).
\nonumber
\end{align}
As \(D'\Subset D\) was arbitrary,
\begin{align}
f\in L^2_{\mathrm{loc}}(D)
\subset L^1_{\mathrm{loc}}(D).
\label{eq:atomic-f-local-integrability}
\end{align}

We next use the kernel equation.  By the maximal definition of \(B_m^X\),
the distribution \(\bar\partial_m s\) is represented by a measurable
\(L_m\)-valued \((0,1)\)-form \(\eta\), and
\begin{align}
B_m^Xs=\sqrt2\,\mathcal I_{m,X}\eta.
\nonumber
\end{align}
Since \(B_m^Xs=0\) and \(\mathcal I_{m,X}\) is a fiberwise isomorphism
almost everywhere, one has
\begin{align}
\eta=0
\qquad\text{a.e.}
\nonumber
\end{align}
Therefore
\begin{align}
\bar\partial_m s=0
\qquad\text{in }\mathcal D'(D).
\nonumber
\end{align}
Because \((\partial_z)^{\otimes m}\) is a holomorphic local frame, this
equation is exactly
\begin{align}
\partial_{\bar z}f=0
\qquad\text{in }\mathcal D'(D).
\label{eq:atomic-dbar-f-zero}
\end{align}

For completeness, we spell out the regularity argument.  In distributions,
\begin{align}
\Delta_{\mathrm{euc}}f
=
4\partial_z\partial_{\bar z}f
=
0.
\nonumber
\end{align}
Together with
\eqref{eq:atomic-f-local-integrability}, the Weyl lemma for the Euclidean
Laplacian, applied to the real and imaginary parts of \(f\), shows that
\(f\) agrees almost everywhere with a smooth harmonic function.  Equation
\eqref{eq:atomic-dbar-f-zero} then holds pointwise, and hence \(f\) is
holomorphic on \(D\).

On overlaps of holomorphic coordinate charts, the resulting holomorphic
coefficients satisfy the holomorphic transition law of
\(K_{\CP^1}^{-m}\).  Indeed, they already represent the same measurable
section almost everywhere, and two holomorphic functions which agree
almost everywhere agree everywhere.  The local representatives therefore
assemble to a global holomorphic section.  Hence
\begin{align}
\ker B_m^X
\subset
H^0(\CP^1,K_{\CP^1}^{-m}).
\nonumber
\end{align}

Lemma~\ref{lem:CP1-anticanonical-cohomology} gives
$
\dim_{\C}H^0(\CP^1,K_{\CP^1}^{-m})=2m+1,
$
therefore
$
\dim_{\C}\ker B_m^X\leq2m+1.
$
\end{proof}

\begin{lemma}\label{lem:alex-ladder-compactness}
For every \(m\geq0\), the embedding of
\(\operatorname{Dom}(B_m^X)\), equipped with its graph norm, into \(H_m^X\)
is compact.  Consequently \(A_m^X\), \(C_m^X\) have compact resolvent.
\end{lemma}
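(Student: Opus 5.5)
The plan is to reduce compactness of the graph-norm embedding to a weighted local $\bar\partial$-estimate, done locally with a partition of unity. First transport everything to the fixed round spaces through the unitaries $J_{m,X}$, so that $F=e^{(m+1)u}s\in H_m^0$ and, by \eqref{eq:cp-fixed-round-graph-equation}, $D_m(e^{-(m+1)u}F)=e^{-mu}\widehat B_m^XF$. Let $(s_j)$ be bounded in the graph norm. It suffices to extract a subsequence converging in $H_m^X$, i.e. with $e^{(m+1)u}s_j$ convergent in $L^2(dA_0)$.

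The key steps, in order, are as follows.
\begin{enumerate}
\item Use the lower bound $u\geq-C$ (Lemma~\ref{lem:u-has-lower-bound}). In a coordinate disc $D$ with $s_j=f_j(\partial_z)^{\otimes m}$, the weighted bounds \eqref{eq:alex-local-Bm-graph-norm} give $f_j$ and $\partial_{\bar z}f_j$ bounded in $L^2_{\mathrm{loc}}(D)$. For a cutoff $\chi$, the Cauchy transform represents $\chi f_j$ in terms of $\partial_{\bar z}(\chi f_j)$, which is bounded in $L^2$. Hence $\chi f_j$ is bounded in $H^1$, and Rellich gives a subsequence converging in $L^p_{\mathrm{loc}}$ for every $p<\infty$, and a.e.
\item Upgrade this to convergence in the weighted norm $\int|f|^2e^{2(m+1)U}$. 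For this, use the Brezis--Merle type exponential integrability coming from the logarithmic-potential representation $U=$ const $+\int\log|z-w|^{-1}\,d\omega_X$ minus a smooth term. Away from atoms of mass $\geq 2\pi/(2(m+1)q')$, the weight $e^{2(m+1)U}$ lies in $L^{q'}_{\mathrm{loc}}$ for some $q'>1$, as in the density part of Lemma~\ref{lem:alex-Bm-closed-dense}. Hölder with $f_j\to f$ in $L^{2q}$ then gives weighted $L^2$ convergence on such discs.
\item Treat the finitely many heavy points $p$ (every atom is $<2\pi$, and $\omega_X(\CP^1)=4\pi$) by showing uniform smallness of $\int_{B_\varepsilon(p)}|s_j|^2\,dA_X$ as $\varepsilon\to0$. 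Then a diagonal argument together with step (2) on the complement finishes.
\end{enumerate}

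Step (3) is the main obstacle. Near an atom of mass $2\pi\gamma$ one has $e^{2U}\sim|z|^{-2\gamma}$, so $e^{2(m+1)U}$ need not be integrable at all. What I would try first is to use the $\bar\partial$ control with weight $e^{2mU}$ directly. Write $\chi f_j$ via the Cauchy kernel and estimate it in the weighted space $L^2(|z|^{-2(m+1)\gamma})$. This is a weighted Hardy/Cauchy-transform inequality: the data weight $|z|^{-2m\gamma}$ on $\partial_{\bar z}f$ is compared with the weight $|z|^{-2(m+1)\gamma}$ on $f$, the exponent gap being $|z|^{-2\gamma}$ with $\gamma<1$. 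This gives $\int_{B_\varepsilon}|f_j|^2e^{2(m+1)U}\leq C\varepsilon^{2-2\gamma}\cdot(\text{graph norm})^2$ plus a contribution from a finite-dimensional polar part, where $f_j$ may carry a holomorphic component. That polar part is handled separately by compactness of finite-rank terms. Alternatively, and more robustly, I would interpolate: $A_m^X+I\geq$ an operator comparable, via a conformally weighted Caffarelli--Kohn--Nirenberg inequality, to one with compact resolvent. The diffuse part of $\omega_X$ is controlled by the $q'>1$ integrability from step (2).

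Given the compact embedding, $A_m^X$ has compact resolvent, because its form domain is $\operatorname{Dom}(B_m^X)$ with the equivalent graph norm. For $C_m^X$, Proposition~\ref{prop:weak-Bochner-Kodaira} embeds $\operatorname{Dom}(\mathfrak c_m^X)$ continuously into $\operatorname{Dom}(\mathfrak a_{m+1}^X)$, since $\mathfrak a_{m+1}^X\leq\mathfrak c_m^X$. Composing this with the compact embedding at level $m+1$ gives compactness of the $\mathfrak c_m^X$-form domain in $H_{m+1}^X$, hence compact resolvent for $C_m^X$.
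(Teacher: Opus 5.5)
\textbf{Verdict: there is a genuine gap.} It is exactly the step you flagged: Step~(3) carries the whole content of the lemma for $m\geq1$, and it is only sketched.

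\textbf{Where the argument breaks.} Your bound $\int_{B_\varepsilon}|f_j|^2e^{2(m+1)U}\le C\varepsilon^{2-2\gamma}(\text{graph norm})^2$ (plus a polar part) is derived as if $e^{2U}\asymp|z|^{-2\gamma}$ near an atom $p$ with $\omega_X(\{p\})=2\pi\gamma$. Nothing in the paper gives this.
\begin{itemize}
\item One only has $U=\gamma\log|z|^{-1}+\widetilde U$, where $\widetilde U$ is a harmonic function plus the potential of $\omega_X-2\pi\gamma\delta_p$.
\item $\widetilde U$ is bounded below and exponentially integrable on small discs, but it is in general unbounded above, since lighter atoms of $\omega_X$ may accumulate at $p$.
\item Lemma~\ref{lem:atomic-logarithmic-mean} controls only the circular mean $\overline U$, and only from below.
\end{itemize}
With such weights, the Fourier-mode reduction behind a weighted Hardy or Cauchy-transform inequality is unavailable.

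\textbf{What your route would still need.}
\begin{itemize}
\item (a) A genuine power-weight estimate giving \emph{higher} integrability. For instance, a uniform $L^{q}$ bound, $q>1$, for $|f|^2|z|^{-2(m+1)\gamma}$ near $p$ over graph-bounded families, including the finitely many holomorphic monomials the maximal domain permits.
\item (b) A H\"older absorption of $e^{2(m+1)\widetilde U}$, whose $L^{q'}$ norm on $B_\varepsilon(p)$ tends to zero.
\end{itemize}
Neither is carried out. The Caffarelli--Kohn--Nirenberg alternative is not specified. Moreover, comparison with a power-weighted model only yields compactness in the power-weighted $L^2$ norm. That norm is weaker than the $H_m^X$ norm, because $\widetilde U$ is bounded below but not above.

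\textbf{What is correct.} Steps (1)--(2) are correct. With $-\Delta_{\mathrm{euc}}U=\omega_X$, Jensen's inequality only requires $\omega_X(D)<2\pi/((m+1)q')$. So for $m=0$ no heavy points occur and your argument is already complete. Your final deduction for $C_m^X$ from the level-$(m+1)$ embedding via Proposition~\ref{prop:weak-Bochner-Kodaira} is also correct.

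\textbf{The missing idea.} No analysis at the atoms is needed: the ladder itself pushes the essential spectrum to infinity. Set $\alpha_r=r(r+1)$.
\begin{itemize}
\item The paper uses $\dim\ker B_r^X\le 2r+1$ (Lemma~\ref{lem:atomic-kernel-holomorphic}) and $\ker(B_r^X)^*=\{0\}$ (from Proposition~\ref{prop:weak-Bochner-Kodaira}).
\item Let $T_r$ be the restriction of $(B_r^X)^*B_r^X$ to $(\ker B_r^X)^\perp$. Polar decomposition gives $A_r^X=\alpha_rI_{\ker B_r^X}\oplus(T_r+\alpha_r)$ and $C_r^X=U_r(T_r+\alpha_r)U_r^*$ with $U_r$ unitary. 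Hence $\sigma_{\mathrm{ess}}(A_r^X)=\sigma_{\mathrm{ess}}(C_r^X)$.
\item The max--min formula makes $\inf\sigma_{\mathrm{ess}}$ monotone under the form order $C_r^X\geq A_{r+1}^X$, and $A_N^X\geq\alpha_N$.
\item Iterating yields $\inf\sigma_{\mathrm{ess}}(A_m^X)\geq N(N+1)$ for every $N>m$. Hence $A_m^X$ and $C_m^X$ have compact resolvent.
\item Compactness of $(A_m^X+I)^{-1/2}$ then gives the compact graph-norm embedding.
\end{itemize}
This uses only the form order you already invoked for $C_m^X$, applied at every level above $m$ rather than only at $m+1$.
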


\begin{proof}
\textbf{Step (1)}. Put
\begin{align}
\alpha_r:=r(r+1), \qquad r\geq0. \nonumber
\end{align}
We prove first that the essential spectrum of every operator is empty.

Lemma \ref{lem:atomic-kernel-holomorphic} gives
\begin{align}
\ker B_r^X
&\subset H^0(\CP^1,K_{\CP^1}^{-r}),
&
\dim_{\C}\ker B_r^X
&\leq2r+1.
\label{eq:alex-compactness-finite-kernel}
\end{align}

Second,
\begin{align}
\ker(B_r^X)^*=\{0\}.
\label{eq:alex-compactness-adjoint-kernel}
\end{align}
Indeed, if $t\in\ker(B_r^X)^*$, then
$t\in\operatorname{Dom}(\mathfrak c_r^X)$.  By
Proposition~\ref{prop:weak-Bochner-Kodaira},
$t\in\operatorname{Dom}(\mathfrak a_{r+1}^X)$ and
\begin{align}
\alpha_{r+1}\|t\|_{H_{r+1}^X}^2
&\leq \mathfrak a_{r+1}^X(t,t)
\leq \mathfrak c_r^X(t,t)
=\alpha_r\|t\|_{H_{r+1}^X}^2.
\nonumber
\end{align}
Since $\alpha_{r+1}-\alpha_r=2(r+1)>0$, one has $t=0$.

We next compare the essential spectra of the two partners at one
level.  Let
\begin{align}
B_r^X=U_r|B_r^X|
\nonumber
\end{align}
be the polar decomposition.  By
\eqref{eq:alex-compactness-adjoint-kernel},
\begin{align}
\overline{\operatorname{Ran}B_r^X}
=
\bigl(\ker(B_r^X)^*\bigr)^\perp
=
H_{r+1}^X.
\nonumber
\end{align}
Hence the partial isometry $U_r$ restricts to a unitary map
\begin{align}
U_r:(\ker B_r^X)^\perp\longrightarrow H_{r+1}^X.
\nonumber
\end{align}
Set
\begin{align}
T_r
:=
(B_r^X)^*B_r^X\big|_{(\ker B_r^X)^\perp}.
\nonumber
\end{align}
The polar-decomposition functional calculus gives
\begin{align}
B_r^X(B_r^X)^*=U_rT_rU_r^*.
\nonumber
\end{align}
Moreover, $(\ker B_r^X)^\perp$ is a reducing subspace for
$(B_r^X)^*B_r^X$, and therefore
\begin{align}
A_r^X
&=
\alpha_r I_{\ker B_r^X}
\oplus(T_r+\alpha_r I),
\nonumber\\
C_r^X
&=
U_r(T_r+\alpha_r I)U_r^*.
\nonumber
\end{align}
The first summand of $A_r^X$ acts on the finite-dimensional space
$\ker B_r^X$ by \eqref{eq:alex-compactness-finite-kernel}.  A
finite-dimensional direct summand has empty essential spectrum, while
unitary equivalence preserves essential spectrum.  Hence
\begin{align}
\sigma_{\mathrm{ess}}(A_r^X)
=
\sigma_{\mathrm{ess}}(C_r^X).
\label{eq:alex-partner-essential-spectrum}
\end{align}

For a self-adjoint operator $T$ bounded from below on a Hilbert space
$H$, define
\begin{align}
\Lambda_{\mathrm{ess}}(T)
:=
\begin{cases}
\inf\sigma_{\mathrm{ess}}(T),
&\sigma_{\mathrm{ess}}(T)\neq\varnothing,\\
+\infty,
&\sigma_{\mathrm{ess}}(T)=\varnothing.
\end{cases}
\nonumber
\end{align}
If $\mathfrak q_T$ is the closed form of $T$, the max--min principle
and its form-domain version imply
\begin{align}
\Lambda_{\mathrm{ess}}(T)
=
\sup_{\substack{L\subset H\\ \dim L<\infty}}
\ 
\inf_{\substack{
v\in\operatorname{Dom}(\mathfrak q_T)\cap L^\perp\\
\|v\|_H=1}}
\mathfrak q_T(v,v).
\label{eq:alex-bottom-essential-maxmin}
\end{align}
Indeed, for each fixed value of $\dim L$, the inner variational
quantity is the corresponding max--min value; taking the supremum over
all finite dimensions gives the bottom of the essential spectrum.
See
\cite[Theorem~8.1.1 and Proposition~8.1.3,
pp.~125--127]{NonnenmacherSpectralTheory2023}.

It follows in particular that quadratic-form order is monotone at the
bottom of the essential spectrum: if $S\geq T$ in the form sense, then
\begin{align}
\Lambda_{\mathrm{ess}}(S)
\geq
\Lambda_{\mathrm{ess}}(T).
\label{eq:alex-essential-form-monotonicity}
\end{align}
For completeness, fix a finite-dimensional $L\subset H$.  The form
order means
\begin{align}
\operatorname{Dom}(\mathfrak q_S)
&\subset\operatorname{Dom}(\mathfrak q_T),
&
\mathfrak q_S(v,v)
&\geq\mathfrak q_T(v,v)
\quad
(v\in\operatorname{Dom}(\mathfrak q_S)).
\nonumber
\end{align}

Consequently,
\begin{align}
&\inf_{\substack{
v\in\operatorname{Dom}(\mathfrak q_S)\cap L^\perp\\
\|v\|=1}}
\mathfrak q_S(v,v)\geq
\inf_{\substack{
v\in\operatorname{Dom}(\mathfrak q_S)\cap L^\perp\\
\|v\|=1}}
\mathfrak q_T(v,v)
\geq
\inf_{\substack{
v\in\operatorname{Dom}(\mathfrak q_T)\cap L^\perp\\
\|v\|=1}}
\mathfrak q_T(v,v).
\nonumber
\end{align}
Taking the supremum over $L$ proves
\eqref{eq:alex-essential-form-monotonicity}.

Apply \eqref{eq:alex-essential-form-monotonicity} to the form order
\begin{align}
C_r^X\geq A_{r+1}^X
\nonumber
\end{align}
from Proposition~\ref{prop:weak-Bochner-Kodaira}, and then use
\eqref{eq:alex-partner-essential-spectrum}.  This yields
\begin{align}
\Lambda_{\mathrm{ess}}(A_r^X)
=
\Lambda_{\mathrm{ess}}(C_r^X)
\geq
\Lambda_{\mathrm{ess}}(A_{r+1}^X)
\qquad(r\geq0).
\label{eq:alex-essential-ladder-step}
\end{align}
On the other hand,
\begin{align}
\mathfrak a_N^X(s,s)
&=
\|B_N^Xs\|_{H_{N+1}^X}^2
+
\alpha_N\|s\|_{H_N^X}^2
\geq
\alpha_N\|s\|_{H_N^X}^2,
\qquad
s\in\operatorname{Dom}(\mathfrak a_N^X).
\nonumber
\end{align}
Thus $A_N^X\geq\alpha_N I$ as a self-adjoint operator, so
\[
\sigma(A_N^X)\subset[\alpha_N,+\infty)
\]
and therefore
\begin{align}
\Lambda_{\mathrm{ess}}(A_N^X)
\geq
\alpha_N
=
N(N+1).
\nonumber
\end{align}
Iterating \eqref{eq:alex-essential-ladder-step}, for every $N>m$ we
obtain
\begin{align}
\Lambda_{\mathrm{ess}}(A_m^X)
&\geq
\Lambda_{\mathrm{ess}}(A_{m+1}^X)
\geq\cdots\geq
\Lambda_{\mathrm{ess}}(A_N^X)
\nonumber\\
&\geq N(N+1).
\nonumber
\end{align}
Since this holds for every $N>m$, it follows that
\begin{align}
\Lambda_{\mathrm{ess}}(A_m^X)=+\infty,
\qquad
\sigma_{\mathrm{ess}}(A_m^X)=\varnothing.
\nonumber
\end{align}
A self-adjoint operator has empty essential spectrum if and only if it
has compact resolvent; see
\cite[Proposition~7.2.4, p.~117]
{NonnenmacherSpectralTheory2023}.
Thus $A_m^X$ has compact resolvent.  Equation
\eqref{eq:alex-partner-essential-spectrum} also gives
\begin{align}
\sigma_{\mathrm{ess}}(C_m^X)=\varnothing,
\nonumber
\end{align}
so $C_m^X$ has compact resolvent as well.

\textbf{Step (2)}. It remains to prove the compact graph embedding in the statement.  By
the representation theorem for the closed form $\mathfrak a_m^X$,
\begin{align}
\operatorname{Dom}\bigl((A_m^X+I)^{1/2}\bigr)
=
\operatorname{Dom}(\mathfrak a_m^X)
=
\operatorname{Dom}(B_m^X),
\nonumber
\end{align}
and, for $s\in\operatorname{Dom}(B_m^X)$,
\begin{align}
\bigl\|(A_m^X+I)^{1/2}s\bigr\|_{H_m^X}^2
&=
\mathfrak a_m^X(s,s)+\|s\|_{H_m^X}^2
\nonumber\\
&=
\|B_m^Xs\|_{H_{m+1}^X}^2
+
(\alpha_m+1)\|s\|_{H_m^X}^2.
\label{eq:alex-compactness-form-graph-norm}
\end{align}
The norm in \eqref{eq:alex-compactness-form-graph-norm} is equivalent
to the graph norm of $B_m^X$.  Since $A_m^X$ has compact resolvent,
$(A_m^X+I)^{-1}$ is compact.  Its positive square root
$(A_m^X+I)^{-1/2}$ is compact as well, by the spectral calculus.

Now let $(s_j)$ be bounded in the graph norm of $B_m^X$, and set
\begin{align}
f_j:=(A_m^X+I)^{1/2}s_j.
\nonumber
\end{align}
By \eqref{eq:alex-compactness-form-graph-norm}, $(f_j)$ is bounded in
$H_m^X$, and
\begin{align}
s_j=(A_m^X+I)^{-1/2}f_j.
\nonumber
\end{align}
The compactness of $(A_m^X+I)^{-1/2}$ gives a subsequence of $(s_j)$
converging in $H_m^X$.  Hence
\begin{align}
\operatorname{Dom}(B_m^X)\hookrightarrow H_m^X
\nonumber
\end{align}
is compact when $\operatorname{Dom}(B_m^X)$ is equipped with its graph
norm.  This proves all assertions.
\end{proof}

The scalar case has the expected interpretation:
\begin{align}
A_0^X=(B_0^X)^*B_0^X=-\Delta_X. \label{eq:alex-A0-laplacian}
\end{align}
Indeed, the form \(\|B_0^Xf\|^2\) is the conformally invariant Dirichlet
energy \(\int_X|\nabla f|^2dA_X\), which defines the canonical Alexandrov
Laplacian.

\section{A saturated round counting threshold excludes curvature atoms}\label{sec:alexandrov-atomic-obstruction}

\begin{definition}
A finite Radon measure \(\mu\) is atomless if
\(\mu(\{p\})=0\) for every \(p\).
\end{definition}

Since \(dA_X\) is atomless, \(\nu_X\) and \(\omega_X\) have the same atoms.

\begin{lemma}\label{lem:atomic-counting-saturation}
Let \(l\geq1\).  If
\begin{align}
\mathcal N_{-\Delta_X}\bigl(l(l+1)\bigr)=(l+1)^2, \label{eq:alex-saturation-hypothesis}
\end{align}
then
\begin{align}
V_l^X&=\ker B_l^X, &\dim_{\C}V_l^X&=2l+1, \nonumber\\
\mathfrak c_{l-1}^X(s,s) &=\mathfrak a_l^X(s,s) =l(l+1)\|s\|_{H_l^X}^2 &&\text{for every }s\in V_l^X. \nonumber
\end{align}
Moreover \(\dim_{\C}\ker B_m^X=2m+1\) for \(0\leq m\leq l\).
\end{lemma}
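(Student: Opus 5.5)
The plan is to feed the Alexandrov ladder into Proposition~\ref{prop:abstract-ladder-saturation}, with $\rho_m=2m+1$ and $\alpha_m=m(m+1)$, using the Hilbert spaces $H_m=H_m^X$ and the closed operators $B_m=B_m^X$. Everything in the statement is then a specialization of that proposition, so the work is checking its hypotheses one by one.

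First I check the package of Notation~\ref{not:abstract-ladder-package}. Lemma~\ref{lem:alex-Bm-closed-dense} shows that each $B_m^X$ is closed and densely defined. Lemma~\ref{lem:alex-ladder-compactness} gives compact resolvent for both $A_m^X$ and $C_m^X$. The shifts $\alpha_m=m(m+1)$ are strictly increasing, starting from $\alpha_0=0$.

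Second, the consecutive form order \eqref{eq:abstract-ladder-form-order} is required for $m=0,\dots,l-1$. It holds at every level, domain inclusion included, by Proposition~\ref{prop:weak-Bochner-Kodaira}.

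Third, the kernel bound $r_m\le \rho_m=2m+1$ is Lemma~\ref{lem:atomic-kernel-holomorphic}.

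Finally, I must convert the hypothesis \eqref{eq:alex-saturation-hypothesis} into $\mathcal N_{A_0^X}(\alpha_l)=\sum_{m=0}^l(2m+1)=(l+1)^2$. This needs the identification $A_0^X=-\Delta_X$ from \eqref{eq:alex-A0-laplacian}.

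Proposition~\ref{prop:abstract-ladder-saturation} then yields $r_m=2m+1$ for $0\le m\le l$. It also yields $E_l=V_l=\ker B_l$, so $V_l^X=\ker B_l^X$ has dimension $2l+1$. Here $V_l$ in the abstract statement is literally $\ker B_l^X\cap\operatorname{Dom}((B_{l-1}^X)^*)$, matching the definition of $V_l^X$. Finally, \eqref{eq:abstract-ladder-top-form-equality} gives $\mathfrak c^X_{l-1}(s,s)=\mathfrak a^X_l(s,s)=l(l+1)\|s\|^2$ for $s\in V_l^X$.

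The only point I expect to need care is the identification $A_0^X=-\Delta_X$, that is, that the closed form $\|B_0^Xf\|^2$ with the maximal domain $\operatorname{Dom}(B_0^X)$ is the canonical Dirichlet form of $X$. If this is not taken as given by \eqref{eq:alex-A0-laplacian}, I would argue as follows. For $m=0$ the weights in \eqref{eq:alex-local-Bm-graph-norm} reduce to $\int|f|^2e^{2U}+\int|\partial_{\bar z}f|^2$. The $L^2$ part is $L^2(dA_X)$ and the energy part is the conformally invariant Euclidean Dirichlet integral, so the form domain is the maximal $W^{1,2}$-type space. To match this with the Cheeger--Alexandrov energy, I would compare both forms with the smooth approximations $g_\tau$. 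On one side, Lemma~\ref{lem:cp-graph-to-form} gives Mosco convergence of the $\mathfrak a_0^{g_\tau}$ to $\mathfrak a_0^X$. On the other side, the spectral convergence of Lemma~\ref{lem:alex-spectral-convergence} pins down the limit spectrum as that of $-\Delta_X$. Comparing the two, I would conclude that $\lambda_j(A_0^X)=\lambda_j(X)$ for all $j$, which is all the counting argument needs.
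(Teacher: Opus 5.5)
Your proposal is correct and is exactly the paper's proof: Proposition~\ref{prop:abstract-ladder-saturation} is applied with $\rho_m=2m+1$, its hypotheses come from Lemmas~\ref{lem:alex-Bm-closed-dense}, \ref{lem:alex-ladder-compactness}, \ref{lem:atomic-kernel-holomorphic} and Proposition~\ref{prop:weak-Bochner-Kodaira}, and the counting hypothesis is transferred through $A_0^X=-\Delta_X$, which the paper simply takes from \eqref{eq:alex-A0-laplacian}. One caution about your optional fallback for that identification: Mosco convergence alone gives, via recovery sequences, only $\lambda_j(X)=\lim_i\lambda_j(g_{\tau_i})\le\lambda_j(A_0^X)$, whereas transferring the saturation hypothesis needs the reverse inequality $\lambda_j(A_0^X)\le\lambda_j(X)$, and that requires an extra asymptotic-compactness input (no loss of $L^2$ mass for the $g_{\tau_i}$-eigenfunctions in the fixed space $H_0^0$) which your sketch does not supply.
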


\begin{proof}
We apply Proposition~\ref{prop:abstract-ladder-saturation} to the concrete
singular ladder
\begin{align}
H_m=H_m^X,\qquad B_m=B_m^X, \qquad \alpha_m=m(m+1), \qquad \rho_m=2m+1. \nonumber
\end{align}
The operators are closed and densely defined by
Notation~\ref{not:alexandrov-sphere-data}; compact resolvent is
Lemma~\ref{lem:alex-ladder-compactness}; the form-domain inclusion and form
order are Proposition~\ref{prop:weak-Bochner-Kodaira}; and the kernel bounds
\(r_m\leq2m+1\) are Lemma~\ref{lem:atomic-kernel-holomorphic}.  

Since \(A_0^X=-\Delta_X\), hypothesis \eqref{eq:alex-saturation-hypothesis} is exactly \eqref{eq:abstract-ladder-saturation-hypothesis}.  

Therefore \eqref{eq:abstract-ladder-maximal-kernels}, \eqref{eq:abstract-ladder-top-space}, and \eqref{eq:abstract-ladder-top-form-equality} give all stated conclusions.
\end{proof}

\begin{lemma}\label{lem:atomic-logarithmic-mean}
Let \(p\) be an atom of \(\nu_X\), with
\(\nu_X(\{p\})=2\pi\gamma\), \(0<\gamma<1\).  In a holomorphic coordinate
\(z\) centered at \(p\), write \(g_X=e^{2U}|dz|^2\) and
\begin{align}
\overline U(r):=\frac1{2\pi}\int_0^{2\pi}U(re^{i\theta})\,d\theta. \nonumber
\end{align}
Then there is $C, r_{p, \epsilon}> 0$ such that 
\begin{align}
e^{\overline{U}(r)}\geq C\cdot (r_{p, \epsilon})^{\gamma- \epsilon}\cdot r^{-\gamma+ \epsilon}, \quad \quad \forall r\in (0, r_{p, \epsilon}). \nonumber 
\end{align}
\end{lemma}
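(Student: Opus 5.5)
The plan is to compute the circular mean $\overline U(r)$ exactly from a Riesz-type representation of $U$ on a small disc. The lower bound then follows from positivity of $\omega_X$ and the size of the atom at $p$; the $\epsilon$-loss is harmless slack.

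\textbf{Step 1: representation of $U$.} Fix a closed coordinate disc $\overline{D_{R}}$ around $p$ with $R$ small. In the coordinate $z$ we have $\omega_X=-\Delta_{\mathrm{euc}}U$ as a Radon measure, and by \eqref{eq:cp-curv-lower-measure} $\omega_X\geq dA_X\geq0$. Let $\mu:=\omega_X|_{D_R}$, a finite positive measure. Then
\[
U(z)=-\frac1{2\pi}\int_{D_R}\log|z-w|\,d\mu(w)+h(z),
\]
where $h$ is harmonic on $D_R$. This holds as distributions, and hence pointwise a.e. for the canonical superharmonic representative of $U$ that the paper already uses in Lemma~\ref{lem:u-has-lower-bound}.

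\textbf{Step 2: circle means.} I use the classical identity
\[
\frac1{2\pi}\int_0^{2\pi}\log|re^{i\theta}-w|\,d\theta=\log\max(r,|w|).
\]
Together with the mean value property of $h$ and Fubini (justified because the integrand is bounded below on compact sets), it gives, for $0<r<R$,
\[
\overline U(r)=h(0)-\frac1{2\pi}\int_{D_R}\log\max(r,|w|)\,d\mu(w).
\]
So $\overline U$ is finite for $r>0$ and locally absolutely continuous on $(0,R)$. Differentiating under the integral, which is legitimate since $r\mapsto\log\max(r,|w|)$ is Lipschitz on $[r_1,R)$ for each $r_1>0$, gives
\[
r\,\overline U{}'(r)=-\frac{\mu(\{|w|<r\})}{2\pi}\qquad\text{for a.e. }r.
\]
The exceptional $r$ are the at most countably many radii whose circles carry mass.

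\textbf{Step 3: use the atom.} Since $dA_X$ is atomless, $\omega_X(\{p\})=\nu_X(\{p\})=2\pi\gamma$. Positivity of $\mu$ then gives $\mu(\{|w|<r\})\geq2\pi\gamma$ for every $r>0$. Hence $r\,\overline U{}'(r)\leq-\gamma\leq-(\gamma-\epsilon)$ a.e. Set $r_{p,\epsilon}:=R/2$ and integrate from $r$ to $r_{p,\epsilon}$:
\[
\overline U(r)\geq\overline U(r_{p,\epsilon})+(\gamma-\epsilon)\log\frac{r_{p,\epsilon}}{r}.
\]
Exponentiating yields the claim with $C:=e^{\overline U(r_{p,\epsilon})}$, which is a finite positive constant by Step 2.

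\textbf{Main obstacle.} The only delicate point is Step 1–2: identifying $U$ pointwise with the logarithmic potential plus a harmonic function, and justifying the Fubini exchange. Near the atom $U$ is unbounded, so these need care. I would handle this by working with the superharmonic representative, which is what the Green representation in Lemma~\ref{lem:u-has-lower-bound} provides, and by noting that the circle mean is insensitive to the choice of a.e. representative for all but a null set of radii. An alternative is to mollify $\mu$, derive the derivative formula for the smooth potentials, and pass to the limit using monotone convergence of $\log\max(r,|w|)$.
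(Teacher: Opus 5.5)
Your proposal is correct and follows essentially the same route as the paper: both use the circular-mean flux identity $-2\pi r\,\overline U{}'(r)=\omega_X(B_r(p))$, bound the right side from below by the atom, integrate from $r$ to $r_{p,\epsilon}$, and exponentiate. The differences are minor.
\begin{enumerate}
\item You derive the flux identity, and the absolute continuity of $\overline U$ needed to integrate it, from the Riesz decomposition and Jensen's identity $\frac{1}{2\pi}\int_0^{2\pi}\log|re^{i\theta}-w|\,d\theta=\log\max(r,|w|)$. The paper simply cites Gauss--Green.
\item You use positivity of $\omega_X$ to get $\omega_X(B_r(p))\geq 2\pi\gamma$ directly, which shows the $\epsilon$ is not actually needed.
\item You take $C=e^{\overline U(r_{p,\epsilon})}$ rather than invoking Lemma~\ref{lem:u-has-lower-bound}.
\end{enumerate}
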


\begin{proof}
Locally, \(-\Delta_{\mathrm{euc}}U=\omega_X\).  The Gauss--Green formula for
circular means gives, for almost every \(r\),
\begin{align}
-2\pi r\,\overline U'(r)=\omega_X(B_r(p)),\label{equ-of-U'}
\end{align}

From $\displaystyle \lim_{t\rightarrow 0}\omega_X(B_t(p))= 2\pi \gamma$, for any $\epsilon> 0$ there is $r_{p, \epsilon}> 0$ such that 
\begin{align}
\omega_X(B_t(p))\geq 2\pi(\gamma- \epsilon), \quad \quad \forall t\in (0, r_{p, \epsilon}). \label{ineq-omeag-X}
\end{align}

Now from (\ref{equ-of-U'}), for any $r\in (0, r_{p, \epsilon})$, we have
\begin{align}
\overline{U}(r_{p, \epsilon})- \overline{U}(r)= -\frac{1}{2\pi}\int_r^{r_{p,\epsilon}}\frac{\omega_X(B_t(p))}{t}dt. \label{integ-equ}
\end{align}

By (\ref{ineq-omeag-X}) and (\ref{integ-equ}), using Lemma \ref{lem:u-has-lower-bound}, we obtain
\begin{align}
\overline{U}(r)\geq \overline{U}(r_{p, \epsilon})+ \frac{1}{2\pi}\int_r^{r_{p,\epsilon}}\frac{\omega_X(B_t(p))}{t}dt\geq -C+ \ln(\frac{r_{p, \epsilon}}{r})^{\gamma- \epsilon}. \nonumber 
\end{align}
Therefore
\begin{align}
e^{\overline{U}(r)}\geq C\cdot (r_{p, \epsilon})^{\gamma- \epsilon}\cdot r^{-\gamma+ \epsilon}\geq C(p, \epsilon, \gamma)\cdot r^{-\gamma+ \epsilon}. \nonumber 
\end{align}
\end{proof}

\begin{lemma}\label{lem:atomic-adjoint-domain-vanishing}
Let \(p\in X\) be an atom of \(\nu_X\), then
\begin{align}
s(p)= 0, \quad \quad \quad \forall s\in V_l^X,\ l\geq1. \nonumber
\end{align}
\end{lemma}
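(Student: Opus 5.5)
The plan is to work in a holomorphic coordinate \(z\) centred at the atom \(p\), with \(g_X=e^{2U}|dz|^2\). I will turn the condition \(s\in\operatorname{Dom}((B_{l-1}^X)^*)\) into a weighted integrability statement for an explicit first-order expression in the coefficient of \(s\). Then I will show that this expression fails to be integrable unless the coefficient vanishes at \(p\).

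\textbf{Step 1 (local form of the adjoint).} By Lemma~\ref{lem:atomic-kernel-holomorphic}, \(s\in\ker B_l^X\) is a global holomorphic section of \(K_{\CP^1}^{-l}\). Near \(p\) we write \(s=f(\partial_z)^{\otimes l}\) with \(f\) holomorphic, and it suffices to show \(f(0)=0\).

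Take test sections \(t=g(\partial_z)^{\otimes(l-1)}\) with \(g\) smooth and compactly supported in the disc. We may take \(g\) vanishing to high order at \(0\), say \(g=z^N\chi\) with \(N\) large, so that \(t\in\operatorname{Dom}(B_{l-1}^X)\) regardless of the size of the atom. Using \eqref{eq:alex-local-L2-density} and \eqref{eq:alex-ImX-coordinate}, and up to fixed positive constants, \(\langle B_{l-1}^Xt,s\rangle_{H_l^X}=\int \partial_{\bar z}g\,\bar f\,e^{2lU}\,dx\,dy\).

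The adjoint condition says there is \(w=W(\partial_z)^{\otimes(l-1)}\) with \(\int|W|^2e^{2lU}<\infty\) such that this equals \(\int g\,\overline W\,e^{2lU}\). Hence, distributionally on the punctured disc,
\[
W=-e^{-2lU}\partial_z\bigl(f e^{2lU}\bigr)=-\bigl(f'+2l f\,\partial_zU\bigr).
\]
The remaining information, beyond this distributional identity, is the finiteness of \(\int|W|^2e^{2lU}\).

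\textbf{Step 2 (extract the atomic singularity by circular averaging).} Suppose \(f(0)\neq0\). On a small disc, \(f\) is bounded away from zero, so \(|\partial_zU|\leq C(|W|+|f'|)/|f|\). Finiteness of \(\int|W|^2e^{2lU}\) therefore gives \(\int_{B_\rho}|\partial_zU|^2e^{2lU}\,dx\,dy<\infty\), after absorbing the \(f'\) term. Here \(\int_{B_\rho}e^{2lU}\) is finite because \(s\in H_l^X\) and \(f\) is nonvanishing near \(0\).

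I then pass to circles. For radial \(U\) one has \(\partial_zU=\tfrac{\bar z}{2|z|}\partial_rU\). In general I use the identity \(\oint_{|z|=r}\partial_zU\,z\,d\theta=\pi r\,\overline U'(r)\), which is the Gauss--Green formula already used in Lemma~\ref{lem:atomic-logarithmic-mean}. Combined with \eqref{equ-of-U'}, it gives \(|\oint z\,\partial_zU\,d\theta|=\tfrac12\omega_X(B_r(p))\geq\pi(\gamma-\epsilon)\) for \(r<r_{p,\epsilon}\).

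Cauchy--Schwarz on each circle then yields
\[
\pi^2(\gamma-\epsilon)^2\leq r^2\int_0^{2\pi}|\partial_zU|^2e^{2lU}\,d\theta\cdot\int_0^{2\pi}e^{-2lU}\,d\theta .
\]
The factor \(\int e^{-2lU}\,d\theta\) is bounded because \(U\geq-C\) near \(p\) (Lemma~\ref{lem:u-has-lower-bound}). Consequently \(\int_0^{2\pi}|\partial_zU|^2e^{2lU}\,d\theta\geq c\,r^{-2}\), and integrating against \(r\,dr\) gives a divergent \(\int_0 r^{-1}\,dr\). This contradicts Step 1, so \(f(0)=0\).

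If more room is needed, the lower bound \(e^{\overline U}\gtrsim r^{-\gamma+\epsilon}\) from Lemma~\ref{lem:atomic-logarithmic-mean} can be placed on the \(e^{2lU}\) side instead, via Jensen, to strengthen the divergence to \(\int_0 r^{-1-2l(\gamma-\epsilon)}\,dr\). The contradiction only needs the Gauss--Green flux identity and \(u\geq-C\).

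\textbf{Main obstacle.} The delicate point is Step 1. It requires a rigorous justification that the adjoint identity holds with \(W\) given by the displayed formula near \(p\), with no extra distribution supported at \(0\). This is why I restrict to test functions vanishing to high order at \(p\). Such tests only determine \(W\) on the punctured disc, but that suffices, since Step 2 uses only \(\int_{B_\rho\setminus\{0\}}|W|^2e^{2lU}<\infty\).

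One must also check that \(\partial_zU\in L^1_{\mathrm{loc}}\), which holds since \(U\) is a logarithmic potential of a finite measure. This is needed so that the product \(f\,\partial_zU\) and the circle identity make sense for almost every \(r\). I would also verify that \(t=z^N\chi(\partial_z)^{\otimes(l-1)}\) lies in \(\operatorname{Dom}(B_{l-1}^X)\) using the local graph norm \eqref{eq:alex-local-Bm-graph-norm}. Near \(p\), \(e^{2lU}\lesssim|z|^{-2l\gamma'}\) in an averaged sense, and this is integrable against \(|z|^{2N}\).
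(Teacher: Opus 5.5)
Your argument is correct, but it follows a different route from the paper's.

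\textbf{The paper's route.} The paper does not assume \(f(0)\neq0\). It writes \(f=z^ka\) and proves the quantitative bound \(k\geq l\gamma\) by studying the weighted circle mass \(I(r)=\int_0^{2\pi}|f|^2e^{2lU}\,d\theta\). Jensen's inequality and Lemma~\ref{lem:atomic-logarithmic-mean} give \(I(r)\gtrsim r^{2k-2l\gamma+2l\varepsilon}\). On the other side, \(D_lf:=e^{-2lU}\partial_z(e^{2lU}f)\in L^2(e^{2lU})\) gives \(\int_T^\infty|J'|^2\,dt<\infty\) for \(J(t)=I(e^{-t})^{1/2}\). Hence \(J(t)\lesssim1+t^{1/2}\), which is incompatible with the exponential lower bound. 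The paper only ever differentiates the product \(e^{2lU}f\) and never isolates \(\partial_zU\).

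\textbf{Your route.} You divide by the nonvanishing \(f\). This turns the adjoint-domain condition into finiteness of the weighted Dirichlet energy \(\int_{B_\rho}|\partial_zU|^2e^{2lU}\). The flux identity \(|\oint z\,\partial_zU\,d\theta|=\tfrac12\omega_X(B_r(p))\geq\pi\gamma\), together with \(U\geq-C\), rules this out. The argument is shorter and avoids the Jensen lower bound. It yields only \(\operatorname{ord}_p(s)\geq1\) rather than \(\geq l\gamma\), but that is enough for the lemma and for Proposition~\ref{prop:atomic-alexandrov-counting-obstruction}.

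\textbf{A step you should add.} The identity \(e^{-2lU}\partial_z(fe^{2lU})=f'+2lf\,\partial_zU\) is a chain rule for \(e^{2lU}\), and it is not automatic for a logarithmic potential. It is easy to supply:
\begin{itemize}
\item Since \(f\) is smooth and nonvanishing on \(B_\rho\), the adjoint identity gives \(\partial_z(e^{2lU})=-e^{2lU}(W+f')/f\in L^1_{\mathrm{loc}}\), so \(e^{2lU}\in W^{1,1}_{\mathrm{loc}}\).
\item Because \(e^{2lU}\geq e^{-2lC}\), the chain rule for a \(C^1\) Lipschitz extension of \(\log\) from \([e^{-2lC},\infty)\) identifies this derivative with \(2le^{2lU}\partial_zU\).
\end{itemize}

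\textbf{An unnecessary restriction.} Under your hypothesis \(f(0)\neq0\), you already have \(\int_{B_\rho}e^{2(l+1)U}<\infty\). So every \(g\in C_c^\infty(B_\rho)\) gives an element \(g(\partial_z)^{\otimes(l-1)}\in\operatorname{Dom}(B_{l-1}^X)\), and you do not need to restrict to test functions vanishing to high order at \(p\).
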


\begin{proof}
First, write
\begin{align}
\nu_X(\{p\})=2\pi\gamma, \qquad 0<\gamma<1. \nonumber
\end{align}

By Lemma~\ref{lem:atomic-kernel-holomorphic}, \(s\) is a holomorphic section
of \(K_{\CP^1}^{-l}\).  Choose a local holomorphic coordinate \(z\) centered
at \(p\), write \(g_X=e^{2U}|dz|^2\), and write
\begin{align}
s=f(z)(\partial_z)^{\otimes l}, \qquad f(z)=z^k a(z), \qquad a(0)\neq0. \nonumber
\end{align}
We prove that \(k\geq l\gamma>0\).  Since \(k\) is an integer, the conclusion
follows.

The target Hilbert space of \((B_{l-1}^X)^*\) is \(H_{l-1}^X\).  In the
coordinate \(z\), its local weight is, up to a smooth positive factor,
\begin{align}
e^{2l U}\,dx\,dy. \nonumber
\end{align}
The formal adjoint coefficient is
\begin{align}
D_l f := \partial_z f+2l U_z f = e^{-2l U}\partial_z(e^{2l U}f), \label{eq:atomic-Dell-definition}
\end{align}
where the last expression is understood distributionally.  Since
\(s\in\operatorname{Dom}((B_{l-1}^X)^*)\), the distribution \(D_l f\) is
represented by an \(H_{l-1}^X\)-coefficient.

Thus, on a sufficiently small coordinate disc,
\begin{align}
\int |D_l f|^2 e^{2l U}\,dx\,dy<\infty. \label{eq:atomic-Dell-L2}
\end{align}
Moreover, the local norm condition for \(s\in H_l^X\) gives local
integrability of \(|f|^2e^{2l U}\).  Indeed, the \(H_l^X\)-weight is,
up to a fixed smooth positive factor, \(e^{2(l+1)U}\,dx\,dy\), and \(U\) is
locally bounded from below.

For \(0<r\ll1\), set
\begin{align}
I(r):= \int_0^{2\pi} |f(re^{i\theta})|^2 e^{2l U(re^{i\theta})}\,d\theta . \nonumber
\end{align}
We first derive a lower bound for \(I(r)\) from the atom.  Since
\(a(0)\neq0\), after shrinking the coordinate disc there is a constant
\(c_0>0\) such that
\begin{align}
|f(re^{i\theta})|^2\geq c_0 r^{2k}. \nonumber
\end{align}

By Jensen's inequality and Lemma~\ref{lem:atomic-logarithmic-mean}, for every
\(\varepsilon>0\) and all sufficiently small \(r\),
\begin{align}
I(r)&\geq c_0 r^{2k}\int_0^{2\pi}e^{2lU(re^{i\theta})}\,d\theta \geq c_1 r^{2k}\exp\bigl(2l\overline U(r)\bigr) \geq c_2 r^{2k-2l\gamma+2l\varepsilon}. \nonumber
\end{align}
Equivalently, with \(r=e^{-t}\),
\begin{align}
I(e^{-t})^{1/2} \geq c_3\exp\bigl((l\gamma-k-l\varepsilon)t\bigr) \qquad \text{for }t\gg1. \label{eq:atomic-J-exponential-lower}
\end{align}

Put $J(t):=I(e^{-t})^{1/2}. $ For a.e. \(r\), the distributional identity
\eqref{eq:atomic-Dell-definition} gives
\begin{align}
\frac{d}{dr}I(r) = 2\Re\int_0^{2\pi} e^{i\theta}D_l f(re^{i\theta}) \overline{f(re^{i\theta})} e^{2l U(re^{i\theta})}\,d\theta. \label{eq:atomic-I-derivative}
\end{align}

Here is the precise meaning of this identity.  Put
\(Q:=|f|^2e^{2l U}\).  Since
\eqref{eq:atomic-Dell-definition} says distributionally that
\(e^{2l U}D_l f=\partial_z(e^{2l U}f)\), and since \(f\) is
holomorphic, one has
\begin{align}
\partial_z Q=e^{2l U}D_l f\,\overline f \nonumber
\end{align}
in the sense of distributions on punctured coordinate annuli.  Hence, using
\(\partial_r=e^{i\theta}\partial_z+e^{-i\theta}\partial_{\bar z}\),
\begin{align}
\partial_r Q =2\Re\bigl(e^{i\theta}D_l f\,\overline f\,e^{2l U}\bigr) \nonumber
\end{align}
distributionally on each such annulus.  The two local \(L^2\)-bounds above imply,
by Cauchy--Schwarz, that the right-hand side is locally integrable.  Therefore
the standard slicing theorem for weak derivatives gives that \(I(r)\) is locally
absolutely continuous in \(r\) and that \eqref{eq:atomic-I-derivative} holds for
a.e. \(r\).  


By Cauchy's inequality,
\begin{align}
|I'(r)| \leq 2 \left( \int_0^{2\pi}|D_l f(re^{i\theta})|^2e^{2l U(re^{i\theta})} \,d\theta \right)^{1/2} I(r)^{1/2}. \nonumber
\end{align}
Consequently, for a.e. \(t\) with \(J(t)>0\),
\begin{align}
|J'(t)|^2\leq e^{-2t}\int_0^{2\pi}|D_l f(e^{-t}e^{i\theta})|^2e^{2lU(e^{-t}e^{i\theta})}\,d\theta. \label{eq:atomic-Jprime-bound}
\end{align}

Integrating
\eqref{eq:atomic-Jprime-bound} and using \eqref{eq:atomic-Dell-L2}, we obtain
\begin{align}
\int_T^\infty |J'(t)|^2\,dt<\infty \nonumber
\end{align}
for every sufficiently large \(T\).  Choose \(T\) such that \(J(T)<\infty\).
Then, for \(t\geq T\),
\begin{align}
J(t) \leq J(T) + \left(\int_T^t |J'(\tau)|^2\,d\tau\right)^{1/2} (t-T)^{1/2} \leq C(1+t^{1/2}). \label{eq:atomic-J-polynomial-upper}
\end{align}
Thus the adjoint-domain condition implies at most polynomial growth of
\(J(t)\) as \(t\to\infty\).

Suppose now that \(k<l\gamma\).  Choose \(\varepsilon>0\) so small that
\begin{align}
l\gamma-k-l\varepsilon>0. \nonumber
\end{align}
Then \eqref{eq:atomic-J-exponential-lower} says that \(J(t)\) grows at least
exponentially, while \eqref{eq:atomic-J-polynomial-upper} says that it grows at
most like \(t^{1/2}\).

This contradiction proves \(k\geq l\gamma>0\).  So \(s(p)=0\).
\end{proof}

\begin{prop}\label{prop:atomic-alexandrov-counting-obstruction}
If
\begin{align}
\mathcal N_{-\Delta_X}\bigl(l(l+1)\bigr)=(l+1)^2 \nonumber
\end{align}
for some \(l\geq1\), then \(\nu_X\) is atomless.
\end{prop}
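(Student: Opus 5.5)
We argue by contradiction, combining the saturation lemma with the vanishing lemma. Assume that $\mathcal N_{-\Delta_X}\bigl(l(l+1)\bigr)=(l+1)^2$ for some $l\geq1$, and suppose $\nu_X$ has an atom at some $p\in X$. Since $dA_X$ is atomless, $p$ is also an atom of $\omega_X$. By \eqref{eq:alex-angle-defect}, the mass is $\nu_X(\{p\})=2\pi\gamma$ with $0<\gamma<1$, so the hypotheses of Lemma~\ref{lem:atomic-adjoint-domain-vanishing} are met at $p$.

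\textbf{Step (1).} Apply Lemma~\ref{lem:atomic-counting-saturation} at level $l$. It gives $V_l^X=\ker B_l^X$ and $\dim_{\C}V_l^X=2l+1$.

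\textbf{Step (2).} Use Lemma~\ref{lem:atomic-kernel-holomorphic}, which gives $\ker B_l^X\subset H^0(\CP^1,K_{\CP^1}^{-l})$. Every $s\in V_l^X$ is therefore a global holomorphic section of $L_l$, and evaluation at $p$ makes sense.

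\textbf{Step (3).} By Lemma~\ref{lem:atomic-adjoint-domain-vanishing}, every $s\in V_l^X$ satisfies $s(p)=0$. Hence
\[
V_l^X\subset\{s\in H^0(\CP^1,L_l):s(p)=0\}.
\]
By \eqref{eq:CP1-evaluation-kernel-dim} in Lemma~\ref{lem:CP1-anticanonical-cohomology}, the right-hand space has dimension $2l$. So $\dim_{\C}V_l^X\leq 2l$, which contradicts $\dim_{\C}V_l^X=2l+1$ from Step (1). Therefore $\nu_X$ has no atoms.

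The only point needing care is the identification in Step (2): the pointwise value $s(p)$ must refer to the holomorphic representative, and $V_l^X$ must embed linearly and injectively into $H^0(\CP^1,L_l)$. Both follow from the proof of Lemma~\ref{lem:atomic-kernel-holomorphic}, because two holomorphic representatives that agree almost everywhere agree everywhere. All remaining analysis has already been done in the cited lemmas.
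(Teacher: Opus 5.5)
Your proposal is correct and is essentially the paper's own proof: Lemma~\ref{lem:atomic-counting-saturation} gives $\dim_{\C}V_l^X=2l+1$. Lemmas~\ref{lem:atomic-adjoint-domain-vanishing} and \ref{lem:atomic-kernel-holomorphic} then place $V_l^X$ inside the holomorphic sections of $K_{\CP^1}^{-l}$ vanishing at the atom, and the evaluation statement of Lemma~\ref{lem:CP1-anticanonical-cohomology} bounds that space by $2l$, giving the contradiction; your remark on the injectivity of passing to holomorphic representatives is a harmless clarification that the paper leaves implicit.
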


\begin{proof}
By Lemma~\ref{lem:atomic-counting-saturation},
\(\dim_{\C}V_l^X=2l+1\).  If \(p\) were an atom of \(\nu_X\),
Lemma~\ref{lem:atomic-adjoint-domain-vanishing} would imply that every
section in \(V_l^X\) vanishes at \(p\).  Lemma~\ref{lem:atomic-kernel-holomorphic}
then gives
\begin{align}
V_l^X\subset \{s\in H^0(\CP^1,K_{\CP^1}^{-l}):s(p)=0\}. \nonumber
\end{align}
By the evaluation statement in
Lemma~\ref{lem:CP1-anticanonical-cohomology}, the space on the right has
complex dimension \(2l\), a contradiction.
\end{proof}

\section{The atomless Bochner--Kodaira defect identity}\label{sec:alexandrov-atomless-defect}

The weak form order of Proposition~\ref{prop:weak-Bochner-Kodaira} retains
only nonnegativity of the limiting defect.  Rigidity needs the defect measure
itself.  Under atomlessness, the missing term is recovered directly from the maximal adjoint domain.

\begin{lemma}\label{lem:no-atomic-residue-lsc}
Assume \(\omega_X\) is atomless.  Let \(l\geq1\) and
\(s\in V_l^X\).  Set \(q:= |s|_{h_{l,X}}^2\), then
\begin{align}
\|(B_{l-1}^X)^*s\|_{H_{l-1}^X}^2 =2l\int_X q\,d\omega_X. \label{eq:no-atomic-residue-identity}
\end{align}
In particular, the integral on the right is finite.
\end{lemma}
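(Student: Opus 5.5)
The plan is to derive \eqref{eq:no-atomic-residue-identity} from an exact pointwise Bochner-type identity for holomorphic sections. This identity is then integrated over \(\CP^1\), and the step requiring care is showing that the total-divergence term contributes nothing.

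\textbf{Step 1: the local identity.} By Lemma~\ref{lem:atomic-kernel-holomorphic}, \(s\) is a holomorphic section of \(K_{\CP^1}^{-l}\). In a holomorphic coordinate with \(g_X=e^{2U}|dz|^2\), write \(s=f(\partial_z)^{\otimes l}\) with \(f\) holomorphic, and put \(Q:=|f|^2e^{2lU}\); up to the fixed factor \(2^{-l}\), this is the local expression of \(q\). As in the proof of Lemma~\ref{lem:atomic-adjoint-domain-vanishing}, the adjoint coefficient is
\[
D_lf=e^{-2lU}\partial_z\bigl(e^{2lU}f\bigr),
\]
and \(s\in\operatorname{Dom}((B_{l-1}^X)^*)\) gives \(\int|D_lf|^2e^{2lU}\,dx\,dy<\infty\) locally.

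Formally, holomorphy of \(f\) gives \(\partial_{\bar z}Q=f\,\overline{D_lf}\,e^{2lU}\). Applying \(\partial_z\) and using \(\partial_{\bar z}D_lf=2lU_{z\bar z}f\) yields
\[
|D_lf|^2e^{2lU}=\partial_z\partial_{\bar z}Q-2lU_{z\bar z}Q=\tfrac14\Delta_{\mathrm{euc}}Q+\tfrac{l}{2}\,Q\,\omega_X .
\]
Here I use \(\omega_X=-\Delta_{\mathrm{euc}}U\) and \(U_{z\bar z}=\tfrac14\Delta_{\mathrm{euc}}U\). With the normalizations of \eqref{eq:alex-local-L2-density} and \eqref{eq:alex-ImX-coordinate}, I expect this to read globally as
\[
|(B_{l-1}^X)^*s|^2_{h_{l-1,X}}\,dA_X=c\,\Delta_{\mathrm{conf}}q+2l\,q\,\omega_X
\]
for a conformally invariant Laplacian of the function \(q\). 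As a check, this is exactly the smooth ladder identity of Lemma~\ref{lem:complex-curvature-ladder} applied to \(s\), since \(B_ls=0\) and \(K_g\,dA_g\) corresponds to \(\omega_X\). Integrating over the closed sphere, where the Laplacian term integrates to zero, gives \eqref{eq:no-atomic-residue-identity}.

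\textbf{Step 2: justification by truncation and Poincaré--Lelong.} The formal computation multiplies the measure \(\omega_X\) by the possibly unbounded function \(q\), so it must be justified. I would work with \(\log Q=\log|f|^2+2lU\). By Poincaré--Lelong,
\[
\Delta_{\mathrm{euc}}\log Q=2\pi\sum_{f(z_j)=0}\delta_{z_j}-2l\,\omega_X,
\]
so \(\log Q\) is \(\delta\)-subharmonic with finite Laplacian measure. I then truncate: let \(Q_T:=\min(Q,T)\), or better \(\Phi_T(\log Q)\) for a smooth concave cutoff \(\Phi_T\), and multiply by a cutoff \(\chi_\varepsilon\) vanishing on small discs around the finitely many zeros of \(f\). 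For these truncated objects, the identity of Step 1 holds in the weak sense, with all products meaningful. This is because \(U\) is bounded below (Lemma~\ref{lem:u-has-lower-bound}) and the distributional identity \(\partial_z(e^{2lU}f)=e^{2lU}D_lf\) holds, as in the slicing argument of Lemma~\ref{lem:atomic-adjoint-domain-vanishing}. Letting \(T\to\infty\), monotone convergence handles the \(q\,\omega_X\) term on the left, and dominated convergence handles the \(|D_lf|^2e^{2lU}\) term, which is already integrable. This also shows \(\int q\,d\omega_X<\infty\).

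\textbf{Main obstacle: the boundary fluxes.} The hardest part is showing that the boundary fluxes \(\int_{\partial B_\varepsilon}\partial_rQ\,d\theta\), around the zeros of \(f\) and around any point where \(Q\) blows up, tend to zero along a sequence \(\varepsilon_j\to0\).
\begin{itemize}
\item At points where \(\omega_X\) has no atom, I would use \eqref{eq:atomic-I-derivative}, which gives \(I'(r)=2\Re\int e^{i\theta}D_lf\,\bar f\,e^{2lU}\,d\theta\). Cauchy--Schwarz, together with the finiteness of \(\int|D_lf|^2e^{2lU}\) and of \(\int Q\,dx\,dy\), gives \(\int_0^{r_0}|rI'(r)|\,dr/r<\infty\), hence \(\liminf_{r\to0}r|I'(r)|=0\).
\item Atomlessness of \(\omega_X\) is what allows this. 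By the circular-mean identity \eqref{equ-of-U'}, \(r\overline U'(r)\to0\), so no logarithmic residue \(2\pi\gamma\,Q\) survives in the flux.
\item At zeros of \(f\), \(Q\) is small and the same estimate applies. The delta terms in the Poincaré--Lelong formula are multiplied by \(Q=0\) there, so they contribute nothing.
\end{itemize}
Patching the local identities with a partition of unity, the global Laplacian term vanishes, which yields \eqref{eq:no-atomic-residue-identity}.
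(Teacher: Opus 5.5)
\textbf{Assessment.} Your overall architecture matches the paper's, but the flux step has a genuine gap. Your identity $|(B_{l-1}^X)^*s|^2\,dA_X=\Delta_0q\,dA_0+2l\,q\,\omega_X$ is, via $\Delta_0q=q\,\Delta_0\log q+q^{-1}|\nabla_0q|^2$, equivalent to the two facts the paper actually uses:
\[
|(B_{l-1}^X)^*s|^2\,dA_X=q^{-1}|\nabla_0q|^2\,dA_0,
\qquad
-\Delta_0\log q=2l\,\omega_X-4\pi\sum_p\operatorname{ord}_p(s)\,\delta_p .
\]
For $\log|f|^2$ the Poincar\'e--Lelong coefficient is $4\pi\operatorname{ord}$, not $2\pi$. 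The paper also tests against a height truncation $\phi_R=\min\{q,R\}$ away from small discs around the zeros of $s$.

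\textbf{First problem: what you excise.} You propose to excise discs around every point where $Q$ blows up. Nothing in the hypotheses makes that set finite: it contains the polar set $\{U=+\infty\}$ away from zeros of $f$. So this is not a finite excision. It is also unnecessary, because the height truncation is exactly what neutralizes the blow-up of $q$. Only the finitely many zeros of $s$ need to be removed.

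\textbf{Second problem: the flux bound.} Cauchy--Schwarz with $\int|D_lf|^2e^{2lU}\,dx\,dy<\infty$ and $\int Q\,dx\,dy<\infty$ gives $\int_0^{r_0}r\,|I'(r)|\,dr<\infty$. That yields only $\liminf_{r\to0}r^2|I'(r)|=0$, not $\int_0^{r_0}|I'(r)|\,dr<\infty$. The latter would force $I(r)$ to have a finite limit, and the two finite integrals do not imply that. For example, take $f\equiv1$ and an atomless radial $\omega_X$ with $\omega_X(B_r)\sim c/\log(1/r)$. Then $U\sim\frac{c}{2\pi}\log\log(1/r)$, both integrals are finite when $lc<\pi$, yet $I(r)\to\infty$. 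For the same reason, $r\,\overline U'(r)\to0$ controls only the circular mean of $U$. It does not control $\int Q\,\partial_rU\,d\theta$ when $Q$ is unbounded, so it does not show that the untruncated flux vanishes.

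\textbf{The missing ingredient.} Estimate the \emph{truncated} flux using the Fisher-type energy, which is exactly the adjoint norm. In a holomorphic coordinate centered at a zero $p_\alpha$ of $s$, put
\[
e(r):=\int_{|z|=r}q^{-1}|\nabla_{\mathrm{euc}}q|^2\,ds_{\mathrm{euc}}.
\]
By coarea, $\int_0^\rho e(r)\,dr\le\int_Xq^{-1}|\nabla_0q|^2\,dA_0=\|(B_{l-1}^X)^*s\|^2<\infty$. Since $\phi_R^2/q\le R$,
\[
\Bigl|\int_{|z|=\varepsilon}\phi_R\,\partial_r\log q\,ds_{\mathrm{euc}}\Bigr|
\le\Bigl(\int_{|z|=\varepsilon}\frac{|\partial_rq|^2}{q}\,ds_{\mathrm{euc}}\Bigr)^{1/2}\Bigl(\int_{|z|=\varepsilon}\frac{\phi_R^2}{q}\,ds_{\mathrm{euc}}\Bigr)^{1/2}
\le\bigl(2\pi R\,\varepsilon\,e(\varepsilon)\bigr)^{1/2}.
\]
This tends to $0$ along radii with $\varepsilon\,e(\varepsilon)\to0$, and such radii exist because $e\in L^1(0,\rho)$. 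If you keep your smooth cutoff $\chi_\varepsilon$ instead, the corresponding error is at most $\sqrt R\,\|\nabla\chi_\varepsilon\|_{L^2}\bigl(\int_{\operatorname{supp}\nabla\chi_\varepsilon}q^{-1}|\nabla q|^2\bigr)^{1/2}$, which also tends to $0$.

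\textbf{Where atomlessness enters.} It is not needed for the fluxes. Set $\Omega_\varepsilon:=X\setminus\bigcup_\alpha\overline{D_\varepsilon(p_\alpha)}$. Atomlessness is used to pass from $\int_{\Omega_\varepsilon}\phi_R\,d\omega_X$ to $\int_X\phi_R\,d\omega_X$, via $\omega_X(\overline{D_\varepsilon(p_\alpha)})\to0$ at the zeros of $s$. By Lemma~\ref{lem:atomic-adjoint-domain-vanishing}, these zeros are the only possible locations of atoms anyway. Letting $R\to\infty$ by monotone convergence then gives both the identity and the finiteness of $\int_Xq\,d\omega_X$.
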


\begin{proof}
\textbf{Step (1)}. By Lemma~\ref{lem:atomic-kernel-holomorphic}, \(s\) is holomorphic.  In a
holomorphic coordinate with \(g_X=e^{2U}|dz|^2\), write
\begin{align}
s=f(z)(\partial_z)^{\otimes l}, \qquad q=2^{-l}|f|^2e^{2lU}. \nonumber
\end{align}

We use the conventions
\begin{align}
\partial_z
:=
\frac12(\partial_x-i\partial_y),
\qquad
\partial_{\bar z}
:=
\frac12(\partial_x+i\partial_y).
\nonumber
\end{align}

For $g_X=e^{2U}|dz|^2$, the oriented orthonormal frame and the associated unitary frames are
\begin{align}
e_1&=e^{-U}\partial_x,
&
e_2&=e^{-U}\partial_y,
\nonumber\\
Z_X
&:=
\frac1{\sqrt2}(e_1-i e_2)
=
\sqrt2\,e^{-U}\partial_z,
&
\bar\zeta_X
&:=
\frac1{\sqrt2}(\theta^1-i\theta^2)
=
\frac1{\sqrt2}e^U d\bar z.
\label{eq:adjoint-q-unitary-frames}
\end{align}
Consequently,
\begin{align}
\partial_z
=
2^{-1/2}e^UZ_X,
\qquad
d\bar z
=
\sqrt2\,e^{-U}\bar\zeta_X.
\nonumber
\end{align}
For every \(r\geq0\) and every measurable coefficient \(v\), we therefore
have the exact local norm formula
\begin{align}
\left|
v(\partial_z)^{\otimes r}
\right|_{g_X}^2\,dA_X
=
2^{-r}|v|^2e^{2(r+1)U}\,dx\,dy.
\label{eq:adjoint-q-local-bundle-norm}
\end{align}

We next record the exact coordinate formula for \(B_m^X\).  If
\begin{align}
\phi=a(\partial_z)^{\otimes m},
\nonumber
\end{align}
then
\begin{align}
\bar\partial_m\phi
&=
(\partial_{\bar z}a)\,
d\bar z\otimes(\partial_z)^{\otimes m}
\nonumber\\
&=
2^{(1-m)/2}e^{(m-1)U}
(\partial_{\bar z}a)\,
\bar\zeta_X\otimes Z_X^{\otimes m}.
\nonumber
\end{align}
Since
\begin{align}
B_m^X=\sqrt2\,\mathcal I_{m,X}\bar\partial_m,
\qquad
\mathcal I_{m,X}
\bigl(\bar\zeta_X\otimes Z_X^{\otimes m}\bigr)
=
Z_X^{\otimes(m+1)},
\nonumber
\end{align}
and
\begin{align}
Z_X^{\otimes(m+1)}
=
2^{(m+1)/2}e^{-(m+1)U}
(\partial_z)^{\otimes(m+1)},
\nonumber
\end{align}
it follows that
\begin{align}
B_m^X
\bigl(a(\partial_z)^{\otimes m}\bigr)
=
2\sqrt2\,e^{-2U}
(\partial_{\bar z}a)
(\partial_z)^{\otimes(m+1)}.
\label{eq:adjoint-q-local-B-formula}
\end{align}

Put
\begin{align}
w:=(B_{l-1}^X)^*s,
\qquad
w=b(\partial_z)^{\otimes(l-1)}
\nonumber
\end{align}
on \(D\).  Let \(a\in C_c^\infty(D)\), and set
\begin{align}
\phi:=a(\partial_z)^{\otimes(l-1)}.
\nonumber
\end{align}
Using \eqref{eq:adjoint-q-local-bundle-norm},
\eqref{eq:adjoint-q-local-B-formula}, and the convention that the
Hermitian inner product is linear in its first variable, we obtain
\begin{align}
\left\langle B_{l-1}^X\phi,s\right\rangle_{H_l^X}
&=
2^{3/2-l}
\int_D
(\partial_{\bar z}a)\,
\overline f\,e^{2lU}\,dx\,dy,
\label{eq:adjoint-q-left-pairing}
\\
\left\langle\phi,w\right\rangle_{H_{l-1}^X}
&=
2^{1-l}
\int_D
a\,\overline b\,e^{2lU}\,dx\,dy.
\label{eq:adjoint-q-right-pairing}
\end{align}
The defining relation for the Hilbert adjoint says that
\eqref{eq:adjoint-q-left-pairing} and
\eqref{eq:adjoint-q-right-pairing} are equal for every
\(a\in C_c^\infty(D)\).  Distributional integration by parts therefore gives
\begin{align}
-\sqrt2\,
\partial_{\bar z}\bigl(e^{2lU}\overline f\bigr)
=
e^{2lU}\overline b
\qquad\text{in }\mathcal D'(D).
\nonumber
\end{align}
After complex conjugation,
\begin{align}
\partial_z\bigl(e^{2lU}f\bigr)
=
-2^{-1/2}e^{2lU}b
\qquad\text{in }\mathcal D'(D).
\label{eq:adjoint-q-distributional-adjoint}
\end{align}
Thus the distribution on the left of
\eqref{eq:adjoint-q-distributional-adjoint} is represented by a measurable
function, and we may define
\begin{align}
D_lf
:=
e^{-2lU}\partial_z\bigl(e^{2lU}f\bigr)
=
-2^{-1/2}b.
\label{eq:adjoint-q-Dl-definition}
\end{align}
Equivalently,
\begin{align}
(B_{l-1}^X)^*s
=
-\sqrt2\,D_lf\,(\partial_z)^{\otimes(l-1)}.
\label{eq:adjoint-q-local-adjoint-formula}
\end{align}
When \(U\) is smooth, and distributionally in the present sense,
\begin{align}
D_lf
=
\partial_zf+2lU_zf.
\nonumber
\end{align}

Formula \eqref{eq:adjoint-q-local-bundle-norm}, applied with \(r=l-1\),
now gives
\begin{align}
\left|
(B_{l-1}^X)^*s
\right|_{g_X}^2\,dA_X
&=
2^{1-l}
\left|\sqrt2\,D_lf\right|^2
e^{2lU}\,dx\,dy=
2^{2-l}|D_lf|^2e^{2lU}\,dx\,dy.
\label{eq:adjoint-q-local-adjoint-density}
\end{align}

In particular $D_lf\in L^2_{\mathrm{loc}}
\bigl(D,e^{2lU}dx\,dy\bigr).$

\textbf{Step (2)}. We next compute the weak derivative of \(q\).  By the local boundedness
of the holomorphic function \(f\), and
\eqref{eq:adjoint-q-local-adjoint-density}, all the products in the
following calculation are locally integrable.  

Since
\(\partial_z\overline f=0\), we obtain distributionally
\begin{align}
\partial_zq
&=
2^{-l}
\partial_z\bigl(e^{2lU}f\overline f\bigr)
=
2^{-l}
\partial_z\bigl(e^{2lU}f\bigr)\overline f
=
2^{-l}e^{2lU}D_lf\,\overline f.
\label{eq:adjoint-q-derivative}
\end{align}
The right-hand side belongs to \(L^1_{\mathrm{loc}}\) by
Cauchy--Schwarz.  Since \(q\) is real-valued, this also identifies
\(\partial_{\bar z}q\), and hence
\begin{align}
q\in W^{1,1}_{\mathrm{loc}}(D).
\nonumber
\end{align}

On the set where \(q>0\), formula
\eqref{eq:adjoint-q-derivative} gives
\begin{align}
\frac{4|\partial_zq|^2}{q}
&=
4
\frac{
2^{-2l}e^{4lU}|D_lf|^2|f|^2
}{
2^{-l}e^{2lU}|f|^2
}
=
2^{2-l}|D_lf|^2e^{2lU}.
\label{eq:adjoint-q-coordinate-energy}
\end{align}

Finally, write the fixed round metric in this coordinate as
\begin{align}
g_0=e^{2\psi}|dz|^2.
\nonumber
\end{align}
For the weak gradient of the real-valued function \(q\),
\begin{align}
|\nabla_0q|^2
=
e^{-2\psi}
\left(
|\partial_xq|^2+|\partial_yq|^2
\right)
=
4e^{-2\psi}|\partial_zq|^2,
\nonumber
\end{align}
while
\begin{align}
dA_0=e^{2\psi}\,dx\,dy.
\nonumber
\end{align}

Therefore
\begin{align}
\frac{|\nabla_0q|^2}{q}\,dA_0
&=
\frac{4|\partial_zq|^2}{q}\,dx\,dy
=
2^{2-l}|D_lf|^2e^{2lU}\,dx\,dy
=
\left|
(B_{l-1}^X)^*s
\right|_{g_X}^2\,dA_X.
\label{eq:adjoint-q-density-identity}
\end{align}

If \(s\not\equiv0\), its holomorphic coefficient has only finitely many
zeros.  Thus \(\{q=0\}\) has \(dA_0\)-measure zero.  We define
\(|\nabla_0q|^2/q\) to be zero on this set; this does not change the
integral.  If \(s\equiv0\), both sides below vanish identically.  

Since both sides of \eqref{eq:adjoint-q-density-identity} are globally defined
densities, the local identities agree on coordinate overlaps.  Integrating
over \(X\) gives
\begin{align}
\left\|(B_{l-1}^X)^*s\right\|_{H_{l-1}^X}^2
=
\int_X
\frac{|\nabla_0q|^2}{q}\,dA_0.
\label{eq:adjoint-q-energy}
\end{align}

\textbf{Step (2)}. If \(Z(s)\) is the finite zero divisor of \(s\), then the Poincar\'e--Lelong formula
\cite[p.~388]{GriffithsHarrisPAG}
gives
\begin{align}
\Delta_0\log|f|
=
2\pi\sum_{p\in Z(s)}
\operatorname{ord}_p(s)\,\delta_p
\label{eq:local-PL-f}
\end{align}
as distributions.  Here we use the convention
\(\Delta_0=\operatorname{div}_{g_0}\nabla_0\), so that
\(-\Delta_0\) is nonnegative and, in a Euclidean coordinate,
\(\Delta_{\mathrm{euc}}\log|z|=2\pi\delta_0\).

Since
\begin{align}
\log q=-l\log2+2\log|f|+2lU, \quad \quad -\Delta_0U=\omega_X,\nonumber
\end{align}
we obtain
\begin{align}
-\Delta_0\log q
=
2l\,\omega_X
-
4\pi\sum_{p\in Z(s)}
\operatorname{ord}_p(s)\,\delta_p
\label{eq:PL-q}
\end{align}
as measure-valued distributions.  

Set
\begin{align}
E(q)
:=
\int_X\frac{|\nabla_0q|^2}{q}\,dA_0
<\infty,
\label{eq:q-total-fisher-energy}
\end{align}
where the integrand is defined to be zero on \(\{q=0\}\).

Because \(q\in L^1_{\mathrm{loc}}(dA_0)\) and
\eqref{eq:q-total-fisher-energy} is finite, one has
\begin{align}
\sqrt q\in W^{1,2}_{\mathrm{loc}},
\qquad
|\nabla_0\sqrt q|^2
=
\frac14\frac{|\nabla_0q|^2}{q}
\quad\text{a.e. on }\{q>0\}.
\nonumber
\end{align}

For \(R>0\), put
\begin{align}
\phi_R(x)\vcentcolon= \min\{q(x),R\}, \quad \quad \forall x\in X.
\nonumber
\end{align}
It follows that
\(\phi_R\in W^{1,2}_{\mathrm{loc}}\cap L^\infty\), and the
Sobolev chain rule gives
\begin{align}
\nabla_0\phi_R
=
\mathbf 1_{\{q<R\}}\nabla_0q
\qquad\text{a.e.}
\label{eq:truncation-chain-rule}
\end{align}

Write
\begin{align}
Z(s)=\{p_1,\ldots,p_N\}.
\nonumber
\end{align}
Choose mutually disjoint holomorphic coordinate discs
\(D_\rho(p_\alpha)\), \(1\leq\alpha\leq N\), containing no zeros
other than their centers.  For \(0<\varepsilon<\rho\), set
\begin{align}
\Omega_\varepsilon
:=
X\setminus
\bigcup_{\alpha=1}^N
\overline{D_\varepsilon(p_\alpha)}.
\nonumber
\end{align}

Let \(n_\varepsilon\) be the outward unit normal of
\(\Omega_\varepsilon\).  On an inner boundary circle this normal
points into the deleted disc.  We also write
\(\nu_\alpha\) for the unit normal to
\(\partial D_\varepsilon(p_\alpha)\) pointing away from \(p_\alpha\);
thus
\begin{align}
n_\varepsilon=-\nu_\alpha
\qquad\text{on }\partial D_\varepsilon(p_\alpha).
\nonumber
\end{align}

On \(\Omega_\varepsilon\), the divisor terms in
\eqref{eq:PL-q} are absent, and hence
\begin{align}
-\Delta_0\log q
=
2l\,\omega_X
\qquad\text{on }\Omega_\varepsilon
\label{eq:PL-q-punctured}
\end{align}
in the distributional sense.  

Testing
\eqref{eq:PL-q-punctured} by \(\phi_R\) and using the weak
Green formula gives, for almost every admissible \(\varepsilon\),
\begin{align}
2l\int_{\Omega_\varepsilon}\phi_R\,d\omega_X
&=
\int_{\Omega_\varepsilon}
\left\langle
\nabla_0\log q,\nabla_0\phi_R
\right\rangle
\,dA_0
-
\int_{\partial\Omega_\varepsilon}
\phi_R\,\partial_{n_\varepsilon}\log q\,ds_0
\nonumber\\
&=
\int_{\Omega_\varepsilon\cap\{0<q<R\}}
\frac{|\nabla_0q|^2}{q}\,dA_0
+
\sum_{\alpha=1}^N
\int_{\partial D_\varepsilon(p_\alpha)}
\phi_R\,\partial_{\nu_\alpha}\log q\,ds_0.
\label{eq:punctured-truncation-green}
\end{align}
Indeed, away from \(Z(s)\), $\nabla_0\log q=\frac{\nabla_0q}{q}$, and therefore \eqref{eq:truncation-chain-rule} gives
\begin{align}
\left\langle
\nabla_0\log q,\nabla_0\phi_R
\right\rangle
=
\mathbf 1_{\{0<q<R\}}
\frac{|\nabla_0q|^2}{q}
\qquad\text{a.e.}
\label{eq:truncated-interior-integrand}
\end{align}

It remains to show that the boundary fluxes in
\eqref{eq:punctured-truncation-green} vanish along a sequence
\(\varepsilon_j\downarrow0\).

For almost every \(r\in(0,\rho)\), set
\begin{align}
e_\alpha(r)
:=
\int_{\partial D_r(p_\alpha)}
\frac{|\nabla_{\mathrm{euc}}q|^2}{q}\,
ds_{\mathrm{euc}}.
\nonumber
\end{align}
The coarea formula and
\eqref{eq:q-total-fisher-energy} give
\begin{align}
\int_0^\rho e_\alpha(r)\,dr
=
\int_{D_\rho(p_\alpha)}
\frac{|\nabla_{\mathrm{euc}}q|^2}{q}\,
dx\,dy
<\infty.
\label{eq:circle-energy-integrable}
\end{align}

\textbf{Step (3)}. Put
\begin{align}
e(r):=\sum_{\alpha=1}^N e_\alpha(r).
\nonumber
\end{align}
Then \(e\in L^1(0,\rho)\).

We claim that, for every \(\delta,\eta>0\), the set
\begin{align}
\left\{
r\in (0,\delta):
r e(r)<\eta
\right\}
\label{eq:good-radius-positive-measure-set}
\end{align}
has positive Lebesgue measure.  Otherwise, one would have
\begin{align}
e(r)\geq\frac{\eta}{r}
\qquad\text{for a.e. }r\in(0,\delta),
\nonumber
\end{align}
and hence
\begin{align}
\int_0^\delta e(r)\,dr
\geq
\eta\int_0^\delta\frac{dr}{r}
=
+\infty,
\nonumber
\end{align}
contrary to \(e\in L^1(0,\rho)\).

Set \(\varepsilon_0:=\rho\).  Inductively, having chosen
\(\varepsilon_{j-1}\), put
\begin{align}
\delta_j
:=
\min\left\{
\frac{\rho}{2^j},
\frac{\varepsilon_{j-1}}{2}
\right\}.
\nonumber
\end{align}

By \eqref{eq:good-radius-positive-measure-set}, we may choose $\varepsilon_j\in \cap(0,\delta_j)$
such that $\varepsilon_j e(\varepsilon_j)<\frac1j.$ Then
\begin{align}
\lim_{j\rightarrow\infty}\varepsilon_j e(\varepsilon_j)= 0.
\label{eq:good-radii}
\end{align}

\textbf{Step (4)}. In the holomorphic coordinate \(z\) centered at \(p_\alpha\), write
\[
g_0=e^{2\psi_\alpha}|dz|^2.
\]
Let \(\nu_{\alpha,0}\) and
\(\nu_{\alpha,\mathrm{euc}}\) denote the unit normals to
\(\partial D_r(p_\alpha)\), with respect to \(g_0\) and the Euclidean
metric, respectively, both pointing away from \(p_\alpha\).  Then
\[
\nu_{\alpha,0}
=
e^{-\psi_\alpha}\nu_{\alpha,\mathrm{euc}},
\qquad
ds_0=e^{\psi_\alpha}ds_{\mathrm{euc}},
\]
and hence
\[
\partial_{\nu_{\alpha,0}}f\,ds_0
=
\partial_{\nu_{\alpha,\mathrm{euc}}}f\,
ds_{\mathrm{euc}}.
\]
In particular,
\[
\int_{\partial D_r(p_\alpha)}
\phi_R\,\partial_{\nu_{\alpha,0}}\log q\,ds_0
=
\int_{\partial D_r(p_\alpha)}
\phi_R\,\partial_{\nu_{\alpha,\mathrm{euc}}}\log q\,
ds_{\mathrm{euc}}.
\]

For \(q>0\), since \(\phi_R=\min\{q,R\}\), one has $\frac{\phi_R^2}{q}\leq R$. Consequently, by Cauchy--Schwarz,
\begin{align}
\left|
\int_{\partial D_{\varepsilon_j}(p_\alpha)}
\phi_R\,\partial_{\nu_{\alpha,0}}\log q\,ds_0
\right|
&=
\left|
\int_{\partial D_{\varepsilon_j}(p_\alpha)}
\frac{\phi_R}{q}\,
\partial_{\nu_{\alpha,\mathrm{euc}}}q\,
ds_{\mathrm{euc}}
\right|
\nonumber\\
&\leq
\left(
\int_{\partial D_{\varepsilon_j}(p_\alpha)}
\frac{
|\partial_{\nu_{\alpha,\mathrm{euc}}}q|^2
}{q}\,
ds_{\mathrm{euc}}
\right)^{1/2}
\left(
\int_{\partial D_{\varepsilon_j}(p_\alpha)}
\frac{\phi_R^2}{q}\,
ds_{\mathrm{euc}}
\right)^{1/2}
\nonumber\\
&\leq
\bigl(
2\pi R\varepsilon_j e_\alpha(\varepsilon_j)
\bigr)^{1/2}.
\label{eq:puncture-flux-estimate}
\end{align}

Summing over \(\alpha\) and using Cauchy--Schwarz in
\(\mathbb R^N\),
\begin{align}
\lim_{j\rightarrow\infty}\sum_{\alpha=1}^N
\left|
\int_{\partial D_{\varepsilon_j}(p_\alpha)}
\phi_R\,\partial_{\nu_\alpha}\log q\,ds_0
\right|
\leq
\lim_{j\rightarrow\infty}\bigl(
2\pi NR\varepsilon_j e(\varepsilon_j)
\bigr)^{1/2}=0 \label{eq:all-puncture-fluxes-vanish}
\end{align}
by \eqref{eq:good-radii}.

We now let \(j\to\infty\) in
\eqref{eq:punctured-truncation-green}.  Since the integrand in
\eqref{eq:truncated-interior-integrand} is dominated by the integrable
function $\frac{|\nabla_0q|^2}{q}$, we have
\begin{align}
\int_{\Omega_{\varepsilon_j}\cap\{0<q<R\}}
\frac{|\nabla_0q|^2}{q}\,dA_0
\longrightarrow
\int_{\{0<q<R\}}
\frac{|\nabla_0q|^2}{q}\,dA_0.
\label{eq:punctured-interior-limit}
\end{align}

Moreover, \(\omega_X\) is atomless and \(\phi_R\leq R\), so
\begin{align}
\left|
\int_X\phi_R\,d\omega_X
-
\int_{\Omega_{\varepsilon_j}}\phi_R\,d\omega_X
\right|
&\leq
R\sum_{\alpha=1}^N
\omega_X\bigl(\overline{D_{\varepsilon_j}(p_\alpha)}\bigr)
\longrightarrow0.
\label{eq:punctured-measure-limit}
\end{align}
Here we used
\begin{align}
\omega_X\bigl(\overline{D_{\varepsilon_j}(p_\alpha)}\bigr)
\longrightarrow
\omega_X(\{p_\alpha\})=0.
\nonumber
\end{align}
Combining
\eqref{eq:punctured-truncation-green},
\eqref{eq:all-puncture-fluxes-vanish},
\eqref{eq:punctured-interior-limit}, and
\eqref{eq:punctured-measure-limit}, we obtain
\begin{align}
\int_{\{0<q<R\}}
\frac{|\nabla_0q|^2}{q}\,dA_0
=
2l\int_X \phi_R\,d\omega_X.
\label{eq:truncated-defect-identity}
\end{align}
Since the integrand is defined to be zero on \(\{q=0\}\), the
left-hand side may equivalently be written as $\int_{\{q<R\}}
\frac{|\nabla_0q|^2}{q}\,dA_0$.

Finally, as \(R\to\infty\),
\begin{align}
\mathbf 1_{\{q<R\}}
\frac{|\nabla_0q|^2}{q}
\uparrow
\frac{|\nabla_0q|^2}{q}
\quad\text{a.e.},
\qquad
\phi_R(x)\uparrow q(x).
\nonumber
\end{align}
Monotone convergence in
\eqref{eq:truncated-defect-identity} therefore yields
\begin{align}
\int_X\frac{|\nabla_0q|^2}{q}\,dA_0
=
2l\int_Xq\,d\omega_X,
\nonumber
\end{align}
which, together with \eqref{eq:adjoint-q-energy}, proves
\eqref{eq:no-atomic-residue-identity}.
\end{proof}

\begin{prop}\label{prop:atomless-BK-defect-inequality}
Assume \(\nu_X\) is atomless.  Then, for every \(l\geq1\) ,
\begin{align}
\mathfrak c_{l-1}^X(s,s)-\mathfrak a_l^X(s,s) =2l\int_X|s|_{h_{l, X}}^2\,d\nu_X, \quad \quad \forall s\in V_l^X. \nonumber
\end{align}
\end{prop}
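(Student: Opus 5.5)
The plan is to reduce the defect identity to Lemma~\ref{lem:no-atomic-residue-lsc} by unpacking the two forms on $s\in V_l^X$. Since $\omega_X=\nu_X+dA_X$ and $dA_X$ is atomless, the hypothesis that $\nu_X$ is atomless is equivalent to $\omega_X$ being atomless. Hence Lemma~\ref{lem:no-atomic-residue-lsc} applies to every $s\in V_l^X$ and gives
\[
\|(B_{l-1}^X)^*s\|_{H_{l-1}^X}^2=2l\int_X|s|_{h_{l,X}}^2\,d\omega_X<\infty .
\]

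Next, for $s\in V_l^X=\ker B_l^X\cap\operatorname{Dom}((B_{l-1}^X)^*)$ the two forms are defined, since $s\in\operatorname{Dom}(B_l^X)$ and $s\in\operatorname{Dom}((B_{l-1}^X)^*)$. Because $B_l^Xs=0$, the definitions \eqref{eq:alex-am-form}--\eqref{eq:alex-cm-form} give
\[
\mathfrak a_l^X(s,s)=l(l+1)\|s\|_{H_l^X}^2,\qquad
\mathfrak c_{l-1}^X(s,s)=\|(B_{l-1}^X)^*s\|^2_{H_{l-1}^X}+(l-1)l\|s\|_{H_l^X}^2 .
\]
Subtracting, the difference equals
\[
\|(B_{l-1}^X)^*s\|^2-2l\|s\|_{H_l^X}^2 .
\]

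Finally, write $\|s\|_{H_l^X}^2=\int_X|s|_{h_{l,X}}^2\,dA_X$. Inserting the identity from the first step gives
\[
2l\int_X|s|^2_{h_{l,X}}\,d\omega_X-2l\int_X|s|_{h_{l,X}}^2\,dA_X=2l\int_X|s|^2_{h_{l,X}}\,d\nu_X .
\]
This subtraction is legitimate because both integrals are finite: the $dA_X$ integral is the $H_l^X$ norm, and the $\omega_X$ integral is finite by Lemma~\ref{lem:no-atomic-residue-lsc}. Since $\nu_X\ge0$ and $\nu_X\le\omega_X$, the $\nu_X$ integral is then also finite, and splitting $\omega_X=\nu_X+dA_X$ is justified.

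The only point needing care is measure-theoretic. The function $q=|s|^2_{h_{l,X}}$ is defined only $dA_0$-a.e., whereas $\omega_X$ may charge $dA_0$-null sets. Lemma~\ref{lem:no-atomic-residue-lsc}, however, already integrates $q$ against $\omega_X$ in its proof, via the representative $q=2^{-l}|f|^2e^{2lU}$ with the Reshetnyak representative of $U$. I would use that same representative throughout, so that no new obstacle arises. The substantive analysis is entirely contained in Lemma~\ref{lem:no-atomic-residue-lsc}.
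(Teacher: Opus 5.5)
Your proposal is correct and follows the same argument as the paper. Since $B_l^Xs=0$, the difference of the shifted forms reduces to $\|(B_{l-1}^X)^*s\|^2-2l\|s\|_{H_l^X}^2$, and Lemma~\ref{lem:no-atomic-residue-lsc} together with $\nu_X=\omega_X-dA_X$ finishes the proof; your remarks that $\nu_X$ and $\omega_X$ have the same atoms (as $dA_X\ll dA_0$) and that all integrals are finite only make explicit what the paper's two-line proof leaves implicit.
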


\begin{proof}
Because \(B_l^Xs=0\), the definitions of the two shifted forms give
\begin{align}
\mathfrak c_{l-1}^X(s,s)-\mathfrak a_l^X(s,s) =\|(B_{l-1}^X)^*s\|^2-2l\|s\|_{H_l^X}^2. \nonumber
\end{align}
Apply Lemma~\ref{lem:no-atomic-residue-lsc} and use
\(\|s\|_{H_l^X}^2=\int|s|^2dA_X\) and
\(\nu_X=\omega_X-dA_X\).
\end{proof}

\section{Eigenvalue comparison and rigidity}\label{sec:alexandrov-comparison-rigidity}

\begin{prop}\label{prop:alex-ordered-comparison-by-approx}
For every \(j\geq1\),
\begin{align}
\lambda_j(X)\geq\lambda_j(\mathbb S^2,g_{\mathrm{round}}). \nonumber
\end{align}
\end{prop}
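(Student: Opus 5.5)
The plan is to obtain the inequality by smooth approximation: combine the smooth comparison theorem with the spectral convergence lemma. Fix a sequence \(\tau_k\downarrow0\) and let \(g_{\tau_k}=e^{2P_{\tau_k}u}g_0\) be the heat-regularized metrics from \eqref{def:g_tau-by-heat}.

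First, by Lemma~\ref{lem:cp-heat-preserves-curv}, each \(g_{\tau_k}\) is a smooth metric on \(S^2\) with \(K_{g_{\tau_k}}\geq1\). Theorem~\ref{thm:cluster-bottom-comparison}, in the ordered form \eqref{eq:smooth-ordered-comparison}, therefore applies to every \(g_{\tau_k}\). It gives
\begin{align}
\lambda_j(S^2,g_{\tau_k})\geq\lambda_j(\mathbb S^2,g_{\mathrm{round}}),\qquad j\geq1,\ k\geq1. \nonumber
\end{align}

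Second, Lemma~\ref{lem:alex-spectral-convergence} gives \(\lambda_j(S^2,g_{\tau_k})\to\lambda_j(X)\) for each fixed \(j\). The right-hand side above does not depend on \(k\), so letting \(k\to\infty\) preserves the non-strict inequality and yields \(\lambda_j(X)\geq\lambda_j(\mathbb S^2,g_{\mathrm{round}})\).

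The main point to check is that the spectrum in Lemma~\ref{lem:alex-spectral-convergence} is that of the canonical Alexandrov Laplacian \(-\Delta_X\), counted with multiplicity. This is the operator identified with \(A_0^X\) in \eqref{eq:alex-A0-laplacian}, and Shioya's theorem applies because the convergence is noncollapsed. Both facts are already recorded in the proof of that lemma, so no additional analysis is needed.

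As a cross-check, the same conclusion follows intrinsically. The closed-counting bound \(\mathcal N_{A_0^X}(l(l+1))\leq(l+1)^2\) is immediate from Corollary~\ref{cor:abstract-two-sphere-ladder}, using Proposition~\ref{prop:weak-Bochner-Kodaira}, Lemma~\ref{lem:atomic-kernel-holomorphic} and Lemma~\ref{lem:alex-ladder-compactness}. The strict-counting part of that corollary, \(N^{<}_{A_0^X}(l(l+1))\leq l^2\), gives \(\lambda_{l^2}(X)\geq l(l+1)\). Monotonicity of the ordered spectrum then extends this to every index \(i\) with \(l^2\leq i\leq l^2+2l\), exactly as in the proof of Theorem~\ref{thm:cluster-bottom-comparison}.
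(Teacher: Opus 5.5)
Your proposal is correct and matches the paper's proof. The paper likewise applies Theorem~\ref{thm:cluster-bottom-comparison} to the heat-regularized metrics $g_{\tau_i}$, which satisfy $K_{g_{\tau_i}}\geq1$ by Lemma~\ref{lem:cp-heat-preserves-curv}, and then passes to the limit using Lemma~\ref{lem:alex-spectral-convergence}. Your intrinsic cross-check through the singular ladder is the argument the paper gives in the proof of Corollary~\ref{cor:alex-counting-comparison-by-approx}, where it notes that the two routes yield the ordered comparison independently.
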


\begin{proof}
Choose $g_{\tau_i}$ from (\ref{def:g_tau-by-heat}), 
then each \(g_{\tau_i}\) has \(K_{g_{\tau_i}}\geq1\) by Lemma \ref{lem:cp-heat-preserves-curv}. 

So Theorem~\ref{thm:cluster-bottom-comparison} gives the inequality for
\(g_{\tau_i}\).  Pass to the limit using
Lemma~\ref{lem:alex-spectral-convergence}.
\end{proof}

\begin{cor}\label{cor:alex-counting-comparison-by-approx}
For every \(l\geq1\),
\begin{align}
N_{-\Delta_X}^{<}\bigl(l(l+1)\bigr)&\leq l^2, \quad \quad 
\mathcal N_{-\Delta_X}\bigl(l(l+1)\bigr)\leq(l+1)^2. \label{eq:alex-counting-comparison}
\end{align}
\end{cor}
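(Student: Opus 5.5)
Two routes are available, and I would give the direct one as the proof and the other as a cross-check. The direct route applies Corollary~\ref{cor:abstract-two-sphere-ladder} to the singular ladder
\begin{align}
H_m=H_m^X,\qquad B_m=B_m^X,\qquad \alpha_m=m(m+1). \nonumber
\end{align}

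\textbf{Verifying the hypotheses.} Each hypothesis of Notation~\ref{not:abstract-ladder-package} and of the corollary has already been established.
\begin{enumerate}
\item Closedness and dense domain are Lemma~\ref{lem:alex-Bm-closed-dense}.
\item Compact resolvent of $A_m^X$ and $C_m^X$ is Lemma~\ref{lem:alex-ladder-compactness}.
\item The kernel bound $r_m\leq 2m+1$ is Lemma~\ref{lem:atomic-kernel-holomorphic}.
\item The form order \eqref{eq:abstract-ladder-form-order} at every level, meaning both the domain inclusion $\operatorname{Dom}(\mathfrak c_m^X)\subset\operatorname{Dom}(\mathfrak a_{m+1}^X)$ and the inequality, is Proposition~\ref{prop:weak-Bochner-Kodaira}.
\end{enumerate}
Corollary~\ref{cor:abstract-two-sphere-ladder} then yields both bounds in \eqref{eq:abstract-two-sphere-threshold-counts} for $A_0^X$. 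Finally, \eqref{eq:alex-A0-laplacian} identifies $A_0^X=-\Delta_X$, which gives \eqref{eq:alex-counting-comparison}.

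\textbf{Cross-check via the ordered comparison.} The strict bound also follows from Proposition~\ref{prop:alex-ordered-comparison-by-approx} alone. That proposition gives
\[
\lambda_{l^2}(X)\geq\lambda_{l^2}(\mathbb S^2)=l(l+1).
\]
Because eigenvalues are indexed from $0$, at most $l^2$ eigenvalues lie strictly below $l(l+1)$. The closed bound is the one that genuinely needs the ladder. The ordered comparison does not bound the multiplicity at the threshold value, since lower semicontinuity of counting under spectral convergence runs the wrong way: eigenvalues of the smooth approximants lying above $l(l+1)$ could converge down to $l(l+1)$. This is why I would rely on the singular ladder route for the closed count, rather than trying to pass the smooth counting bound to the limit.

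\textbf{Main obstacle.} The only delicate point is making sure the abstract corollary is applicable. Its form order must hold in the closed-form sense, including the domain inclusion, and not merely on a core. Proposition~\ref{prop:weak-Bochner-Kodaira} gives exactly this through Mosco convergence, so the remaining work reduces to quoting the results listed above.
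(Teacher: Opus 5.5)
Your direct route is the paper's proof. It applies Corollary~\ref{cor:abstract-two-sphere-ladder} to $H_m^X$, $B_m^X$, $\alpha_m=m(m+1)$, uses the same inputs (compact resolvent, the weak Bochner--Kodaira form order, the holomorphic kernel bound), and then identifies $A_0^X=-\Delta_X$.

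One correction to your cross-check: the closed bound does \emph{not} need the ladder. Proposition~\ref{prop:alex-ordered-comparison-by-approx} at index $(l+1)^2$ gives $\lambda_{(l+1)^2}(X)\geq(l+1)(l+2)>l(l+1)$. Hence $\mathcal N_{-\Delta_X}\bigl(l(l+1)\bigr)\leq N^{<}_{-\Delta_X}\bigl((l+1)(l+2)\bigr)\leq(l+1)^2$.

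Your semicontinuity objection is valid only for passing a counting function at a fixed threshold to the limit. It does not apply to the ordered comparison, because the gap to the next round cluster absorbs it. The singular ladder is genuinely needed only for the equality analysis in Lemma~\ref{lem:atomic-counting-saturation}.
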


\begin{proof}
Apply Corollary~\ref{cor:abstract-two-sphere-ladder} directly to
\begin{align}
H_m=H_m^X, \qquad B_m=B_m^X, \qquad \alpha_m=m(m+1). \nonumber
\end{align}
The compact-resolvent hypothesis is
Lemma~\ref{lem:alex-ladder-compactness}, the form order is
Proposition~\ref{prop:weak-Bochner-Kodaira}, and
\(\dim\ker B_m^X\leq2m+1\) is
Lemma~\ref{lem:atomic-kernel-holomorphic}.  

Finally,
\(A_0^X=-\Delta_X\) by \eqref{eq:alex-A0-laplacian}.  Thus
\eqref{eq:alex-counting-comparison} is exactly
\eqref{eq:abstract-two-sphere-threshold-counts}.  The approximation argument
in Proposition~\ref{prop:alex-ordered-comparison-by-approx} gives the same
ordered comparison independently.
\end{proof}

\begin{lemma}\label{lem:alex-zero-defect-roundness}
If \(\nu_X=0\), then \(X\) is isometric to the unit round sphere.
\end{lemma}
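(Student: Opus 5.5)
The plan follows the route indicated in the flowchart: Gauss--Bonnet to obtain the area, then the equality case of Alexandrov volume comparison. If \(\nu_X=0\), then \(\omega_X=dA_X\) as measures. Evaluating on all of \(X\) and using the Gauss--Bonnet identity \eqref{eq:alex-gauss-bonnet} gives
\begin{align}
\operatorname{Area}(X)=\int_X dA_X=\omega_X(X)=4\pi=\operatorname{Area}(\mathbb S^2,g_{\mathrm{round}}). \nonumber
\end{align}
Since \(X\) is a two-dimensional Alexandrov space with curvature \(\geq1\), the Bishop--Gromov comparison holds for it. Its two-dimensional Hausdorff measure coincides with \(dA_X\) in the conformal model \eqref{eq:alex-conformal-model}. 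The Bishop--Gromov comparison therefore gives \(\operatorname{Area}(X)\leq4\pi\), with equality only in the maximal case. The equality case of the Alexandrov volume comparison (Grove--Petersen / Burago--Gromov--Perelman maximal-volume rigidity) then says that a compact \(n\)-dimensional Alexandrov space with curvature \(\geq1\) and volume equal to \(\operatorname{Vol}(\mathbb S^n)\) is isometric to \(\mathbb S^n\). This yields the conclusion.

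An alternative and more self-contained second step stays in the conformal picture. The identity \(\nu_X=0\) means \(-\Delta_0u=e^{2u}-1\) in distributions on \(\CP^1\), with \(e^{2u}\in L^1\) and \(u\geq-C\) by Lemma~\ref{lem:u-has-lower-bound}. Since \(\omega_X\) is now absolutely continuous, Brezis--Merle type estimates give \(e^{2u}\in L^p\) for some \(p>1\). Bootstrapping then makes \(u\) smooth, so \(g_X\) is a smooth metric with \(K\equiv1\) on \(S^2\). A complete simply connected surface of constant curvature \(1\) is the unit round sphere. Finally, the metric \(d_{g_0,u}\) equals \(d_X\) by the choice of the conformal parametrization, so the isometry is one of metric spaces.

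I expect the main obstacle to be the justification that \(dA_X\) (the conformal area density) agrees with the Hausdorff measure used in the volume comparison. That is the standard Reshetnyak identification of area for surfaces of bounded integral curvature, and I would cite it rather than reprove it. In the alternative route, the delicate point is the first regularity gain \(e^{2u}\in L^p\), \(p>1\). I would get it from the fact that the atomless measure \(\omega_X\) has small mass on small discs, together with the Brezis--Merle inequality, as in the density argument of Lemma~\ref{lem:alex-Bm-closed-dense}.
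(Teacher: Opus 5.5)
Your main route is correct and is essentially the paper's proof. With $\nu_X=0$, Gauss--Bonnet gives $\mathcal H^2(X)=\int_X dA_X=\omega_X(X)=4\pi$, and the equality case of Alexandrov volume comparison forces roundness; the paper packages this step through Bonnet--Myers, $\operatorname{diam}X\leq\pi$, so that $B_R(p)=X$ for $R>\pi$ saturates the ball comparison with the model sphere. Your alternative conformal route ($-\Delta_0u=e^{2u}-1$, a Brezis--Merle bootstrap to smoothness, then $K\equiv1$) is also sound and avoids the Alexandrov rigidity theorem, though it is not needed.
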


\begin{proof}

The Alexandrov Bonnet--Myers diameter theorem
\cite[Theorem~3.6]{BGP}
gives
\[
\operatorname{diam}X\leq\pi.
\]
Fix \(p\in X\) and choose \(R>\pi\).  Then \(B_R(p)=X\), while the
radius-\(R\) ball in the simply connected two-dimensional model of
constant curvature \(1\) is the whole unit sphere.  

By \eqref{eq:alex-gauss-bonnet},
\begin{align}
\mathcal H^2(X)=\int_X 1dA_X=\omega_X(X)=4\pi. \nonumber
\end{align}

Since both balls have two-dimensional Hausdorff measure \(4\pi\), the equality case of
the Alexandrov volume-comparison theorem
\cite[Theorem~10.2]{BGP}
shows that \(X\) is isometric to the unit round sphere.
\end{proof}

\begin{theorem}[Alexandrov counting rigidity]
\label{claim:alexandrov-link-rigidity-needed}
Let \(X\) be a compact Alexandrov two-sphere with curvature at least \(1\).
If, for some \(l\geq1\),
\begin{align}
\mathcal N_{-\Delta_X}\bigl(l(l+1)\bigr)=(l+1)^2, \nonumber
\end{align}
then \(X\) is isometric to the unit round sphere.
\end{theorem}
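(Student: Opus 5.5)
The plan is to assemble the results of Sections~\ref{sec:alexandrov-atomic-obstruction}--\ref{sec:alexandrov-atomless-defect} and conclude with Lemma~\ref{lem:alex-zero-defect-roundness}. Assume $\mathcal N_{-\Delta_X}(l(l+1))=(l+1)^2$ for some $l\geq1$.

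First, Proposition~\ref{prop:atomic-alexandrov-counting-obstruction} shows that $\nu_X$ is atomless. Since $dA_X$ is atomless, $\omega_X$ is atomless too, so the hypothesis of Lemma~\ref{lem:no-atomic-residue-lsc} holds.

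Second, Lemma~\ref{lem:atomic-counting-saturation} gives $\dim_{\C}V_l^X=2l+1$. It also gives the top-rung equality
\[
\mathfrak c_{l-1}^X(s,s)=\mathfrak a_l^X(s,s)\qquad\text{for every } s\in V_l^X.
\]
Feeding this into the exact defect identity of Proposition~\ref{prop:atomless-BK-defect-inequality} yields
\[
\int_X|s|_{h_{l,X}}^2\,d\nu_X=0\qquad\text{for every } s\in V_l^X.
\]

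Third, we show that this forces $\nu_X=0$. By Lemma~\ref{lem:atomic-kernel-holomorphic} together with the dimension count, $V_l^X=\ker B_l^X$ is the full space $H^0(\CP^1,K_{\CP^1}^{-l})$, since both have dimension $2l+1$. Global generation in Lemma~\ref{lem:CP1-anticanonical-cohomology} gives, for each $p$, a section $s\in V_l^X$ with $s(p)\neq0$.

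For such an $s$, the function $|s|_{h_{l,X}}^2=2^{-l}|f|^2e^{2lU}$ is locally bounded below near $p$ by a positive constant. Indeed, $f(p)\neq0$ and $U$ is bounded below by Lemma~\ref{lem:u-has-lower-bound}. Note that $U$ is only defined $dA_0$-a.e. and $\nu_X$ may charge $dA_0$-null sets, so we take the canonical representative of $U$, the Green/Reshetnyak potential. This representative is lower semicontinuous and satisfies $U\geq -C$ everywhere, which is exactly the representative implicit in writing $\int q\,d\omega_X$ in Lemma~\ref{lem:no-atomic-residue-lsc}.

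Hence $\nu_X$ vanishes on a neighborhood of $p$. By compactness, finitely many such neighborhoods cover $X$, so $\nu_X=0$.

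Finally, Lemma~\ref{lem:alex-zero-defect-roundness} shows that $X$ is isometric to the unit round sphere.

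The main obstacle is the third step: the pointwise positivity of $q$ must be understood relative to the measure $\nu_X$, not merely $dA_0$-a.e. If that representative issue proved delicate, the fallback is simpler. Summing $|s_j|^2$ over an $h_{l,0}$-orthonormal basis of $H^0(\CP^1,K_{\CP^1}^{-l})$ gives a smooth, everywhere-positive function $\rho$. Then
\[
\int e^{2lU}\rho\,d\nu_X=0,
\]
and since $e^{2lU}\rho\geq c>0$ everywhere by the lower bound on $U$, it follows that $\nu_X(X)=0$.
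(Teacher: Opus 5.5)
Your proposal is correct and follows the paper's proof step for step. You obtain atomlessness from Proposition~\ref{prop:atomic-alexandrov-counting-obstruction}, top-rung saturation from Lemma~\ref{lem:atomic-counting-saturation}, and the exact defect identity from Proposition~\ref{prop:atomless-BK-defect-inequality}. You then use $V_l^X=H^0(\CP^1,K_{\CP^1}^{-l})$ and global generation to force $\nu_X=0$, and finish with Lemma~\ref{lem:alex-zero-defect-roundness}.

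Your insistence that the positive lower bound on $|s_p|_{h_{l,X}}^2$ hold $\nu_X$-a.e. is a real sharpening. The paper only says "almost everywhere", which is not enough, because $\nu_X$ may charge $dA_0$-null sets. The everywhere-defined Green-potential representative with $u\geq -C$ secures it. However, your fallback with the summed function $\rho$ rests on that same everywhere lower bound. So the fallback only removes the covering step; it does not remove the representative issue.
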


\begin{proof}
By Proposition~\ref{prop:atomic-alexandrov-counting-obstruction}, the defect
measure \(\nu_X\) is atomless.  By
Lemma~\ref{lem:atomic-counting-saturation}, the concrete application of
Proposition~\ref{prop:abstract-ladder-saturation} gives
\begin{align}
V_l^X=\ker B_l^X, \qquad \dim_{\C}V_l^X=2l+1, \nonumber
\end{align}
and
\begin{align}
\mathfrak c_{l-1}^X(s,s)=\mathfrak a_l^X(s,s) \qquad(s\in V_l^X). \nonumber
\end{align}
Proposition~\ref{prop:atomless-BK-defect-inequality} therefore yields
\begin{align}
\int_X|s|_{h_{l, X}}^2\,d\nu_X=0 \qquad\text{for every }s\in V_l^X. \label{eq:alex-each-section-zero}
\end{align}

By Lemma~\ref{lem:atomic-kernel-holomorphic} and the dimension equality,
\begin{align}
V_l^X=H^0(\CP^1,K_{\CP^1}^{-l}). \nonumber
\end{align}

For every \(p\in\CP^1\), the evaluation map is surjective by
Lemma~\ref{lem:CP1-anticanonical-cohomology}; hence some
\(s_p\in V_l^X\) satisfies \(s_p(p)\neq0\).  

In a coordinate centered at
\(p\), its holomorphic coefficient is therefore bounded away from zero on a
smaller neighborhood.  The local conformal factor is bounded from below, so
\(|s_p|_{h_{l, X}}^2\geq c_p>0\) almost everywhere on that neighborhood.
Equation~\eqref{eq:alex-each-section-zero} then implies that \(\nu_X\)
vanishes there.  

Compactness gives finitely many such neighborhoods, and hence
\(\nu_X=0\).

Lemma~\ref{lem:alex-zero-defect-roundness} now shows that \(X\) is round.
\end{proof}

\part{Eigenvalues comparison and dimension of harmonic functions}

\section{A counterexample in dimension three}\label{sec:S3-counterexample}\label{sec:harmonic-growth}

This section explains why the two-dimensional ladder should not be regarded as the shadow of a straightforward higher-dimensional theorem.  The obstruction is not only technical.  In dimension two, the scalar spherical harmonic multiplicities are reproduced by the line-bundle dimensions $\dim H^0(\CP^1,K^{-m})=2m+1$, and the curvature commutator has the correct sign under $K\geq1$.  In dimension three, a Ricci lower bound does not supply an analogous first-order ladder with the round multiplicities, and the exact eigenvalue comparison is in fact false.

The example below is included to mark the boundary of the method.  It gives a completely explicit counterexample to the statement that $\Ric_g\geq2g$ on $S^3$ should imply
$\lambda_i(S^3,g)\geq\lambda_i(\mathbb S^3,g_{\mathrm{round}})$ for every $i\geq1$.

\begin{theorem}[An explicit conformal counterexample on $S^3$]\label{thm:s3-failure-2}
Let $g_0=g_{\mathrm{round}}$ be the unit round metric on
$S^3\subset\mathbb C^2$, where
$S^3=\{(z,w)\in\mathbb C^2: |z|^2+|w|^2=1\}$.  Define
\begin{align}
q:=|z|^2,\qquad \psi(q):=\frac{49}{50}-31q+\frac{75}{4}q^2-6q^3. \nonumber
\end{align}
For every sufficiently small $\varepsilon>0$, the conformal metric
\begin{align}
\widehat g_\varepsilon:=e^{2\varepsilon\psi}g_0 \nonumber
\end{align}
satisfies
\begin{align}
\Ric_{\widehat g_\varepsilon}\geq 2\widehat g_\varepsilon, \nonumber
\end{align}
but
\begin{align}
\lambda_{7714}(S^3,\widehat g_\varepsilon) < 840 = \lambda_{7714}(\mathbb S^3,g_0), \nonumber
\end{align}
where eigenvalues are counted with the convention $\lambda_0=0$.
\end{theorem}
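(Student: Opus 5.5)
The plan is to treat $\widehat g_\varepsilon$ as a first-order perturbation of $g_0$ in $\varepsilon$, and to verify two things separately: a pointwise Ricci inequality, and a negative first variation for the bottom of the degree-$28$ eigenvalue cluster. The index bookkeeping is as follows. The round eigenvalues on $\mathbb S^3$ are $k(k+2)$, with multiplicity $(k+1)^2$. Hence
\[
N^{<}_{-\Delta_{\mathbb S^3}}(840)=\sum_{k=0}^{27}(k+1)^2=\tfrac{28\cdot29\cdot57}{6}=7714,
\]
and $840=28\cdot 30$. So $\lambda_{7714}$ is the lowest member of the cluster $k=28$, which has multiplicity $29^2$.

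\textbf{Ricci bound.} For $g=e^{2f}g_0$ in dimension three,
\[
\Ric_g=\Ric_{g_0}-(\nabla^2f-df\otimes df)-(\Delta_0 f+|df|^2)g_0 .
\]
Put $f=\varepsilon\psi$ and use $\Ric_{g_0}=2g_0$ together with $2\widehat g_\varepsilon=2e^{2\varepsilon\psi}g_0$. This gives
\begin{align}
\Ric_{\widehat g_\varepsilon}-2\widehat g_\varepsilon=-\varepsilon\bigl(\nabla^2_0\psi+(\Delta_0\psi+4\psi)g_0\bigr)+O(\varepsilon^2), \nonumber
\end{align}
with the error uniform in $C^0$. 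It therefore suffices to show that the symmetric tensor $T:=\nabla_0^2\psi+(\Delta_0\psi+4\psi)g_0$ is \emph{strictly} negative definite everywhere on $S^3$; compactness then absorbs the $O(\varepsilon^2)$ term. Since $\psi=\psi(q)$ with $q=|z|^2$, I would compute $|\nabla_0 q|^2=4q(1-q)$, $\Delta_0 q=4-8q$ and $\nabla_0^2 q=-2q\,g_0+(\text{the Hessian of }|z|^2\text{ restricted})$. The tensor $T$ is $T^2$-invariant. It diagonalizes in the frame consisting of $\nabla q/|\nabla q|$, the $z$-rotation direction and the $w$-rotation direction (orthogonal to $\nabla q$). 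Its three eigenvalues are explicit polynomials in $q\in[0,1]$ built from $\psi,\psi',\psi''$. Negativity of the cubic-derived polynomials on $[0,1]$, including at the degenerate endpoints $q=0,1$ where the frame collapses, is an elementary check. The constant $49/50$ is presumably chosen exactly so that this holds with a margin.

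\textbf{Eigenvalue perturbation.} Write the Rayleigh quotient of $\widehat g_\varepsilon$ as $\int e^{\varepsilon\psi}|du|_0^2\,dA_0\big/\int e^{3\varepsilon\psi}u^2\,dA_0$. Analytic (Kato) perturbation theory for the degenerate eigenvalue $840$ then shows that the $841$ branches have first derivatives equal to the eigenvalues of the form
\[
Q(u)=\int_{S^3}\psi\bigl(|du|^2-3\cdot 840\,u^2\bigr)dA_0
\]
restricted to $E_{28}$, taken relative to the $L^2$ norm. Hence it suffices to exhibit one $u\in E_{28}$ with $Q(u)<0$. This gives $\lambda_{7714}(\widehat g_\varepsilon)<840$ for small $\varepsilon$, because the lower clusters stay below $840$ by continuity. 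By the $T^2$-symmetry $Q$ is diagonal in the weight decomposition, so I would try first the extremal weight vectors $u=z^{28}$ (real and imaginary parts); then $|u|^2=q^{28}$ and $|du|^2$ is an explicit polynomial in $q$. The key simplification is that $q=|z|^2$ is uniformly distributed on $[0,1]$ under normalized measure on $S^3$. Consequently $Q(z^{28})$ reduces to $\int_0^1\psi(q)P(q)\,dq$ for an explicit polynomial $P$, a finite rational computation.

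The main obstacle is this sign: nothing guarantees that $z^{28}$ is the right test vector. If $Q(z^{28})\ge0$, I would next try the other weight sectors $z^aw^b\bar z^c\bar w^d$ projected to harmonics, or more simply zonal-type functions in $q$ (Jacobi polynomials $P^{(0,0)}$-type in $2q-1$). On each such sector $Q$ is again a one-variable integral, and I would search among them for a negative value.
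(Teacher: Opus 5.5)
Your architecture is the paper's. The first-order Ricci expansion reduces to a $T^2$-invariant tensor, diagonal in the frame $\partial_t,e_\theta,e_\phi$. In the paper each diagonal entry is at least $\frac{2}{25}$ plus $(1-q)$ times a quadratic with negative discriminant, so your tensor satisfies $T\leq-\frac{2}{25}g_0$. The eigenvalue drop comes from the first variation of $\int e^{\varepsilon\psi}|du|^2\big/\int e^{3\varepsilon\psi}u^2$ on a single degree-$28$ harmonic, closed by min--max. Your Kato formulation is equivalent; the paper instead uses the test space $E_{<28}\oplus\operatorname{span}\{f_{28}\}$ and the vanishing of cross terms between $\phi$-frequencies.

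The theorem's whole content, however, lies in the two explicit sign checks, and the decisive one fails for the test vector you propose. For $u=\operatorname{Re}(z^{28})$, the isometry $(z,w)\mapsto(w,z)$ turns the computation into the paper's with $\psi(q)$ replaced by
\[
\psi(1-q)=-\tfrac{1727}{100}+\tfrac{23}{2}q+\tfrac34q^2+6q^3 .
\]
Using the beta-integral coefficients $\gamma_0=-2$, $\gamma_1=-\frac{29}{450}$, $\gamma_2=-\frac{28}{6975}$, $\gamma_3=-\frac{9}{24800}$, one gets
\[
\mathcal R_\varepsilon\bigl(\operatorname{Re}z^{28}\bigr)=840\Bigl(1+\Bigl(\tfrac{1727}{50}-\tfrac{667}{900}-\tfrac{21}{6975}-\tfrac{27}{12400}\Bigr)\varepsilon+O(\varepsilon^2)\Bigr),
\]
with coefficient about $+33.8$. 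The reason is geometric: $|z^{28}|^2=q^{28}$ concentrates near $q=1$, where $\psi(1)=-\frac{1727}{100}$. There the deformation is, to first order, a constant shrinking of the metric, which raises Rayleigh quotients. So $Q(\operatorname{Re}z^{28})>0$, and your main line produces no eigenvalue below $840$.

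The missing idea is that the test function must be localized where $\psi>0$, i.e.\ near the Hopf circle $\{z=0\}$. A generic choice cannot work: by the addition theorem, the average of $Q(u)/\|u\|^2$ over an orthonormal basis of $E_{28}$ is $-1680\int_0^1\psi\,dq=1680\cdot\frac{977}{100}>0$, so the cluster moves up on average. The paper uses $f_{28}=\operatorname{Re}(w^{28})=(1-q)^{14}\cos(28\phi)$, whose first-order coefficient is
\[
\Gamma_{28}=-\tfrac{49}{25}+\tfrac{899}{450}-\tfrac{7}{93}+\tfrac{27}{12400}=-\tfrac{3941}{111600}<0 .
\]
Even this is delicate: the constant and linear terms of $\psi$ together contribute $+\frac{17}{450}$, and only the quadratic term tips the sign.

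Your fallback search over weight sectors does contain this vector (weight $(0,\pm28)$), so the plan is repairable. As written, though, it neither identifies the right test function nor verifies the sign, and it also leaves the Ricci polynomials uncomputed. These explicit computations are the proof.
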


\begin{proof}
We use the standard coordinates
\begin{align}
z=e^{i\theta}\sin t,\qquad w=e^{i\phi}\cos t,\qquad 0<t<\frac{\pi}{2}, \nonumber
\end{align}
so that $q=\sin^2t$ and
\begin{align}
g_0=dt^2+\sin^2t\,d\theta^2+\cos^2t\,d\phi^2. \nonumber
\end{align}
The role of the Hopf variable is summarized in Figure~\ref{fig:S3-hopf-coordinate}: the conformal deformation is constant on each Hopf torus and is controlled by a single polynomial profile in $q$.

\begin{figure}[H]
\centering
\resizebox{0.88\textwidth}{!}{%
\begin{tikzpicture}[>=Latex, every node/.style={font=\small}]
\draw[->, thick] (-0.45,0) -- (9.0,0) node[right] {$q=|z|^2=\sin^2t$};
\foreach \x/\lab in {0/$0$,4.2/$q$,8.4/$1$}{
  \draw[thick] (\x,0.08)--(\x,-0.08) node[below=4pt] {\lab};
}
\node[below=18pt] at (0,0) {$z=0$};
\node[below=18pt] at (8.4,0) {$w=0$};
\node[align=center, above=23pt] at (0,0) {Hopf circle\\$|w|=1$};
\node[align=center, above=23pt] at (8.4,0) {Hopf circle\\$|z|=1$};
\draw[thick] (0,0.78) ellipse (0.30 and 0.12);
\draw[thick] (8.4,0.78) ellipse (0.30 and 0.12);
\foreach \x/\a/\b in {1.45/0.38/0.14,2.8/0.58/0.20,4.2/0.78/0.27,5.6/0.58/0.20,6.95/0.38/0.14}{
  \draw[thick] (\x,1.08) ellipse[x radius=\a,y radius=\b];
  \draw[densely dashed] (\x-\a,1.08) arc[start angle=180,end angle=360,x radius=\a,y radius=\b];
  \draw (\x-0.18,1.08) ellipse[x radius=0.15,y radius=\b];
}
\node[align=center] at (4.2,1.78) {Clifford tori $T_q=\{|z|^2=q,\ |w|^2=1-q\}$};
\draw[->, thick] (4.2,0.16) -- (4.2,0.78);
\node[right] at (4.35,0.48) {$0<t<\pi/2$};
\draw[->, thick] (1.15,-1.05) -- (7.35,-1.05) node[right] {$q$};
\draw[thick] (1.15,-0.75) .. controls (3.2,-0.98) and (5.6,-1.45) .. (7.35,-1.78);
\node[left] at (1.15,-0.75) {$\psi(q)$};
\node[align=center] at (4.2,-2.18) {$\widehat g_\varepsilon=e^{2\varepsilon\psi(q)}g_0$ is constant along each $T_q$.};
\end{tikzpicture}%
}
\caption{Hopf-coordinate schematic for Theorem~\ref{thm:s3-failure-2}.  The deformation depends only on $q$, so the curvature and Rayleigh-quotient calculations reduce to one-dimensional formulae in $q$.}
\label{fig:S3-hopf-coordinate}
\end{figure}

For a conformal change $\widehat g=e^{2u}g$ in dimension $3$, one has
\begin{align}
\Ric_{\widehat g} = \Ric_g-\nabla^2u-(\Delta u)g+du\otimes du-|\nabla u|^2g. \nonumber
\end{align}
Taking $u=\varepsilon\psi$ and using $\Ric_{g_0}=2g_0$, we get
\begin{align}
\Ric_{\widehat g_\varepsilon}-2\widehat g_\varepsilon &= 2(1-e^{2\varepsilon\psi})g_0 -\varepsilon\nabla^2\psi -\varepsilon(\Delta\psi)g_0 +\varepsilon^2(d\psi\otimes d\psi-|\nabla\psi|^2g_0) \nonumber\\
&= \varepsilon T+O(\varepsilon^2), \nonumber
\end{align}
where the remainder is uniform on $S^3$ and
\begin{align}
T:=-\nabla^2\psi-(\Delta\psi)g_0-4\psi g_0. \nonumber
\end{align}

Let
\begin{align}
e_t:=\partial_t,\qquad e_\theta:=(\sin t)^{-1}\partial_\theta,\qquad e_\phi:=(\cos t)^{-1}\partial_\phi. \nonumber
\end{align}
For any function $F=F(q)$, direct differentiation gives
\begin{align}
\nabla^2F(e_t,e_t) &= 4q(1-q)F''+2(1-2q)F', \nonumber\\
\nabla^2F(e_\theta,e_\theta) &= 2(1-q)F', \nonumber\\
\nabla^2F(e_\phi,e_\phi) &= -2qF', \nonumber\\
\Delta F &= 4q(1-q)F''+4(1-2q)F'. \nonumber
\end{align}
Therefore, in the orthonormal frame $\{e_t,e_\theta,e_\phi\}$, the tensor $T$ is diagonal.  Substituting
\begin{align}
\psi(q)=\frac{49}{50}-31q+\frac{75}{4}q^2-6q^3 \nonumber
\end{align}
gives
\begin{align}
T_{tt} &= \frac{2}{25}+(1-q)(480q^2-591q+182), \nonumber\\
T_{\theta\theta} &= \frac{2}{25}+23+3(1-q)(100q^2-134q+53), \nonumber\\
T_{\phi\phi} &= \frac{2}{25}+6(1-q)(50q^2-61q+20). \nonumber
\end{align}
Figure~\ref{fig:S3-polynomial-data} records the sign structure of these formulae.  The picture is schematic; the proof uses the exact discriminant calculation immediately below.

\begin{figure}[H]
\centering
\resizebox{0.92\textwidth}{!}{%
\begin{tikzpicture}[>=Latex, every node/.style={font=\small}]
\begin{scope}[shift={(0,0)}]
  \draw[->, thick] (0,-1.85) -- (0,1.25) node[above] {$\psi$};
  \draw[->, thick] (-0.15,0) -- (4.65,0) node[right] {$q$};
  \draw (0,0.06)--(0,-0.06) node[below=3pt] {$0$};
  \draw (4.2,0.06)--(4.2,-0.06) node[below=3pt] {$1$};
  \draw[thick] (0,0.85) .. controls (1.1,0.40) and (2.8,-0.65) .. (4.2,-1.45);
  \node[anchor=west] at (0.18,0.93) {$\psi(0)=49/50$};
  \node[anchor=east] at (4.2,-1.58) {$\psi(1)=-1727/100$};
  \node[align=center] at (2.1,-2.22) {(a) conformal profile};
\end{scope}
\begin{scope}[shift={(6.2,0)}]
  \draw[->, thick] (0,-0.25) -- (0,2.65) node[above] {$T_{aa}$};
  \draw[->, thick] (-0.15,0) -- (4.65,0) node[right] {$q$};
  \draw[densely dashed] (0,0.42) -- (4.35,0.42) node[right] {$2/25$};
  \draw (0,0.06)--(0,-0.06) node[below=3pt] {$0$};
  \draw (4.2,0.06)--(4.2,-0.06) node[below=3pt] {$1$};
  \draw[thick] (0,2.20) .. controls (1.0,1.60) and (2.7,1.35) .. (4.2,0.50);
  \draw[thick, dashed] (0,1.65) .. controls (1.0,1.95) and (2.7,1.15) .. (4.2,0.72);
  \draw[thick, dotted] (0,1.12) .. controls (1.0,1.45) and (2.7,0.88) .. (4.2,0.46);
  \node[anchor=west] at (0.22,2.25) {$T_{tt}$};
  \node[anchor=west] at (0.50,1.82) {$T_{\theta\theta}$};
  \node[anchor=west] at (0.32,1.08) {$T_{\phi\phi}$};
  \node[align=center] at (2.1,-2.22) {(b) all diagonal entries stay above $2/25$};
\end{scope}
\end{tikzpicture}%
}
\caption{The one-variable polynomial data in the conformal counterexample.  The lower bound for $T$ is proved by showing that the three displayed quadratic factors are everywhere positive, so each diagonal entry is at least $2/25$.}
\label{fig:S3-polynomial-data}
\end{figure}

The three quadratic factors are positive on $\mathbb R$, since their discriminants are
\begin{align}
591^2-4\cdot480\cdot182&=-159<0, \nonumber\\
134^2-4\cdot100\cdot53&=-3244<0, \nonumber\\
61^2-4\cdot50\cdot20&=-279<0. \nonumber
\end{align}
Since $0\leq q\leq1$, it follows that
\begin{align}
T\geq \frac{2}{25}g_0. \label{eq:S3-T-lower}
\end{align}
By \eqref{eq:S3-T-lower}, after shrinking $\varepsilon>0$ if necessary,
\begin{align}
\Ric_{\widehat g_\varepsilon}-2\widehat g_\varepsilon\geq0. \nonumber
\end{align}

It remains to prove that an eigenvalue drops.  For $k\geq1$, consider the round spherical harmonic
\begin{align}
f_k(z,w):=\operatorname{Re}(w^k)=(1-q)^{k/2}\cos(k\phi). \nonumber
\end{align}
Let $\mathcal R_\varepsilon$ be the Rayleigh quotient of the positive operator $-\Delta_{\widehat g_\varepsilon}$.  Since
\begin{align}
|\nabla f_k|_{\widehat g_\varepsilon}^2\,dV_{\widehat g_\varepsilon} &= e^{\varepsilon\psi}|\nabla f_k|_{g_0}^2\,dV_{g_0}, \nonumber\\
f_k^2\,dV_{\widehat g_\varepsilon} &= e^{3\varepsilon\psi}f_k^2\,dV_{g_0}, \nonumber
\end{align}
and since
\begin{align}
dV_{g_0}=\sin t\cos t\,dt\,d\theta\,d\phi=\frac12\,dq\,d\theta\,d\phi, \nonumber
\end{align}
we can compute the quotient explicitly.  First,
\begin{align}
|\nabla f_k|_{g_0}^2 = k^2(1-q)^{k-1}\bigl(q\cos^2(k\phi)+\sin^2(k\phi)\bigr). \nonumber
\end{align}
Moreover,
\begin{align}
\int_0^{2\pi}\int_0^{2\pi} \bigl(q\cos^2(k\phi)+\sin^2(k\phi)\bigr)\,d\theta\,d\phi &= 2\pi^2(1+q), \nonumber\\
\int_0^{2\pi}\int_0^{2\pi} \cos^2(k\phi)\,d\theta\,d\phi &= 2\pi^2. \nonumber
\end{align}
Thus
\begin{align}
\mathcal R_\varepsilon(f_k) &= k^2\frac{A_k(\varepsilon)}{B_k(\varepsilon)}, \nonumber\\
A_k(\varepsilon) &= \int_0^1(1-q)^{k-1}(1+q)e^{\varepsilon\psi(q)}\,dq, \nonumber\\
B_k(\varepsilon) &= \int_0^1(1-q)^ke^{3\varepsilon\psi(q)}\,dq. \nonumber
\end{align}
At $\varepsilon=0$,
\begin{align}
A_k(0)=\frac{k+2}{k(k+1)},\qquad B_k(0)=\frac{1}{k+1},\qquad \mathcal R_0(f_k)=k(k+2). \nonumber
\end{align}
Differentiating at $\varepsilon=0$ gives
\begin{align}
\mathcal R_\varepsilon(f_k) = k(k+2)\bigl(1+\Gamma_k\varepsilon+O(\varepsilon^2)\bigr), \nonumber
\end{align}
where
\begin{align}
\Gamma_k := \frac{k(k+1)}{k+2} \int_0^1(1-q)^{k-1}(1+q)\psi(q)\,dq - 3(k+1)\int_0^1(1-q)^k\psi(q)\,dq. \label{eq:S3-gamma}
\end{align}

We now take $k=28$.  The only integral identity needed is
\begin{align}
\int_0^1q^r(1-q)^s\,dq=\frac{r!\,s!}{(r+s+1)!}. \nonumber
\end{align}
Let $\gamma_r$ denote the contribution to $\Gamma_{28}$ in \eqref{eq:S3-gamma} from the monomial $q^r$.  For $r=0,1,2,3$, the above beta integral gives
\begin{align}
\gamma_0=-2,\qquad \gamma_1=-\frac{29}{450},\qquad \gamma_2=-\frac{28}{6975},\qquad \gamma_3=-\frac{9}{24800}. \nonumber
\end{align}
Consequently,
\begin{align}
\Gamma_{28} &= -2\cdot\frac{49}{50} -\frac{29}{450}(-31) -\frac{28}{6975}\cdot\frac{75}{4} -\frac{9}{24800}(-6) \nonumber\\
&= -\frac{3941}{111600}<0. \label{eq:S3-gamma-28}
\end{align}
By \eqref{eq:S3-gamma-28}, for all sufficiently small $\varepsilon>0$,
\begin{align}
\mathcal R_\varepsilon(f_{28})<28\cdot30=840. \label{eq:S3-f28-bound}
\end{align}

Let $E_{<28}$ be the real vector space spanned by all round spherical harmonics of degrees $0,1,\ldots,27$.  The degree $j$ eigenspace on the round $S^3$ has dimension $(j+1)^2$, hence
\begin{align}
D:=\dim E_{<28} = \sum_{j=0}^{27}(j+1)^2 = \frac{28\cdot29\cdot57}{6} = 7714. \label{eq:S3-D}
\end{align}
On the round sphere,
\begin{align}
\sup_{0\neq u\in E_{<28}}\mathcal R_0(u) = 27\cdot29 = 783 < 840. \nonumber
\end{align}
Since $E_{<28}$ is finite dimensional and $\widehat g_\varepsilon\rightarrow g_0$ smoothly, after shrinking $\varepsilon>0$ once more we have
\begin{align}
\sup_{0\neq u\in E_{<28}}\mathcal R_\varepsilon(u)<840. \label{eq:S3-low-bound}
\end{align}

The conformal factor depends only on $q$, and hence is independent of $\theta$ and $\phi$.  Therefore the $L^2(\widehat g_\varepsilon)$ inner product and the Dirichlet form have no cross terms between different $\phi$-Fourier frequencies.  Every element of $E_{<28}$ has $\phi$-Fourier frequencies $|m|\leq27$, while $f_{28}$ has only the frequencies $m=\pm28$.  Define
\begin{align}
F:=E_{<28}\oplus\operatorname{span}\{f_{28}\}. \nonumber
\end{align}
Then
\begin{align}
\dim F=D+1=7715. \nonumber
\end{align}
The final min--max test space is depicted in Figure~\ref{fig:S3-minmax-picture}: the low-degree harmonic block and the single perturbed degree-$28$ test function both lie below the round threshold $840$.

\begin{figure}[H]
\centering
\resizebox{0.90\textwidth}{!}{%
\begin{tikzpicture}[>=Latex, every node/.style={font=\small}]
\draw[->, thick] (-0.2,0) -- (9.6,0) node[right] {test space};
\draw[->, thick] (0,-0.45) -- (0,4.05) node[above] {Rayleigh level};
\draw[densely dashed, thick] (0,3.15) -- (8.9,3.15) node[right] {$840=28\cdot30$};

\draw[fill=gray!12, thick, rounded corners] (0.75,0.75) rectangle (5.05,2.35);
\node[align=center] at (2.90,1.55) {$E_{<28}$\\degrees $0,\ldots,27$\\$\sup \mathcal R_\varepsilon<840$};
\draw[fill=gray!25, thick, rounded corners] (5.75,0.75) rectangle (6.75,2.35);
\node[align=center] at (6.25,1.55) {$f_{28}$\\$\mathcal R_\varepsilon(f_{28})<840$};

\draw[<->] (0.75,0.36) -- (5.05,0.36) node[midway, below=3pt] {$D=7714$};
\draw[<->] (5.75,0.36) -- (6.75,0.36) node[midway, below=3pt] {$1$};
\draw[thick] (0.75,-0.18) -- (6.75,-0.18);
\node[below=10pt] at (3.75,-0.18) {$F=E_{<28}\oplus\operatorname{span}\{f_{28}\}$, $\dim F=D+1$};
\draw[->, thick] (7.25,1.55) -- (8.45,1.55);
\node[align=center, anchor=west] at (8.52,1.55) {min--max gives\\$\lambda_D(S^3,\widehat g_\varepsilon)<840$};
\end{tikzpicture}%
}
\caption{The min--max mechanism in the $S^3$ counterexample.  The direct sum $F$ has dimension $D+1$, and every nonzero vector in $F$ has Rayleigh quotient below the bottom of the round degree-$28$ cluster.}
\label{fig:S3-minmax-picture}
\end{figure}

By \eqref{eq:S3-f28-bound}, \eqref{eq:S3-low-bound}, and the vanishing of the cross terms,
\begin{align}
\sup_{0\neq u\in F}\mathcal R_\varepsilon(u)<840. \label{eq:S3-F-bound}
\end{align}
The min--max principle applied to \eqref{eq:S3-F-bound} gives
\begin{align}
\lambda_D(S^3,\widehat g_\varepsilon)<840. \label{eq:S3-minmax-bound}
\end{align}
On the round unit $S^3$, the degree $l$ eigenvalue is $l(l+2)$ with multiplicity $(l+1)^2$.  By \eqref{eq:S3-D}, the bottom index of the degree $28$ cluster is exactly $D=7714$, and therefore
\begin{align}
\lambda_D(\mathbb S^3,g_0)=28\cdot30=840. \label{eq:S3-round-value}
\end{align}
Combining \eqref{eq:S3-minmax-bound} and \eqref{eq:S3-round-value} gives
\begin{align}
\lambda_{7714}(S^3,\widehat g_\varepsilon) < \lambda_{7714}(\mathbb S^3,g_0). \nonumber
\end{align}
This proves the theorem.
\end{proof}

\begin{remark}
The numerical choices are made only to keep the verification short.  The curvature check reduces to three quadratics with negative discriminant, and the eigenvalue drop is detected by the single degree $28$ spherical harmonic $f_{28}=\operatorname{Re}(w^{28})$.  
\end{remark}

\section{The dimension of polynomial growth harmonic functions}\label{sec:three-dimensional-yau}

This section returns to the original motivation.  The eigenvalue comparison on a two-sphere is not merely an isolated compact result; it is the cross-section statement predicted by the sharp dimension problem for polynomial-growth harmonic functions.  For a cone $C(X)$, homogeneous harmonic functions are controlled by the spectrum of the cross-section $X$.  Thus a finite counting comparison on $X$ becomes a dimension comparison for harmonic functions on the cone, and, through tangent-cone-at-infinity arguments, for noncompact manifolds.

The smooth theorem proves the needed comparison when the cross-section is a smooth positively curved two-sphere.  For a three-manifold with nonnegative sectional curvature and positive asymptotic volume ratio, however, the cross-section at infinity is naturally an Alexandrov two-sphere.  Hence the equality case forces us to ask for a singular version of the same finite counting rigidity.  This is the point where the complex-geometric language becomes more than a simplification: it provides the right framework for curvature measures, singular Hermitian metrics, and holomorphic sections on the conformal sphere.

We now isolate the part of the sharp Yau dimension problem which is genuinely suggested by the two-dimensional eigenvalue comparison proved above. 

For \(k\geq0\), put \(\rho(x):=d_g(p,x)\) and define
\begin{align}
\mathscr H_k(M):=\{u\in C^\infty(M):\Delta_g u=0,\ |u(x)|\leq C(\rho(x)^k+1)\text{ for some }C>0\}, \nonumber\\
h_k(M):=\dim\mathscr H_k(M). \nonumber
\end{align}

In this section \((M^3,g)\) is complete with sectional curvature \(K_g\geq0\) and
\begin{align}
\operatorname{AVR}(M):=\lim_{r\to\infty}\frac{\operatorname{vol}B_p(r)}{r^3}>0. \label{eq:avr-def}
\end{align}
The positivity condition \eqref{eq:avr-def} is what we call maximal volume growth.

For the Euclidean model,
\begin{align}
h_d(\mathbb R^3)=\sum_{j=0}^d(2j+1)=(d+1)^2,\qquad d\in\mathbb Z_{\geq0}. \label{eq:R3-hd}
\end{align}

Earlier work of Peter Li--Jiaping Wang \cite{LiWang1999} gave the corresponding asymptotic upper estimates under nonnegative sectional curvature.

Xian-Tao Huang's theorem upgrades the asymptotic-cone philosophy to the finite counting inequality under maximal volume growth and uniqueness of the tangent cone.

\begin{theorem}\label{thm:huang-finite-dimensional-comparison}
Let \((M^n,g)\) be a complete noncompact Riemannian manifold with \(\Ric_g\geq0\) and maximal volume growth. Assume that \(M\) has a unique tangent cone at infinity, denoted by
\begin{align}
(C(X),d_{C(X)},m_{C(X)}), \nonumber
\end{align}
where \(C(X)\) is the metric cone over the cross-section \((X,d_X,m_X)\). Then, for every \(k>0\),
\begin{align}
h_k(M^n)\leq\mathcal N_{-\Delta_X}\bigl(k(k+n-2)\bigr). \label{eq:huang-finite-dimensional-comparison}
\end{align}
\end{theorem}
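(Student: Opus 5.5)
This is Huang's finite-dimensional comparison theorem. The plan is to pass a basis of polynomial growth harmonic functions to the tangent cone at infinity and count the resulting homogeneous harmonic functions on $C(X)$ by the spectrum of $X$.

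\textbf{Step 1: normalize a large subspace.} Fix $k>0$ and suppose $h_k(M)\geq N+1$, where
\[
N:=\mathcal N_{-\Delta_X}\bigl(k(k+n-2)\bigr).
\]
Choose a $(N+1)$-dimensional subspace $W\subset\mathscr H_k(M)$. For each scale $r_i\to\infty$, Gram--Schmidt gives a basis $u^{(i)}_1,\ldots,u^{(i)}_{N+1}$ of $W$ that is orthonormal in $L^2$ of the rescaled unit ball $\bigl(B_p(r_i),r_i^{-1}d_g\bigr)$ with normalized volume. This is the Colding--Minicozzi normalization.

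\textbf{Step 2: keep the rescaled functions bounded.} We need uniform bounds on larger balls $B_R$ in the rescaled metric. I would obtain them from a Colding--Minicozzi type doubling argument: for a finite-dimensional space of functions of growth at most $k$, there are infinitely many scales at which
\[
\frac{\|u\|_{L^2(B_{2r})}^2}{\|u\|_{L^2(B_r)}^2}\leq 2^{2k+n+\delta}\quad\text{for all }u\in W,
\]
after possibly passing to a subspace. Cheng--Yau gradient estimates and the Cheeger--Colding theory under noncollapsing then give a limit: the rescaled functions converge, uniformly on compacts and in $W^{1,2}$, to $N+1$ harmonic functions on $C(X)$. These limits are $L^2$-orthonormal on the unit ball and satisfy the same doubling bound.

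\textbf{Step 3: count on the cone.} On the metric measure cone, the Cheeger--Colding spectral theory gives separation of variables. Every harmonic function is an $L^2$ convergent series $\sum_j c_j\, r^{\alpha_j}\varphi_j$, where $\varphi_j$ are the eigenfunctions of $-\Delta_X$ and
\[
\alpha_j(\alpha_j+n-2)=\lambda_j(X),
\]
which is \eqref{eq-intro-indicial}. The doubling bound, applied at all scales (via a three-circle/monotonicity type argument, or by iterating along the chosen scales), forces every coefficient $c_j$ with $\alpha_j>k$ to vanish. Hence the limit space lies in a space of dimension $N$, which contradicts orthonormality of $N+1$ functions. This proves \eqref{eq:huang-finite-dimensional-comparison}.

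\textbf{Main obstacle.} The main obstacle is Step 2, together with the endpoint case of Step 3. Controlling degree exactly $k$ rather than $k+\delta$ needs either a sharp frequency monotonicity or the fact that $\mathcal N$ counts eigenvalues with $\leq$, so that one can let $\delta\to0$ using discreteness of the spectrum of $X$. The uniqueness of the tangent cone is what makes the limit cone independent of the chosen scales. In the present paper I would invoke this result from Huang's work as stated, since it is used only as an external input.
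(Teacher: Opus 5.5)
The paper proves this statement only by citing Huang's Theorem~1.6, so your closing decision to invoke that result is exactly the paper's route. The reconstruction you sketch, however, would not give the sharp count as written. The gap is in Step~2.

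Write $Q_R(u):=\int_{B_R}u^2$. The Colding--Minicozzi doubling lemma controls the Gram determinant of $W$ only at a sparse sequence of scales. That is, it bounds only the \emph{product} of the $N+1$ generalized eigenvalues of the pair $Q_{\Omega r},Q_r$ on $W$. Individual ratios $Q_{\Omega r}(u)/Q_r(u)$ are controlled only for part of a suitable basis.
\begin{itemize}
\item Passing to a subspace drops the dimension below $N+1$, and the contradiction is lost.
\item Keeping all of $W$ gives, in the cone limit, only the averaged bound $\sum_{j=0}^{N}(2\alpha_j+n)\leq(N+1)(2k+n+\delta)$. This is a Weyl-type estimate, not $\alpha_N\leq k$.
\end{itemize}
Step~3 cannot repair this, for three reasons. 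A one-scale doubling bound does not remove high modes from a single cone function; consider $r^{\alpha_0}\varphi_0+\varepsilon\, r^{\alpha_J}\varphi_J$. Exact log-convexity of $R\mapsto Q_R(f)$ on the cone propagates such a bound only to \emph{smaller} scales. Finally, the blow-down at $r_i$ sees the scales $Rr_i$, which are not among the Colding--Minicozzi scales, so there is nothing to iterate.

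What is needed is a ratio bound that holds simultaneously for every $u\in W$ and at every large scale. This is where uniqueness of the tangent cone does real work, beyond making the limit independent of the scales. Let $\alpha_N$ be the first degree exceeding $k$, and fix $\beta\in(k,\alpha_N)$.
\begin{itemize}
\item On the cone, $\log Q_R(f)=\log\sum_j|c_j|^2R^{2\alpha_j+n}/(2\alpha_j+n)$ is convex in $\log R$. It is affine across three consecutive $\Omega$-adic scales only when $f$ is homogeneous.
\item A blow-down contradiction argument is legitimate here, because every blow-down is $C(X)$ and $\beta$ is not a degree. It gives $R_0$ such that, for every harmonic $u$ on $M$ and every $r\geq R_0$, a ratio $\geq\Omega^{2\beta+n}$ on $[r,\Omega r]$ forces a ratio $\geq\Omega^{2\beta+n}$ on $[\Omega r,\Omega^2 r]$.
\item Iterating this against $Q_s(u)\leq Cs^{2k+n}$ shows that every $u\in W$ has ratio $<\Omega^{2\beta+n}$ at every $r\geq R_0$.
\end{itemize}
This supplies the uniform bounds on large balls that Step~2 needs. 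Each limit function then has ratio $\leq\Omega^{2\beta+n}$ at every scale, so it lies in $\operatorname{span}\{r^{\alpha_j}\varphi_j:\alpha_j\leq k\}$, a space of dimension $N$. This contradicts having $N+1$ orthonormal limits. The endpoint issue also disappears, since no $\alpha_j$ lies in $(k,\beta]$.
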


\begin{proof}
This is \cite[Theorem~1.6]{Huang2021}.
\end{proof}

We now prove Theorem~\ref{thm:three-dim-positive-avr-bound-1} in the notation of this section.

\begin{proof}[Proof of Theorem~\ref{thm:three-dim-positive-avr-bound-1}]
\textbf{Step (1)}  For $k=0$, the Cheng--Yau gradient estimate for bounded harmonic functions under $\operatorname{Ric}_g\geq0$ \cite{ChengYau1975} gives $h_0(M)=1=h_0(\mathbb R^3)$.  Since $K_g\geq0$, the metric space $(M,d_g)$ is a three-dimensional Alexandrov space with curvature bounded below by $0$.

By Shioya's structure theorem for the limit cone of a noncompact Alexandrov space with nonnegative curvature \cite[Proposition~1.1, pp.~209--211]{ShioyaMassRays}, the blow-downs converge, without passing to a subsequence, to the unique tangent cone at infinity:
\begin{align}
(M,r_i^{-2}g,p)\longrightarrow C(X),\qquad r_i\to\infty. \label{eq:three-dim-tangent-cone}
\end{align}
Here $X=M(\infty)$ is the ideal boundary of $M$, and $C(X)$ is the Euclidean cone over $X$.  Since \eqref{eq:avr-def} is positive, the convergence in \eqref{eq:three-dim-tangent-cone} is noncollapsed and the limit cone has Hausdorff dimension $3$.  Hence $X$ has Hausdorff dimension $2$, and the standard cone criterion gives Alexandrov curvature at least $1$ on $X$ \cite[Proposition~4.2.3, pp.~13--14]{BGP}.

\textbf{Step (2)}.  The pointed form of Perelman's stability theorem implies that $C(X)$ is a topological three-manifold near its vertex $o$; see \cite[Theorem~7.11, pp.~125--126]{KapovitchStability}.  Moreover,
\begin{align}
B_o(\varepsilon)\setminus\{o\}\cong(0,\varepsilon)\times X \nonumber
\end{align}
for every $\varepsilon>0$.  With integer coefficients, local homology at the vertex therefore satisfies
\begin{align}
H_j(C(X),C(X)\setminus\{o\};\mathbb Z)\cong H_j(\mathbb R^3,\mathbb R^3\setminus\{0\};\mathbb Z). \nonumber
\end{align}
Because $C(X)$ is contractible and $C(X)\setminus\{o\}$ deformation retracts onto $X$, the long exact sequence of the pair also gives
\begin{align}
H_j(C(X),C(X)\setminus\{o\};\mathbb Z)\cong\widetilde H_{j-1}(X;\mathbb Z). \nonumber
\end{align}
Thus $X$ has the reduced integral homology of $S^2$.  By \cite[\S12.9.3, p.~52]{BGP}, every compact two-dimensional Alexandrov space is a compact topological surface, possibly with boundary.  The identity $H_2(X;\mathbb Z)\cong\mathbb Z$ rules out boundary and gives orientability, while $H_1(X;\mathbb Z)=0$.  The classification of compact connected surfaces now shows that $X$ is homeomorphic to $S^2$.

\textbf{Step (3)}  For $k\geq1$, \eqref{eq:huang-finite-dimensional-comparison} with $n=3$ and Corollary~\ref{cor:alex-counting-comparison-by-approx} give
\begin{align}
h_k(M)\leq\mathcal N_{-\Delta_X}\bigl(k(k+1)\bigr)\leq(k+1)^2=h_k(\mathbb R^3). \nonumber
\end{align}
Together with the case $k=0$ and \eqref{eq:R3-hd}, this proves \eqref{eq:positive-avr-sharp-bound-1}.

\textbf{Step (4)}  Suppose that $h_k(M)=(k+1)^2$ for some $k\in\mathbb Z^+$.  Equality in the preceding chain forces
\begin{align}
\mathcal N_{-\Delta_X}\bigl(k(k+1)\bigr)=(k+1)^2. \nonumber
\end{align}
The Alexandrov counting-rigidity theorem, Theorem~\ref{claim:alexandrov-link-rigidity-needed}, makes $X$ the unit round sphere.  Hence the tangent cone in \eqref{eq:three-dim-tangent-cone} is $\mathbb R^3$ and $\operatorname{AVR}(M)=\omega_3$.  Bishop--Gromov monotonicity says that
\begin{align}
r\longmapsto\frac{\operatorname{vol}B_p(r)}{\omega_3r^3} \nonumber
\end{align}
is nonincreasing.  Its limit as $r\downarrow0$ is $1$, and its limit as $r\to\infty$ is $\operatorname{AVR}(M)/\omega_3=1$; it is therefore identically one.  The equality case of Bishop--Gromov implies that $(M^3,g)$ is isometric to $\mathbb R^3$.
\end{proof}

\begin{remark}[Why a single-eigenvalue Obata statement is insufficient]
Theorem \ref{claim:alexandrov-link-rigidity-needed} cannot be replaced by a naive Obata-type statement of the form
\begin{align}
\lambda_j(X)=\lambda_j(\mathbb S^2,g_{\rm round})\quad\Longrightarrow\quad X\text{ is isometric to }(\mathbb S^2,g_{\rm round}). \nonumber
\end{align}
Indeed, to disprove such a statement it is enough to find one Alexandrov two-sphere \(X\) with curvature \(\geq1\), not isometric to the round unit sphere, but satisfying \(\lambda_1(X)=2\). The football metrics in Example \ref{ex:football-single-eigenvalue-equality} give exactly such examples. Thus the rigidity statement needed in Theorem \ref{claim:alexandrov-link-rigidity-needed} must be formulated as a counting equality at a whole round threshold, not as equality of one eigenvalue.
\end{remark}

\begin{example}[A nonround Alexandrov sphere with \(\lambda_1=2\)]
\label{ex:football-single-eigenvalue-equality}
Let \(0<c<1\). On \((0,\pi)\times(\mathbb R/2\pi\mathbb Z)\), set
\begin{align}
g_c=dr^2+c^2\sin^2 r\,d\theta^2. \nonumber
\end{align}
Let \(X_c\) be the metric completion obtained by adding the two endpoints \(N=\{r=0\}\) and \(S=\{r=\pi\}\). This is the standard spherical football. The original two-cone classification and construction of spherical metrics of constant curvature on \(S^2\) with two conical singularities is due to Troyanov \cite[pp.~296--306]{TroyanovTwoCones}. The explicit Friedrichs spectrum of the spherical football is computed in Mazzeo--Zhu \cite[Lemmas~2--3, arXiv version, pp.~24--25]{MazzeoZhuSphericalMetrics}.  The metric completion and the eigenfunction responsible for the equality \(\lambda_1(X_c)=2\) are shown in Figure~\ref{fig:football-geometry}.

\begin{figure}[H]
\centering
\resizebox{0.86\textwidth}{!}{%
\begin{tikzpicture}[>=Latex, every node/.style={font=\small}]
\begin{scope}[x=1cm,y=1cm]
  \coordinate (N) at (0,2.20);
  \coordinate (S) at (0,-2.20);
  \draw[thick] (N) .. controls (1.10,1.08) and (1.10,-1.08) .. (S)
               .. controls (-1.10,-1.08) and (-1.10,1.08) .. cycle;
  \draw[densely dashed] (N) .. controls (0.36,0.98) and (0.36,-0.98) .. (S);
  \draw[densely dashed] (N) .. controls (-0.36,0.98) and (-0.36,-0.98) .. (S);
  \draw[dotted] (-1.08,0) arc[start angle=180,end angle=360,x radius=1.08,y radius=0.23];
  \draw (1.08,0) arc[start angle=0,end angle=180,x radius=1.08,y radius=0.23];
  \fill (N) circle (1.4pt) node[above] {$N$};
  \fill (S) circle (1.4pt) node[below] {$S$};
  \draw (0,1.93) arc[start angle=-90,end angle=-50,radius=0.32];
  \node[right] at (0.34,1.94) {$2\pi c$};
  \draw[<->] (-1.40,0) -- (1.40,0) node[midway,above=3pt] {$2c\sin r$};
  \node[align=center] at (0,-2.92) {(a) the completed football};
\end{scope}
\begin{scope}[xshift=5.0cm,x=1.25cm,y=1.25cm]
  \draw[->, thick] (0,-1.15) -- (0,1.25) node[above] {$y$};
  \draw[->, thick] (-0.1,0) -- (3.45,0) node[right] {$r$};
  \draw (0,0.06)--(0,-0.06) node[below] {$0$};
  \draw (3.14159,0.06)--(3.14159,-0.06) node[below] {$\pi$};
  \draw[thick,domain=0:3.14159,samples=100,smooth]
       plot (\x,{0.78*sin(\x r)});
  \node[anchor=west] at (2.05,0.58) {$c\sin r$};
  \draw[thick,dashed,domain=0:3.14159,samples=100,smooth]
       plot (\x,{0.95*cos(\x r)});
  \node[anchor=west] at (0.10,1.08) {$u=\cos r$};
  \node[align=center] at (1.57,-1.42) {(b) profile radius and axial mode};
\end{scope}
\end{tikzpicture}%
}
\caption{The football metric $g_c=dr^2+c^2\sin^2 r\,d\theta^2$.  The cone angle at each added endpoint is $2\pi c$, and the axial mode $u=\cos r$ gives the eigenvalue $2$ independently of $c$.}
\label{fig:football-geometry}
\end{figure}

Near \(r=0\), since \(\sin r=r+O(r^3)\), one has
\begin{align}
g_c=dr^2+c^2r^2d\theta^2+O(r^4)d\theta^2. \nonumber
\end{align}
The same expansion holds near \(r=\pi\). Hence \(N\) and \(S\) are conical points with cone angle \(2\pi c\). Since \(c<1\), these cone angles are strictly smaller than \(2\pi\). On the regular part \(X_c^{\rm reg}:=X_c\setminus\{N,S\}\), the Gaussian curvature is
\begin{align}
K_{g_c}=-\frac{(c\sin r)''}{c\sin r}=1. \nonumber
\end{align}
The curvature measure is therefore
\begin{align}
\omega_{X_c}=dA_{g_c}+2\pi(1-c)\delta_N+2\pi(1-c)\delta_S\geq dA_{g_c}. \nonumber
\end{align}
Thus \(X_c\) is an Alexandrov two-sphere with curvature at least \(1\). It is not isometric to the round unit sphere because it has two conical points.

By the spectrum formula in \cite[Lemmas~2--3, arXiv version, pp.~24--25]{MazzeoZhuSphericalMetrics},
\begin{align}
\operatorname{Spec}(-\Delta_{X_c})=\left\{\left(\frac{j}{c}+\ell\right)\left(\frac{j}{c}+\ell+1\right):j,\ell\in\mathbb Z_{\geq0}\right\}, \nonumber
\end{align}
with multiplicity one for \(j=0\) and multiplicity two for \(j>0\). The low-lying part of this spectrum is represented in Figure~\ref{fig:football-spectrum-lattice}; it separates the meridian mode at eigenvalue $2$ from all angular modes when $0<c<1$.

\begin{figure}[H]
\centering
\resizebox{0.82\textwidth}{!}{%
\begin{tikzpicture}[x=1.10cm,y=0.82cm,>=Latex, every node/.style={font=\small}]
\draw[->, thick] (-0.2,0) -- (7.2,0) node[right] {$j$};
\draw[->, thick] (0,-0.25) -- (0,4.45) node[above] {$\ell$};
\foreach \j in {0,1,2,3,4}{\draw (\j,0.07)--(\j,-0.07) node[below] {$\j$};}
\foreach \l in {0,1,2,3,4}{\draw (0.07,\l)--(-0.07,\l) node[left] {$\l$};}
\foreach \j in {0,1,2,3,4}{
  \foreach \l in {0,1,2,3,4}{\fill (\j,\l) circle (1.2pt);}
}
\draw[very thick] (0,0) circle (4pt);
\draw[very thick] (0,1) circle (4pt);
\node[anchor=west] at (0.18,0.15) {$\lambda_{0,0}=0$};
\node[anchor=west] at (0.18,1.15) {$\lambda_{0,1}=2$};
\draw[densely dashed] (0.5,-0.05) -- (0.5,4.15);
\node[align=left,anchor=west] (ang) at (4.85,3.65) {$j\geq1$ modes\\have $\lambda_{j,\ell}>2$};
\draw[->] (ang.west) -- (2.2,3.0);
\node[align=left,anchor=west] (mer) at (4.85,2.05) {$j=0,\ \ell\geq2$\\gives $\lambda\geq6$};
\draw[->] (mer.west) -- (0.03,2.0);
\draw[decorate,decoration={brace,mirror}] (-0.18,-0.02) -- (-0.18,1.02) node[midway,left=11pt] {low modes};
\end{tikzpicture}%
}
\caption{The lattice of spectral modes for the spherical football.  Only $(j,\ell)=(0,0)$ and $(0,1)$ lie at or below the threshold $2$; all other modes are strictly above $2$ when $0<c<1$.}
\label{fig:football-spectrum-lattice}
\end{figure}

The mode \(j=0\), \(\ell=1\) gives the eigenfunction
\begin{align}
u(r,\theta)=\cos r,\qquad -\Delta_{X_c}u=2u. \nonumber
\end{align}
Since \(0<c<1\), every mode with \(j\geq1\) has eigenvalue
\begin{align}
\frac{j}{c}\left(\frac{j}{c}+1\right)>2. \nonumber
\end{align}
Also the modes \(j=0\), \(\ell\geq2\) have eigenvalues at least \(6\). Therefore the only eigenvalues not exceeding \(2\) are \(0\) and \(2\), and hence
\begin{align}
\lambda_1(X_c)=2=\lambda_1(\mathbb S^2,g_{\rm round}). \nonumber
\end{align}
This gives a nonround Alexandrov two-sphere with curvature \(\geq1\) and first positive eigenvalue equal to the first positive round eigenvalue. Thus single ordered-eigenvalue equality is not a valid Alexandrov rigidity hypothesis.
\end{example}

\end{document}